\newcommand{\Aut}{\mbox{\rm Aut}}
\newcommand{\Inn}{\mbox{\rm Inn}}
\newcommand{\Res}{\mbox{\rm Res}}
\newcommand{\Irr}{\text{\rm Irr}}
\newcommand{\Stab}{\text{\rm Stab}}
\newcommand{\diag}{\text{\rm diag}}
\newcommand{\Inndiag}{\text{\rm Inndiag}}
\newcommand{\GL}{\text{\rm GL}}
\newcommand{\GU}{\text{\rm GU}}
\newcommand{\SL}{\text{\rm SL}}
\newcommand{\PGL}{\text{\rm PGL}}
\newcommand{\PGU}{\text{\rm PGU}}
\newcommand{\PSL}{\text{\rm PSL}}
\newcommand{\SU}{\text{\rm SU}}
\newcommand{\PSU}{\text{\rm PSU}}
\newcommand{\SO}{\text{\rm SO}}
\newcommand{\Sp}{\text{\rm Sp}}
\newcommand{\Spin}{\text{\rm Spin}}
\newcommand{\PCSO}{\text{\rm PCSO}}
\renewcommand{\P}{\text{\rm P}}
\newcommand{\id}{\mbox{\rm id}}
\newcommand{\Id}{\mbox{\rm Id}}
\newcommand{\ad}{\mbox{\rm ad}}
\newcommand{\ex}{\diamond}
\newtheorem{num}{Notation}[section]
\newtheorem{defn}[num]{Definition}
\newtheorem{thm*}{Theorem}
\newtheorem{lem}[num]{Lemma}
\newtheorem{prp}[num]{Proposition}
\newtheorem{rem}[num]{Remark}
\newtheorem{hypo}[num]{Hypothesis}
\begin{document}

\title{The eigenvalue one property of finite groups, II}
\author{Gerhard Hiss}
\author{Rafa{\l} Lutowski}

\address{Gerhard Hiss:
Lehrstuhl f{\"u}r Algebra und Zahlentheorie,
RWTH Aachen University,
52056 Aachen, Germany}
\email{gerhard.hiss@math.rwth-aachen.de}

\address{Rafa{\l} Lutowski:
Institute of Mathematics, Physics and Informatics,
University of Gda\'nsk,
ul. Wita Stwosza 57,
80-308 Gda\'nsk, Poland}
\email{rafal.lutowski@ug.edu.pl}

\subjclass[2000]{Primary: 20C15, 20C33; Secondary: 55M20, 20F34}

\keywords{Flat manifolds, Reidemeister numbers, irreducible 
representations of odd degrees of finite groups, finite simple groups,
automorphism groups}

\begin{abstract}
We prove a conjecture of Dekimpe, De Rock and Penninckx concerning the 
existence of eigenvalues one in certain elements of finite groups acting 
irreducibly on a real vector space of odd dimension. This yields a sufficient 
condition for a closed flat manifold to be an $R_{\infty}$-manifold.
\end{abstract}

\date{\today}

\maketitle

\section{Introduction}

The purpose of this article is to complete the proof of 
\cite[Theorem~$1.1.5$]{HL1}, thereby confirming a conjecture of Dekimpe, De Rock 
and Penninckx \cite[Conjecture~$4.7$]{DDRP}. More precisely, we establish the 
following result.

\bigskip
\noindent
\textbf{Theorem.}
\emph{
Let~$G$ be a finite group of Lie type of even characteristic. Then~$G$ is not a
minimal counterexample to {\rm \cite[Theorem~$1.1.5$]{HL1}}.}

\bigskip
The proof of this theorem, which follows from Propositions~\ref{E1LieEvenEasy},
\ref{PO8Plus}, \ref{PSU3qEven}, \ref{PSUSmallDegrees}, and \ref{E6SmallDegrees} 
below, is by far the most difficult part of our work. Let~$G$ be a finite simple 
group of Lie type of even characteristic. One reason for the complexity of our 
task is the fact that almost every irreducible character of~$G$ has odd degree. 
For example, if $G = E_6(q)$ with~$q$ even, then~$G$ has $q^6 + q^5 + x$ with 
$x < q^5$ irreducible characters, of which $q^{6} + 8q^2$ have odd degree. Of 
course, not all of these are real, but the task is to identify the real 
characters among these. It is here, where we have to use the full power of 
Deligne-Lusztig theory, in particular Lusztig's generalized Jordan decomposition 
of characters. The book~\cite{GeMa} by Geck and Malle is of enormous help in 
this respect. A rough sketch of the proof is given in \cite{HLIschia}.
 
The general strategy is the same as in the first part of our work. The large 
degree method (see \cite[Subsection~$3.1$]{HL1}) rules out most of the instances
to be considered. For the characters of smaller degrees we apply the restriction 
method (see \cite[Subsection~$4.3$]{HL1}). This general approach fails for 
numerous particular cases, be it for characters of very small degrees or for
very small values of~$q$ or the Lie rank of~$G$. These extra cases require 
lengthy and tedious calculations. The Tables~\cite{LL,LL2} by Frank L{\"u}beck 
and extensive computations with GAP~\cite{GAP04} and 
Chevie~\cite{chevie,Michel} provide substantial contributions to our proofs.

Let us now briefly comment on the content of the individual sections. 
Section~$2$ collects a few preliminaries of general nature. In Section~$3$ we 
introduce the groups under examination, and prove our result for those with a 
trivial Schur multiplier and no graph automorphism of order~$3$. We are then 
left with~$G$ equal to one of $\SL_d( q )$ with 
$\gcd( d, q - 1 ) > 1$, $\SU_d( q )$ with $\gcd( d, q + 1 ) > 1$, $E_6(q)$ with 
$3 \mid q - 1$, ${^2\!E}_6(q)$ with $3 \mid q + 1$, or $\P\Omega^+_8(q)$. In
Section~$4$ we provide the relevant theoretical background for these groups, 
including the specific $\sigma$-setups, duality, automorphisms and semisimple
characters. In Section~$5$ we establish bounds on the orders of centralizers of 
particular automorphisms and elements in order to apply the large degree
method. The results are sufficient to rule out $\P\Omega^+_8(q)$ as a minimal
counterexample. Section~$6$ is devoted to the special linear and unitary groups.
After lengthy preparations, and a special treatment for the groups of 
degree~$3$, we can finally show that these groups do not yield minimal 
counterexamples. The analogous result is then proved in Section~$7$ for the
simple groups of Lie type~$E_6$, twisted and untwisted.

In this second part of our series of two papers we will use the notation and
preliminary results of the first part. Recall in particular
\cite[Notation~$4.1.1$]{HL1}: The symbol~$G$ denotes a non-abelian finite
simple group, which here is a group of Lie type of even characteristic. 
Next,~$V$ denotes a non-trivial irreducible $\mathbb{R}G$-module of odd 
dimension, and~$\rho$ the representation of~$G$ afforded by~$V$. Moreover, 
$n \in \GL(V)$ is an element of finite order normalizing~$\rho(G)$. 
Finally,~$\nu$ denotes the automorphism of~$G$ induced by~$n$.

\section{General preliminaries}

We begin with estimates for bounding character degrees and centralizer
orders in linear and unitary groups.
\begin{lem}
\label{OrderEstimates}
Let $a \in \mathbb{R}$ and $m \in \mathbb{Z}$ with $a, m \geq 2$.
Then the following inequalities hold.

{\rm (a)} We have $$a^{m(m+1)/2}(1 - a^{-1} - a^{-2}) \leq 
\prod_{i=1}^m (a^i - 1) \leq a^{m(m+1)/2}.$$

{\rm (b)} We have $$a^{m(m+1)/2}(1 - a^{-1} - a^{-2}) \leq 
\prod_{i=1}^m (a^i - (-1)^i) \leq a^{m(m + 1)/2}(1 - a^{-1} - a^{-2})^{-1}.$$
\end{lem}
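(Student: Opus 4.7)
The plan is to treat the two parts in parallel: in each case I would factor out the geometric prefactor $a^{m(m+1)/2}$, isolate the leading one or two terms of the normalized product, and bound the tail by the elementary inequality $\prod_i(1-x_i) \geq 1 - \sum_i x_i$, valid for $x_i \in [0,1]$.

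For part (a), the upper bound will follow at once from $a^i - 1 \leq a^i$. For the lower bound I would write
\[
\prod_{i=1}^m(a^i-1) = a^{m(m+1)/2}\prod_{i=1}^m(1-a^{-i}),
\]
and since each factor $(1-a^{-i})$ lies in $(0,1]$ the partial products decrease in $m$, so it suffices to prove $\prod_{i=1}^\infty(1-a^{-i}) \geq 1 - a^{-1} - a^{-2}$. I would then split off the first two factors as $(1-a^{-1})(1-a^{-2}) = 1 - a^{-1} - a^{-2} + a^{-3}$ and bound the tail by $\prod_{i\geq 3}(1-a^{-i}) \geq 1 - \sum_{i\geq 3}a^{-i} = 1 - 1/(a^2(a-1))$. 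Multiplying and simplifying, the required inequality reduces to $a \geq 1$.

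For part (b), I would normalize similarly and observe that the odd-indexed factors $(1+a^{-i})$ lie in $[1,\infty)$ while the even-indexed factors $(1-a^{-i})$ lie in $(0,1]$. For the lower bound, discarding the odd factors and applying the same elementary inequality to the even ones yields $\prod \geq 1 - 1/(a^2-1)$, and I would verify that this is $\geq 1 - a^{-1} - a^{-2}$ via the trivial inequality $a^3 \geq a+1$. For the upper bound I would dually discard the even factors, apply $1+x \leq (1-x)^{-1}$, and then bound the resulting product of $(1-a^{-i})^{-1}$ for odd $i$ geometrically:
\[
\prod_{i=1}^m\bigl(1+(-1)^{i+1}a^{-i}\bigr) \leq \Big(\prod_{i\,\text{odd}}(1-a^{-i})\Big)^{-1} \leq \Big(1-\frac{a}{a^2-1}\Big)^{-1} = \frac{a^2-1}{a^2-a-1},
\]
which is at most $(1-a^{-1}-a^{-2})^{-1} = a^2/(a^2-a-1)$ since $a^2 - 1 \leq a^2$ and $a^2 - a - 1 > 0$ for $a \geq 2$.

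The main obstacle will be calibrating the tail estimate in part (a): the naive bound $\prod_{i=1}^m(1-a^{-i}) \geq 1 - \sum_{i=1}^m a^{-i}$ applied to all terms is just barely too weak, and one must peel off the first two factors before bounding the rest so that the spurious term $a^{-3}$ absorbs the loss from the geometric tail estimate. The analogous estimates in part (b) are less delicate because half of the factors already lie above $1$ and may simply be dropped.
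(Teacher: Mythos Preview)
Your proposal is correct. It differs from the paper's argument in two respects worth noting. For part~(a), the paper simply cites \cite[Proof of Lemma~4.1]{LMT} for the bound $\prod_{i\geq 1}(1-a^{-i}) \geq 1 - a^{-1} - a^{-2}$, whereas you supply a self-contained proof by peeling off two factors and bounding the tail via $\prod(1-x_i) \geq 1 - \sum x_i$; this makes your write-up independent of the external reference. For part~(b), the paper takes a different tack: for the lower bound it observes directly that $\prod_{i=1}^m(a^i - (-1)^i) \geq \prod_{i=1}^m(a^i - 1)$ and reduces to~(a), which is slicker than your argument of dropping the odd factors; for the upper bound it rewrites $\prod_{i=1}^m(a^i - (-1)^i)$ as a quotient with numerator $\leq \prod_{i=1}^m a^{2i}$ and denominator $\prod_{i=1}^m(a^i-1)$, then applies~(a) to the denominator. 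Your route via $1+x \leq (1-x)^{-1}$ and a geometric tail bound on the odd-indexed terms is arguably more transparent and avoids the algebraic identity, at the cost of a small extra verification that $(a^2-1)/(a^2-a-1) \leq a^2/(a^2-a-1)$. Both approaches yield the stated bounds with no loss.
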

\begin{proof}
(a) The second estimate is trivial. According to
\cite[Proof of Lemma~$4.1$]{LMT}, we
have
$$
\prod_{i=1}^{\infty}(1 - a^{-i}) \geq 1 - a^{-1} - a^{-2}.
$$
Thus $\prod_{i=1}^{m}(1 - a^{-i}) \geq 1 - a^{-1} - a^{-2}$. Multiplying this
inequality with $a^{m(m+1)/2}$, we obtain
$\prod_{i=1}^m (a^i - 1) \geq a^{m(m+1)/2}(1 - a^{-1} - a^{-2})$.

(b) To prove the first estimate, observe that $\prod_{i=1}^m (a^i - (-1)^i)
\geq \prod_{i=1}^m (a^i - 1)$,
and use the first estimate of~(a). To prove the second estimate, observe that
\begin{eqnarray*}
\prod_{i=1}^m (a^i - (-1)^i) & = &
\frac{\prod_{i=1}^{\lceil m/2 \rceil} (a^{2i - 1} - 1)(a^{2i - 1} + 1)
\prod_{i=1}^{\lfloor m/2 \rfloor}(a^{2i} - 1)^2}
{\prod_{i=1}^{m}( a^i - 1)} \\
& \leq & \frac{\prod_{i = 1}^m a^{2i}}{\prod_{i=1}^{m}( a^i - 1)},
\end{eqnarray*}
and use the first estimate of~(a).
\end{proof}

We will also need the following lemma on conjugation in algebraic groups.

\addtocounter{subsection}{1}
\begin{lem}
\label{Conjugation0}
Let~$H$ be an algebraic group (which could be finite), and let $M \leq H$ be a
closed subgroup. Let $t, t' \in Z( M )$ with $C_{H}( t ) = M$. Suppose that~$t$
and~$t'$ are conjugate in~$H$.

Then~$t$ and~$t'$ are conjugate in~$N_{H}( M )$. If~$N_{H}( M )$ is a product
$N_{H}( M ) = M S$ for some $S \leq N_{H}( M )$, then~$t$ and~$t'$ are conjugate
in~$S$. Moreover, if $M \cap S = \{ 1 \}$, then~$S$ acts regularly on the set of
elements of~$Z( M )$ that are $H$-conjugate to~$t$.
\end{lem}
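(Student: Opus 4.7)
The plan is to derive all three assertions from one key observation: the conjugating element must lie in $N_H(M)$. I would fix $h \in H$ with $t' = h t h^{-1}$ throughout.

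For the first assertion, I would note that $C_H(t') = h C_H(t) h^{-1} = h M h^{-1}$, while $t' \in Z(M)$ yields $M \leq C_H(t') = h M h^{-1}$. The task is to upgrade this inclusion to equality $M = h M h^{-1}$, i.e.\ $h \in N_H(M)$. If $H$ is finite, this is immediate from $|M| = |h M h^{-1}|$. In the algebraic group case, conjugation by $h$ is an algebraic automorphism of $H$ sending $M$ isomorphically to $h M h^{-1}$, so the two closed subgroups have equal dimension and their component groups have equal (finite) order. The inclusion $M \leq h M h^{-1}$ combined with equal dimension forces $M^{\circ} = (h M h^{-1})^{\circ}$, and equality of component group orders then promotes this to $M = h M h^{-1}$. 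This is the most delicate step of the argument, but it is routine.

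For the second assertion, assume $N_H(M) = M S$ and write $h = m s$ with $m \in M$, $s \in S$. Since $s \in N_H(M)$, we have $s t s^{-1} \in s Z(M) s^{-1} = Z(s M s^{-1}) = Z(M)$, so this element is fixed by conjugation by any element of $M$, in particular by $m$. Therefore $t' = m (s t s^{-1}) m^{-1} = s t s^{-1}$, giving a conjugator in $S$.

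For the third assertion, assume further that $M \cap S = \{1\}$. Transitivity of the $S$-action on the set of $Z(M)$-elements $H$-conjugate to $t$ is exactly the content of the second assertion (applied to each such $t'$, using that the conjugating element lies in $N_H(M)$ by the first assertion). For freeness, observe that any $t'$ in this set has $C_H(t') = h M h^{-1} = M$, because $h \in N_H(M)$. Hence the stabilizer in $S$ of $t'$ is $C_H(t') \cap S = M \cap S = \{1\}$. Combining transitivity with trivial stabilizers gives regularity.

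The main obstacle is the algebraic-group portion of the first step; once $h \in N_H(M)$ is secured, steps two and three are purely formal consequences of the decomposition $N_H(M) = M S$ and of orbit-stabilizer.
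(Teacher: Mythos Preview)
Your proposal is correct and follows essentially the same approach as the paper's proof: establish $h \in N_H(M)$ via the inclusion $M \leq {^h\!M}$ and equality of dimension, then use $t \in Z(M)$ to pass to a conjugator in~$S$, and finally invoke triviality of the $S$-stabilizer. You supply more detail than the paper does---in particular, you explicitly handle the component groups in the first step, whereas the paper simply asserts that equal dimension gives $M = {^h\!M}$ (which is justified here since the two groups are conjugate, hence have the same number of components).
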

\begin{proof}
Let $h \in H$ with ${^ht} = t'$. Then
$$M \leq C_{H}( t' ) = {^h\!C}_{H}( t ) = {^h\!M}.$$
This implies that $M = {^h\!M}$, as~$M$  and~${^h\!M}$ are closed subgroups
of~$H$ of the same dimension. Hence $h \in N_{H}( M )$. The second statement is
clear, since $t \in Z( M )$. If $M \cap S = \{ 1 \}$, the centralizer of~$t$
in~$S$ is trivial. This implies the final statement.
\end{proof}

Let us introduce one further piece of notation. Let~$H$ be a group,
$\beta \in \Aut(H)$ and~$l$ a positive integer. We then write
$$N_{\beta,l}( h ) := \prod_{i=0}^{l-1} \beta^i(h)$$
for $h \in H$. With this notation, we have 
\begin{equation}
\label{PowerFormula}
(\ad_h \circ \beta)^l = \ad_{N_{\beta,l}(h)} \circ \beta^l
\end{equation}
for every $\beta \in \Aut(H)$, every $h \in H$ and every positive integer~$l$.

\section{The elementary cases}
\label{EvenCharacterteristic}

In this section we introduce the groups to be investigated and prove our result
for a large subclass of them by an elementary argument.

\subsection{The groups}
If $G = \P\Omega_{2d + 1}( 2 )$ for $d \geq 3$ or if $G = {^2\!F}_4(2)'$, 
then~$G$ has the $E1$-property by \cite[Lemma~$4.4.1$]{HL1}. If $G = F_4(2)$, 
then~$G$ has the $E1$-property by \cite[Lemma~$4.4.2$]{HL1}. In order to prove 
the $E1$-property for simple groups of Lie type of even characteristic, we may 
therefore assume that~$G$ is one of the groups specified in part~(a) of the 
following hypothesis. Part~(b) lists a subset of these groups which require a
deeper analysis.

\addtocounter{num}{1}
\begin{hypo}
\label{GroupsOfEvenCharacteristic}
{\rm
(a) Let~$G$ be one of the groups in Lines~$1$--$3$, $5$--$14$ or~$16$ of 
\cite[Table~$1$]{HL1}, where~$q$ is even. If~$G$ is as in Line~$3$ or~$8$ of 
this table, assume that $q > 2$. For the convenience of the reader, we reproduce 
the restricted list in Table~\ref{TableOfGroupsEven}.

\setlength{\extrarowheight}{0.5ex}
\begin{table}
\caption{\label{TableOfGroupsEven} The relevant simple groups of Lie type of 
even characteristic}
$$
\begin{array}{rrrc}\hline\hline
\text{\rm Row} & \multicolumn{1}{c}{\text{\rm Names}} & \text{\rm Rank} & 
\text{\rm Conditions} \\ \hline\hline
1 & A_{d-1}(q), \PSL_{d}(q) & d \geq 2 & q \text{\rm\ even}, (d,q) \neq (2,2), (2,4), (3,2) \\ 
2 & {^2\!A}_{d-1}(q), \PSU_{d}(q) & d \geq 3 & q \text{\rm\ even}, (d,q) \neq (3,2), (4,2) \\ 
3 & B_d(q), \P\Omega_{2d+1}(q) & d \geq 2 & q > 2 \text{\rm\ even\ } \\ 
5 & D_d(q), \P\Omega^+_{2d}(q) & d \geq 4 & q \text{\rm\ even\ } \\ 
6 & {^2\!D}_d(q), \P\Omega^-_{2d}(q) & d \geq 4 & q \text{\rm\ even\ } \\ 
7 & G_2(q) & & q > 2 \text{\rm\ even\ } \\
8 & F_4(q) & & q > 2 \text{\rm\ even\ } \\
9 & E_6(q) & & q \text{\rm\ even\ }\\
10 & E_7(q) & & q \text{\rm\ even\ }\\
11 & E_8(q) & & q \text{\rm\ even\ }\\
12 & {^3\!D}_4(q) & & q \text{\rm\ even\ }\\
13 & {^2\!E}_6(q) & & q \text{\rm\ even\ }\\
14 & {^2\!B}_2(q) & & q = 2^{2m+1} > 2 \\
16 & {^2\!F}_4(q) & & q = 2^{2m+1} > 2 \\ \hline
\end{array}
$$
\end{table}

(b) Let~$G$ be in one of the following subsets of the groups of 
Table~\ref{TableOfGroupsEven}.
\begin{itemize}
\item[(i)] $G = \PSL_d(q)$ with $d \geq 3$ and $\gcd( d, q - 1 ) > 1$, 
\item[(ii)] $G = \PSU_d( q )$ with $d \geq 3$ and $\gcd( d, q + 1 ) > 1$, 
\item[(iii)] $G = E_6(q)$ and $3 \mid q - 1$, 
\item[(iv)] $G = {^2\!E}_6(q)$ and $3 \mid q + 1$, 
\item[(v)] $G = \P\Omega^+_8(q)$.
\end{itemize}
}\hfill{$\Box$}
\end{hypo}

\addtocounter{subsection}{1}
\subsection{Restricting to a maximal unipotent subgroup}
\label{UsingGelfandGraevCharacters} 
For~$G$ as in Hypothesis~\ref{GroupsOfEvenCharacteristic}, let $B = UT$ denote 
the standard Borel subgroup of~$G$, with standard unipotent subgroup~$U$ and 
standard torus~$T$; see \cite[Subsection~$5.3$]{HL1}. Here we will restrict the 
character of~$V$ to~$U$ to deal with many of the cases listed in
Hypothesis~\ref{GroupsOfEvenCharacteristic}. In the following proposition we 
use \cite[Notation~$4.1.1$]{HL1}; see the last paragraph of the introduction.

\addtocounter{num}{1}
\begin{prp}
\label{EvenCharacteristicTheorem}
Let~$G$ be as in {\rm Hypothesis~\ref{GroupsOfEvenCharacteristic}(a)}. Assume 
that~$U$ and~$T$ are stable under~$\nu$. Let~$\chi$ denote the character of~$V$.
Then $(G,V,n)$ has the $E1$-property, if any one of the two conditions below is 
satisfied.

{\rm (a)} There is $T$-orbit~$\mathcal{O}$ of linear characters of~$U$ fixed
by~$\nu$ such that $\langle \Res_U^G( \chi ), \lambda \rangle$ is odd for every 
$\lambda \in \mathcal{O}$.

{\rm (b)} Every $T$-orbit of linear characters of~$U$ is fixed by~$\nu^2$. 
\end{prp}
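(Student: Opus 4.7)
The plan is to apply the restriction method of \cite[Subsection~$4.3$]{HL1} with $U$ as the restriction subgroup, exploiting two peculiarities of even characteristic. First, $|T|$ is odd, so every $T$-orbit on $\Irr(U)$ has odd cardinality. Second, the abelianization $U/[U,U]$ has exponent~$2$ (being generated by the images of the simple root subgroups, each an $\mathbb{F}_q$-vector space in characteristic~$2$), so every linear character $\lambda$ of $U$ takes values in $\{\pm 1\}$; in particular such $\lambda$ is real-valued, and the $\lambda$-isotypic component $V_\lambda$ of $\Res_U^G V$ descends to a genuine real subspace of $V$.

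For part~(a), let $m := \langle \Res_U^G \chi, \lambda \rangle$ for any $\lambda \in \mathcal{O}$; this common value is odd by hypothesis. I form the real subspace
$$W := \bigoplus_{\lambda \in \mathcal{O}} V_\lambda \subseteq V.$$
Then $\dim_\R W = |\mathcal{O}| \cdot m$ is odd, and $W$ is preserved by $\rho(U)$ trivially, by $\rho(T)$ because $T$ permutes $\mathcal{O}$, and by $n$ because $\nu$ stabilises $\mathcal{O}$. Now $n|_W$ is a finite-order real linear map: its non-real eigenvalues pair up in complex conjugate pairs of product~$1$, while its real eigenvalues are $\pm 1$, so $\det(n|_W) \in \{\pm 1\}$. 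A short parity count in odd dimension shows that $\det(n|_W) = +1$ forces the $(+1)$-eigenspace of $n|_W$ to have odd dimension, and hence to be non-zero, which yields the $E1$-property. It therefore suffices to find $g \in G$ with $\det(\rho(g) n|_W) = +1$. Since $\rho(u)|_{V_\lambda} = \lambda(u)\,\id$ for $u \in U$ and $m$ is odd, one computes $\det(\rho(u)|_W) = \prod_{\lambda \in \mathcal{O}} \lambda(u)$, so a suitable $u \in U$ supplies the needed sign whenever the product character $\prod_{\lambda \in \mathcal{O}} \lambda$ of $U$ is non-trivial.

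For part~(b), I begin with the observation that since $\dim_\R V$ is odd while $U$ is a $2$-group (so $\mu(1)$ is even for every non-linear $\mu \in \Irr(U)$), the identity $\dim V = \sum_\mu m_\mu \mu(1)$ forces at least one linear character $\lambda_0$ of $U$ to appear with odd multiplicity $m$ in $\Res_U^G \chi$. Let $\mathcal{O}_0$ be its $T$-orbit. By~(b), $\nu^2 \mathcal{O}_0 = \mathcal{O}_0$; if $\nu \mathcal{O}_0 = \mathcal{O}_0$ I am already in the setting of~(a) and conclude directly. Otherwise $\mathcal{O}_1 := \nu \mathcal{O}_0 \neq \mathcal{O}_0$, and the real subspaces $W_i := \bigoplus_{\lambda \in \mathcal{O}_i} V_\lambda$ have the same odd dimension while $n$ interchanges them. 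Writing $n|_{W_0 \oplus W_1} = \bigl( \begin{smallmatrix} 0 & A \\ B & 0 \end{smallmatrix} \bigr)$ in block form, $n^2|_{W_0} = AB$ is a finite-order real map on an odd-dimensional real space with determinant in $\{\pm 1\}$; when this determinant is $+1$, $AB$ admits a $(+1)$-eigenvector $w_0$, and then $(w_0, B w_0)$ is a $(+1)$-eigenvector of $n$ on $W_0 \oplus W_1$, giving the $E1$-property. The sign adjustment when $\det(AB) = -1$ proceeds by multiplying $n$ by a suitable $\rho(u)$ with $u \in U$, exactly as in part~(a).

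The principal obstacle throughout is the determinant adjustment: whenever the relevant product character $\prod \lambda$ over the chosen orbit(s) is trivial---in particular when the orbit reduces to the trivial character of $U$---no element of $\rho(U)$ can flip the sign, and one must appeal to the torus, to central elements of $G$, or to further structural features of $G$ to ensure that the determinant equals~$+1$. Handling these degenerate configurations in a uniform way, without losing the compatibility with the hypotheses on $U$, $T$ and $\nu$, is where the principal technical effort will be concentrated.
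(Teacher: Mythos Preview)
Your determinant-adjustment step is a genuine gap, and it is precisely the point where the paper's argument diverges from yours and avoids the difficulty entirely.

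For part~(a), instead of working with the whole orbit sum $W = \bigoplus_{\lambda \in \mathcal{O}} V_\lambda$, the paper fixes a \emph{single} $\lambda \in \mathcal{O}$ and works with the isotypic component $V_1 := V_\lambda$. Since~$\nu$ stabilises the $T$-orbit~$\mathcal{O}$ and~$T$ acts transitively on it, there is $t \in T$ with ${}^\nu\lambda = {}^{t^{-1}}\lambda$, so $\rho(t)n$ stabilises~$V_1$. After replacing~$n$ by $\rho(t)n$, one is reduced to showing that $(U, V_1, n)$ has the $E1$-property. But~$U$ is a $2$-group, so it has the $E1$-property in the abstract sense by \cite[Corollary~$3.2.4$]{HL1}, and then \cite[Lemmas~$3.1.2$ and~$3.1.3$]{HL1} finish the argument. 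No determinant computation is needed at all: the eigenvalue-one fact is packaged inside the cited $E1$-property of $2$-groups.

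The case where $\mathcal{O} = \{1_U\}$ is handled separately: the odd-dimensional $U$-fixed space is a real $T$-module with $|T|$ odd, so it contains a trivial $B$-submodule, forcing~$\chi$ into the principal series and hence unipotent; but by \cite[Theorem~$6.8$]{MalleHeight0} the only odd-degree unipotent character is the trivial one, contradicting the hypothesis that~$V$ is non-trivial. Your sketch flags this case as problematic but offers no mechanism to dispose of it.

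For part~(b), your swap argument on $W_0 \oplus W_1$ is unnecessary. The paper simply observes that since $\chi$ is $\nu$-invariant, each pair $\{\mathcal{O}, {}^\nu\mathcal{O}\}$ with ${}^\nu\mathcal{O} \neq \mathcal{O}$ contributes an even amount to $\chi(1)$; as $\chi(1)$ is odd, some $\nu$-fixed $T$-orbit has $|\mathcal{O}|\cdot m$ odd, and one is back in~(a). Your approach to~(b) via the block antidiagonal form of $n|_{W_0\oplus W_1}$ inherits the same unresolved determinant issue, whereas the counting reduction eliminates the case altogether.

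In short: the missing idea is to pass to a single isotypic component via the $T$-action and then invoke the $E1$-property of $2$-groups, rather than attempting a direct eigenvalue argument on the orbit sum. Your product-character sign trick cannot be completed in general (the product $\prod_{\lambda \in \mathcal{O}} \lambda$ may well be trivial even for non-trivial orbits, and elements of~$T$ have odd order so contribute determinant~$+1$), so the obstacle you identify is real and is bypassed, not overcome, in the paper.
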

\begin{proof}
(a) Let $\lambda \in \mathcal{O}$. If~$\lambda$ is the trivial character, 
$\langle \Res_U^G( \chi ), \lambda \rangle$ equals the degree of 
${^*\!R}_T^G( \chi )$, which then must be odd. Let~$V_1 \leq V$ denote the 
subspace of $U$-fixed points of~$V$. This is an $\mathbb{R}T$-module with 
character ${^*\!R}_T^G( \chi )$. Thus there is an irreducible 
$\mathbb{R}T$-submodule~$S$ of $V_1$ of odd dimension. As~$|T|$ is odd,~$S$ is 
the trivial $\mathbb{R}T$-module. It follows that $\Res_B^G( V )$ contains a 
trivial constituent. Hence~$\chi$ is a principal series character, and in 
particular unipotent. The only unipotent character of~$G$ of odd degree is the 
trivial character; see \cite[Theorem~$6.8$]{MalleHeight0}, taking into account 
our restrictions on~$G$. This is excluded by hypothesis. Thus~$\lambda$ is not 
the trivial character. 

Since~$U$ is generated by involutions,~$U/U'$ is an elementary abelian 
$2$-group, so that~$\lambda$ is realizable over~$\mathbb{R}$. Let~$V_1$ denote 
the $\mathbb{R}U$-submodule of $\Res^G_U( V )$ with character 
$\langle \Res_U^G( \chi ), \lambda \rangle \lambda$, i.e.\ $V_1$ is the 
$\lambda$-homogeneous component of $\Res^G_U( V )$. As~$\nu$ normalizes~$U$ and 
thus also~$U'$, the submodule~$nV_1$ of~$\Res^G_U(V)$ is the 
${^\nu\!\lambda}$-homogeneous component of $\Res^G_U(V)$. Since ${^\nu\!\lambda}
\in \mathcal{O}$, there is $t \in T$ such that $\rho(t)nV_1 = V_1$. 
Replacing~$n$ by $\rho(t)n$, we may assume that~$n$ stabilizes~$V_1$. As~$U$ is 
a $2$-group, it has the $E1$-property by \cite[Corollary~$3.2.4$]{HL1}.
In 
particular, $(U,V_1)$ has the $E_1$-property by \cite[Lemma~$3.1.2$]{HL1}.
In 
turn~$(G,V,n)$ has the $E1$-property by \cite[Lemma~$3.1.3$]{HL1}.

(b) Let~$\mathcal{O}$ be a $T$-orbit of linear characters of~$U$ and let
$\lambda \in \mathcal{O}$. Clearly, $\langle \Res_U^G( \chi ), \lambda \rangle = 
\langle \Res_U^G( \chi ), \lambda' \rangle$ for every other
$\lambda' \in \mathcal{O}$, so $\mathcal{O}$ contributes
$|\mathcal{O}|\langle \Res_U^G( \chi ), \lambda \rangle$ to the degree of~$\chi$.
If~${^{\nu}\!\mathcal{O}} \neq \mathcal{O}$, we obtain an even contribution to
the degree of~$\chi$, as $\langle \Res_U^G( \chi ), {^{\nu}\!\lambda} \rangle = 
\langle \Res_U^G( {^{\nu}\!\chi} ), {^{\nu}\!\lambda} \rangle = 
\langle {^{\nu}\Res}_U^G( \chi ), {^{\nu}\!\lambda} \rangle =
\langle \Res_U^G( \chi ), \lambda \rangle$ and ${^{\nu^2}\!\mathcal{O}} = 
\mathcal{O}$. Thus there is a $\nu$-invariant orbit~$\mathcal{O}$ with
$|\mathcal{O}|\langle \Res_U^G( \chi ), \lambda \rangle$ odd for all
$\lambda \in \mathcal{O}$. Hence the hypothesis of~(a) is satisfied.
\end{proof}

We can now prove the $E1$-property for a large subclass of the simple groups of
Lie type of even characteristic. Our argument involves the notion of regular 
characters of~$U$, for which we follow~\cite[14.B]{CeBo2}. Notice that regular 
characters are called non-degenerate in \cite[p.~$253$]{C2}. Recall the 
$\sigma$-setup for~$G$ introduced in \cite[Subsection~$5.2$]{HL1}. Recall in 
particular that $\overline{G}$ is a simple adjoint algebraic group so that 
$Z( \overline{G} )$ is trivial. Under the hypothesis of the following 
proposition, we have $G = \overline{G}^{\sigma}$. We will also use the notation 
introduced in \cite[Subsection~$5.3$]{HL1} related to the $BN$-pair of~$G$.

\begin{prp}
\label{E1LieEvenEasy}
Let~$G$ be one of the groups of
{\rm Hypothesis~\ref{GroupsOfEvenCharacteristic}(a)} not listed
in {\rm Hypothesis~\ref{GroupsOfEvenCharacteristic}(b)(i)--(iv)}. If 
$G = \P\Omega_8^+(q)$, assume that~$\nu^2$ is a product of an inner 
automorphism of~$G$ with a field automorphism. Then~$(G,V,n)$ has the 
$E1$-property.
\end{prp}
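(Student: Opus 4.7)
My plan is to apply Proposition~\ref{EvenCharacteristicTheorem}(b), so that it suffices to verify that every $T$-orbit of linear characters of~$U$ is fixed by~$\nu^2$, after a preliminary reduction making~$\nu$ normalize~$(U,T)$. The reduction is straightforward: all Borel pairs of $G = \overline{G}^\sigma$ are $G$-conjugate, so there exists $g \in G$ such that $\ad_g \circ \nu$ normalizes $(U,T)$; replacing~$n$ by~$\rho(g)^{-1}n$ accomplishes this and only reparametrizes the coset $\rho(G)n$, preserving the $E1$-property of~$(G,V,n)$.

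Next I would identify the $T$-orbits of linear characters of~$U$. Since~$\overline{G}$ is adjoint, the simple roots form a $\Z$-basis of the character lattice of~$T$, so~$T$ acts transitively on the non-trivial characters of each $\sigma$-stable direct summand of $U/U' \cong \bigoplus_{\alpha \in \Delta} U_\alpha$. Hence the $T$-orbits on linear characters of~$U$ are parametrized by subsets of~$\Delta/\sigma$, the set of $\sigma$-orbits on the simple roots. Because $\Inndiag(G) = G$ for adjoint~$G$, modulo inner automorphisms (which in $N_G(U,T) = T$ act trivially on this orbit set)~$\nu$ is represented by a product of a field automorphism and a graph automorphism; field automorphisms do not permute simple roots, so~$\nu^2$ fixes every $T$-orbit if and only if the graph component of~$\nu^2$ acts trivially on $\Delta/\sigma$.

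The verification is then a case analysis over the types in Hypothesis~\ref{GroupsOfEvenCharacteristic}(a) but not in (b)(i)--(iv). For types $A_{d-1}$, $D_d$ with $d \geq 5$, and $E_6$, the graph symmetry of $\Delta/\sigma$ is an involution and so squares to the identity; for $^2\!A_{d-1}$, $^2\!D_d$, $^3\!D_4$, $^2\!B_2$, $^2\!F_4$, $B_d$, $E_7$, $E_8$, and $G_2$, the graph symmetry of $\Delta/\sigma$ is already trivial. For $B_2 = C_2$ and $F_4$ in characteristic~$2$, the exceptional graph-field automorphism squares to a field automorphism, so~$\nu^2$ again acts trivially on $\Delta/\sigma$. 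Finally, for $G = \P\Omega_8^+(q)$, where the graph symmetry is~$S_3$ and could a priori give a $3$-cycle on squaring, the explicit extra hypothesis that~$\nu^2$ is a product of an inner automorphism with a field automorphism directly yields the triviality. In every case Proposition~\ref{EvenCharacteristicTheorem}(b) applies and delivers the $E1$-property.

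The main obstacle is the book-keeping in this case analysis, in particular the characteristic-$2$ exceptional isogenies for $B_2 = C_2$ and $F_4$ where Frobenius and the graph symmetry intertwine, and verifying that the identification of $T$-orbits with subsets of $\Delta/\sigma$ carries over cleanly to the twisted types where~$\sigma$ permutes simple roots non-trivially; both points rely essentially on the adjointness of $\overline{G}$.
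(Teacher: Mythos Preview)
Your proposal is correct and follows the same route as the paper: after reducing to $\nu \in \Gamma_G\Phi_G$ (using $G = \overline{G}^\sigma$), one checks that $\nu^2 \in \Phi_G$ (by the order~$\leq 2$ property of $\Gamma_G$ away from $D_4$, and by the extra hypothesis in the $D_4$ case) and then applies Proposition~\ref{EvenCharacteristicTheorem}(b). The paper streamlines your identification of the $T$-orbits by invoking the connected-center property of Levi subgroups of an adjoint group---each linear character of~$U$ is regular for a unique standard Levi~$L_I$, and~$L_I$ has a single $T$-orbit of regular characters by \cite[Theorem~$8.1.2$(ii)]{C2}---which handles all types uniformly and sidesteps the twisted- and exceptional-isogeny bookkeeping you flag as the main obstacle.
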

\begin{proof}
We have $\nu = \ad_g \circ \mu$ for some $g \in \overline{G}^{\sigma}$ and some 
$\mu \in \Gamma_G \times \Phi_G$; see \cite[Subsection~$5.5$]{HL1}. Since 
$G = \overline{G}^{\sigma}$, we may assume that 
$\nu \in \Gamma_G \times \Phi_G$. In particular,~$\nu$ fixes~$U$ and~$T$. Every 
element of~$\Gamma_G$, except in the case where $G = \P\Omega_8^+(q)$, has order 
at most~$2$. Thanks to our assumption in the latter case, we have 
$\nu^2 \in \Phi_G$, and thus~$\nu^2$ stabilizes every standard Levi subgroup 
of~$G$ and the unipotent radical of every standard parabolic subgroup of~$G$.

Let~$\lambda$ denote a linear character of~$U$. By~\cite[14.B]{CeBo2}, there is 
a standard parabolic subgroup $P_I = L_IU_I$ such that~$U_I$ is in the kernel 
of~$\lambda$. Choose~$U_I$ maximal with this property. Then~$\lambda$, viewed as 
a character of~$U/U_I$, is regular. Indeed, by \cite[Theorem~$2.8.7$]{C2}, the 
unipotent radicals of parabolic subgroups of~$L_I$ are of the form $U_J/U_I$ for 
subsets $J \subseteq I$.
Now $L_I = \overline{L}_I^{\sigma}$ for the standard Levi 
subgroup~$\overline{L}_I$ of~$\overline{G}$. As~$\overline{L}_I$ has connected 
center (see \cite[Proposition~$8.1.4$]{C2}), the argument given in the proof 
of~\cite[Theorem~$8.1.2$(ii)]{C2} shows that then~$L_I$ has a unique $T$-orbit 
of regular characters.  As $\nu^2$ stabilizes~$U$ and~$U_I$ and sends~$\lambda$
to a regular character of~$U/U_I$, it follows that~$\nu^2$ stabilizes the
$T$-orbit containing~$\lambda$. Our assertion follows from 
Proposition~\ref{EvenCharacteristicTheorem}.
\end{proof}

If $G = \P\Omega_8^+(q)$ and~$\nu$ does not satisfy the hypothesis of 
Proposition~\ref{E1LieEvenEasy}, then $\nu = \ad_h \circ \iota \circ \mu$ for 
some $h \in G$, some $\iota \in \Gamma_G$ of order~$3$, and some 
$\mu \in \Phi_G$. This case will be treated in Proposition~\ref{PO8Plus} below.

\section{Preliminaries for the remaining groups}

Here, we collect the properties relevant to our proofs for the remaining 
group to be considered.

\subsection{The $\sigma$-setup for the remaining groups}
\label{TheRemainingGroups}
Let~$G$ be one of the groups listed in 
Hypothesis~\ref{GroupsOfEvenCharacteristic}(b). We now specify the choice 
of~$\overline{G}$ in the $\sigma$-setup for~$G$ and recall some notation
concerning automorphisms. In the present context,~$\mathbb{F}$ is an algebraic
closure of the field with two elements. In cases (i) and (ii) for~$G$ we take 
$\overline{G} = \PGL_d( \mathbb{F} )$, in cases (iii) and (iv) we take
$\overline{G} = E_6( \mathbb{F} )_{\rm ad}$, and in case (v) we let 
$\overline{G} = \PCSO_8( \mathbb{F} )$, the simple algebraic group 
over~$\mathbb{F}$ of adjoint type~$D_4$. The standard torus~$\overline{T}$ 
contained in the standard Borel subgroup~$\overline{B}$ gives rise to the root 
system of~$\overline{G}$, which is of type~$A_{d-1}$,~$E_6$ or~$D_4$, 
respectively. We write~$\Pi$ for the set of simple roots determined 
by~$\overline{T}$ and~$\overline{B}$. The corresponding Dynkin diagrams are 
displayed in \cite[Figure~$1$]{HL1}.

Recall the notation introduced in \cite[Subsection~$5.5$]{HL1}. In particular, 
$\varphi = \varphi_2$ is the standard Frobenius morphism of~$\overline{G}$ 
defined in \cite[Theorem~$1.15.4$(a)]{GLS}, and 
$\Phi_{\overline{G}} = \langle \varphi \rangle$. If~$\iota$ denotes a symmetry 
of the Dynkin diagram of~$\overline{G}$, then~$\iota$ also denotes the 
corresponding standard graph automorphism of~$\overline{G}$. Let~$f$ be a 
positive integer and $q = 2^f$. Taking $\sigma := \varphi^f$, we obtain the 
untwisted groups $\overline{G}^{\sigma} = \PGL_d( q )$, respectively 
$E_6(q)_{\text{ad}}$, respectively $\P\Omega^+_8(q)$. The corresponding simple 
groups are $G = \PSL_d( q )$, respectively $E_6(q)$, respectively 
$\P\Omega^+_8(q)$. If $\overline{G} = \PGL_d( \mathbb{F} )$ or 
$\overline{G} = E_6( \mathbb{F} )_{\rm ad}$, the group of symmetries of the 
Dynkin diagram of~$\overline{G}$ has order~$2$, so that 
$\Gamma_{\overline{G}} = \langle \iota \rangle$ for the non-trivial such 
symmetry~$\iota$. Taking $\sigma := \iota \circ \varphi^f$, we obtain the 
twisted groups $\overline{G}^{\sigma} = \PGU_d( q )$, respectively 
${^2\!E}_6(q)_{\text{ad}}$.  The corresponding simple groups are 
$G = \PSU_d( q )$, respectively ${^2\!E}_6(q)$. Recall that 
$\overline{G}^{\sigma} = G\overline{T}^{\sigma}$; 
see \cite[Theorem~$2.2.6$(g)]{GLS}.

For the sake of a uniform treatment, we will use the common 
$\varepsilon$-convention to distinguish between the twisted and the untwisted 
groups in cases (i)--(iv). Fix $\varepsilon \in \{ -1, 1 \}$. If 
$\varepsilon = 1$, let $\sigma = \varphi^f$, and if $\varepsilon = -1$, let 
$\sigma = \iota \circ \varphi^f$, where, as above, $\iota$ is the non-trivial
symmetry of the Dynkin diagram of~$\overline{G}$. We will thus understand
$\PSL^{\varepsilon}(q)$ to be $\PSL_d(q)$ if $\varepsilon = 1$, and $\PSU_d(q)$, 
if $\varepsilon = -1$. Likewise, $E_6^{\varepsilon}(q)$ denotes the simple group 
$E_6(q)$ if $\varepsilon = 1$, and ${^2\!E}_6(q)$, if $\varepsilon = -1$. This 
$\varepsilon$-convention is also used for the groups $\overline{G}^{\sigma}$.

Clearly,~$\overline{G}^{\sigma}$ and thus also~$G$, are invariant 
under~$\varphi$ and~$\iota$ for every symmetry~$\iota$ of the Dynkin diagram
of~$\overline{G}$. The restrictions to~$\overline{G}^{\sigma}$ or~$G$ 
of~$\varphi$ and any such~$\iota$ are denoted by the same letters. In 
particular, $\Phi_G = \langle \varphi \rangle$. If $\overline{G} = 
\PGL_d( \mathbb{F} )$ or $\overline{G} = E_6( \mathbb{F} )_{\rm ad}$, 
we have $\Gamma_{\overline{G}} = \langle \iota \rangle$ for a non-trivial
symmetry~$\iota$ of the Dynkin diagram of~$\overline{G}$. If the 
corresponding~$G$ is untwisted, i.e.\ $G = \PSL_d( q )$ or $G = E_6(q)$, 
then $\Gamma_G = \langle \iota \rangle$ has order~$2$. If~$G$ is twisted, 
then $\iota = \varphi^f$, viewed as elements of~$\Aut(G)$, so that 
$\iota \in \Phi_G$. Thus according to the conventions 
in~\cite[Definitions~$2.2.4$, $2.5.10$]{GLS}, we have $\Gamma_G = \{ 1 \}$, 
although $\Gamma_{\overline{G}}$ is non-trivial. If~$G$ is as in case~(v),
$\Gamma_{\overline{G}} \cong \Gamma_G$ is isomorphic to a symmetric group on
three letters. In this case, $\Gamma_G$ has elements of order~$2$ and~$3$; the
latter are called triality automorphisms of~$G$.

\subsection{The dual groups}
\label{TheDualGroups}
At some stage we will also need to consider the groups dual to~$\overline{G}$.
Namely, to describe the irreducible characters of~$G$ it is more convenient to 
realize~$G$ as a central quotient of the group 
${\overline{G}^*}^{\sigma^*}$, where~$\overline{G}^*$ is a group dual 
to~$\overline{G}$ and~$\sigma^*$ is a Steinberg morphism of~$\overline{G}^*$ 
dual to~$\sigma$; see \cite[Definition~$1.5.17$]{GeMa}. For a reason that will
become apparent in Subsection~\ref{AutomorphismsIII} below, we will 
henceforth usually suppress the asterisk in the notation of~$\sigma^*$, and 
just write~$\sigma$ instead. Only in some proofs we stick, for reasons of 
clarity, to the more precise notation. 

We take~$\overline{G}^*$ as the simply connected version of the simple algebraic 
group of type $A_{d-1}$,~$E_6$, respectively~$D_4$. To be specific, if 
$\overline{G} = \PGL_d( \mathbb{F} )$, $E_6( \mathbb{F} )_{\rm ad}$ or 
$\PCSO_8( \mathbb{F} )$, we let $\overline{G}^* = \SL_d( \mathbb{F} )$, 
$E_6( \mathbb{F} )_{\rm sc}$ and $\Spin_8( \mathbb{F} )$, respectively. 
There is an isogeny, i.e.\ a surjective homomorphism of algebraic groups with 
finite kernel,
\begin{equation}
\label{Isogeny}
\overline{G}^* \rightarrow \overline{G}.
\end{equation}
If $\overline{G}^* = \SL_d( \mathbb{F} )$, we have 
${\overline{G}^*}^{\sigma} = \SL_d( q )$, respectively $\SU_d(q)$. If 
$\overline{G}^* = E_6( \mathbb{F} )_{\rm sc}$, the group 
${\overline{G}^*}^{\sigma}$ is the universal covering group of $G = E_6(q)$, 
respectively ${^2\!E}_6(q)$. In all cases, the center of 
${\overline{G}^*}^{\sigma}$ is cyclic and of odd order. The groups 
$\overline{G}^\sigma$ are almost simple, whereas the groups 
${\overline{G}^*}^{\sigma}$ are quasisimple. 

\subsection{Automorphisms, I}
\label{AutomorphismsIII}
Assume that~$G$ is one of the groups of 
Hypothesis~\ref{GroupsOfEvenCharacteristic}(b)(i)--(iv), so that 
$\overline{G} = \PGL_d( \mathbb{F} )$ or 
$\overline{G} = E_6( \mathbb{F} )_{\rm ad}$. Here, we shortly comment on the 
connections between the automorphism groups of~$G$, $\overline{G}^{\sigma}$, 
$\overline{G}$, and their duals. Let us recall the notation introduced in 
\cite[Definition~$1.15.1$]{GLS}. Thus $\Aut_1( \overline{G} )$ is the set of all 
automorphisms~$\alpha$ of~$\overline{G}$ (as an abstract group), such that 
$\alpha$ or $\alpha^{-1}$ is an endomorphism of~$\overline{G}$ as an algebraic 
group. Notice that $\sigma \in \Aut_1( \overline{G} )$. Also, 
$\Aut_1( \overline{G} )$ is a group; see \cite[Theorem~$1.15.7$(a)]{GLS}. 
Analogous notations will be used for~$\overline{G}^*$.

The isogeny~(\ref{Isogeny}) gives rise to a natural map
\begin{equation}
\label{Aut1Ismorphism}
\Aut_1( \overline{G}^* ) \rightarrow \Aut_1( \overline{G} ),
\end{equation}
which is in fact an isomorphism; see \cite[Theorems~$1.15.6$(c)]{GLS}. 
By \cite[Theorems~$1.15.6$(b)]{GLS}, the isomorphism~(\ref{Aut1Ismorphism}) 
preserves isogenies. 

\addtocounter{num}{3}
\begin{lem}
\label{DualIsogenies}
Let $t \in \overline{T}$ and let 
$\mu \in \Gamma_{\overline{G}} \times \Phi_{\overline{G}}$ be an isogeny.
Then $\alpha := \ad_t \circ \mu$ is an isogeny of~$\overline{G}$. Let 
$\alpha^* \in \Aut_1( \overline{G}^* )$ be the inverse image of~$\alpha$
under~{\rm (\ref{Aut1Ismorphism})}. Then~$\alpha^*$ is an isogeny 
of~$\overline{G}^*$, which is dual to~$\alpha$ in the sense of 
\cite[Proposition~$11.1.11$]{DiMi2}.
\end{lem}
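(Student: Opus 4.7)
The plan is to first confirm that $\alpha$ and $\alpha^*$ are isogenies, then to identify $\alpha^*$ explicitly in a form parallel to that of $\alpha$, and finally to read off duality at the level of character lattices.

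For the first point, $\ad_t$ is an inner automorphism of $\overline{G}$ and hence an isomorphism of algebraic groups, in particular an isogeny. Since~$\mu$ is an isogeny by hypothesis and isogenies are closed under composition, $\alpha = \ad_t \circ \mu$ is an isogeny of~$\overline{G}$. By the property cited just above that the isomorphism~(\ref{Aut1Ismorphism}) preserves isogenies, the inverse image $\alpha^*$ is then automatically an isogeny of~$\overline{G}^*$.

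To identify $\alpha^*$ explicitly, let $\pi \colon \overline{G}^* \rightarrow \overline{G}$ be the isogeny~(\ref{Isogeny}). It restricts to a surjective homomorphism of tori $\overline{T}^* \rightarrow \overline{T}$, so there is $t^* \in \overline{T}^*$ with $\pi(t^*) = t$. Since all relevant types $A_{d-1}$, $E_6$, $D_4$ are simply laced, the Dynkin diagrams of~$\overline{G}$ and~$\overline{G}^*$ coincide, so the graph component of~$\mu$ also defines a standard graph automorphism of~$\overline{G}^*$; together with the standard Frobenius of~$\overline{G}^*$, this produces an element $\mu^* \in \Gamma_{\overline{G}^*} \times \Phi_{\overline{G}^*}$. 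A direct verification yields $\pi \circ (\ad_{t^*} \circ \mu^*) = (\ad_t \circ \mu) \circ \pi$, so by uniqueness of lifts along~$\pi$ (i.e.\ bijectivity of~(\ref{Aut1Ismorphism})), we conclude $\alpha^* = \ad_{t^*} \circ \mu^*$.

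For duality, observe that $\alpha$ stabilizes~$\overline{T}$ and $\alpha^*$ stabilizes~$\overline{T}^*$, while $\ad_t$ and $\ad_{t^*}$ act trivially on the respective tori. Consequently, $\alpha$ induces the same endomorphism of $X(\overline{T})$ as~$\mu$, and $\alpha^*$ induces the same endomorphism of $X(\overline{T}^*)$ as~$\mu^*$. The standard duality between~$\overline{G}$ and~$\overline{G}^*$ identifies $X(\overline{T})$ with $Y(\overline{T}^*)$ in such a way that the Dynkin symmetry defining the graph part of~$\mu$ corresponds to the same symmetry defining~$\mu^*$, while~$\varphi$ and its dual both act as multiplication by $q = 2^f$ on character lattices. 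This is precisely the criterion of \cite[Proposition~$11.1.11$]{DiMi2} for $\alpha^*$ to be dual to~$\alpha$. The main technical obstacle is bookkeeping the precise conventions of~\cite{GLS} for standard isogenies and those of~\cite{DiMi2} for duality so that the induced maps on character and cocharacter lattices correspond correctly; once the conventions are aligned, the verification reduces to the simply-laced compatibility of standard graph automorphisms together with the trivial action of inner torus automorphisms on the root datum.
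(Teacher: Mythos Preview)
Your argument is correct and follows essentially the same route as the paper's proof: both note that $\ad_t$ (respectively $\ad_{t^*}$) acts trivially on the maximal torus and hence is invisible to the duality criterion of \cite[Proposition~$11.1.11$]{DiMi2}, reducing the question to $\mu$ alone. The paper then reduces further to the generators $\iota$ and~$\varphi$ and declares the claim obvious, whereas you spell out the identification $\alpha^* = \ad_{t^*} \circ \mu^*$ and check the lattice condition for~$\mu$ directly; this is the same content, just expanded.

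One small slip: $\varphi = \varphi_2$ acts on $X(\overline{T})$ as multiplication by~$2$, not by $q = 2^f$; it is $\sigma = \varphi^f$ that acts by~$q$. This does not affect your argument, since what matters is that the scalar is the same on both sides.
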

\begin{proof}
Clearly,~$\alpha$ is an isogeny. As~(\ref{Aut1Ismorphism}) preserves isogenies, 
$\alpha^*$ is an isogeny as well. To prove the second claim, we may assume that 
$t = 1$. We may also assume that $\alpha = \iota$, or $\alpha = \varphi$, the
generators, introduced in Subsection~\ref{TheRemainingGroups},
of $\Gamma_{\overline{G}}$ and $\Phi_{\overline{G}}$, respectively.
The claim is then obvious.
\end{proof}

Now identify~$\Aut_1( \overline{G}^* )$ and~$\Aut_1( \overline{G} )$
via~(\ref{Aut1Ismorphism}). As $\sigma \in \Aut_1( \overline{G} )$ is a 
Steinberg morphism, its inverse image in $\Aut_1( \overline{G}^* )$ is a 
Steinberg morphism of~$\overline{G}^*$ dual to~$\sigma$. This justifies our 
simplified notation introduced in Subsection~\ref{TheRemainingGroups}.

Let $\alpha \in \Aut( G )$. Then~$\alpha$ is the restriction to~$G$ of an 
element of~$\Aut_1( \overline{G} )$ commuting with~$\sigma$; see 
\cite[Theorem~$2.5.4$]{GLS}. In fact, this element giving rise to~$\alpha$ may 
be chosen to be an isogeny. Indeed, as~$\alpha$ has finite order, 
$\alpha^{-1} = \alpha^l$ for some non-negative integer~$l$. The claim follows
from this, as either~$\beta$ or~$\beta^{-1}$ is an isogeny for every
$\beta \in \Aut_1( \overline{G} )$. Also,~$\alpha$ extends to a unique 
automorphism of $\overline{G}^{\sigma}$ arising from the elements 
of~$\Aut_1( \overline{G} )$ commuting with~$\sigma$ and restricting 
to~$\alpha$, and we will tacitly view~$\alpha$ as an element of 
$\Aut( \overline{G}^{\sigma} )$ this way. 

\addtocounter{subsection}{1}
\subsection{Semisimple characters}
\label{SemisimpleCharacters}
As already mentioned in Subsection \ref{TheDualGroups}, we are going to
describe the irreducible characters of~$G$ through the epimorphism
${\overline{G}^*}^{\sigma} \rightarrow G$ via inflation. This does not introduce 
new characters to be investigated, as $Z( {\overline{G}^*}^{\sigma} )$, the 
kernel of this epimorphism, is cyclic of odd order, so that every real 
irreducible character of~${\overline{G}^*}^{\sigma}$ has 
$Z( {\overline{G}^*}^{\sigma} )$ in its kernel. (Contrary to a more common 
approach, we interchange the roles of the group and its dual here, as our focus 
is on the irreducible characters of the group~${\overline{G}^*}^{\sigma}$.) 

Viewed as element of $\Irr( {\overline{G}^*}^{\sigma} )$, the character of~$V$ 
is what is called a semisimple character. For this notion, we follow 
\cite[Definition~$2.6.9$]{GeMa}. By \cite[Corollary~$2.6.18$(a)]{GeMa} and the 
results in \cite[Section~$15$]{CeBo2}, in particular 
\cite[Th{\'e}or{\`e}me~$15.10$, Corollaire~$15.11$, Proposition~$15.13$ and 
Corollaire~$15.14$]{CeBo2}, this matches with \cite[D{\'e}finition 15.A]{CeBo2}.

The set $\Irr( {\overline{G}^*}^{\sigma} )$ is partitioned into Lusztig series 
$\mathcal{E}( {\overline{G}^*}^{\sigma}, s )$, where~$s$ runs through the 
$\overline{G}^{\sigma}$-conjugacy classes of semisimple elements 
of~$\overline{G}^{\sigma}$; see \cite[Definition~$2.6.1$]{GeMa}. The following 
lemma collects the properties of semisimple characters relevant to our further 
investigation.

\addtocounter{num}{1}
\begin{lem}
\label{SemisimpleCharactersLemma}
Let $\chi \in \Irr( {\overline{G}^*}^{\sigma} )$ and let 
$s \in \overline{G}^{\sigma}$ be semisimple such that 
$\chi \in \mathcal{E}( {\overline{G}^*}^{\sigma}, s )$. Let 
$\alpha^* \in \Aut_1( {\overline{G}^*} )$ be an isogeny commuting with~$\sigma$.
Let $\alpha \in \Aut_1( \overline{G} )$ be an isogeny commuting with~$\sigma$, 
dual to~$\alpha^*$. (Such an isogeny exists; see, e.g.\ \cite[$1.7.11$]{GeMa}.)
Then the following statements hold.

{\rm (a)} The degree of~$\chi$ is odd, if and only if~$\chi$ is semisimple in 
the sense of \cite[Definition~$2.6.9$]{GeMa}. If~$\chi(1)$ is odd, then
$$\chi(1) = [\overline{G}^\sigma\colon\!C_{\overline{G}^{\sigma}}(s)]_{2'}.$$

{\rm (b)} Assume that~$\chi$ has odd degree.
Then the following assertions are true.

\begin{itemize}
\item[(i)] The semisimple characters in 
$\mathcal{E}( {\overline{G}^*}^{\sigma}, s )$ all have the same 
degree, and they correspond, via Lusztig's Jordan decomposition of characters, 
to the irreducible characters of 
$(C_{\overline{G}}( s )/C^{\circ}_{\overline{G}}( s ))^\sigma$. In particular, 
if $C_{\overline{G}}( s )$ is connected, then~$\chi$ is the unique
semisimple character in $\mathcal{E}( {\overline{G}^*}^{\sigma}, s )$.

\item[(ii)] If~$\chi$ is real, then~$s$ is real in $\overline{G}^{\sigma}$. 
If~$s$ is real in $\overline{G}^{\sigma}$ and 
$C_{{\overline{G}}}( s )$ is connected, then~$\chi$ is real.

\item[(iii)] If~$\chi$ is $\alpha^*$-invariant, then the 
$\overline{G}^{\sigma}$-conjugacy class of~$s$ is invariant under~$\alpha$.
If the $\overline{G}^{\sigma}$-conjugacy class of~$s$ is $\alpha$-invariant
and $C_{{\overline{G}^*}}( s )$ is connected, then~$\chi$ is
$\alpha^*$-invariant.

\item[(iv)] Let $\overline{L} \leq \overline{G}$ be a standard Levi subgroup 
of~$\overline{G}$ with $s \in \overline{L}$. Let~$\overline{L}^*$ denote the 
standard Levi subgroup of~$\overline{G}^*$ dual to~$\overline{L}$.
Suppose that~$s$ is real in $\overline{L}^{\sigma}$
and that $C_{{\overline{L}^*}}( s )$ is connected.

Then $\mathcal{E}( {\overline{L}^*}^{\sigma}, s )$ contains a real, semisimple
character~$\psi$ of odd degree, such that 
$\langle R_{{\overline{L}^*}^{\sigma}}^{{\overline{G}^*}^{\sigma}}( \psi ), 
\chi \rangle = 1$.
\end{itemize}
\end{lem}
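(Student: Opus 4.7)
The plan is to deduce (a) and (b)(i)--(iv) from the standard Jordan decomposition of characters (as developed in \cite{GeMa} and \cite{DiMi2}), combined with the well-known behaviour of Lusztig series under complex conjugation, under automorphisms, and under Lusztig induction. Throughout, I would exploit that $\overline{G}^*$ is simply connected and $\overline{G}$ is adjoint: centralizers of semisimple elements in $\overline{G}^*$ and its standard Levi subgroups are connected, whereas in $\overline{G}$ the component group $A(s) := C_{\overline{G}}(s)/C^\circ_{\overline{G}}(s)$ may be non-trivial.

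For part (a), the equivalence of oddness of $\chi(1)$ with semisimplicity of $\chi$ follows from the standard formula expressing each degree in $\mathcal{E}(\overline{G}^{*\sigma}, s)$ as the product of $[\overline{G}^{*\sigma} : C^\circ_{\overline{G}}(s)^\sigma]_{2'}$ with a unipotent character degree of $C^\circ_{\overline{G}}(s)^\sigma$. Semisimple characters are, by definition, those for which this unipotent factor is trivial, while every non-trivial unipotent degree is divisible by~$2$. The explicit identity $\chi(1) = [\overline{G}^\sigma : C_{\overline{G}^\sigma}(s)]_{2'}$ is then obtained from $|\overline{G}^\sigma| = |\overline{G}^{*\sigma}|$ for dual groups and an identification of the $2'$-parts of $|C^\circ_{\overline{G}}(s)^\sigma|$ and $|C_{\overline{G}^\sigma}(s)|$ via the component group.

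Parts (b)(i)--(iii) proceed in parallel. For (i) I appeal to the refinement of Jordan decomposition parametrizing the semisimple characters in $\mathcal{E}(\overline{G}^{*\sigma}, s)$ by $\Irr(A(s)^\sigma)$; equality of the degrees and uniqueness in the case $A(s) = \{1\}$ are immediate. For (ii) I use that complex conjugation sends $\mathcal{E}(\overline{G}^{*\sigma}, s)$ to $\mathcal{E}(\overline{G}^{*\sigma}, s^{-1})$, so reality of $\chi$ forces $s$ to be real; conversely, if $C_{\overline{G}}(s)$ is connected, (i) supplies a unique semisimple character in the series, which must therefore be self-conjugate. Part (iii) is strictly analogous: the dual pair $(\alpha^*, \alpha)$ sends $\mathcal{E}(\overline{G}^{*\sigma}, s)$ to $\mathcal{E}(\overline{G}^{*\sigma}, \alpha(s))$ (using the compatibility of Jordan decomposition with isogenies), and the same uniqueness argument yields the converse.

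For (b)(iv), I first apply (ii) inside $\overline{L}^*$, together with connectedness of $C_{\overline{L}^*}(s)$, to produce a real semisimple character $\psi \in \mathcal{E}(\overline{L}^{*\sigma}, s)$ of odd degree. The multiplicity identity $\langle R_{\overline{L}^{*\sigma}}^{\overline{G}^{*\sigma}}(\psi), \chi \rangle = 1$ would then come from Bonnaf\'e's compatibility of Lusztig induction with Jordan decomposition: under this bijection $\psi$ corresponds to the trivial unipotent character of $C^\circ_{\overline{L}}(s)^\sigma$ and $\chi$ to (the character attached to) the trivial unipotent character of $C^\circ_{\overline{G}}(s)^\sigma$, so $R_{\overline{L}^*}^{\overline{G}^*}\psi$ translates into Lusztig induction of the trivial unipotent character from $C^\circ_{\overline{L}}(s)$ up to $C^\circ_{\overline{G}}(s)$, whose decomposition contains the trivial unipotent character with multiplicity one by Frobenius reciprocity. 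I anticipate the main obstacle to be precisely this last step: one must handle the possibly non-connected centralizer $C_{\overline{G}}(s)$ correctly and single out, among the $|A(s)^\sigma|$ semisimple characters in the series, the one that actually matches $\chi$ with multiplicity exactly one.
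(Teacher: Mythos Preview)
Your proposal is correct and follows essentially the same route as the paper. The paper's proof differs only in the specific citations used to pin down the steps: for (a) the fact that every non-trivial unipotent degree is even is quoted from Malle's height-zero paper \cite[Theorem~6.8]{MalleHeight0}; for (b)(iii) the transport of Lusztig series under dual isogenies is taken from \cite[Proposition~7.2]{JTayl}; and the obstacle you anticipate in (b)(iv) is resolved exactly as you suggest, but via the formalism of Harish-Chandra induction to the possibly disconnected centralizer in the sense of \cite[Definition~4.8.8]{GeMa}, with \cite[Proposition~4.8.10]{GeMa} giving the multiplicity-one statement for the trivial character and \cite[Theorem~4.8.24]{GeMa} transferring it back to $R_{{\overline{L}^*}^{\sigma}}^{{\overline{G}^*}^{\sigma}}$.
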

\begin{proof}
(a) and (b)(i). Use the notion of unipotent characters of 
$C_{\overline{G}}( s )^\sigma$ as introduced in \cite[$12$]{Lu}. Then, via 
Lusztig's generalized Jordan decomposition of characters,~$\chi$ corresponds to 
a unipotent character~$\lambda$ of $C_{\overline{G}}( s )^\sigma$ such that
$\chi(1) = 
\lambda(1)[\overline{G}^\sigma\colon\!C_{\overline{G}^{\sigma}}( s )]_{2'}$;
see \cite[Proposition~$5.1$]{Lu}. 

Suppose that~$\chi(1)$ is odd. Thus $\lambda(1)$ is odd, and hence~$\lambda$
lies over an odd degree unipotent character~$\lambda'$ of 
${C^{\circ}_{\overline{G}^{\sigma}}( s )}$. Then 
\cite[Theorem~$6.8$]{MalleHeight0} implies that~$\lambda'$ is the trivial 
character. It follows that~$\chi$ is semisimple by~\cite[Theorem~$2.6.11$(c) 
and Corollary~$2.6.18$(a)]{GeMa} and the definition of the generalized Jordan 
decomposition from the usual Jordan decomposition through a regular embedding 
of~$\overline{G}^*$. This also implies that the semisimple characters in 
$\mathcal{E}( {\overline{G}^*}^{\sigma}, s )$ are exactly those that correspond, 
via the generalized Jordan decomposition of characters, to the extensions 
of~$\lambda'$ to $C_{\overline{G}^{\sigma}}( s )$. Since 
${C_{\overline{G}^{\sigma}}( s )}/{C^{\circ}_{\overline{G}^{\sigma}}( s )}
= (C_{\overline{G}}( s )/C^{\circ}_{\overline{G}}( s ))^\sigma$ is abelian by 
\cite[Lemma $8.3$]{CeBo2}, this implies (b)(i), one direction of~(a), as well
as the degree formula.

Suppose now that~$\chi$ is semisimple. Then~$\chi(1)$ is odd by
\cite[Theorem~$2.6.11$(b) and Corollary~$2.6.18$(a)]{GeMa}. This gives the
other direction of~(a).

(b)(ii) If $\overline{S}$ is a $\sigma$-stable maximal torus of~$\overline{G}$
containing~$s$, then $R_{\overline{S}}^{\overline{G}}( s )$ is complex conjugate
to $R_{\overline{S}}^{\overline{G}}( s^{-1} )$ by the character formula; see
\cite[Theorem~$2.2.16$]{GeMa}. The definition of the set
$\mathcal{E}( {\overline{G}^*}^{\sigma}, s )$ 
(see \cite[Definition~$2.6.1$]{GeMa}) implies that the character complex 
conjugate to~$\chi$ lies in $\mathcal{E}( {\overline{G}^*}^{\sigma}, s^{-1} )$.
This proves the first assertion. If $C_{{\overline{G}}}( s )$ is 
connected,~$\chi$ is the unique semisimple character in 
$\mathcal{E}( {\overline{G}^*}^{\sigma}, s )$ by~(i). Since~$s$ is real, the 
character complex conjugate to~$\chi$ lies in 
$\mathcal{E}( {\overline{G}^*}^{\sigma}, s )$, and is semisimple by~(a).
Hence~$\chi$ is real.

(b)(iii) As~$\alpha$ and~$\alpha^*$ are dual isogenies, we have
$${^{\alpha^*}\!\mathcal{E}}( {\overline{G}^*}^{\sigma}, s ) =
\mathcal{E}( {\overline{G}^*}^{\sigma}, \alpha^{-1}( s ) );$$ 
see \cite[Proposition~$7.2$]{JTayl}. The proof now proceeds as in~(ii).

(b)(iv) The first claim follows from (i), (ii), and~(a), applied 
to~$\overline{L}$. Further, under any Jordan decomposition of 
characters,~$\psi$ corresponds to the trivial character; see 
\cite[Theorem~$2.6.11$(c)]{GeMa}. As~$\chi$ is semisimple by~(a), it 
corresponds, by~(i), to an irreducible character~$\lambda$ 
of~$C_{\overline{G}^{\sigma}}( s )$ which has 
$C^{\circ}_{\overline{G}^{\sigma}}( s )$ in its kernel. As 
$C_{\overline{L}}( s )$ is connected, we have $C_{\overline{L}^{\sigma}}( s )
\leq C^{\circ}_{\overline{G}^{\sigma}}( s )$. As~$\overline{L}$ is $1$-split,
$C_{\overline{L}^{\sigma}}( s )$ is a $1$-split Levi subgroup of 
$C^{\circ}_{\overline{G}}( s )$, and thus Lusztig induction from
$C_{\overline{L}^{\sigma}}( s )$ to $C^{\circ}_{\overline{G}^{\sigma}}( s )$
is just usual Harish-Chandra induction. Thus the trivial character of
$C_{\overline{L}^{\sigma}}( s )$, Harish-Chan\-dra induced to 
$C_{\overline{G}^{\sigma}}( s )$ in the sense of 
\cite[Definition~$4.8.8$]{GeMa}, contains~$\lambda$ with multiplicity~$1$; see
\cite[Proposition~$4.8.10$]{GeMa}. The second claim now follows from 
\cite[Theorem~$4.8.24$]{GeMa}. 
\end{proof}

The next lemma paves the way for our applications.
\begin{lem}
\label{ProofByHCInductionQEven}
Let~$G$ be as in {\rm Hypothesis~\ref{GroupsOfEvenCharacteristic}(b)(i)--(iv)} 
and suppose that every proper subgroup of~$G$ has the $E1$-property. Let~$\iota$
denote the standard graph automorphism of~$\overline{G}$ of order~$2$. Let 
$(V,n,\nu)$ be as in \cite[Notation~$4.1.1$]{HL1}, and let~$\chi$ denote the 
character of~$V$. View~$V$ as an 
$\mathbb{R}{\overline{G}^*}^{\sigma}$-module and~$\chi$ as a character 
of~${\overline{G}^*}^{\sigma}$ via inflation. Let $s \in \overline{G}^{\sigma}$ 
be such that $\chi \in \mathcal{E}( {\overline{G}^*}^{\sigma}, s )$. Then 
$(G,V,n)$ has the $E1$-property under the following hypotheses.

There is a $\iota$-stable, proper standard Levi 
subgroup~$\overline{L}$ of~$\overline{G}$ and a 
$\overline{G}^{\sigma}$-conjugate $s' \in \overline{L}^{\sigma}$, such that the 
conditions {\rm (i),(ii),(iii)} hold.

\begin{itemize}
\item[{\rm (i)}] The element~$s'$ is real in $\overline{L}^\sigma$.
\item[{\rm (ii)}] The centralizer~$C_{\overline{L}}( s' )$ is connected.
\item[{\rm (iii)}] For every $\alpha \in \Aut(G)$ 
stabilizing~$\overline{L}^{\sigma}$, the following holds: If~$\alpha(s)$ and~$s$ 
are conjugate in~$\overline{G}^{\sigma}$, then~$\alpha(s')$ and~$s'$ are 
conjugate in~$\overline{L}^{\sigma}$.
\end{itemize}
\end{lem}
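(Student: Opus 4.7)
The plan is to apply the Harish-Chandra restriction method from
\cite[Subsection~$4.3$]{HL1}. Using hypotheses~(i) and~(ii),
Lemma~\ref{SemisimpleCharactersLemma}(b)(iv) applied inside~$\overline{L}$ to
the representative~$s'$ produces a real, semisimple character
$\psi \in \mathcal{E}({\overline{L}^*}^\sigma, s')$ of odd degree with
$\langle R_{{\overline{L}^*}^\sigma}^{{\overline{G}^*}^\sigma}(\psi), \chi
\rangle = 1$; note that
$\mathcal{E}({\overline{G}^*}^\sigma, s') =
\mathcal{E}({\overline{G}^*}^\sigma, s)$ since~$s$ and~$s'$ are conjugate
in~$\overline{G}^\sigma$. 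The aim is to extract from the $\psi$-isotypic
component of~$V$ a proper, odd-dimensional, $n$-stable real subspace and apply
the inductive hypothesis on proper subgroups of~$G$.

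First I would arrange that~$\nu$ stabilizes the dual standard parabolic
${\overline{P}^*}^\sigma$ with Levi~${\overline{L}^*}^\sigma$ and unipotent
radical~${\overline{U}_P^*}^\sigma$. Writing $\nu = \ad_g \circ \mu$ with
$g \in G$ and $\mu \in \Gamma_G \times \Phi_G$ (cf.\
\cite[Subsection~$5.5$]{HL1}), I replace~$n$ by $\rho(g)^{-1} n$, which
preserves the $E1$-property of $(G,V,n)$ and allows the assumption
$\nu = \mu$. Because~$\overline{L}$ is $\iota$-stable and every element
of~$\Gamma_G \times \Phi_G$ stabilizes the standard subgroups attached to
$\iota$-invariant subsets of simple roots, this~$\nu$ stabilizes
${\overline{L}^*}^\sigma$, ${\overline{P}^*}^\sigma$, and
${\overline{U}_P^*}^\sigma$, and, under the isomorphism~(\ref{Aut1Ismorphism}),
also their counterparts~$\overline{L}^\sigma$, $\overline{P}^\sigma$, and
$\overline{U}_P^\sigma$ in~$\overline{G}^\sigma$.

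Second I would show that~$\nu$ fixes~$\psi$. Because~$n$ stabilizes~$V$, $\nu$
fixes~$\chi$, so Lemma~\ref{SemisimpleCharactersLemma}(b)(iii) gives that the
$\overline{G}^\sigma$-class of~$s$ is $\nu$-stable. Condition~(iii), applied
to~$\alpha = \nu$, then forces $\nu(s')$ to be $\overline{L}^\sigma$-conjugate
to~$s'$. Since $C_{\overline{L}}(s')$ is connected by~(ii),
Lemma~\ref{SemisimpleCharactersLemma}(b)(i) applied inside~$\overline{L}$
implies that~$\psi$ is the unique semisimple character in
$\mathcal{E}({\overline{L}^*}^\sigma, s')$, whence~$\psi$ is $\nu$-fixed.
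Third, Frobenius reciprocity together with the multiplicity-one formula shows
that the $\psi$-isotypic component~$V_\psi$ of
$V^{{\overline{U}_P^*}^\sigma}$ is a real ${\overline{L}^*}^\sigma$-submodule
of odd dimension~$\psi(1)$. As $Z({\overline{G}^*}^\sigma)$ has odd order and
lies in $\ker\chi$, it also lies in $\ker\psi$; hence~$V_\psi$ descends to an
irreducible real module for the proper subgroup $L_G \leq G$ that is the image
of~${\overline{L}^*}^\sigma$. Because~$\nu$ stabilizes the parabolic data and
fixes~$\psi$, $n$ normalizes~$V_\psi$. The inductive hypothesis applied
to~$L_G$, together with \cite[Lemmas~$3.1.2$ and~$3.1.3$]{HL1}, then
transfers the $E1$-property from $(L_G, V_\psi, n|_{V_\psi})$ to~$(G, V, n)$.

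The main obstacle is the $\nu$-invariance of~$\psi$: one must carefully
translate between the automorphism $\nu \in \Aut(G)$ and its incarnation on
the ambient algebraic group~$\overline{G}$ containing $s, s'$ via the
identification of Subsection~\ref{AutomorphismsIII}, and then couple the
connectedness assumption~(ii) with the conjugacy-transfer in~(iii) to obtain
the needed uniqueness of~$\psi$ in its Lusztig series. Condition~(iii) is
tailor-made for precisely this bookkeeping.
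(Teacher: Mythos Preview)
Your overall strategy matches the paper's: produce the real odd-degree semisimple character~$\psi$ of~${\overline{L}^*}^{\sigma}$ via Lemma~\ref{SemisimpleCharactersLemma}(b)(iv), show that (a suitable modification of)~$\nu$ fixes~$\psi$ using hypotheses~(ii) and~(iii), and then descend to the proper Levi by the Harish-Chandra restriction method. The paper packages your third step by a direct appeal to \cite[Lemma~$5.4.1$]{HL1} rather than rebuilding it from \cite[Lemmas~$3.1.2$,~$3.1.3$]{HL1}, but that is cosmetic.

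There is, however, a genuine gap in your reduction step. You write $\nu = \ad_g \circ \mu$ with $g \in G$ and then replace~$n$ by $\rho(g)^{-1}n$ to assume $\nu = \mu$. But the decomposition from \cite[Subsection~$5.5$]{HL1} only gives $\nu = \ad_h \circ \mu$ with $h \in \overline{G}^{\sigma}$; in the cases of Hypothesis~\ref{GroupsOfEvenCharacteristic}(b)(i)--(iv) the group $\overline{G}^{\sigma}/G$ is non-trivial (of order~$e$, respectively~$3$), so~$h$ may induce a genuine diagonal automorphism that you cannot absorb into~$n$ via an element of~$\rho(G)$. Consequently you cannot arrange $\nu \in \Gamma_G \times \Phi_G$, and your later application of condition~(iii) with $\alpha = \nu$ is not justified, since this~$\nu$ need not stabilize~$\overline{L}^{\sigma}$.

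The paper's remedy is exactly what condition~(iii) is designed for. Using $\overline{G}^{\sigma} = G\overline{T}^{\sigma}$, one finds $g \in G$ such that $\alpha := \ad_g \circ \nu = \ad_t \circ \mu$ with $t \in \overline{T}^{\sigma}$. Since $\overline{T}^{\sigma} \leq \overline{L}^{\sigma}$, this~$\alpha$ does stabilize~$\overline{L}^{\sigma}$ and the associated parabolic, and one works with the pair $(gn,\alpha)$ in place of $(n,\nu)$. One then lifts~$\alpha$ to a dual isogeny~$\alpha^*$ via Lemma~\ref{DualIsogenies}, applies Lemma~\ref{SemisimpleCharactersLemma}(b)(iii) on both the~$\overline{G}$ and~$\overline{L}$ levels, and invokes~(iii) for this~$\alpha$ (not for~$\mu$) to conclude that~$\psi$ is $\alpha^*$-fixed. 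With this correction your argument goes through.
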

\begin{proof}
Let the notation be as in Lemma~\ref{SemisimpleCharactersLemma}. 
As~$\overline{L}$ is $\iota$-stable and a standard Levi subgroup 
of~$\overline{G}$, it is $\sigma$-stable. As~$\iota$ commutes with~$\sigma$,
the finite group~$\overline{L}^{\sigma}$ is $\iota$-stable as well.
We may assume that $s = s' \in \overline{L}^{\sigma}$. Let~$\overline{L}^*$ 
denote the standard Levi subgroup of~$\overline{G}^*$ dual to~$\overline{L}$.
Then~$\overline{L}^*$ is~$\sigma$-stable. Let $\psi \in 
\Irr( {\overline{L}^*}^{\sigma} )$ denote the unique semisimple character in
$\mathcal{E}( {\overline{L}^*}^{\sigma}, s )$; see 
Lemma~\ref{SemisimpleCharactersLemma}(b)(i). Then~$\psi$ is real, of odd degree,
and~$\chi$ occurs with multiplicity~$1$ 
in~$R_{\overline{L}^*}^{\overline{G}^*}( \psi )$; see
Lemma~\ref{SemisimpleCharactersLemma}(b)(iv). Since~$\chi$ is not the trivial 
character, $s \neq 1$, an thus~$\psi$ is not the trivial 
character of~${\overline{L}^*}^{\sigma}$.

Write $\nu = \ad_h \circ \mu$ with
$h \in \overline{G}^{\sigma}$ and $\mu \in \Gamma_G \times \Phi_G$; see
\cite[Subsection~$5.5$]{HL1}.
As $\overline{L}$ is $\iota$-stable,~$\mu$ 
stabilizes~$\overline{L}^{\sigma}$ and the corresponding standard parabolic 
subgroup~$\overline{P}^{\sigma}$. Since
$\overline{G}^{\sigma} = G\overline{T}^{\sigma}$ (see 
Subsection~\ref{TheRemainingGroups}), there is $g \in G$ such that 
$\alpha := \ad_g \circ \nu = \ad_{t} \circ \mu$ with 
$t \in \overline{T}^{\sigma}$. In particular,~$\alpha$
stabilizes~$\overline{L}^{\sigma}$ and $\overline{P}^{\sigma}$. Also,~$\alpha$ 
is the restriction to~$G$ of an isogeny $\alpha \in \Aut_1( \overline{G} )$
commuting with~$\sigma$, which fixes~$\overline{T}$; see the penultimate 
paragraph in Subsection~\ref{AutomorphismsIII}. 

Let $\alpha^* \in \Aut_1( \overline{G}^* )^{\sigma}$ denote the inverse image
of~$\alpha$ under the isomorphism~(\ref{Isogeny}) in
Subsection~\ref{AutomorphismsIII}. Then~$\alpha$ and $\alpha^*$ are dual 
isogenies by Lemma~\ref{DualIsogenies}. Now~$\alpha^*$ restricts to an 
automorphism of~${\overline{G}^*}^{\sigma}$, which 
stabilizes~${\overline{L}^*}^{\sigma}$ and the corresponding standard parabolic 
subgroup of~${\overline{G}^*}^{\sigma}$. The automorphism induced by 
this~$\alpha^*$ on 
$G \cong {\overline{G}^*}^{\sigma}/Z( {\overline{G}^*}^{\sigma} )$ is the 
original~$\alpha$ we started with. As such,~$\alpha^*$ fixes~$\chi$, and 
so~$\alpha^*$ also stabilizes the $\overline{G}^{\sigma}$-conjugacy class 
of~$s$ by Lemma~\ref{SemisimpleCharactersLemma}(b)(iii). By Hypothesis~(iii), 
the $\overline{L}^{\sigma}$-conjugacy class of~$s$ is $\alpha$-stable as well.
It follows that~$\psi$ is fixed by~$\alpha^*$, once more by
Lemma~\ref{SemisimpleCharactersLemma}(b)(iii). The assertion now follows from 
\cite[Lemma~$5.4.1$]{HL1}, applied to the group ${\overline{G}^*}^{\sigma}$ and 
the triple $(V,gn,\alpha)$.
\end{proof}

\section{Bounds on orders of centralizers and elements}

In this section we establish bounds on the orders of certain centralizers and 
elements. The results are enough to rule out $G = \P\Omega_8^+(q)$ as a minimal 
counterexample. 

\subsection{Bounds on orders of centralizers and elements}
\label{CentralizersOfAutomorphisms}
We need to estimate the sizes of the centralizers of certain automorphisms of 
the groups~$G$ listed in Hypothesis~\ref{GroupsOfEvenCharacteristic}(b). We 
will use the notations introduced and summarized in 
Subsections~\ref{TheRemainingGroups}. Recall in particular the $\sigma$-setup 
for~$G$, the notion $q = 2^f$, and the significance of the symbols~$\varphi$ 
and~$\iota$, the latter denoting a non-trivial element 
of~$\Gamma_{\overline{G}}$, as well as its restriction to~$G$. Also recall the 
$\varepsilon$-convention for the linear and unitary groups as well as for the 
groups of type~$E_6$. We also define the parameter $\delta \in \{ 1, 2 \}$ by 
$\delta := 1$, if $\varepsilon = 1$, and $\delta := 2$, if $\varepsilon = -1$.

We begin with a definition.

\addtocounter{num}{1}
\begin{defn}
\label{DefineMG}
{\rm 
With~$G$ as in {\rm Hypothesis~\ref{GroupsOfEvenCharacteristic}(b)}, define the 
positive integer $M_G$ as follows:

{(a)} If $G = E_6^{\varepsilon}(q)$, then $M_G := q^{48}$.

{(b)} If $G = \PSL_d^{\varepsilon}(q)$ with $d \geq 5$, then
$M_G := q^{d(d+1)/2}$.

{(c)} If $G = \PSL_3^{\varepsilon}(q)$, then 
$M_G := 
\begin{cases}
q^4, & \text{\ if } \varepsilon = 1 \text{\ and\ } q \neq 16, \\
62\,401, & \text{\ if } \varepsilon = 1 \text{\ and\ } q = 16, \\
q^4 + q^3, & \text{\ if } \varepsilon = -1.
\end{cases}
$

{\rm (d)} If $G = \P\Omega_8^+(q)$, then $M_G := q^{14} + q^{12}$.
}
\end{defn}

\begin{lem}
\label{CentralizersEstimates}
Let~$p$ be a prime and let $\alpha \in \Aut(G) \setminus \Inndiag(G)$ with 
$|\alpha| = p$. If $G = \P\Omega_8^+(q)$ assume that $q \neq 2$. 
Then $|C_G( \alpha )| < M_G$, unless $G = E_6^{\varepsilon}(q)$ or 
$G = \P\Omega_8^+(q)$ and~$\alpha$ is a graph automorphism of~$G$ of 
order~$2$. In the former case, $|C_G( \alpha )| < q^{52}$, in the latter 
case, $|C_G( \alpha )| < q^{21}$.
\end{lem}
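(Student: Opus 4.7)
The plan is to carry out a case analysis according to the type of~$G$ and of~$\alpha$. By \cite[Theorem~$2.5.1$]{GLS} we may replace~$\alpha$ by a representative of its $\Inndiag(G)$-orbit (which does not change the order of $C_G(\alpha)$) that lifts to an element of $\Phi_{\overline{G}}\Gamma_{\overline{G}}$ acting on~$\overline{G}$. Since~$\alpha$ has prime order~$p$, this representative is of one of three types: a pure field automorphism $\varphi^{f/p}$ (with $p\mid f$); a diagram symmetry~$\iota$ of order~$p$ (where $p=2$ for types $A$ and $E_6$, and $p\in\{2,3\}$ for $D_4$); or a composition $\iota\circ\varphi^{f/p}$, which may still be an outer automorphism even when~$\iota$ itself belongs to~$\Phi_G$ (as in the twisted cases; cf.\ Subsection~\ref{TheRemainingGroups}). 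The isomorphism type of $C_G(\alpha)$, modulo a central factor of order dividing $|\Inndiag(G)/G|$ (which is odd), is a smaller group of Lie type, described in standard references such as \cite{GLS}, and it remains to bound its order.

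For $\alpha = \varphi^{f/p}$ a pure field automorphism, $C_G(\alpha)$ is the group of the same Lie type as~$G$ over the subfield $\mathbb{F}_{q^{1/p}}$, of order at most $q^{N/p}\leq q^{N/2}$, where $N = \dim\overline{G}$; Lemma~\ref{OrderEstimates} then yields a bound strictly below~$M_G$ in each case of Definition~\ref{DefineMG}. For a diagram symmetry of order~$2$, the centralizer, modulo centre, is: $\Sp_d(q)$ (or $\Sp_{d-1}(q)$ for odd~$d$, using the isomorphism $\Omega_d\cong\Sp_{d-1}$ in characteristic two) for $G = \PSL_d^{\varepsilon}(q)$; $F_4(q)$ for $G = E_6^{\varepsilon}(q)$; and $\Omega_7(q)\cong\Sp_6(q)$ for $G = \P\Omega_8^+(q)$. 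The bounds $|\Sp_d(q)| < q^{d(d+1)/2}$, $|F_4(q)| < q^{52}$, and $|\Sp_6(q)| < q^{21}$ follow directly from Lemma~\ref{OrderEstimates}(a), and account for the exceptional conclusions in the statement. For the triality of $\P\Omega_8^+(q)$ the centralizer is $G_2(q)$, with $|G_2(q)| = q^6(q^2-1)(q^6-1) < q^{14}\leq M_G$.

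For a graph-field automorphism, the centralizer is, up to centre, a twisted group of the same root system as~$G$ defined over $\mathbb{F}_{q^{1/p}}$, whose order is dominated by that of the corresponding untwisted group over the same subfield and hence, again by Lemma~\ref{OrderEstimates}, lies strictly below~$M_G$ outside the exceptional cases. The main technical obstacle is the bookkeeping in the low-rank cases~(c) and~(d) of Definition~\ref{DefineMG}, where the bound on $M_G$ is nearly sharp and must be verified individually. Most prominently, the anomalous value $M_G = 62\,401$ for $G = \PSL_3(16)$ is calibrated precisely so that $|\PGU_3(4)| = 4^3\cdot 15\cdot 65 = 62\,400$, the largest centralizer of an outer automorphism of prime order in this group, is strictly less than~$M_G$; this point can be verified from the standard order formulas, supplemented if desired by a direct GAP computation.
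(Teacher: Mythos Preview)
Your proposal follows essentially the same route as the paper: reduce $\alpha$ to a standard field, graph, or graph-field automorphism, read off the centralizer from \cite{GLS}, and bound its order against~$M_G$. The identification of the exceptional cases ($F_4(q)$, $\Sp_6(q)$, $G_2(q)$, and the calibration $|\PGU_3(4)|=62\,400$) matches the paper exactly.

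One technical point is worth flagging. For the non-graph case you rely on conjugating $\alpha$ inside $\Inndiag(G)$ to a pure element of $\Phi_{\overline{G}}\Gamma_{\overline{G}}$; this requires the classification of $\Inndiag(G)$-classes of such automorphisms (essentially \cite[Propositions~4.9.1--4.9.2]{GLS}), and for graph involutions in type~$A_{d-1}$ with~$d$ even one must in principle account for more than one class. The paper sidesteps this by a cleaner algebraic-group argument: writing $\alpha = \ad_g\circ\mu$ with $\mu\in\Gamma_G\times\Phi_G$ of order~$p$, one has $C_G(\alpha)\leq C_{\overline{G}}(\alpha)\cong C_{\overline{G}}(\mu)=\overline{G}^{\mu}$ by \cite[Lemma~1.4.14]{GeMa}, so it suffices to bound $|\overline{G}^{\mu}|$ without ever asserting that $\alpha$ itself is conjugate to~$\mu$. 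The paper then observes that the maximal $|\overline{G}^{\mu}|$ occurs for~$G$ untwisted and $p=2$, where $\overline{G}^{\mu}$ is the twisted group ${^2\!E}_6(q_0)$, $\PGU_d(q_0)$, $\PGU_3(q_0)$, respectively $\P\Omega_8^-(q_0)$ with $q=q_0^2$; your phrase ``dominated by that of the corresponding untwisted group over the same subfield'' is correct but less sharp, since it is precisely these twisted groups that realise the critical bounds (in particular $|\PGU_3(4)|$ for $\PSL_3(16)$).
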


\begin{proof}
Since~$|\alpha| = p$ and $\alpha \not\in \Inndiag(G)$, the order of~$\alpha$ 
modulo $\Inndiag(G)$ also equals~$p$. It follows that $\alpha = \ad_g \circ \mu$ 
for some $g \in \overline{G}^{\sigma}$ and some $\mu \in \Gamma_G \times \Phi_G$
of order~$p$. Recall that $\Gamma_G = \{ 1 \}$ if~$G$ is twisted. Write 
$\mu = \iota' \circ \varphi'$ with $\iota' \in \Gamma_G$ and
$\varphi' \in \Phi_G$. As $|\mu| = p$ is a prime, we must have
$|\varphi'| = p$ or $|\iota'| = p$. In the latter case,~$G$ is untwisted and
$p = 2$ or $p = 3$, where $p = 3$ only occurs for $G = \P\Omega_8^+(q)$.

Suppose first that~$\alpha$ is a graph automorphism of~$G$. If~$G$ is untwisted,
this means that $\varphi' = 1$, and either $p = 2$, or $p = 3$ and 
$G = \P\Omega_8^+(q)$. If~$G$ is twisted, this means that $\iota' = 1$ and 
$p = 2$. If $p = 2 = r$, then 
$C_G( \alpha ) \cong F_4(q), \Sp_{2\lfloor d/2 \rfloor}(q), \Sp_2(q), \SO_7(q)$ 
in the respective cases of Definition~\ref{DefineMG}; see 
\cite[Proposition~$4.2.9$]{GLS}. If $p = 3$, we have 
$C_G( \alpha ) \cong G_2(q)$ by \cite[Table~$4.7.3$A]{GLS}. Notice that the 
latter reference only gives $O^{2'}( C_G( \alpha ) )$, but according to 
\cite[Table~$8.50$]{BHRD}, the only maximal subgroups of~$G$ containing a 
subgroup isomorphic to~$G_2(q)$ are~$G_2(q)$ itself or~$\Sp_6(q)$. 
However,~$G_2(q)$ is the only maximal subgroup of $\Sp_6(q)$ 
containing~$G_2(q)$; see \cite[Tables~$8.28$,~$8.29$]{BHRD}. Thus 
$O^{2'}( C_G( \alpha ) ) = C_G( \alpha )$. In any case, the given upper bound 
for $|C_G( \alpha )|$ is satisfied.

Now suppose that~$\alpha$ is not a graph automorphism. Then $\varphi' \neq 1$ so
that $|\varphi'| = p$. Recall that $\Phi_G$ is cyclic of order~$f$ if~$G$ is
untwisted, and of order~$2f$, if~$G$ is twisted. As~$p$ is odd in the twisted 
case, we have $p \mid f$ in any case. If~$G$ is twisted,~$\iota$ is an 
involution in~$\Phi_G$. Then~$\varphi^{f/p}$ in the untwisted case, and
$\iota \circ \varphi^{f/p}$ in the twisted case, are elements of order~$p$
in~$\Phi_G$. Clearly, $C_G( \alpha ) = C_G( \alpha^l )$ for all integers~$l$ 
prime to~$p$. By Equation~(\ref{PowerFormula}), we may thus assume that 
${\varphi'} = \varphi^{f/p}$, respectively 
${\varphi'} = \iota \circ \varphi^{f/p}$. Recall that $\varphi$ and~$\iota$ 
arise from restrictions of corresponding elements of $\Aut_1( \overline{G} )$, 
denoted by the same letters. Viewed as such, $\mu = \iota' \circ \varphi'$ and
$\alpha = \ad_g \circ \mu$ are Steinberg morphisms of~$\overline{G}$. By our 
choice of~$\varphi'$, we have $\sigma = {\varphi'}^p$. Hence $\mu^p = \sigma$, 
as $|\iota'| \in \{ 1, p \}$. 
It follows that $C_{\overline{G}^{\sigma}}( \mu ) = C_{\overline{G}} ( \mu )$.
Now $C_{G}( \alpha ) \leq C_{\overline{G}} ( \alpha ) \cong 
C_{\overline{G}} ( \mu ) = C_{\overline{G}^{\sigma}}( \mu )$;
for the isomorphism in the latter chain see \cite[Lemma~$1.4.14$]{GeMa}. 
In particular, $|C_{G}( \alpha )| \leq |C_{\overline{G}} ( \mu )| = 
|\overline{G}^{\mu}|$. The groups of maximal order among 
the~$\overline{G}^{\mu}$ occur for~$G$ untwisted and $p = 2$. These are, in the
respective cases, ${^2\!E}_6(q_0)$, $\PGU_d( q_0 )$, $\PGU_3( q_0 )$ and 
$\P\Omega_8^-( q_0 )$, where $q = q_0^2$. It is easy to check that the orders of
these groups satisfy the asserted bounds. Notice that $|\PGU_3(4)| = 62\,400$.
\end{proof}

We will also need the following technical result on the orders of some
elements of $\Aut(G)$. 

\begin{lem}
\label{OrderBound}
Suppose that~$G$ is as in
{\rm Hypothesis~\ref{GroupsOfEvenCharacteristic}(b)(i)--(iv)}.
Let $\beta = \ad_t \circ \mu \in \Aut(G)$ with $t \in \overline{T}^{\sigma}$
and $\mu \in \Gamma_G \times \Phi_G$. Suppose that $3 \mid q - \varepsilon$.
Then the following hold.

{\rm (a)} If $t = 1$, then $|\beta|$ divides $\delta f$.

{\rm (b)} If~$t$ is a $3$-element, then $|\beta|$ divides $3 \delta f$.

{\rm (c)} If $\varepsilon = -1$, $|\mu|$ is even and $t^{q+1} = 1$,
then $|\beta|$ divides $2 f$.
\end{lem}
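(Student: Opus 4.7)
My plan is to apply the power formula~(\ref{PowerFormula}), $\beta^l = \ad_{N_{\mu, l}(t)} \circ \mu^l$, in each of the three parts. Since $\overline{G}$ is adjoint, $Z(\overline{G}^\sigma) = \{1\}$ and conjugation $\overline{G}^\sigma \hookrightarrow \Aut(G)$ is faithful; hence $\beta^l = \id$ is equivalent to the two conditions $\mu^l = 1$ in $\Aut(G)$ and $N_{\mu, l}(t) = 1$ in $\overline{T}^\sigma$. In each part I will verify both with the required~$l$.

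For~(a), $t = 1$ makes $N_{\mu, l}(t) = 1$ automatically, so only $|\mu| \mid \delta f$ must be shown. When $\varepsilon = 1$, the hypothesis $3 \mid 2^f - 1$ forces $2 \mid f$ (the multiplicative order of~$2$ modulo~$3$ is~$2$), so $|\mu| \mid \mathrm{lcm}(|\Gamma_G|, |\Phi_G|) \mid \mathrm{lcm}(2, f) = f = \delta f$. When $\varepsilon = -1$, $\Gamma_G = \{1\}$ and $|\Phi_G| = 2f = \delta f$, so the bound is trivial.

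For~(c), since $\varepsilon = -1$ and $\Gamma_G = \{1\}$ we have $\mu = \varphi^b$. The hypothesis that $|\mu| = 2f/\gcd(b, 2f)$ is even, together with $f$ odd (forced by $3 \mid q + 1$), implies $b$ is odd. By~(a) we have $\mu^{2f} = 1$, so $\beta^{2f} = \ad_s$ with $s = N_{\mu, 2f}(t)$, and working in multiplicative coordinates on~$\overline{T}$ where $\varphi$ acts as squaring, a geometric-series computation yields $s = t^{(q^{2b} - 1)/(2^b - 1)}$. The parities $b, f$ both odd imply $\gcd(2^b - 1, q + 1) = 1$ by a standard argument on the multiplicative order of~$2$ modulo a common divisor (which would have to divide~$b$ and~$2f$ but not~$f$); combined with the divisibilities $(2^b - 1) \mid q^{2b} - 1$ and $(q + 1) \mid q^{2b} - 1$ this yields $(q + 1) \mid (q^{2b} - 1)/(2^b - 1)$, and then $t^{q + 1} = 1$ gives $s = 1$.

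The hard part is~(b). Part~(a) gives $\mu^{\delta f} = 1$, so $\beta^{\delta f} = \ad_s$ with $s := N_{\mu, \delta f}(t)$; since $\mu(N_{\mu, \delta f}(t)) = N_{\mu, \delta f}(t)$, the element~$s$ lies in the finite group $\overline{T}^{\langle \sigma, \mu\rangle}$ and is a $3$-element. The target is $s^3 = 1$, which yields $\beta^{3 \delta f} = \ad_{s^3} = \id$. To establish it, I plan to write $\mu = \iota^a \varphi^b$, express $s$ in multiplicative torus coordinates as $s = t^e$ for an explicit integer~$e$ depending on $(a, b, f)$, and verify $v_3(3 e) \geq 1 + v_3(f)$ in each of the four subcases according to the parities of~$a$ and~$b$, using the Lifting the Exponent lemma to evaluate $v_3(2^m \pm 1)$. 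Combined with the bound $v_3(|t|) \leq v_3(q - \varepsilon) = 1 + v_3(f)$ on the $3$-exponent of $\overline{T}^\sigma$ for adjoint~$\overline{G}$, this gives $|t| \mid 3 e$, hence $s^3 = 1$. The main subtlety is that for type~$E_6$ the graph automorphism~$\iota$ acts on the cocharacter lattice of~$\overline{T}$ by the non-scalar involution $-w_0$ rather than by inversion; the formula for~$e$ must be adjusted accordingly, but the resulting $3$-adic bookkeeping proceeds along parallel lines.
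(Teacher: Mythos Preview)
Your treatments of~(a) and~(c) are correct; for~(c) you take a slightly longer route than the paper (which simply computes $\beta^f = \ad_x \circ \varphi^f$ with $x^{q+1}=1$ and then squares), but your geometric-series argument is sound.

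For~(b) there is a genuine gap. You propose to write $s = N_{\mu,\delta f}(t) = t^{e}$ for an explicit integer~$e$ and then do $3$-adic bookkeeping on~$e$. But when $\mu$ has a nontrivial graph component (which can occur for $\varepsilon = 1$, both in type~$A_{d-1}$ and in type~$E_6$), the automorphism~$\iota$ does \emph{not} act on~$\overline{T}$ by a scalar, so $N_{\mu,\delta f}(t)$ is in general of the form $t^{e_1}\cdot\iota(t)^{e_2}$, not $t^{e}$. You note this for~$E_6$ but then wave it away with ``the formula for~$e$ must be adjusted accordingly,'' without saying how. The same issue already arises for~$A_{d-1}$, where $\iota$ sends $\diag(a_1,\dots,a_d)$ to $\diag(a_d^{-1},\dots,a_1^{-1})$. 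Your plan, as stated, does not prove~(b).

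The paper handles this differently and avoids the problem altogether. It first passes to $\beta^{c}$ with $c := \delta f / f_3$. The point is that~$c$ is even (since $f$ is even when $\varepsilon = 1$, and $\delta = 2$ when $\varepsilon = -1$), so $\mu^{c}$ is a \emph{pure} power of~$\varphi$, say $\varphi^{c'}$ with~$c'$ even. From here on~$\varphi$ acts on~$\overline{T}$ by squaring, so the norm computations are genuinely scalar: $\beta^{3c} = \ad_{x^{1+q'+q'^2}} \circ \mu^{3c}$ with $q' = 2^{c'} \equiv 1 \pmod{3}$, hence $3 \mid 1+q'+q'^2$, and iterating kills the $3$-part step by step. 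This reduction to a $\varphi$-power before computing norms is the key idea you are missing. Your approach is salvageable along similar lines (e.g.\ by grouping terms in pairs to replace~$t$ by $u := t\cdot\iota(t)^{2^b}$ and applying your $3$-adic estimate to~$u$), but you would also need to justify the claimed bound $v_3(\exp \overline{T}^{\sigma}) = 1+v_3(f)$ uniformly for all four adjoint groups in the hypothesis, which you have asserted without proof.
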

\begin{proof}
(a) This is clear, as $\Gamma_G \times \Phi_G$ has exponent~$\delta f$.

(b) Put $c := \delta f/f_3$. If $\delta = 1$, then~$f$ is even as then
$3 \mid 2^f - 1$. Thus~$c$ is even. Also, $|\mu^c|$ divides $f_3$. Hence
$\mu^c = \varphi^{c'}$ for some even integer~$c'$. By
Equation~(\ref{PowerFormula}), we have $\beta^c = \ad_x \circ \mu^c$ with a
$3$-element $x \in \overline{T}^{\sigma}$. To prove our claim, it suffices to
show that $\beta^{3cf_3} = 1$.

Suppose that $f_3 := 3^b$ for some non-negative integer~$b$. Then
$(q - \varepsilon)_3 = 3^{b+1}$ by \cite[Lemma IX.$8.1$(e)]{HuBII}.
Observe that~$\varphi$ acts on~$\overline{T}$ by squaring the elements; see
\cite[Theorem~$1.12.1$]{GLS}. Put $q' := 2^{c'}$. Once more by
Equation~(\ref{PowerFormula}), we find that
$\beta^{3c} = \ad_{x'} \circ \mu^{3c}$ with $x' = x^{1+q'+{q'}^2}$. As~$q'$ is
an even power of~$2$, we have $3 \mid 1+q'+{q'}^2$. Hence $|x'| = |x|/3$, unless
$|x| = 1$, and $|\mu^{3c}| = |\mu^{c}|/3$, unless $|\mu^{c}|_3 = 1$. Iterating
this argument, we find that $\beta^{cf_3} = \ad_y \circ \id$ with
$y \in \overline{T}^{\sigma}$ of order~$1$ or~$3$. This completes the proof
of~(a).

(c) Since $\varepsilon = -1$, we have $\Gamma_G = \{ \id \}$, and $\Phi_G$ is
cyclic of order~$2f$. Moreover,~$f$ is odd, as $3 \mid 2^f + 1$. As $|\mu|$ is
even, Equation~(\ref{PowerFormula}) implies that
$\beta^f = \ad_x \circ \varphi^f$ for some $x \in \overline{T}^{\sigma}$ with
$x^{q+1} = 1$. From Equation~(\ref{PowerFormula}) we get $\beta^{2f} = 
\ad_{x^{1+q}} \circ \id = \id$. This yields our assertion.
\end{proof}

\addtocounter{subsection}{3}
\subsection{The proof for the eight-dimensional orthogonal group}
As an illustration of Lemma~\ref{CentralizersEstimates}, we complete the proof 
that the groups $\P\Omega_8^+(q)$ have the $E1$-property.

\addtocounter{num}{1}
\begin{prp}
\label{PO8Plus}
Let $G = \P\Omega_8^+(q)$. Assume that $\nu = \ad_h \circ \iota \circ \mu$,
where $h \in G$, $\iota \in \Gamma_G$ of order~$3$ and $\mu \in \Phi_G$.
Then $(G,V,n)$ has the $E1$-property.
\end{prp}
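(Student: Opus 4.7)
The plan is to apply the large degree method of \cite[Subsection~$3.1$]{HL1} in combination with the centralizer bound of Lemma~\ref{CentralizersEstimates}. First, I extract from $\nu$ a power $\alpha \in \Aut(G)$ whose image in $\Out(G)$ is a triality of order $3$. Since the standard triality $\iota$ and the standard field automorphism $\mu$ commute in $\Aut(G)$ modulo $\Inndiag(G)$, the image of $\nu$ in $\Out(G)$ is cyclic of order $\mathrm{lcm}(3,|\mu|)$, so a suitable power $\alpha$ of $\nu$ has order exactly $3$ in $\Out(G)$ and is not the order-$2$ graph automorphism. Lemma~\ref{CentralizersEstimates} then yields $|C_G(\alpha)|<M_G=q^{14}+q^{12}$ (for $q\neq 2$).

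Next, by the large degree method of \cite[Subsection~$3.1$]{HL1}, the $E1$-property for $(G,V,n)$ follows once one knows $\chi(1)\geq M_G$ for every nontrivial real irreducible character $\chi$ of $G$ of odd degree, where $\chi$ denotes the character of $V$. By Lemma~\ref{SemisimpleCharactersLemma}(a), such a $\chi$ is semisimple with
$$
\chi(1) = [\overline{G}^\sigma\colon C_{\overline{G}^\sigma}(s)]_{2'}
$$
for some nontrivial real semisimple $s \in \overline{G}^\sigma = \PCSO_8(q)^\sigma$. A direct enumeration of the possible reductive centralizers $C_{\overline{G}}(s)$ for nontrivial real semisimple $s$ in the simple adjoint group of type $D_4$ --- via the root subsystems of $D_4$ combined with Lang--Steinberg, or by direct consultation of the tables in \cite{LL,LL2} --- shows that this $2'$-index is at least $M_G$ whenever $q\geq 4$ is even.

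The main obstacle is precisely this degree verification: one must check, over the admittedly small list of $D_4$-subsystems, that every proper reductive subgroup of the appropriate twist has $2'$-index in $\overline{G}^\sigma$ exceeding $q^{14}+q^{12}$; the worst case will come from the largest proper Levi or semisimple centralizer of $D_4$, which must be handled tightly. A secondary point is the isolated case $q=2$, for which $\mu = 1$ and $\nu$ is essentially the triality $\iota$ itself; here $\P\Omega_8^+(2)$ is small enough to be handled directly with GAP. Finally there is the minor subtlety that when $3\mid|\mu|$ the extraction of an order-$3$ power of $\nu$ in $\Out(G)$ requires one to first quotient out the inner-diagonal part of $\nu$, but this is immediate from the commutation of $\iota$ and $\mu$ modulo $\Inndiag(G)$.
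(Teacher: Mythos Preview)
Your degree verification step fails. You claim that for every nontrivial real semisimple $s$ the $2'$-index $[\overline{G}^\sigma : C_{\overline{G}^\sigma}(s)]_{2'}$ is at least $M_G = q^{14}+q^{12}$, but the ``worst case'' you anticipate---the largest proper centralizer in $D_4$---is a Levi of type $A_3$, with $C_{G^*}(s)\cong \GL_4(q)$ or $\GU_4(q)$. The corresponding semisimple characters have degrees
\[
(q-1)^2(q^2+q+1)(q^2+1)\quad\text{and}\quad (q+1)^2(q^2+1)(q^2-q+1),
\]
which are of order $q^6$, far below $q^{14}+q^{12}$. Since $q$ is even these degrees are odd, and the characters are real, so they are genuine candidates for~$\chi$; your criterion cannot dispose of them. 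The paper handles precisely these two families by a different argument: the three standard $A_3$-Levi subgroups of $D_4$ are permuted cyclically by the triality $\iota^*$ and fixed by field automorphisms, so $\nu^*(s)$ is not conjugate to~$s$ and hence ${}^{\nu}\chi\neq\chi$, contradicting the extendibility of $\chi$ to $\langle\Inn(G),\nu\rangle$. This invariance obstruction is the missing ingredient in your plan.

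There is also a structural issue with how you invoke the large degree method. The form used in the paper (via \cite[Lemma~$4.3.3$]{HL1}) takes $\alpha=\ad_g\circ\nu$ for some $g\in G$---i.e.\ a modification of $n$ by an element of $\rho(G)$---not a \emph{power} of $\nu$; one then needs centralizer bounds for $\alpha_{(p)}$ for \emph{every} prime $p\mid|\alpha|$, and the resulting criterion is $\chi(1)>(|\alpha|-1)\,M_G^{1/2}$ rather than $\chi(1)\geq M_G$. This is why the paper, when $|\nu|$ is odd, multiplies by a carefully chosen $\iota$-stable involution $g\in\P\Omega_8^+(2)$ to force $|\alpha|$ even and to control $|C_G(\alpha_{(2)})|$. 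Even with the correct, much weaker threshold $(|\alpha|-1)q^7$, the $q^6$-degree characters still fall below it, so the non-invariance argument above remains indispensable.
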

\begin{proof}
By pre-multiplying~$n$ with~$h^{-1}$, we may assume that 
$\nu = \iota \circ \mu$. If~$|\nu|$ is even, we put $g := 1$. If~$|\nu|$ is odd, 
we let $g \in \P\Omega_8^+(2) \leq G$ be a $\iota$-stable involution, whose
centralizer in $\P\Omega_8^+(2)$ has order $2^{10}3$. Using the fact that the
centralizer of~$\iota$ in $\P\Omega_8^+(2)$ is isomorphic to $G_2(2)$, the 
corresponding class fusion (computed with GAP) shows the existence of such 
a~$g$. By construction,~$\nu$ fixes~$g$. The table of unipotent characters 
of~$G$ in Chevie~\cite{chevie} contains, in particular, the orders of the 
centralizers of the unipotent conjugacy classes of~$G$. An inspection of these 
entries shows that $|C_G( g )| = q^{10}(q^2 - 1)$. Put 
$\alpha := \ad_g \circ \nu$. Assume that $q \neq 2$. Then 
$|C_G( \alpha_{(p)} )| \leq q^{14} + q^{12}$ for every prime~$p$ 
dividing~$|\alpha|$. For odd~$p$ or even~$|\nu|$ this follows from
Lemma~\ref{CentralizersEstimates}, as $\alpha$ is not a graph automorphism of 
order~$2$. For odd~$|\nu|$ we have $C_G( \alpha_{(2)} ) = C_G( g )$.

By \cite[Lemma~$4.3.3$]{HL1} the triple $(G,V,n)$ has the $E1$-property, if
\begin{equation}
\label{POmegaDegrees}
\chi(1) > (|\alpha| - 1)(q^{14}+q^{12})^{1/2}. 
\end{equation}
Suppose first that $\chi(1) > q^9/8$. If $f \geq 5$, then 
$|\alpha| - 1 \leq 6f - 1 \leq 2^f = q$. 
For $f = 4$ we have $|\alpha| \leq 12 \leq 2^4 = q$. These bounds clearly imply 
that~(\ref{POmegaDegrees}) is satisfied for $f \geq 4$. For $f = 2$ or~$3$, we
have $|\alpha| = 6$. Using this and the exact values for $\chi(1)$ given 
in~\cite{LL2}, we see that~(\ref{POmegaDegrees}) also holds for $f = 2, 3$.
We are left with the case $q = 2$, i.e.\ $G = \P\Omega_8^+(2)$. Here,~$\Phi_G$
is trivial and thus $\alpha = \ad_g \circ \iota$, where~$g$ is a
$\iota$-stable involution with centralizer of order $2^{10}3$. Let
$G^{\ex} := \langle \Inn(G) , \alpha \rangle = \langle \Inn(G) , \nu \rangle$ and 
let~$\chi^{\ex} $ denote the character of~$G^{\ex} $ afforded by~$V$; see 
\cite[Remark~$4.2.5$]{HL1}. Now~$G^{\ex} $ is a group of shape~$G.3$. The character 
table of~$G.3$, and thus of~$G^{\ex} $ is contained in the Atlas~\cite{Atlas} and in 
Gap~\cite{GAP04}. It is easy to locate the conjugacy classes of~$G^{\ex} $ containing 
the elements of $\langle \alpha \rangle$. Assume that~$\chi$ does not 
satisfy~(\ref{POmegaDegrees}). Using Gap, we find that then either 
$\chi(1) = 175$ or $\chi(1) = 525$. In either case, we check that
$\Res^{G^{\ex} }_{\langle \alpha \rangle}( \chi^{\ex} )$ contains each of the real 
irreducible characters of~$\langle \alpha \rangle$ as constituents. Hence 
$(G,V,n)$ has the $E1$-property by \cite[Lemma~$4.3.1$]{HL1}. This completes the 
proof in case $\chi(1) > q^9/8$.

Now assume that $\chi(1) \leq q^9/8$. Using~\cite{LL2}, we find that~$\chi$ lies 
in one of two series of irreducible characters of~$G$ of degrees 
$(q-1)^2(q^2+q+1)(q^2+1)$ and $(q+1)^2(q^2+1)(q^2-q+1)$. We claim that~$\chi$ is 
not invariant under~$\nu$. This would contradict the fact that~$\chi$ extends to 
$G^{\ex}  = \langle \Inn(G), \nu \rangle$; see \cite[Remark~$4.2.5$]{HL1}.
Using~\cite{LL}, it is easy to see that $\chi \in \mathcal{E}( G, s )$, 
where~$s$ is a semisimple element of~$G^*$ with 
$C_{G^*}( s ) \cong \GU_4(q)$ or $C_{G^*}( s ) \cong \GL_4(q)$. In fact, we may 
assume that $C_{\overline{G}^*}( s )$ is a standard Levi subgroup of 
$\overline{G}^*$ of type~$A_3$. There are three such Levi subgroups, which are 
mutually non-conjugate in~$\overline{G}^*$. Put $\nu^* = \iota^* \circ \mu^*$, 
where~$\iota^*$ and~$\mu^*$ denote the standard graph automorphism 
of~$\overline{G}^*$ of order~$3$, dual to~$\iota$, and the standard field 
automorphism of~$\overline{G}^*$ dual to~$\mu$, respectively. Then~$\nu^*$ is an 
isogeny of $\overline{G}^*$ dual to~$\nu$. Notice that~$\iota^*$ permutes the 
three standard Levi subgroups of $\overline{G}^*$ of type~$A_3$ cyclically. On 
the other hand, each standard Levi subgroup is fixed by~$\varphi^*$. Hence 
$C_{\overline{G}^*}( \nu^*( s ) )$ is not conjugate in~$\overline{G}^*$ to 
$C_{\overline{G}^*}( s )$. It follows that~$\nu^*( s )$ and~$s$ are not 
$G^*$-conjugate. As ${^\nu\!\chi} \in \mathcal{E}( G, {\nu^*}^{-1}(s) )$ by 
\cite[Proposition~$7.2$]{JTayl}, it follows that ${^\nu\!\chi} \neq \chi$, as 
claimed.
\end{proof}

\section{The linear and unitary groups}
\label{RemainingLinearUnitary}

In this section, let~$G$ be one of the groups of 
{\rm Hypothesis \ref{GroupsOfEvenCharacteristic}(b)(i),(ii)}. Thus  
$G = \PSL_d^{\varepsilon}(q)$ with $d \geq 3$ and $q = 2^f$, where $q \neq 2$ if
$d = 3$ and $\varepsilon = -1$. Put $e := \gcd( d, q - \varepsilon)$ and recall
that $e > 1$ by hypothesis. Also recall the definition of the parameter~$\delta$
from the introduction to Subsection~\ref{CentralizersOfAutomorphisms}. We will 
further use the notations and results introduced in 
Subsections~\ref{TheRemainingGroups}--\ref{AutomorphismsIII}.  

\subsection{On the natural representation of the linear and unitary groups}
\label{LinearAndUnitaryNaturalRepresentation}
We extend our $\varepsilon$-convention to the general linear and unitary groups
in the obvious way. We will need some results on semisimple elements 
of~$\GL_d^\varepsilon(q)$ acting on its natural vector space. In view of 
Lusztig's Jordan decomposition of characters, the description of real 
semisimple elements is relevant for the classification of real irreducible 
characters.

Let $E := {\mathbb{F}_{q^\delta}}^d$ denote the natural column vector space 
for~$\GL^\varepsilon_d(q)$. If $\varepsilon =  -1$, the group
$\GU_d( q ) \leq \GL_d(q^2)$ is the stabilizer of the Hermitian form
\begin{equation}
\label{HermitianForm}
( ( x_1, \ldots , x_d )^t, ( x_1, \ldots , x_d )^t )
\mapsto \sum_{i = 1}^d x_i y_{d-i+1}^q
\end{equation}
on~$E$. A subspace $E_1$ is totally isotropic, if the hermitian 
form~(\ref{HermitianForm}) vanishes on $E_1 \times E_1$. A pair 
$(E_1,E_1^{\dagger})$ of totally isotropic subspaces of~$E$ is called a 
complementary pair, if $E_1 \oplus E_1^{\dagger}$ is non-degenerate (which 
implies that $\dim(E_1) = \dim(E_1^{\dagger})$).

Let~$\Delta$ be a monic irreducible polynomial over~$\mathbb{F}_{q^\delta}$. We 
write $\Delta^*$ for the monic polynomial whose roots in~$\mathbb{F}$ are the 
inverses of the roots of~$\Delta$. Then~$\Delta^*$ is an irreducible polynomial 
over~$\mathbb{F}_{q^\delta}$. If $\delta = 2$, we write $\Delta^\dagger$ for the 
monic polynomial whose roots are the $-q$ths powers of the roots of~$\Delta$. 
Then~$\Delta^\dagger$ is an irreducible polynomial over~$\mathbb{F}_{q^2}$. The 
notions~$\Delta^*$ and~$\Delta^{\dagger}$ are extended to all monic polynomials 
over~$\mathbb{F}_{q^{\delta}}$ by multiplicativity. We collect a few formal 
properties of these notions.

\addtocounter{num}{1}
\begin{lem}
\label{FormalitiesOnPolynomialsI}
Let~$\Delta$ be a monic irreducible polynomial over~$\mathbb{F}_{q^\delta}$ of
degree~$k$. Then the following hold.

{\rm (a)} If $\Delta = \Delta^*$, then~$k$ is even, unless $k = 1$ and~$1$ is
the root of~$\Delta$.

{\rm (b)} If $\delta = 2$ and $\Delta = \Delta^\dagger$, then~$k$ is odd.
\end{lem}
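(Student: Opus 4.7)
The plan is to interpret each hypothesis as saying that an explicit map permutes the root set of $\Delta$ in $\overline{\mathbb{F}_{q^\delta}}$, and then extract parity information from the cycle structure of that permutation.

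For (a), first note that $\Delta \neq x$ (otherwise $\Delta^*$ would not be defined, or at any rate $\Delta = \Delta^*$ would fail), so $\Delta(0) \neq 0$ and the involution $\tau\colon \alpha \mapsto \alpha^{-1}$ is well defined on the $k$ roots of $\Delta$. The hypothesis $\Delta = \Delta^*$ means exactly that $\tau$ permutes this root set. A fixed point of $\tau$ satisfies $\alpha^2 = 1$, which in characteristic~$2$ forces $\alpha = 1$. If $k > 1$, irreducibility of $\Delta$ prevents $1$ from being a root (otherwise $x - 1 \mid \Delta$), so $\tau$ is fixed-point-free, partitioning the $k$ roots into pairs and forcing $k$ to be even. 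If instead $k = 1$, then $\tau$ must fix the unique root, giving the exceptional case.

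For (b), the hypothesis $\Delta = \Delta^\dagger$ says that the map $\sigma\colon\alpha\mapsto\alpha^{-q}$ permutes the $k$ roots of $\Delta$. A direct computation yields
$$\sigma^2(\alpha) = (\alpha^{-q})^{-q} = \alpha^{q^2} = F_{q^2}(\alpha),$$
so on the root set $\sigma^2$ coincides with the Frobenius $F_{q^2}$ of $\overline{\mathbb{F}_{q^2}}/\mathbb{F}_{q^2}$. Since $\Delta$ is irreducible of degree $k$ over $\mathbb{F}_{q^2}$, the $k$ roots form a single $F_{q^2}$-orbit, so $\sigma^2$ acts as a $k$-cycle on them.

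It remains to observe the elementary symmetric-group fact: if the square of a permutation of a $k$-element set is a $k$-cycle, then the permutation is transitive (each of its orbits contains an orbit of the square) and hence is itself a $k$-cycle; and a $k$-cycle squares to a $k$-cycle iff $\gcd(2,k) = 1$, i.e.\ iff $k$ is odd. Applying this to $\sigma$ gives the conclusion. There is no real obstacle here; the only thing to be careful about is that the map in~(b) is not a field automorphism, but that is irrelevant since we only use it as a set-theoretic permutation of the roots.
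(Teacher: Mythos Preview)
Your proof is correct. Part~(a) is essentially the paper's argument, just spelled out in more detail: the paper's single sentence (``As~$q$ is even, $\zeta \neq \zeta^{-1}$, unless $\zeta = 1$'') is precisely your pairing observation.

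Part~(b), however, takes a genuinely different route. The paper fixes a root~$\zeta$, writes $\zeta^{-q} = \zeta^{q^{2i}}$ for some $0 \leq i \leq k-1$, and then manipulates exponents to obtain $\zeta^{q^{2(2i-1)}} = \zeta$, whence $k \mid 2i-1$ and so~$k$ is odd. Your argument instead packages the situation as a permutation-group fact: the map $\alpha \mapsto \alpha^{-q}$ permutes the root set and squares to the Frobenius $\alpha \mapsto \alpha^{q^2}$, which acts as a $k$-cycle; then the purely combinatorial observation that a permutation whose square is a $k$-cycle must itself be a $k$-cycle, together with the parity obstruction for squaring a cycle, forces~$k$ odd. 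Your approach is cleaner and avoids the slightly fiddly exponent bookkeeping (in particular the edge case $i=0$, where $2i-1 = -1$ and the paper's formula has to be read with care). The paper's approach, on the other hand, stays closer to the concrete arithmetic of the roots and requires no detour through symmetric-group cycle types.
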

\begin{proof}
(a) Let~$\zeta \in \mathbb{F}$ be a root of~$\Delta$. As~$q$ is even, 
$\zeta \neq \zeta^{-1}$, unless $\zeta = 1$.

(b) Suppose that $\Delta = \Delta^\dagger$ and let $\zeta \in \mathbb{F}$ be a
root of~$\Delta$. As~$\Delta$ is irreducible over~$\mathbb{F}_{q^2}$, the roots
of~$\Delta$ are $\zeta, \zeta^{q^2}, \ldots , \zeta^{q^{2(k-1)}}$, and~$k$ is the
smallest positive integer such that $\zeta^{q^{2k}} = \zeta$.

As $\Delta = \Delta^\dagger$, there is $0 \leq i \leq k - 1$ such that
$\zeta^{-q} = \zeta^{q^{2i}}$. Hence $\zeta^{q(q^{2i-1} + 1)} = 1$. This implies
$\zeta^{q^{2i-1} + 1} = 1$ and thus $\zeta^{q^{2(2i-1)} - 1} = 1$. It follows that
$\zeta^{q^{2(2i-1)}} = \zeta$ which implies that~$k$ is odd.
\end{proof}

\begin{lem}
\label{FormalitiesOnPolynomialsII}
Let $\hat{s}, \hat{s}' \in \GL_d( q^{\delta} )$ be semisimple with characteristic
polynomial~$\Xi$ and~$\Xi'$, respectively.

{\rm (a)} The elements~$\hat{s}$ and~$\hat{s}'$ are conjugate 
in~$\GL_d( q^{\delta} )$, if and only if $\Xi = \Xi'$.

{\rm (b)} The element $\hat{s}$  is real, if and only if $\Xi = \Xi^*$.

{\rm (c)} Suppose that $\delta = 2$ and that $\hat{s} \in \GU_d(q)$.
Then $\Xi = \Xi^{\dagger}$.

{\rm (d)} Suppose that $\delta = 2$ and that $\hat{s}, \hat{s}' \in \GU_d(q)$. 
Then~$\hat{s}$ and~$\hat{s}'$ are conjugate 
in~$\GU_d(q)$, if and only if they are conjugate in~$\GL_d(q^2)$.

\end{lem}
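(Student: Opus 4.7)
Parts~(a) and~(b) are quick consequences of rational canonical form. For~(a), since~$\hat{s}$ is semisimple its minimal polynomial is squarefree, so writing the irreducible factorization $\Xi = \prod_i \Delta_i^{m_i}$ over~$\mathbb{F}_{q^\delta}$, the matrix~$\hat{s}$ is $\GL_d(q^\delta)$-conjugate to the block-diagonal matrix with~$m_i$ copies of the companion matrix of~$\Delta_i$ for each~$i$; this normal form depends only on~$\Xi$. For~(b), observe that the characteristic polynomial of~$\hat{s}^{-1}$ has as roots the reciprocals of the roots of~$\Xi$, hence equals~$\Xi^*$, and apply~(a).

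For part~(c), I would rewrite the Hermitian form~(\ref{HermitianForm}) in matrix shape: with~$J$ the $d \times d$ anti-diagonal matrix of ones and $y^{(q)}$ the entry-wise $q$-th power, the form is $(x,y) = x^T J\, y^{(q)}$. The condition $\hat{s} \in \GU_d(q)$ then becomes $\hat{s}^T J\, \hat{s}^{(q)} = J$, so that $\hat{s}^{(q)}$ is $\GL_d(q^2)$-conjugate to $(\hat{s}^{-1})^T$ and hence has characteristic polynomial $\Xi^*$. Computed directly on the other hand, the characteristic polynomial of $\hat{s}^{(q)}$ has as roots the $q$-th powers of the roots of~$\Xi$. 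Matching the two descriptions and inverting shows that $\zeta \mapsto \zeta^{-q}$ permutes the multiset of roots of~$\Xi$, which is precisely the assertion $\Xi = \Xi^\dagger$.

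For part~(d), the ``only if'' is trivial. For the converse, by~(a) it suffices to show that two semisimple elements of~$\GU_d(q)$ with the same characteristic polynomial are already $\GU_d(q)$-conjugate. I would invoke Lang--Steinberg applied to $H := \GL_d(\mathbb{F})$ equipped with the Steinberg endomorphism whose fixed-point subgroup is~$\GU_d(q)$: the centralizer $C_H( \hat{s} )$ is connected, being a direct product of general linear groups indexed by the distinct eigenvalues of~$\hat{s}$, so the familiar consequence of Lang--Steinberg on $\sigma$-fixed conjugacy classes yields $\GU_d(q)$-conjugacy from $H$-conjugacy; the latter follows from part~(a) applied to $\GL_d(\mathbb{F})$ (or directly from equality of characteristic polynomials).

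The only non-routine point I anticipate is the bookkeeping in~(c): keeping the entry-wise Frobenius action on matrices distinct from the induced action on the spectrum and relating both to the notations~$*$ and~$\dagger$. Parts~(a) and~(b) are standard linear algebra, and~(d) is a one-step application of a very familiar Lang--Steinberg consequence once~(a) is in hand; so no individual step is expected to present any serious obstacle.
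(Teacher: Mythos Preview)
Your proposal is correct. The paper's own proof consists of the single sentence ``These assertions are well known,'' so you have in fact supplied considerably more detail than the paper does; your arguments for each part are standard and sound, and the Lang--Steinberg argument for~(d) via connectedness of centralizers in~$\GL_d(\mathbb{F})$ is exactly the usual justification.
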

\begin{proof}
These assertions are well known.
\end{proof}

\begin{lem}
\label{dCongruentd1}
Let $\hat{s} \in \GL_d( q^{\delta} )$ be semisimple and real and let~$d_1$ 
denote the dimension of the eigenspace of~$\hat{s}$ for the eigenvalue~$1$. 
Then $d \equiv d_1\,\,(\mbox{\rm mod}\,\,2)$.
\end{lem}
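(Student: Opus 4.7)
The plan is to analyze the factorization of the characteristic polynomial $\Xi$ of $\hat{s}$ into monic irreducibles over $\mathbb{F}_{q^\delta}$ and exploit the symmetry forced by reality.

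First I would invoke Lemma~\ref{FormalitiesOnPolynomialsII}(b) to translate reality of $\hat{s}$ into the identity $\Xi = \Xi^*$. Since $\hat{s}$ is semisimple, the multiplicity of any linear factor $(X - \zeta)$ in $\Xi$ equals the dimension of the corresponding eigenspace; in particular the multiplicity of $(X - 1)$ is exactly $d_1$.

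Next I would group the monic irreducible factors of $\Xi$ into three types: (i) factors $\Delta$ with $\Delta \neq \Delta^*$; (ii) self-reciprocal factors $\Delta = \Delta^*$ of degree greater than $1$; (iii) the factor $X - 1$. The identity $\Xi = \Xi^*$ forces any factor of type (i) to appear with the same multiplicity as its partner $\Delta^*$, so the total degree contributed by type (i) factors is even. By Lemma~\ref{FormalitiesOnPolynomialsI}(a), every factor of type (ii) has even degree (because $q$ is even, the only linear self-reciprocal polynomial is $X - 1$), so their total contribution to $\deg(\Xi)$ is also even. Note moreover that no factor $X - \zeta$ with $\zeta \neq 1$ can be self-reciprocal, again because in even characteristic $\zeta = \zeta^{-1}$ forces $\zeta = 1$.

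Putting this together, $d = \deg(\Xi) = d_1 + (\text{even integer})$, which gives the desired congruence $d \equiv d_1 \pmod{2}$. No step looks like a genuine obstacle here; the main thing to watch is the characteristic~$2$ subtlety in identifying the unique self-reciprocal linear polynomial, which is precisely what Lemma~\ref{FormalitiesOnPolynomialsI}(a) records.
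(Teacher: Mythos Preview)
Your argument is correct and rests on the same underlying observation as the paper's proof: in characteristic~$2$ the map $\zeta \mapsto \zeta^{-1}$ has~$1$ as its only fixed point, so the eigenvalues of~$\hat{s}$ different from~$1$ pair off. The paper simply states this pairing at the level of eigenvalues in the algebraic closure (one sentence), whereas you route the same idea through the irreducible factorization of~$\Xi$ over~$\mathbb{F}_{q^\delta}$ and Lemmas~\ref{FormalitiesOnPolynomialsI}(a) and~\ref{FormalitiesOnPolynomialsII}(b); your version is more detailed but not genuinely different.
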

\begin{proof}
The eigenvalues of~$\hat{s}$ different from~$1$ come in pairs of mutually
inverse elements.
\end{proof}

\begin{defn}
{\rm
Let~$\hat{s} \in \GL^{\varepsilon}_d(q)$ be semisimple, and let~$\Theta$ be a 
monic factor of the minimal polynomial of~$\hat{s}$. We then put 
$E_{\Theta}(\hat{s}) := \ker( \Theta(\hat{s}) )$.
}
\end{defn}

\begin{lem}
\label{FormalitiesOnPolynomialsIII}
Put $\hat{G} := \GL_d^{\varepsilon}(q)$. Let~$\hat{s} \in \hat{G}$ be semisimple 
with characteristic polynomial~$\Xi$, and let~$\Delta$ be a monic irreducible 
factor of~$\Xi$ of multiplicity~$m$ and degree~$k$.

{\rm (a)} The centralizer $C_{\hat{G}}( \hat{s} )$ 
stabilizes~$E_{\Theta}(\hat{s})$ for every monic factor~$\Theta$ of the minimal
polynomial of~$\hat{s}$. In particular, $C_{\hat{G}}( \hat{s} )$
stabilizes~$E_{\Delta}( \hat{s} )$. If $\delta = 1$, then $C_{\hat{G}}( \hat{s} )$
induces $\GL_m( q^k )$ on~$E_{\Delta}( \hat{s} )$.

{\rm (b)} Suppose that $\delta = 2$ and let~$\Delta'$ be a monic irreducible 
factor of~$\Xi$ with $\Delta' \neq \Delta^{\dagger}$. Then 
$E_{\Delta'}(\hat{s})$ and $E_{\Delta}(\hat{s})$ are orthogonal.

{\rm (c)} Suppose that $\delta = 2$ and that $\Delta = \Delta^\dagger$. 
Then~$E_{\Delta}(\hat{s})$ is non-degenerate and~$C_{\GU_d(q)}( \hat{s} )$ 
induces~$\GU_m(q^k)$ on~$E_{\Delta}(\hat{s})$.

{\rm (d)} Suppose that $\delta = 2$ and that $\Delta \neq \Delta^\dagger$.
Then~$E_{\Delta}(\hat{s})$ is totally isotropic. In this case, 
$E_{\Delta}( \hat{s} ) \oplus E_{\Delta^{\dagger}}(\hat{s})$ is non-degenerate 
and~$C_{\GU_d(q)}( \hat{s} )$ induces $\GL_m( q^{2k} )$ on 
$E_{\Delta}( \hat{s} ) \oplus E_{\Delta^{\dagger}}(\hat{s})$.
\end{lem}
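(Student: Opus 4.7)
The four assertions all flow from a single observation: if $v, w \in E$ are $\hat{s}$-eigenvectors with eigenvalues $\zeta, \eta$ in $\mathbb{F}$, then in case $\delta = 1$ every element of $C_{\hat{G}}( \hat{s} )$ fixes the generalized eigenspaces of $\hat{s}$, while in case $\delta = 2$ the identity $(\hat{s} v, \hat{s} w) = (v,w)$ together with the sesquilinearity of the Hermitian form~(\ref{HermitianForm}) forces $\zeta \eta^q = 1$ whenever $(v,w) \neq 0$. Since the roots of $\Delta^{\dagger}$ are by definition the $-q$th powers of the roots of $\Delta$, this last relation is equivalent to saying that $\eta$ is a root of $\Delta^{\dagger}$ whenever $\zeta$ is a root of $\Delta$. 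I plan to harvest all four statements from this observation after expanding $E = \bigoplus_{\Delta} E_{\Delta}(\hat{s})$ over the algebraic closure.

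For part~(a), I would note that $E_{\Theta}( \hat{s} ) = \ker \Theta( \hat{s} )$ is cut out by a polynomial in $\hat{s}$, hence is stabilized by any element of $C_{\hat{G}}( \hat{s} )$. If $\delta = 1$, the module $E_{\Delta}( \hat{s} )$ is free of rank $m$ over $\mathbb{F}_q[ \hat{s} \mid_{E_{\Delta}( \hat{s} )} ] \cong \mathbb{F}_q[X]/(\Delta) \cong \mathbb{F}_{q^k}$, and the centralizer of $\hat{s}$ acting on it is exactly $\GL_m(q^k)$. For part~(b), the preceding eigenvalue discussion shows that, passing to $\mathbb{F}$, any eigenvector $v$ of $\hat{s}$ in $E_{\Delta}( \hat{s} )$ is orthogonal to any eigenvector $w$ of $\hat{s}$ in $E_{\Delta'}( \hat{s} )$ when $\Delta' \neq \Delta^{\dagger}$, and an $\mathbb{F}$-basis argument yields the claim over $\mathbb{F}_{q^2}$.

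Part~(d) I would handle before~(c), since it requires no new input. The same eigenvalue relation $\zeta \eta^q = 1$ applied within $E_{\Delta}(\hat{s})$ itself would force some root of $\Delta$ to equal $\zeta^{-1/q}$, hence to be a root of $\Delta^{\dagger}$; as $\Delta \neq \Delta^{\dagger}$ and both are irreducible, this is impossible, so $E_{\Delta}(\hat{s})$ is totally isotropic. Combined with~(b), the orthogonal complement of $E_{\Delta}(\hat{s}) \oplus E_{\Delta^\dagger}(\hat{s})$ is the sum of the remaining $\hat{s}$-components and therefore the pair is non-degenerate. On this pair the centralizer acts on $E_{\Delta}(\hat{s})$ as $\GL_m( q^{2k} )$ by the same module-theoretic argument as in~(a), viewing $E_{\Delta}( \hat{s} )$ as a free $\mathbb{F}_{q^2}[X]/(\Delta) \cong \mathbb{F}_{q^{2k}}$-module of rank~$m$; preservation of the Hermitian pairing between $E_{\Delta}(\hat{s})$ and $E_{\Delta^\dagger}(\hat{s})$ then determines the action on $E_{\Delta^\dagger}(\hat{s})$ uniquely, so the whole induced group on $E_{\Delta}(\hat{s}) \oplus E_{\Delta^\dagger}(\hat{s})$ is $\GL_m(q^{2k})$.

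Finally, for part~(c), when $\Delta = \Delta^{\dagger}$ the lemma \ref{FormalitiesOnPolynomialsI}(b) gives $k$ odd. Part~(b) shows that $E_{\Delta}(\hat{s})$ is orthogonal to every other $\hat{s}$-component, whence its restriction of~(\ref{HermitianForm}) is non-degenerate. The main obstacle, and the one step I expect to require the most care, is identifying the resulting structure as $\GU_m(q^k)$ rather than merely as the centralizer of $\hat{s}$ inside an ambient unitary group. The idea is to use the $\mathbb{F}_{q^{2k}}$-structure on $E_{\Delta}(\hat{s})$ induced by $\hat{s}$, and to verify that the composition of the original form with the trace $\mathbb{F}_{q^{2k}} \to \mathbb{F}_{q^2}$ descends to a non-degenerate Hermitian form over $\mathbb{F}_{q^{2k}}/\mathbb{F}_{q^k}$, the relevant Galois involution being $x \mapsto x^{q^k}$; here the oddness of~$k$ ensures compatibility between the ambient involution $x \mapsto x^q$ and the new one. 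Once this reduction is in place, the commutant of $\hat{s}$ inside the unitary group of the restricted form is $\GU_m(q^k)$ by the standard description of isometry groups over an extension field, completing the proof.
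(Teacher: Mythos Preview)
Your proposal is correct and in fact supplies considerably more detail than the paper, whose proof of this lemma consists solely of the sentence ``These assertions are well known and easily proved; see e.g.\ \cite[Proposition~(1A)]{fs2}.'' Your eigenvalue argument via $(\hat{s}v,\hat{s}w)=\zeta\eta^{q}(v,w)$ and the identification of each $E_{\Delta}(\hat{s})$ as a free module over $\mathbb{F}_{q^{\delta}}[X]/(\Delta)$ is precisely the standard approach behind that reference.

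One small point of phrasing in part~(c): you write that ``the composition of the original form with the trace $\mathbb{F}_{q^{2k}}\to\mathbb{F}_{q^2}$ descends to'' the desired $\mathbb{F}_{q^{2k}}$-Hermitian form, but the direction is the reverse. What one actually does is \emph{lift} the $\mathbb{F}_{q^2}$-valued form~$h$ to an $\mathbb{F}_{q^{2k}}$-valued form~$H$ (for instance by an averaging formula over the $\hat{s}$-action, or by the non-degeneracy of the trace pairing) so that $h=\mathrm{Tr}_{\mathbb{F}_{q^{2k}}/\mathbb{F}_{q^2}}\circ H$; one then checks that~$H$ is Hermitian for the involution $x\mapsto x^{q^{k}}$, using that~$k$ is odd so that $q\equiv q^{k}\pmod{q^{2k}-1}$ on the relevant roots. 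With that adjustment your plan goes through.
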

\begin{proof}
These assertions are well known and easily proved; see e.g.\
\cite[Proposition~(1A)]{fs2}. 
\end{proof}

\begin{lem}
\label{FormalitiesOnPolynomialsIV}
Let $\hat{G} = \GU_d(q)$ for some $d \geq 2$. Let $1 \neq \hat{s} \in \hat{G}$ 
be semisimple and real with characteristic polynomial~$\Xi$. Let~$\Delta_1$ 
denote the monic polynomial of degree~$1$ with root~$1$, and let~$d_1$ be the 
multiplicity 
of~$\Delta_1$ in~$\Xi$. Let $\Delta \neq \Delta_1$ denote a monic irreducible
factor of~$\Xi$ of degree~$k$ and multiplicity~$m$. Then the following hold.

{\rm (a)} If $\Delta = \Delta^{\dagger}$, then $\Delta \neq \Delta^*$, 
and~$\Delta^*$ occurs with multiplicity~$m$ in~$\Xi$. 

{\rm (b)} If $\Delta \neq \Delta^{\dagger}$, then~$\Delta^{\dagger}$ occurs 
with multiplicity~$m$ in~$\Xi$. 

{\rm (c)} We have $d \geq d_1 + 2mk$.

{\rm (d)} We have 
$|C_{\GU_d(q)}( \hat{s} )|_{2'} \leq |\GU_{m}( q^k )|_{2'}^2|\GU_{d - 2mk}(q)|_{2'}$.

{\rm (e)} Let $d'$ denote the greatest multiplicity of an irreducible factor 
of~$\Xi$. If $d' = 1$, then $|C_{\GU_d(q)}( \hat{s} )|_{2'} \leq (q+1)^d$.
If $d' = 2$, then $|C_{\GU_d(q)}( \hat{s} )|_{2'} \leq 
|\GU_2(q)|_{2'}^{\lfloor d/2 \rfloor}(q+1)^{\overline{d}}$ with 
$\overline{d} = d - 2\lfloor d/2 \rfloor$.
\end{lem}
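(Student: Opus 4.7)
The plan is to deduce \textup{(a)}--\textup{(e)} in order, making essential use of the classification of the irreducible factors of $\Xi$ and the centralizer structure given by Lemmas~\ref{FormalitiesOnPolynomialsI}--\ref{FormalitiesOnPolynomialsIII}.

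For part~\textup{(a)}, since $\hat s$ is real, Lemma~\ref{FormalitiesOnPolynomialsII}\textup{(b)} gives $\Xi = \Xi^*$, so $\Delta^*$ is also a monic irreducible factor of $\Xi$ of multiplicity $m$. If $\Delta = \Delta^*$, then $\Delta \neq \Delta_1$ combined with Lemma~\ref{FormalitiesOnPolynomialsI}\textup{(a)} forces $k$ to be even; on the other hand, $\Delta = \Delta^{\dagger}$ forces $k$ to be odd by Lemma~\ref{FormalitiesOnPolynomialsI}\textup{(b)}, a contradiction. Part~\textup{(b)} is immediate from $\Xi = \Xi^{\dagger}$, which holds by Lemma~\ref{FormalitiesOnPolynomialsII}\textup{(c)}. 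For part~\textup{(c)}, parts~\textup{(a)} and~\textup{(b)} supply two distinct irreducible factors, namely $\{\Delta,\Delta^*\}$ or $\{\Delta,\Delta^{\dagger}\}$, each of degree $k$ and multiplicity $m$; both are different from $\Delta_1$, since $\Delta^{\dagger} = \Delta_1$ would force a root $\zeta$ of $\Delta$ to satisfy $\zeta^{-q} = 1$, hence $\zeta = 1$ in characteristic~$2$, contradicting $\Delta \neq \Delta_1$. These two factors contribute $2mk$ to $d$ while $\Delta_1$ contributes $d_1$, proving the estimate.

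For part~\textup{(d)}, I apply Lemma~\ref{FormalitiesOnPolynomialsIII}. When $\Delta = \Delta^{\dagger}$, we also have $(\Delta^*)^{\dagger} = (\Delta^{\dagger})^* = \Delta^*$, so both $E_{\Delta}(\hat s)$ and $E_{\Delta^*}(\hat s)$ are non-degenerate and $C_{\GU_d(q)}(\hat s)$ induces $\GU_m(q^k)$ on each, while its action on the orthogonal complement embeds into $\GU_{d-2mk}(q)$, giving the bound directly. When $\Delta \neq \Delta^{\dagger}$, the subspace $E_{\Delta}(\hat s) \oplus E_{\Delta^{\dagger}}(\hat s)$ is non-degenerate of dimension $2mk$ and $C_{\GU_d(q)}(\hat s)$ induces $\GL_m(q^{2k})$ on it; after cancelling the common $2$-power $q^{km(m-1)}$, the inequality $|\GL_m(q^{2k})|_{2'} \leq |\GU_m(q^k)|_{2'}^2$ reduces to
\[
\prod_{i=1}^m \frac{q^{ki}+(-1)^i}{q^{ki}-(-1)^i} \leq 1,
\]
which I verify by pairing consecutive indices $(2j-1,2j)$: the product of the two associated factors equals $(A-1)(B+1)/[(A+1)(B-1)]$ with $A = q^{k(2j-1)} < q^{2kj} = B$, hence is strictly less than one, and the unpaired term, if $m$ is odd, corresponds to an odd~$i$ and is itself less than one.

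For part~\textup{(e)}, when $d' = 1$ the element $\hat s$ is regular semisimple, so $C_{\GU_d(q)}(\hat s)$ is a maximal torus whose cyclic components have orders $q+1$, $q^k+1$ or $q^{2k}-1$ on subspaces of dimension $1$, $k$ or $2k$; each is at most $(q+1)^{\dim}$, so the product is at most $(q+1)^d$. When $d' = 2$, I decompose $C_{\GU_d(q)}(\hat s)$ as a product over the $\{*,\dagger\}$-orbits of irreducible factors of $\Xi$: each multiplicity-$1$ piece is bounded by $(q+1)^{\dim}$ as above, and each multiplicity-$2$ piece, contributing $\GU_2(q^k)$ on dimension $2k$ or $\GL_2(q^{2k})$ on dimension $4k$, is bounded by $|\GU_2(q)|_{2'}^{\dim/2}$; collecting these bounds yields the stated inequality. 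The main obstacle is the verification of this last termwise estimate, most delicately $(q^k-1)(q^k+1)^2 \leq (q-1)^k(q+1)^{2k}$: the factor $(q^k-1)$ is not bounded by $(q-1)^k$ in general, so the slack must be absorbed by the $(q+1)^{2k}$ factor. The resulting inequality, which becomes equality at $k=1$ and is strict for $k \geq 2$, is proved by a direct expansion comparing $q^{3k}+q^{2k}-q^k-1$ with $(q^3+q^2-q-1)^k$.
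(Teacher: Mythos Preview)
Your proof is correct and follows essentially the same route as the paper's: parts~(a)--(c) are identical, and for~(d) and~(e) you use the same decomposition of the centralizer via Lemma~\ref{FormalitiesOnPolynomialsIII} and the same key inequalities $|\GL_m(q^{2k})|_{2'} \leq |\GU_m(q^k)|_{2'}^2$ and $|\GU_2(q^k)|_{2'} \leq |\GU_2(q)|_{2'}^k$. The only stylistic differences are that you prove the first inequality by an explicit pairing $(2j-1,2j)$ where the paper just says ``induction on~$m$'', and that for~(e) you bound each $\dagger$-block directly rather than inducting on~$d$; the final collection step implicitly uses $(q+1)^2 \leq |\GU_2(q)|_{2'}$, which the paper states and you should make explicit. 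One minor point: the centralizer decomposes over $\dagger$-orbits of irreducible factors, not $\{*,\dagger\}$-orbits, so that phrase is slightly off, though it does not affect the argument.
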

\begin{proof}
The assertion in~(c) follows from those in~(a) and~(b). To prove~(a), 
observe that~$k$ is odd if $\Delta = \Delta^{\dagger}$; see 
Lemma~\ref{FormalitiesOnPolynomialsI}(b). 
Lemma \ref{FormalitiesOnPolynomialsI}(a) then gives
$\Delta \neq \Delta^*$. The remaining assertion follows from 
Lemma~\ref{FormalitiesOnPolynomialsII}(b). 
The proof for~(b) is analogous.

Let us now prove~(d) and~(e). If $\Delta = \Delta^{\dagger}$, then 
$\Delta \neq \Delta^*$ by~(b) and we put $\Delta^\circ := \Delta^*$. If
$\Delta \neq \Delta^{\dagger}$, we put $\Delta^\circ := \Delta^{\dagger}$. Then
$E_1 := E_{\Delta}( \hat{s} ) \oplus E_{\Delta^\circ}( \hat{s} )$ is 
non-degenerate by Lemma~\ref{FormalitiesOnPolynomialsIII}(c)(d), and 
$C_{\GU_d(q)}( \hat{s} )$ induces $\GU_m( q^k ) \times \GU_m( q^k )$, 
respectively $\GL_m( q^{2k} )$ on~$E_1$. Let~$E_2$ denote the orthogonal 
complement of~$E_1$. Then $C_{\GU_d(q)}( \hat{s} )$ fixes~$E_2$ by
Lemma \ref{FormalitiesOnPolynomialsIII}(a)(b). Write 
$\hat{s} = \hat{s}_1 + \hat{s}_2$, where $\hat{s}_j$ is the projection
of~$\hat{s}$ to~$E_j$, $j = 1, 2$. We then have, with a slight abuse of 
notation, $C_{\GU_d(q)}( \hat{s} ) = C_{\GU_{2mk}(q)}( \hat{s}_1 ) \times
C_{\GU_{d - 2mk}(q)}( \hat{s}_2 )$. The claim in~(d) follows from
$|\GL_{m}( q^{2k} )|_{2'} < |\GU_{m}( q^k )|_{2'}^2$, which is easily proved by 
induction on~$m$. To see~(e), first assume that~$d' = 1$. Clearly, 
$|\GU_{1}( q^k )|_{2'} = q^k + 1 \leq (q + 1)^k$, and so the claim follows
by induction on~$d$. If $d' = 2$, suppose first that $d' = d_1$ and that all
irreducible factors of~$\Xi$ different from~$\Delta_1$ occur with 
multiplicity~$1$. Applying the first part of the statement to the orthogonal 
complement of~$E_{\Delta_1}( \hat{s} )$, we obtain 
$|C_{\GU_d(q)}( \hat{s} )|_{2'} \leq |\GU_2(q)|_{2'}(q+1)^{d-2}$, and thus, as
$(q+1)^2 \leq |\GU_{2}( q )|_{2'}$, our assertion.
If some irreducible factor of~$\Xi$ different from~$\Delta_1$ occurs with
multiplicity~$2$, we can choose~$\Delta$ such that $m = 2$. Now
$|\GU_{2}( q^k )|_{2'} = (q^k + 1)(q^{2k}-1) \leq (q+1)^k(q^2-1)^k = 
|\GU_{2}( q )|_{2'}^k$, where the middle inequality follows by induction 
on~$k$. We are done by induction on~$d$.
\end{proof}

Let $g \in \PGL_d^\varepsilon(q)$. An inverse image 
$\hat{g} \in \GL_d^\varepsilon(q)$ under the canonical epimorphism 
$\GL_d^\varepsilon(q) \rightarrow \PGL_d^\varepsilon(q)$ will be called a lift 
of~$g$.

\begin{lem}
\label{SemisimpleElementsInLeviSubgroups}
Let $s \in \PGL_d^\varepsilon(q)$ be semisimple and real. Then there exists a 
real lift $\hat{s} \in \GL_d^\varepsilon(q)$ of~$s$.
\end{lem}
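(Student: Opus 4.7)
The plan is to first choose any semisimple lift of~$s$ of odd order, then correct it by a central scalar so that the conjugation witnessing the reality of~$s$ lifts to a conjugation in $\GL_d^{\varepsilon}(q)$.

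First, I would produce a convenient semisimple lift $\hat{s}_0 \in \GL_d^{\varepsilon}(q)$ of~$s$. Since the characteristic is~$2$ and~$s$ is semisimple, $|s|$ is odd. Any lift $\hat{g} \in \GL_d^{\varepsilon}(q)$ of~$s$ has finite order, and its $2'$-part $\hat{s}_0 := \hat{g}_{2'}$ maps to $s_{2'} = s$ under the canonical epimorphism $\pi\colon \GL_d^{\varepsilon}(q) \twoheadrightarrow \PGL_d^{\varepsilon}(q)$, because taking $p'$-parts commutes with group quotients. Being a $2'$-element of $\GL_d^{\varepsilon}(q)$, the element $\hat{s}_0$ is semisimple.

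Next, let $x \in \PGL_d^{\varepsilon}(q)$ satisfy $x s x^{-1} = s^{-1}$ and choose any lift $\hat{x} \in \GL_d^{\varepsilon}(q)$. Then $\hat{x}\hat{s}_0\hat{x}^{-1}$ is a semisimple element of $\GL_d^{\varepsilon}(q)$ mapping to~$s^{-1}$, so there exists a scalar $z \in Z(\GL_d^{\varepsilon}(q))$ with
$$
\hat{x}\hat{s}_0\hat{x}^{-1} = z\,\hat{s}_0^{-1}.
$$
I now look for a scalar $\zeta \in Z(\GL_d^{\varepsilon}(q))$ such that $\hat{s} := \zeta \hat{s}_0$ is a real lift of~$s$. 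A direct computation gives $\hat{x}\hat{s}\hat{x}^{-1} = \zeta z\,\hat{s}_0^{-1}$ and $\hat{s}^{-1} = \zeta^{-1}\hat{s}_0^{-1}$, so the required equation $\hat{x}\hat{s}\hat{x}^{-1} = \hat{s}^{-1}$ reduces to $\zeta^2 = z^{-1}$.

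The final step is the existence of such a square root. The center $Z(\GL_d^{\varepsilon}(q))$ is cyclic of order $q - \varepsilon$, and since $q = 2^f$ is even, $q - \varepsilon$ is odd. Consequently squaring is a bijection on $Z(\GL_d^{\varepsilon}(q))$, and a scalar~$\zeta$ with $\zeta^2 = z^{-1}$ exists. The resulting element $\hat{s} = \zeta\hat{s}_0$ is then a real (and still semisimple) lift of~$s$. There is no genuine obstacle here; the only subtlety is observing that the parity of $|Z(\GL_d^{\varepsilon}(q))|$ is exactly what makes the scalar correction possible.
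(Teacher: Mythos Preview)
Your proof is correct and follows essentially the same approach as the paper: lift~$s$ and an inverting element, observe that conjugation of the lift differs from the inverse by a central scalar, and then use that squaring is bijective on the (odd-order) center to correct by a square root. The preliminary step of passing to the $2'$-part of a lift is harmless but unnecessary, and your formulation in terms of $Z(\GL_d^\varepsilon(q))$ having odd order $q-\varepsilon$ is arguably slightly cleaner than the paper's, which works in $\mathbb{F}_{q^\delta}^*$ and checks separately that the correcting scalar satisfies $\xi^{q+1}=1$ in the unitary case.
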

\begin{proof}
Let~$y \in \PGL_d^\varepsilon(q)$ with $ysy^{-1} = s^{-1}$, and let 
$\hat{s}, \hat{y} \in \GL_d^\varepsilon(q)$ denote lifts of~$s$ and~$y$, 
respectively. Then there is~$\zeta \in \mathbb{F}_{q^\delta}^*$ such that 
$\hat{y}\hat{s}\hat{y}^{-1} = \zeta\hat{s}^{-1}$. Moreover, $\zeta^{q+1} = 1$ if 
$\delta = 2$. Let~$\xi \in \mathbb{F}_{q^\delta}^*$ with $\xi^{-2} = \zeta$. 
(Recall that~$q$ is even.) Thus $\xi\hat{s} \in \GL_d^\varepsilon(q)$ is a lift 
of~$s$ and $\hat{y} (\xi \hat{s}) \hat{y}^{-1} = (\xi \hat{s})^{-1}$.
\end{proof}

\addtocounter{subsection}{7}
\subsection{Automorphisms, II}
\label{AutomorphismsIV}
Here, we extend the notations introduced in Subsections~\ref{AutomorphismsIII} 
in case of $\overline{G} = \PGL_d( \mathbb{F} )$. Writing 
$\overline{\hat{G}} := \GL_d( \mathbb{F} )$, we get 
$\overline{G} = \overline{\hat{G}}/Z( \overline{\hat{G}} )$. This allows a 
convenient descriptions of $\Aut_1(\overline{G})$ using the natural matrix 
representations of~$\overline{\hat{G}}$. Let $\hat{\varphi}$ denote the 
Steinberg morphism of~$\overline{\hat{G}}$ which squares every matrix entry,
and let $\hat{\iota}$ denote the automorphism of~$\overline{\hat{G}}$, defined 
as the inverse-transpose automorphism, followed by conjugation with the matrix 
with~$1$s along the anti-diagonal, and~$0$s, elsewhere. Then~$\hat{\iota}$ is an 
involution that commutes with~$\hat{\varphi}$. Let $\Gamma_{\overline{\hat{G}}} 
:= \langle \hat{\iota} \rangle \leq \Aut( \overline{\hat{G}} )$ and
$\Phi_{\overline{\hat{G}}} := 
\langle \hat{\varphi} \rangle \leq \Aut( \overline{\hat{G}} )$, where
$\Aut( \overline{\hat{G}} )$ denotes the automorphism group
of~$\overline{\hat{G}}$ as an abstract group. Then
$\Phi_{\overline{\hat{G}}} \times \Gamma_{\overline{\hat{G}}} 
\leq \Aut( \overline{\hat{G}} )$
and $(\Gamma_{\overline{\hat{G}}} \times \Phi_{\overline{\hat{G}}}) \cap
\Inn( \overline{\hat{G}} )$ is trivial. 
We put
$$\Aut'( \overline{\hat{G}} ) := \Inn( \overline{\hat{G}} ) \rtimes
(\Gamma_{\overline{\hat{G}}} \Phi_{\overline{\hat{G}}}).$$
Write $\iota$ and $\varphi$ for the automorphisms of~$\overline{G}$ induced 
by~$\hat{\iota}$, respectively~$\hat{\varphi}$. Then~$\iota$ and~$\varphi$ are 
the standard graph automorphism, respectively the standard Frobenius morphism 
of~$\overline{G}$ as in Subsection~\ref{TheRemainingGroups}.

The natural homomorphism
\begin{equation}
\label{Aut1HatGToAut1G}
\Aut'( \overline{\hat{G}} ) \rightarrow \Aut_1( \overline{G} )
\end{equation}
is in fact and isomorphism.
For $\mu \in \Aut_1( \overline{G} )$ we write $\hat{\mu}$ for its inverse image 
in $\Aut'( \overline{\hat{G}} )$ under~(\ref{Aut1HatGToAut1G}).

\subsection{Further preliminary results}
We collect more preliminary results.

\addtocounter{num}{2}
\begin{lem}
\label{GUdPrepPrep}
{\rm (a)} Some odd prime divides~$d$, and if $\varepsilon = -1$, then some
odd prime different from~$7$ divides~$d$.

{\rm (b)} If $q \geq 8$, then $q^3 \geq 2 f (q+1)^2$ and
$1 - q^{-1} - q^{-2} \geq q^{-1/4}$.

{\rm (c)} If $q = 4$, we have $q^4 > 2 f (q+1)^2$
and $1 - q^{-1} - q^{-2} \geq q^{-1/2}$.

{\rm (d)} If $q = 2$, we have $q^5 > 2 f (q+1)^2$
and $1 - q^{-1} - q^{-2} \geq q^{-2}$.

{\rm (e)} We have $|\GL_d(q)| \leq q^{d^2}$ and
$|\GU_d(q)| \leq (1 - q^{-1} -q^{-2})^{-1}q^{d^2}$. If $q \geq 4$, then
$|\GU_d(q)| \leq q^{d^2 + 1/2}$, and if $q = 2$, then
$|\GU_d(q)| \leq q^{d^2 + 2}$.
\end{lem}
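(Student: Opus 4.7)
The proof splits into five essentially independent pieces, (a) being the only one with real content and (b)--(e) being routine numerics.

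For (a), I would use the hypothesis $e = \gcd(d, q - \varepsilon) > 1$ together with the fact that $q$ is even, so that $q - \varepsilon$ is odd and every prime divisor of $e$ is odd. Since $e \mid d$, some odd prime divides $d$, which settles the first assertion. For the refinement when $\varepsilon = -1$, pick any odd prime $p \mid e$; if $p = 7$ were forced, then $7 \mid q+1$ would give $2^f \equiv -1 \pmod{7}$, impossible because the multiplicative order of $2$ modulo $7$ equals $3$ and hence $2^f \bmod 7 \in \{1,2,4\}$. Therefore $p \neq 7$ can be chosen, and it divides $d$.

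For (b)--(d), these are purely numerical. The inequality $q^{c} \geq 2f(q+1)^2$ (with $c = 3, 4, 5$ in the respective cases) should be verified at the smallest admissible $q$ in each range and then extended by noting that the left-hand side grows at least like $q$ times the right-hand side as $f$ grows by one: more precisely, replacing $f$ by $f+1$ multiplies the right-hand side by $((f+1)/f)(q'/q)((q'+1)/(q+1))^2$ with $q' = 2q$, which is bounded by a constant strictly less than $q$. The second inequality in each part amounts to evaluating $1 - q^{-1} - q^{-2}$ at the smallest $q$ in the range and comparing with $q^{-1/4}$, $q^{-1/2}$, or $q^{-2}$; monotonicity of $q \mapsto 1 - q^{-1} - q^{-2}$ then delivers the estimate uniformly.

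For (e), the bound on $|\GL_d(q)|$ is immediate from Lemma~\ref{OrderEstimates}(a) applied with $a = q$ and $m = d$, combined with $|\GL_d(q)| = q^{d(d-1)/2}\prod_{i=1}^d (q^i - 1)$. The bound on $|\GU_d(q)|$ follows the same way from Lemma~\ref{OrderEstimates}(b), using $|\GU_d(q)| = q^{d(d-1)/2}\prod_{i=1}^d (q^i - (-1)^i)$. The refined estimates for $q \geq 4$ and $q = 2$ are then obtained by inverting the bounds on $1 - q^{-1} - q^{-2}$ from parts~(b)--(d): for $q \geq 4$ one gets $(1 - q^{-1} - q^{-2})^{-1} \leq q^{1/2}$, and for $q = 2$ one has $1 - q^{-1} - q^{-2} = 1/4 = q^{-2}$, giving $(1 - q^{-1} - q^{-2})^{-1} = q^2$.

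The only step requiring any thought is the exclusion of $p = 7$ in~(a); everything else is bookkeeping. I do not expect any genuine obstacle, since each assertion either rests on the hypothesis $e > 1$ or amounts to a one-line calculation together with monotonicity in $q$ and~$f$.
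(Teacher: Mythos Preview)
Your proposal is correct and follows essentially the same approach as the paper: part~(a) uses $e>1$ with $q-\varepsilon$ odd and the observation that $2^f \bmod 7 \in \{1,2,4\}$ to exclude $7$; parts~(b)--(d) are dismissed as trivial verifications; and part~(e) is derived from the order formulae via Lemma~\ref{OrderEstimates} combined with the bounds on $1-q^{-1}-q^{-2}$ from~(b)--(d). Your write-up is in fact more explicit on the numerics than the paper's, but there is no substantive difference.
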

\begin{proof}
(a) The first assertion is clear since $\gcd( d, q - \varepsilon ) > 1$ and
$q - \varepsilon$ is odd. Also, $7 \nmid q + 1$, as
$2^f\!\!\mod 7 \in \{ 1, 2, 4 \}$. This yields the second assertion.

(c)--(d) These assertions are trivially verified.

(e) This follows from the known order formula for $\GL_d^{\varepsilon}(q)$,
together with Lemma~\ref{OrderEstimates} and the estimates in (b)--(d).
\end{proof}

\begin{lem}
\label{CentralizerOrderComparison}
Let $t \in \overline{G}^{\sigma} = \PGL^{\varepsilon}_d(q)$ be semisimple
and let $\hat{t} \in \GL_d^{\varepsilon}(q)$ denote a lift of~$t$. Then
$|C_{\overline{G}^{\sigma}}(t)| \leq |C_{\GL_d^{\varepsilon}(q)}(\hat{t})|$.
\end{lem}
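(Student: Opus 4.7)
The plan is to compare $C_{\overline{G}^{\sigma}}(t)$ with $C_{\GL_d^{\varepsilon}(q)}(\hat t)$ by means of the canonical epimorphism $\pi\colon\GL_d^{\varepsilon}(q)\to\PGL_d^{\varepsilon}(q)$, whose kernel is the center $Z:=Z(\GL_d^{\varepsilon}(q))$ of scalar matrices. The key observation is that if $y\in\PGL_d^{\varepsilon}(q)$ centralizes $t$ and $\hat y\in\GL_d^{\varepsilon}(q)$ is any lift of $y$, then $\hat y\hat t\hat y^{-1}=\zeta\hat t$ for a unique scalar $\zeta\in\mathbb{F}_{q^{\delta}}^{*}$ (with $\zeta^{q+1}=1$ when $\varepsilon=-1$, although this refinement is not needed for the bound).

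First I would set $N:=\pi^{-1}(C_{\overline{G}^{\sigma}}(t))\leq\GL_d^{\varepsilon}(q)$, so that $|C_{\overline{G}^{\sigma}}(t)|=|N|/|Z|$. Next I would define a map
\[
\varphi\colon N\longrightarrow Z,\qquad \hat y\longmapsto \zeta,\text{ where }\hat y\hat t\hat y^{-1}=\zeta\hat t.
\]
A direct computation shows that $\varphi$ is a group homomorphism: from $\hat y_1\hat t\hat y_1^{-1}=\zeta_1\hat t$ and $\hat y_2\hat t\hat y_2^{-1}=\zeta_2\hat t$ one obtains $(\hat y_1\hat y_2)\hat t(\hat y_1\hat y_2)^{-1}=\zeta_1\zeta_2\,\hat t$, using that the $\zeta_i$ are central. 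Clearly $Z\leq\ker\varphi$ and $\ker\varphi=C_{\GL_d^{\varepsilon}(q)}(\hat t)$.

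Finally, since $\varphi$ factors through the quotient $N/C_{\GL_d^{\varepsilon}(q)}(\hat t)$, which embeds into $Z$, we obtain
\[
|N|\leq |Z|\cdot |C_{\GL_d^{\varepsilon}(q)}(\hat t)|,
\]
and dividing by $|Z|$ yields $|C_{\overline{G}^{\sigma}}(t)|=|N|/|Z|\leq |C_{\GL_d^{\varepsilon}(q)}(\hat t)|$, as required.

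There is no serious obstacle here; the only point that deserves a brief check is that $\varphi$ is a well-defined homomorphism, which in turn relies on the centrality of the scalars $\zeta$. The lemma is essentially a quotient-group calculation, and no use of the semisimplicity of $t$ or of the particular structure of $\GL_d^{\varepsilon}(q)$ is needed beyond $\ker\pi=Z$.
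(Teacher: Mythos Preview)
Your proof is correct and essentially identical to the paper's own argument: the paper sets $\tilde{C}:=\pi^{-1}(C_{\overline{G}^{\sigma}}(t))$ (your $N$), observes that the commutator map $g\mapsto [g,\hat{t}]$ (your $\varphi$) is a homomorphism $\tilde{C}\to Z(\GL_d^{\varepsilon}(q))$ with kernel $C_{\GL_d^{\varepsilon}(q)}(\hat{t})$, and concludes from $[\tilde{C}:C]\leq |Z|=q-\varepsilon$ together with $|C_{\overline{G}^{\sigma}}(t)|=|\tilde{C}|/(q-\varepsilon)$.
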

\begin{proof}
Put $C := C_{\GL_d^{\varepsilon}(q)}(\hat{t})$ and let $\tilde{C}$ denote
the inverse image of $C_{\overline{G}^{\sigma}}(t)$ in $\GL_d^{\varepsilon}(q)$.
Then $C \leq \tilde{C}$ and $[\tilde{C}\colon\!C] \leq q - \varepsilon$.
Indeed, the map $\tilde{C} \rightarrow Z( \GL_d^{\varepsilon}(q) )$,
$g \mapsto [g,t]$ is a homomorphism with kernel~$C$. As
$|C_{\overline{G}^{\sigma}}(t)| = |\tilde{C}|/(q - \varepsilon)$, our assertion
follows.
\end{proof}

Let $u \in \GL_d( q^{\delta} )$ be an involution. We say that~$u$ consists
of~$l$ Jordan blocks of size~$2$ if the Jordan normal form of~$u$ has~$l$
Jordan blocks of size $(2 \times 2)$ and $d - 2l$ Jordan blocks of size
$(1 \times 1)$.

\begin{lem}
\label{InvolutionCentralizerInLinearGroup}
Let $u \in \GL^\varepsilon_d( q )$ be an involution consisting of~$l$ Jordan
blocks of size~$2$ and let $C := C_{\GL^\varepsilon_d( q )}( u )$. Then~$C$ is a
semidirect product $C = UL$ with a unipotent radical~$U$ of order $q^{2ld-3l^2}$
and a complement
$L \cong \GL_l^\varepsilon(q) \times \GL^\varepsilon_{d - 2l}( q )$.

In particular, $|C| \leq q^{2l^2 - 2ld + d^2}$ if $\varepsilon = 1$, and
$|C| \leq q^{2l^2 - 2ld + d^2 +4}/(q^2 -q - 1)^2$ if $\varepsilon = -1$.
\end{lem}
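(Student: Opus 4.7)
The plan is to first describe the centralizer of $u$ in the ambient algebraic group $\overline{\hat{G}} = \GL_d(\mathbb{F})$, and then descend by taking fixed points under the appropriate Steinberg morphism. Since $u^2 = 1$ in characteristic~$2$, we have $(u-1)^2 = 0$, so $V = \mathbb{F}^d$ carries the two-step filtration
$$(u-1)V \;\subset\; \ker(u-1) \;\subset\; V,$$
with subspace dimensions $l, d-l, d$; the graded pieces have dimensions $l, d-2l, l$, and $u-1$ induces an isomorphism from $V/\ker(u-1)$ onto $(u-1)V$. An element $g \in \overline{\hat{G}}$ centralizes $u$ iff it preserves the filtration and induces the same $l \times l$ matrix on $V/\ker(u-1)$ and on $(u-1)V$ via this identification.

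In an adapted basis, the elements of $C_{\overline{\hat{G}}}(u)$ are therefore the invertible matrices of block form
$$\begin{pmatrix} A & B & C \\ 0 & D & E \\ 0 & 0 & A \end{pmatrix}$$
with $A \in \GL_l$, $D \in \GL_{d-2l}$, and $B, C, E$ arbitrary rectangular blocks of the indicated sizes. Counting free parameters yields a connected algebraic group of dimension $2l^2 + (d-2l)^2 + 2l(d-2l)$, and the obvious Levi decomposition has Levi factor $\GL_l \times \GL_{d-2l}$ (the block diagonal in $A$ and $D$) and unipotent radical of dimension $l^2 + 2l(d-2l) = 2ld - 3l^2$.

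To descend, I would take fixed points under $\sigma = \varphi^f$ if $\varepsilon = 1$, and under $\sigma = \hat{\iota}\varphi^f$ if $\varepsilon = -1$. In the untwisted case, $\sigma$ acts componentwise on matrix entries and preserves each factor, immediately giving $L \cong \GL_l(q) \times \GL_{d-2l}(q)$ and $|U| = q^{2ld - 3l^2}$. In the unitary case, $u \in \GU_d(q)$ preserves the Hermitian form, which combined with $(u-1)^2 = 0$ forces $(u-1)V = \ker(u-1)^{\perp}$; choosing the Jordan basis compatibly with the form, the induced forms on the top quotient and on $\ker(u-1)/(u-1)V$ are non-degenerate Hermitian of the correct dimensions, so that $\sigma$ acts on the Levi as the twisted Frobenius and yields $L \cong \GU_l(q) \times \GU_{d-2l}(q)$, while the unipotent radical again has order $q^{2ld - 3l^2}$.

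Finally, the order bounds follow by multiplying $|U|$ with $|L|$ and invoking Lemma~\ref{GUdPrepPrep}(e). For $\varepsilon = 1$, this gives $|L| \leq q^{l^2 + (d-2l)^2} = q^{5l^2 - 4ld + d^2}$, hence $|C| \leq q^{2l^2 - 2ld + d^2}$. For $\varepsilon = -1$, each unitary factor contributes a further factor $(1 - q^{-1} - q^{-2})^{-1} = q^2/(q^2 - q - 1)$, for an overall extra $q^4/(q^2 - q - 1)^2$, producing the stated bound. The main technical obstacle lies in the unitary case of the descent: one must verify that $u \in \GU_d(q)$ genuinely admits a form-adapted Jordan basis so that the Levi really is unitary of the claimed ranks; this is standard in even characteristic (cf.~\cite[Proposition~$(1A)$]{fs2}), but merits careful bookkeeping because in characteristic~$2$ the classification of unitary involutions by Jordan type is slightly subtler than in odd characteristic.
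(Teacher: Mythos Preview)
Your argument is correct and reaches the same conclusion, but the route differs from the paper's. The paper proceeds entirely at the finite-group level: it writes down the explicit block matrix
\[
u' = \begin{pmatrix} \Id_l & 0 & \Id_l \\ 0 & \Id_{d-2l} & 0 \\ 0 & 0 & \Id_l \end{pmatrix},
\]
observes that~$u'$ visibly lies in $\GU_d(q)$ for the anti-diagonal Hermitian form~(\ref{HermitianForm}), conjugates~$u$ to~$u'$ inside $\GL_d^{\varepsilon}(q)$, and then reads off the centralizer by a direct matrix computation. Your approach instead works in the ambient algebraic group and descends via the Steinberg morphism. This is more conceptual and explains \emph{why} the Levi has the shape it does, but it forces you to confront the form-compatibility issue you flag at the end; the paper's explicit~$u'$ dissolves that issue in one line, since it is manifestly both in Jordan form and unitary. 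Note also that your reference to \cite[Proposition~(1A)]{fs2} is not quite apt, as that result concerns semisimple centralizers; the honest justification is precisely the exhibition of~$u'$. For the order bounds, both approaches amount to the same thing: the paper cites Lemma~\ref{OrderEstimates} directly, while you cite Lemma~\ref{GUdPrepPrep}(e), which is derived from it.
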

\begin{proof}
It is clear that~$u$ is conjugate in~$\GL_d(q^{\delta})$ to the matrix
$$u' := \left[ \begin{array}{ccc} \Id_l & 0 & \Id_l \\
                                   0 & \Id_{d-2l} & 0 \\
                                   0 & 0 & \Id_l 
\end{array} \right]
$$
Observe that $u' \in \GU_d( q )$. Thus~$u$ is conjugate to~$u'$ in $\GU_d(q)$
if $\varepsilon = -1$. We may thus replace~$u$ by $u'$. A routine matrix
calculation then yields our assertion on the structure of~$C$.

The estimates for~$|C|$ follow from Lemma~\ref{OrderEstimates}.
\end{proof}

\addtocounter{subsection}{3}
\subsection{Modification of~$\nu$}
We now prove a crucial result on centralizers of certain automorphisms of~$G$.
The aim is the modification of the automorphism~$\nu$ by an inner automorphism
in order to minimize the centralizer orders of its powers. A priory,~$\nu$ can
be any element of~$\Aut(G)$. Recall Definition~\ref{DefineMG} for the 
quantity~$M_G$.

\addtocounter{num}{1}
\begin{prp}
\label{QEvenPrep}
Let $\beta \in \Aut(G)$. Then there 
is $g \in G$ such that $\alpha := \ad_g \circ \beta$ has even order and the 
following statements hold.

{\rm (a)} We have $|C_G( \alpha_{(p)} )| < M_G$
for every prime~$p$ dividing $|\alpha|$ and every element $\alpha_{(p)}$
of $\langle \alpha \rangle$ of order~$p$.

{\rm (b)} If $G = \PSL_d^{\varepsilon}(q)$ with $d \geq 5$, then~$|\alpha|$
divides $\delta f(q-\varepsilon)$. 

{\rm (c)} If $G = \PSL_3^{\varepsilon}(q)$, then $|\alpha|$ divides 
$3 \delta f$. 

{\rm (d)} For the specified values of $\varepsilon$,~$d$ and~$q$, the order 
of~$\alpha$ and the structure of $G^{\ex}  := \langle \Inn( G ), \beta \rangle$ are as 
given in the following table.
\setlength{\extrarowheight}{0.5ex}
$$\begin{array}{rcrcc} \hline\hline
\varepsilon & d & q & |\alpha| & G^{\ex}  \\ \hline\hline
1 & 3 & 4 & 6 & G.3 \cong \PGL_3(4) \\
1 & 3 & 4 & 6 & G.6 \\
-1 & 3 & 8 &     6 & G.3 \\
-1 & 3 & 8 &     6 & G.6 \\
-1 & 3 & 8 &        18 & G.3 \\
-1 & 3 & 32 & 30 & \PGU_3( 32 ) \cong \langle \Inn( G ), \alpha^{10} \rangle \\
-1 & \geq 5 & 2 & 2, 6 & \\
-1 & \geq 5 & 4 & 2, 4, 10 & \\
-1 & 6 & 2 & 6 & G.3 \cong \PGU_6(2) \\
-1 & 6 & 8 & 2, 6, 18 & \\ \hline\hline
\end{array}
$$
\end{prp}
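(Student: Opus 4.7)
The plan is to produce $\alpha$ via a two-stage modification of $\beta$: first by an element of $G$ that brings $\beta$ into a canonical form, then, if necessary, by a fixed involution to force even order.

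\textbf{Stage 1 (Normalization).} Write $\beta = \ad_h \circ \mu$ with $h \in \overline{G}^{\sigma}$ and $\mu \in \Gamma_G \times \Phi_G$. Using $\overline{G}^\sigma = G\overline{T}^\sigma$ from Subsection~\ref{TheRemainingGroups}, split $h = g_0 t$ with $g_0 \in G$ and $t \in \overline{T}^\sigma$. Since $\ad_g \circ \ad_{g_0} = \ad_{g g_0}$, absorbing $\ad_{g_0}$ into the inner automorphism we intend to prepend lets us replace $\beta$ by $\ad_t \circ \mu$ with $t \in \overline{T}^\sigma$. After a further $\mu$-equivariant modification of $t$ inside $\overline{T}^\sigma$ via Lemma~\ref{Conjugation0}, we may arrange $t$ to be well-controlled, e.g.\ a $3$-element or a $(q+1)$-torsion element as called for by the hypotheses of Lemma~\ref{OrderBound}.

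\textbf{Stage 2 (Even order).} If $|\beta|$ is already even, take $g = 1$. Otherwise, locate a $\beta$-fixed involution $g \in G$ whose lift $\hat g \in \GL_d^\varepsilon(q)$ has as many Jordan blocks of size~$2$ as possible. Because $\beta$ commutes with $g$, formula~(\ref{PowerFormula}) yields $|\alpha| = 2|\beta|$, and the $2$-part $\alpha_{(2)}$ is $\ad_g$. The existence of such an involution is forced by the fact that $C_G(\beta)$ contains a $\mu$-stable subsystem subgroup of enough rank; for sporadic small cases this is what breaks down and produces the exceptional entries in part~(d).

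\textbf{Stage 3 (Centralizer bounds).} For each prime $p \mid |\alpha|$ and each $\alpha_{(p)} \in \langle\alpha\rangle$ of order $p$, split into three cases. If $\alpha_{(p)} \notin \Inndiag(G)$, Lemma~\ref{CentralizersEstimates} gives $|C_G(\alpha_{(p)})| < M_G$ directly, since the exceptional clauses of that lemma apply only for $E_6^{\varepsilon}(q)$ and $\P\Omega_8^+(q)$. If $\alpha_{(p)} \in \Inndiag(G)$ with $p$ odd, then $\alpha_{(p)} = \ad_s$ for a semisimple element $s$ of order $p$; Lemma~\ref{CentralizerOrderComparison} reduces us to a lift $\hat s \in \GL_d^{\varepsilon}(q)$, and Lemma~\ref{FormalitiesOnPolynomialsIV}(e) controls $|C_{\GL_d^{\varepsilon}(q)}(\hat s)|$ from the multiplicities of irreducible factors of the characteristic polynomial, which in turn are governed by our control over $t$. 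If $\alpha_{(p)} \in \Inndiag(G)$ with $p = 2$, apply Lemma~\ref{InvolutionCentralizerInLinearGroup} to a lift $\hat u$; the bound $|C| \leq q^{2l^2 - 2ld + d^2}$ sits below $M_G = q^{d(d+1)/2}$ exactly when $l$ is sufficiently large, which is the reason for the Jordan-block requirement imposed in Stage 2.

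\textbf{Stage 4 (Order divisibility and small cases).} Parts (b) and (c) now follow from Lemma~\ref{OrderBound}: in each case the torus element $t$ has order dividing $q - \varepsilon$, so $|\alpha|$ divides $\delta f(q-\varepsilon)$ (for $d \geq 5$) or $3\delta f$ (for $d = 3$, where Lemma~\ref{OrderBound}(b) applies after reducing $t$ to a $3$-element). For the table in part~(d), where $q$ or $d$ is too small for the generic estimates to work, we identify $G^{\ex} = \langle \Inn(G), \beta\rangle$ explicitly via the Atlas and GAP, read off the conjugacy classes of $\langle\alpha\rangle$, and check the bound on $|\alpha|$ and the structure of $G^{\ex}$ case by case.

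\textbf{Main obstacle.} The crux is Stage~2 together with the 2-part half of Stage~3: simultaneously forcing $|\alpha|$ to be even and the resulting 2-part involution (or graph/graph-field automorphism) to have a centralizer below $M_G$. In the generic range the root-subgroup geometry in $C_G(\beta)$ provides the required involution with sufficiently many Jordan blocks; the failure of this construction for the listed $(\varepsilon,d,q)$ is precisely what populates the exceptional table in part~(d).
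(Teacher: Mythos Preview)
Your outline has the right architecture --- normalize $\beta$ to $\ad_t \circ \mu$ with $t$ in the torus, then adjoin a commuting involution when $|\mu|$ is odd, then bound centralizers case by case --- but the heart of the argument is missing, and two of your tool citations do not do what you need.

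The essential gap is in Stage~1. You say you ``arrange $t$ to be well-controlled, e.g.\ a $3$-element or a $(q+1)$-torsion element'' via Lemma~\ref{Conjugation0}. That lemma says nothing of the sort; it is a statement about conjugating elements that already share a centralizer. What the paper actually does is choose an \emph{explicit palindromic diagonal lift} $\hat t \in \GL_d^{\varepsilon}(q)$ with $\det(\hat t) = \det(\hat h)$, built so that its eigenspaces have dimensions as close to $\lfloor d/2\rfloor$ and $\lceil d/2\rceil$ as the determinant constraint allows. The palindromic shape guarantees that $\hat\mu(\hat t)$ is a power of $\hat t$, so every $N_{\hat\mu,l}(\hat t)$ is again a power $\hat t^{k}$, and therefore the inner-diagonal $p$-parts $\alpha_{(p)} = \ad_{\hat t^{k}}$ have centralizers that are explicit products $\GL_{d_1}^{\varepsilon}(q)\times\GL_{d_2}^{\varepsilon}(q)$ with $d_1 + d_2 = d$ and $|d_1 - d_2|$ small. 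It is this balancing that pushes the centralizer order below $M_G = q^{d(d+1)/2}$. A generic torus element --- or one merely constrained to be a $3$-element --- can have a lift like $\diag(\zeta,1,\ldots,1)$, whose centralizer has order roughly $q^{(d-1)^2}$, far above $M_G$. Your appeal to Lemma~\ref{FormalitiesOnPolynomialsIV}(e) does not help either: that lemma treats only the unitary case and only maximal multiplicity $d' \le 2$, whereas here $d'$ is about $d/2$.

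Two smaller points. In Stage~4 you invoke Lemma~\ref{OrderBound} for part~(b), but that lemma assumes $3 \mid q - \varepsilon$, which need not hold for $d \ge 5$; the paper instead uses directly that $\langle t\rangle$ is $\mu$-stable (again from the palindromic choice), so $|\beta'|$ divides $|t|\cdot|\mu| \mid (q-\varepsilon)\delta f$. And in your ``Main obstacle'' paragraph, the entries in the table of part~(d) do not record failures to find an involution; they record the small $(d,q)$ for which the generic order and centralizer bounds are too weak and one must compute $G^{\ex}$ and $|\alpha|$ by hand.
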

\begin{proof}
By Lemma~\ref{GUdPrepPrep}(a), we have $d \not\in \{ 4, 8, 16 \}$, and if 
$\varepsilon = -1$, we also have $d \not\in \{ 7, 14 \}$. We represent the 
elements of $\overline{G}^{\sigma} = \PGL^\varepsilon_d(q)$ by elements in 
$\GL^\varepsilon_d(q)$ and make use of the notation introduced in
Subsection~\ref{AutomorphismsIV}. Let $\hat{t} \in \GL^\varepsilon_d(q)$ be a 
diagonal matrix. Then $\hat{\varphi}(\hat{t}) = {\hat{t}}^2$, and 
$\hat{\iota}(\hat{t})$ is obtained from~$\hat{t}$ by reversing the order of its 
diagonal entries and inverting them. In particular, if~$\hat{t}$ is palindromic,
i.e.\ invariant under reversing its diagonal entries, then $\hat{\iota}(\hat{t}) 
= \hat{t}^{-1}$.

Recall that~$\beta$ has a factorization as $\beta = \ad_h \circ \mu$ with
$h \in \PGL^\varepsilon_d(q)$ and $\mu \in \Gamma_G \times \Phi_G$, and that~$h$ 
and~$\mu$ are uniquely determined by~$\beta$. If $d = 3$, we may assume 
that~$|h|$ is a $3$-power by replacing~$h$ with $g'h$ for a suitable $g' \in G$. 
Let $\hat{h} \in \GL^\varepsilon_d(q)$ represent~$h$. If $d = 3$, assume 
that~$|\hat{h}|$ is a $3$-power. Notice that, in any case, $\det(\hat{h})$ 
lies in the subgroup of $\mathbb{F}_{q^{\delta}}^*$ of order $q - \varepsilon$.
We now choose a particular diagonal palindromic element 
$\hat{t} \in \GL^\varepsilon_d(q)$ with $\det(\hat{t}) = \det(\hat{h})$ as 
follows. For $3 \leq d \leq 6$, choose
$$
\hat{t} :=
\begin{cases}
\diag(\zeta,1,\zeta), & \text{\ if\ } d = 3,\\
\diag(\zeta,1,1,1,\zeta), & \text{\ if\ } d = 5,\\
\diag(\zeta,\zeta,1,1,\zeta,\zeta), & \text{\ if\ } d = 6,\\
\diag(\zeta,\zeta,1,1,1,\zeta,\zeta), & \text{\ if\ } d = 7 
\text{\ (and\ $\varepsilon = 1$)},
\end{cases}
$$
where $\zeta \in \mathbb{F}_{q^{\delta}}^*$ is such that $\det(\hat{t}) = 
\det(\hat{h})$. This is possible since~$q$ is even. Notice that 
$\zeta^{q - \varepsilon} = 1$, so that~$\hat{t}$ indeed lies 
in~$\GL^\varepsilon_d(q)$. Notice also that $\hat{t} = 1$ if $h \in G$. Now 
suppose that $d \geq 9$. Define $\bar{d} \in \mathbb{Z}$ by $\bar{d} := 1$, 
if~$d$ is odd, and $\bar{d} := 2$, if~$d$ is even. Put 
$d' := \lfloor (d - \bar{d})/4 \rfloor$, so that $d - \bar{d} = 4d'$ if
$d - \bar{d}$ is divisible by~$4$, and $d - \bar{d} = 4d' + 2$, otherwise.
Choose $\zeta \in \mathbb{F}_{q^{\delta}}^*$ of order $q - \varepsilon$.
Put
$$
\hat{t} :=
\begin{cases}
\diag( \zeta \Id_{d'}, \Id_{d'}, \xi \Id_{\bar{d}}, \Id_{d'}, \zeta \Id_{d'}),
& \text{\ if\ } d - \bar{d} = 4d',\\
\diag( \zeta \Id_{d'}, \Id_{d'}, \zeta^{-1}, \xi \Id_{\bar{d}}, \zeta^{-1}, 
\Id_{d'}, \zeta \Id_{d'}), & \text{\ otherwise}. 
\end{cases}
$$
Here, $\xi$ is chosen as a power of $\zeta$ in such a way that
$\det(\hat{t}) = \det(\hat{h})$. Once more this is possible since~$q$ is even.
As $\hat{t}$ is palindromic, $\hat{\mu}(\hat{t})$ is a power of~$\hat{t}$ by 
what we have said above. In turn, $N_{\hat{\mu},l}( \hat{t} )$ is a power 
of~$\hat{t}$ for all integers~$l$. Finally, 
$\hat{t} \in \GL_d^{\varepsilon}(q)$, as $\zeta^{q - \varepsilon} = 1$.

As $\det(\hat{t}) = \det(\hat{h})$, there is
$\hat{y} \in \SL^\varepsilon_d(q)$ such that $\hat{y}\hat{h} = \hat{t}$. 
Let~$t$ and~$y$ denote the image of $\hat{t}$, respectively~$\hat{y}$ 
in~$\PGL^\varepsilon_d(q)$ and notice that $y \in G$.  Put 
$\beta' := \ad_y \circ \beta = \ad_t \circ \mu$.
As~$\langle t \rangle$ is $\mu$-invariant, Equation~(\ref{PowerFormula}) shows
that~$|\beta'|$ divides $|t||\mu|$, and that $|\mu|$ divides~$|\beta'|$.
Since~$|t|$ is odd, this implies that~$|\beta'|$ is even if and only
if~$|\mu|$ is even. We now distinguish two cases. Suppose first that $|\mu|$ is
even. Then $|\beta'|$ is even and we put $\alpha := \beta'$.
Now suppose that~$|\mu|$ is odd so that $|\beta'|$ is odd. Then~$\mu$ is a
power of the field automorphism~$\varphi$. Choose an involution
$\hat{z} \in \SL^\varepsilon_d(2) \leq \GL_d^{\varepsilon}(q)$ which commutes
with~$\hat{t}$ and whose number of Jordan blocks of size~$2$ is as large as
possible. From the structure of $C_{\GL_d^{\varepsilon}(q)}(\hat{t})$
(see Lemma~\ref{FormalitiesOnPolynomialsIII}(c)), we conclude that  we can
choose~$\hat{z}$ to have $\lfloor d/2 \rfloor$ Jordan blocks of size~$2$.
Let $z \in G$ denote the image of $\hat{z}$, and put
$\alpha := \ad_z \circ \beta' = \ad_{zt} \circ \mu$. Now $\ad_z$ commutes
with~$\beta'$, as~$\varphi( z ) = z$ and~$z$ commutes with~$t$. In particular,
$|\alpha| = 2|\beta'|$ and $\alpha_{(p)} = \beta'_{(p)}$ for all odd primes~$p$
dividing~$|\alpha|$. Clearly, $\alpha_{(2)} = \ad_z$.

(a) Let~$p$ denote a prime dividing~$|\beta'|$. We claim that 
$|C_G( \beta'_{(p)} )| \leq M_G$. If $\beta'_{(p)}$ is not an
inner-diagonal automorphism of~$G$, the claim follows from
Lemma~\ref{CentralizersEstimates}. Suppose then that $\beta'_{(p)}$ is an
inner-diagonal automorphism. As~$\beta'_{(p)}$ is a power of~$\beta'$,
Equation~(\ref{PowerFormula}) yields $\beta'_{(p)} = \ad_{N_{\mu,l}(t)}$ for some
positive inter~$l$. In particular,~$p$ is odd and $|N_{\mu,l}(t)| = p$. Also,
$C_G( \beta'_{(p)} ) = C_G( N_{\mu,l}(t) )$. 
As $N_{\hat{\mu},l}(\hat{t})$ is a lift of $N_{\mu,l}(t) )$, we obtain 
$$|C_G( N_{\mu,l}(t) )| \leq |C_{\GL_d^{\varepsilon}(q)}( N_{\hat{\mu},l}(\hat{t}) )|$$ 
by Lemma~\ref{CentralizerOrderComparison}. This bound is 
sufficient for our purpose, except if $d = 3$. Recall that 
$N_{\hat{\mu},l}(\hat{t}) = \hat{t}^k$ for some integer~$k$. Since 
$N_{\hat{\mu},l}(\hat{t}) \neq 1$, and the eigenvalues of~$\hat{t}$ lie in
$\langle \zeta \rangle$, we have $\zeta^k \neq 1$. Also, 
$\zeta^k \neq \zeta^{-k}$ as~$q$ is even. However, we may have $\xi^k = \zeta^k$ 
or $\xi^k = 1$. This allows to estimate the dimensions of the eigenspaces 
of~$N_{\hat{\mu},l}(\hat{t})$. Notice that the eigenvalues~$\zeta'$ 
of~$N_{\hat{\mu},l}(\hat{t})$ satisfy ${\zeta'}^{q - \varepsilon} = 1$.
Lemma~\ref{FormalitiesOnPolynomialsIII}(c) implies that
$C_{\GL_d^{\varepsilon}(q)}( N_{\hat{\mu},l}(\hat{t}) )$ is a direct product of 
groups $\GL_{d_i}^{\varepsilon}(q)$, where~$d_i$ is the dimension of an eigenspace 
of~$N_{\hat{\mu},l}(\hat{t})$.
If $d = 3$, we have $N_{\hat{\mu},l}(\hat{t}) = \diag(\zeta',1,\zeta')$ for some 
$\zeta'$ of order~$p$ so that 
$C_{\GL_d^{\varepsilon}(q)}( N_{\hat{\mu},l}(\hat{t}) ) \cong 
\GL_1^{\varepsilon}(q) \times \GL_2^{\varepsilon}(q)$.
Clearly, every element of $\GL_3^{\varepsilon}(q)$, which commutes with
$\diag(\zeta',1,\zeta')$ up to a scalar, in fact commutes with 
$\diag(\zeta',1,\zeta')$. Thus $C_{\overline{G}^{\sigma}}( N_{\mu,l}(t) )$
is the image of $C_{\GL_3^{\varepsilon}(q)}( N_{\hat{\mu},l}(\hat{t}) )$ under the
canonical epimorphism. It follows that 
$|C_G( N_{\mu,l}(t) )| \leq |C_{\overline{G}^{\sigma}}( N_{\mu,l}(t) )| =
|\GL_2^{\varepsilon}(q)| = q(q-\varepsilon)(q^2-1) 
< M_G$, as claimed in~(a). Suppose now that $d > 3$. For $d = 5, 6$, we obtain 
$C_{\GL_d^{\varepsilon}(q)}( N_{\hat{\mu},l}(\hat{t}) ) \cong \GL_2^{\varepsilon}(q) 
\times \GL_3^{\varepsilon}(q)$, respectively $\GL_2^{\varepsilon}(q) \times 
\GL_4^{\varepsilon}(q)$. Using the exact values of the orders of theses groups,
we obtain our claim in~(a). For $d = 7$ we have $\varepsilon = 1$ and
$C_{\GL_d(q)}( N_{\hat{\mu},l}(\hat{t}) ) \cong \GL_3(q) \times \GL_4(q)$.
In this case, the claim follows from Lemma~\ref{GUdPrepPrep}(e).
Now suppose that $d \geq 9$. If $d - \bar{d} = 4d'$, then
$C_{\GL_d^{\varepsilon}(q)}( N_{\mu,l}(\hat{t}) )$ is isomorphic to one of
$\GL_{2d'}^{\varepsilon}(q) \times \GL_{2d'+\bar{d}}^{\varepsilon}(q)$ or
$\GL_{2d'}^{\varepsilon}(q) \times \GL_{2d'}^{\varepsilon}(q) \times
\GL_{\bar{d}}^{\varepsilon}(q)$. 
If $d - \bar{d} = 4d' + 2$, then
$C_{\GL_d^{\varepsilon}(q)}( N_{\hat{\mu},l}(\hat{t}) )$ is isomorphic to one of 
$\GL_2^{\varepsilon}(q) \times \GL_{2d'}^{\varepsilon}(q) \times 
\GL_{2d'+\bar{d}}^{\varepsilon}(q)$ or
$\GL_2^{\varepsilon}(q) \times \GL_{2d'}^{\varepsilon}(q) \times 
\GL_{2d'}^{\varepsilon}(q) \times \GL_{\bar{d}}^{\varepsilon}(q)$. 
In each case, the first named possibility for the centralizer has a larger order 
than the second. For $d > 10$, the estimate $|\GL_{l}^{\varepsilon}(q)| \leq 
q^{l^2 + 2}$ from Lemma~\ref{GUdPrepPrep}(e) yields our bounds. For $d = 9, 10$ 
and $q \geq 4$, we use $|\GL_{l}^{\varepsilon}(q)| \leq q^{l^2 + 1/2}$, and for 
$d = 9, 10$ and $q = 2$, we use the exact values for 
$|\GL_{l}^{\varepsilon}(q)|$ to obtain our claim. This proves (a) in 
case~$|\mu|$ is even.
 
Now suppose that~$|\mu|$ is odd so that $\alpha = \ad_z \circ \beta'$.
We have to show that $|C_G( \alpha_{(2)} )| = |C_G( z )| \leq M_G$.
If $d = 3$, then
$|C_{\GL^\varepsilon_3(q)}( \hat{z} )| = q^3 (q - \varepsilon)^2$ by
Lemma~\ref{InvolutionCentralizerInLinearGroup}. The same lemma implies that
$|C_{\GL^\varepsilon_d(q)}( \hat{z} )| \leq q^{(d^2 + 1)/2}$ if 
$\varepsilon = 1$, and 
$|C_{\GL^\varepsilon_d(q)}( \hat{z} )| \leq q^{(d^2 + 9)/2}/(q^2-q-1)^2$, 
if $\varepsilon = -1$. This implies that 
$|C_{\GL^\varepsilon_d(q)}( \hat{z} )| \leq q^{d(d+1)/2}$ in all cases, except 
for $\varepsilon = -1$, $q = 2$ and $d \leq 8$. If $q = 2$, we have $3 \mid d$
so that we are left with $d = 6$ by Lemma~\ref{GUdPrepPrep}(a) (recall that
$(d,q) = (3,2)$ is excluded by hypothesis, as $\PSU_3(2)$ is not a simple 
group). If $d = 6$, the involution $\hat{z}$ has~$3$ Jordan blocks of size~$2$,
and hence $|C_{\GL^\varepsilon_6(q)}( \hat{z} )| = q^9|\GU_3(q)|$ by
Lemma~\ref{InvolutionCentralizerInLinearGroup}. Inserting the exact values for 
$q = 2$, we obtain $|C_{\GL^\varepsilon_6(q)}( \hat{z} )| \leq q^{d(d+1)/2}$
for $q = 2$ as well. Clearly, $C_{\overline{G}^{\sigma}}( z )$ is the image of 
$C_{\GL^\varepsilon_d(q)}( \hat{z} )$ in~$\overline{G}^{\sigma}$. It follows
that $|C_G( \alpha_{(2)} )| = |C_G( z )| \leq M_G$, thus completing the proof 
of~(a).

(b) Suppose first that $|\mu|$ is even, so that $|\alpha| = |\beta'|$. Since 
$|\beta'|$ divides $|t||\mu|$ and $|t|$ and $|\mu|$ divide $q-\varepsilon$ and 
$\delta f$, respectively, we obtain~(b). Now suppose that $|\mu|$ is odd, so that 
$|\alpha| = 2|\beta'|$. Since~$f$ is even if $\varepsilon = 1$, this implies 
that~$|\mu|$ divides $\delta f/2$, so that $|\beta'|$ divides 
$\delta f(q-\varepsilon)/2$, which yields~(b). 

(c) By Lemma~\ref{OrderBound}(b), the order of $\beta'$ divides $3 \delta f$,
and $3 \delta f/2$ if $|\beta'|$ is odd. This yields the assertion 
for~$|\alpha|$.

(d) If $d = 3$, $\varepsilon = 1$ and $q = 4$, then $\Gamma_G \times \Phi_G$ is
elementary abelian of order~$4$. In addition, $|\alpha| \in \{ 2, 6 \}$ by~(c).
Suppose that $|\alpha| = 6$. If $|\beta'|$ is odd, then $\beta' = \ad_t$ and
$G^{\ex}  \cong \PGL_3(4)$. If $|\beta'|$ is even, then $\alpha = \ad_t \circ \mu$
with $|t| = 3$ and $\mu$ an involution fixing~$t$. As~$\iota$ and~$\varphi$
invert~$t$, we must have $\mu = \iota \circ \varphi$. This yields the second
option for~$G^{\ex} $.

Suppose that $G = \PSU_3(8)$. Then $|\alpha| \in \{ 2, 6, 18 \}$ by~(c).
We have $\beta' = \ad_t \circ \mu$ for some
$t \in \PGU_3(8)$ of $3$-power order and some $\mu \in \Phi_G$. Recall that
$t = 1$ if $t \in G$. Hence either $t = 1$ or $|t| = 9$. If
$|\beta'| = 3$ or $|\beta'| = 9$, then $|\mu| \in \{ 1, 3 \}$. In either case,
$\alpha \not\in \Inn(G)$ but $\alpha^3 \in \Inn(G)$. Thus $G^{\ex}  \cong G.3$.
If  $|\alpha| = 18$, then $|\beta'| = 9$ by Lemma~\ref{OrderBound}(a)(c).
We are left with the case that $|\alpha| = |\beta'| = 6$. Then $|\mu| = 6$
and $\alpha = \beta'$. Hence $G^{\ex}  \cong G.6$.

Suppose that $G = \PSU_3(32)$ and $|\alpha| = 30$. Then $|\beta'| = 15$ by
Lemma~\ref{OrderBound}(c). It follows that $\alpha^{10} = {\beta'}^5 = \ad_{t'}$
for an element $t' \in \PGU_3(32) \setminus G$. This proves our assertion.

Suppose that $G = \PSU_d(4)$ for some $d \geq 5$. Then 
$|\beta'| \in \{ 1, 2, 4, 5\}$ by Lemma~\ref{OrderBound}(c), which yields the
claim for~$|\alpha|$. The analogous proof works for $G = \PSU_d(2)$ for some 
$d \geq 5$ and $G = \PSU_6(8)$.
If $G = \PSU_6(2)$ and $|\alpha| = 6$, then $|\beta'| = 3$ by 
Lemma~\ref{OrderBound}(c), and thus $\beta' \in \PGU_6(2)$.
\end{proof}

\addtocounter{subsection}{1}
\subsection{The linear groups of degree~$3$}
We can now prove that the linear groups of degree~$3$ have the $E1$-property.

\addtocounter{num}{1}
\begin{prp}
\label{PSU3qEven}
Let $G = \PSL^{\varepsilon}_3(q)$ and let $(V,n,\nu)$ be as in 
\cite[Notation~$4.1.1$]{HL1}. Then $(G,V,n)$ has the $E1$-property.
\end{prp}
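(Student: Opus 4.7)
The plan is to apply Proposition~\ref{QEvenPrep} to produce $g \in G$ such that, after replacing~$n$ by $\rho(g)n$, the automorphism $\alpha := \ad_g \circ \nu$ has even order dividing $3\delta f$ and $|C_G(\alpha_{(p)})| < M_G$ for every prime~$p$ dividing~$|\alpha|$. The large degree method of \cite[Lemma~$4.3.3$]{HL1} then guarantees the $E1$-property whenever
\begin{equation}
\label{PSU3Bound}
\chi(1) > (|\alpha| - 1)\, M_G^{1/2}.
\end{equation}
Since $|\alpha| \leq 3\delta f$ grows only logarithmically in~$q$ while $M_G^{1/2}$ is of order~$q^2$, inequality~(\ref{PSU3Bound}) excludes at most a short list of small-degree real odd-degree irreducible characters and only for a bounded range of small values of~$q$.

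To handle the residual characters, I would consult the generic character tables of $\PSL_3(q)$ and $\PSU_3(q)$ from Chevie~\cite{chevie}, together with~\cite{LL} to locate each such~$\chi$ in its Lusztig series $\mathcal{E}({\overline{G}^*}^{\sigma}, s)$. For those~$\chi$ not covered by~(\ref{PSU3Bound}), my first attempt, in the spirit of the second half of the proof of Proposition~\ref{PO8Plus}, would be to verify that $\nu^*(s)$ is not $\overline{G}^{*\sigma}$-conjugate to~$s$, whence ${^\nu\!\chi} \neq \chi$ contradicts the existence of the extension $\chi^{\ex}$ to~$G^{\ex}$. When this fails, I would try to apply Lemma~\ref{ProofByHCInductionQEven} with the unique $\iota$-stable proper standard Levi $\overline{L}$ of~$\overline{G}$ (the maximal torus, since the graph automorphism~$\iota$ swaps the two $A_1$-type Levis of~$\overline{G}$), invoking the $E1$-property of the proper subgroup ${\overline{L}^*}^{\sigma}$ by the minimal-counterexample hypothesis.

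For the small values of~$q$ explicitly flagged in Proposition~\ref{QEvenPrep}(d) --- namely $q = 4$ with $\varepsilon = 1$ and $q \in \{8, 32\}$ with $\varepsilon = -1$ --- the structure of $G^{\ex} = \langle \Inn(G), \nu \rangle$ is given there, and the character table of $G^{\ex}$ is available in the ATLAS~\cite{Atlas} or in the GAP character table library~\cite{GAP04}. In those cases I would mimic the final step of the proof of Proposition~\ref{PO8Plus}: locate the conjugacy classes of $\langle \alpha \rangle$ in~$G^{\ex}$, verify that $\Res^{G^{\ex}}_{\langle \alpha \rangle}(\chi^{\ex})$ contains every real irreducible character of~$\langle \alpha \rangle$, and apply \cite[Lemma~$4.3.1$]{HL1}.

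The main obstacle will be the delicate bookkeeping in the sporadic case $(\varepsilon,q) = (1,16)$, where the non-polynomial bound $M_G = 62\,401$ of Definition~\ref{DefineMG}(c) leaves little room, and where the rank-$2$ restriction method through the maximal torus is severely limited. In that case the Lusztig-series non-invariance argument will have to carry the burden, and its success depends on a careful analysis of how $\nu^*$ permutes the semisimple conjugacy classes of ${\overline{G}^*}^{\sigma} = \SL_3(16)$ that support real, small-degree, odd-degree characters of~$G$.
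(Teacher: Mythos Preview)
Your overall architecture is correct and matches the paper's: construct~$\alpha$ via Proposition~\ref{QEvenPrep}, apply the large degree method \cite[Lemma~$4.3.3$]{HL1}, and handle the small-$q$ leftovers with explicit character-table computations and \cite[Lemma~$4.3.1$]{HL1}. However, two of your fallback strategies are either unnecessary or unavailable, and you are missing the observation that makes the large degree method do almost all the work.

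The key point you are missing is that the only real, odd-degree, non-trivial irreducible characters of $\PSL_3^{\varepsilon}(q)$ already satisfy $\chi(1) > (q^3 - 2q^2)/3$. This follows directly from the Simpson--Frame tables~\cite{SiFra}: the characters of degree $q^2 + \varepsilon q + 1$ are not real, and every remaining non-trivial odd-degree character has degree at least $(q^3 - 2q^2)/3$. With this uniform lower bound in hand, inequality~(\ref{PSU3Bound}) is a single explicit inequality in~$q$ and~$f$ that holds for all $q \geq 128$ (both signs of~$\varepsilon$), leaving only $q \in \{4,16,64\}$ for $\varepsilon = 1$ and $q \in \{8,32\}$ for $\varepsilon = -1$ to be checked by hand. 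In particular, there is no ``residual list of small-degree characters'' to analyse via Lusztig series for generic~$q$; the case $q = 16$ that worried you is dispatched in one line using $|\alpha| \leq 6$ and the exact degrees from~\cite{SiFra}.

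Your proposed fallback to Lemma~\ref{ProofByHCInductionQEven} with $\overline{L} = \overline{T}$ does not work for $d = 3$: as Remark~6.23 in the paper spells out, the relevant semisimple elements~$s$ (e.g.\ those of order~$5$ in $\PGL_3(4)$) have no $\overline{G}^{\sigma}$-conjugate in~$\overline{T}^{\sigma}$, so no $\iota$-stable proper standard Levi is available. Fortunately this route is never needed; the Simpson--Frame bound plus the Atlas/GAP verifications for the handful of small~$q$ listed in Proposition~\ref{QEvenPrep}(d) suffice.
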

\begin{proof}
Let~$\chi$ denote the character of~$V$.
The character table of~$G$ is available in~\cite{SiFra}. The irreducible 
characters of~$G$ of degree $q^2 + \varepsilon q + 1$ are not real. 
Thus~$\chi$ is one of the remaining irreducible character of odd degree, and
hence $$\chi(1) > (q^3 - 2q^2)/3$$ by~\cite{SiFra}. 
For these characters we are going to apply \cite[Lemma~$4.3.3$]{HL1},
with~$\alpha$ constructed from $\beta := \nu$ according to 
Proposition~\ref{QEvenPrep}. Thus~$\alpha$ has even order and 
$|C_G( \alpha_{(p)} )| \leq M_G$ for all primes~$p$ 
dividing~$|\alpha|$, where~$M_G$ is as in Definition~\ref{DefineMG}(c). 
Moreover,~$|\alpha|$ divides $3 \delta f$ by Proposition~\ref{QEvenPrep}(c). We 
also put $G^{\ex}  := \langle \Inn(G), \alpha \rangle = \langle \Inn(G), \nu \rangle$, 
and let~$\chi^{\ex}$ denote the extension of~$\chi$ to~$G^{\ex} $; 
\cite[Remark~$4.2.5$]{HL1}.

Suppose first that $\varepsilon = 1$. We aim to show that
\begin{equation}
\label{L3q}
(q^3 - 2q^2)/3 > (|\alpha|-1)M_G^{1/2}.
\end{equation}
Squaring, we see that~(\ref{L3q}) will follow from
$$q^6 - 4q^5 + 4q^4 > 9(|\alpha|-1)^2M_G.$$
If $q > 64$, we have $|\alpha|-1 < 3f < q/4$ and $M_G = q^4$, and it is enough 
to show that
$$16(q^6 - 4q^5 + 4q^4) > 9q^6.$$
This is equivalent to
$$q^5(7q-64) + 64q^4 > 0,$$
which is obviously true for $q > 64$. It remains to consider the cases
$q \in \{ 4, 16, 64 \}$. For $q = 64$, we have $M_G = q^4$ and 
$|\alpha| \leq 18$. With this,~(\ref{L3q}) holds. If $q = 16$, 
we have $|\alpha| \leq 6$ and $M_G = 62\,401$. Using the exact values 
for~$\chi(1)$ from~\cite{SiFra}, we obtain $\chi(1) > 5 \cdot 62\,401^{1/2}$, 
which gives our result.
Suppose that $q = 4$. Then $\chi(1) \in \{ 35, 63 \}$ by the Atlas~\cite{Atlas},
and $|\alpha| \in \{ 2, 6 \}$.  If $|\alpha| = 2$, then~(\ref{L3q}) holds. 
Suppose then that $|\alpha| = 6$.  By Proposition~\ref{QEvenPrep}(d), either 
$G^{\ex} \cong \PGL_3(4)$ or $G^{\ex} \cong G.6$. According to the Atlas~\cite{Atlas}, no 
group of the latter shape has a character of degree~$35$ or~$63$. If 
$G^{\ex} \cong \PGL_3(4)$, then $\chi(1) = 63$ and 
$\Res^{G^{\ex} }_{\langle \alpha \rangle}( \chi^{\ex} )$ contains each of the two 
irreducible real characters of~$\langle \alpha \rangle$ with positive 
multiplicity; see the Atlas~\cite{Atlas}. Hence $(G,V,n)$ has the $E1$-property
by \cite[Lemma~$4.3.1$]{HL1}.

Suppose now that $\varepsilon = -1$. Then $M_G = q^4 + q^3$ and~$|\alpha|$ 
divides~$6f$.  Suppose first that $f \geq 21$. Then $(6f - 1)^3 < 2^f$ and so 
$(|\alpha|-1) < q^{1/3}$. By \cite[Lemma~$4.3.3$]{HL1}, we have to show that
\begin{equation}
\label{UltimateInequality}
(q^3 - 2q^2)/3 > q^{1/3}(q^4 + q^3)^{1/2},
\end{equation}
as then
$$\dim(V) \geq (q^3 - 2q^2)/3 > q^{1/3}(q^4 + q^3)^{1/2} > 
(|\alpha|-1)(q^4 + q^3)^{1/2}.$$
Squaring~(\ref{UltimateInequality}), it suffices to show that
$$
q^6 - 4q^5 + 4q^4 > 9q^{2/3}(q^4 + q^3).
$$
Noting that $4q^4 > 0$ and dividing by~$q^{2/3}q^3$, it suffices to 
show that
$$q^{4/3}(q - 4) > 9(q + 1).$$
This is the case, as $q \geq 2^{21}$. For $q = 2^f$ with $7 \leq f \leq 19$, a 
GAP computation shows that 
$$(q^3 - 2q^2)^2 > 9(6f - 1)^2(q^4 + q^3)$$
which proves our assertion, except for $q = 32$ or $q = 8$.

Suppose first that $q = 32$. Then $|\alpha| \leq 10$ or $|\alpha| = 30$. Now 
$(q^3 - 2q^2)^2 > 9(10-1)^2(q^4 + q^3)$, so that $(G,V,n)$ has the $E1$-property 
if $|\alpha| \leq 10$. Suppose that $|\alpha| = 30$. Then 
$\PGU_3(q) \cong \langle G, \alpha^{10} \rangle \leq G^{\ex} $ by 
Proposition~\ref{QEvenPrep}(d). In particular,~$\chi$ extends to~$\PGU_3(q)$, so 
that $\chi(1) > q^3 - 2q^2$; see the character tables in~\cite{SiFra}. As 
$(q^3 - 2q^2)^2 > 29^2(q^4 + q^3)$, this case also has the $E1$-property.

Suppose finally that $q = 8$. Then $|\alpha| \in \{ 2, 6, 18 \}$ by 
Proposition~\ref{QEvenPrep}(c).  Also, $\chi(1) \in \{ 133, 399, 513 \}$; see 
the Alas~\cite{Atlas}. We have $\chi(1)^2 > (|\alpha| - 1)^2(q^4+q^3)$, unless 
$|\alpha| = 18$ or $|\alpha| = 6$ and $\chi(1) = 133$. If $|\alpha| = 18$, then 
$G^{\ex}  \cong G.3$, by Proposition~\ref{QEvenPrep}(d). There are three isomorphism 
types of groups~$G.3$; see \cite{Atlas}. Only two of them have elements of 
order~$18$. We use Gap to compute $\Res_{\langle \alpha \rangle}^{G^{\ex} }( \chi^{\ex} )$, 
where~$\chi^{\ex}$ is the character of~$G^{\ex} $ on~$V$; see \cite[Remark~$4.2.5$]{HL1}.
It turns out that this restriction contains every irreducible character of 
$\langle \alpha \rangle$ with positive multiplicity. Thus $(G,V,n)$ has the 
$E1$-property by \cite[Lemma~$4.3.1$]{HL1}.
Now suppose that $|\alpha| = 6$ and $\chi(1) = 133$. By 
Proposition~\ref{QEvenPrep}(d), either $G^{\ex}  \cong G.3$ or $G^{\ex}  \cong G.6$. A GAP 
computation as in the case of $|\alpha| = 18$ shows that $(G,V,n)$ has the
$E1$-property.
\end{proof}

\addtocounter{subsection}{1}
\subsection{The characters of large degree}
From now on we can assume that $d \geq 5$. In the next two lemmas we identify 
the irreducible characters of~$G$ whose degree is large enough to satisfy the 
condition of \cite[Lemma~$4.3.3$]{HL1}.
\addtocounter{num}{1}
\begin{lem}
\label{GLepsPrepPrep}
Let $\hat{G} = \GL_d^{\varepsilon}( q )$ for some $d \geq 5$. Let 
$1 \neq \hat{s} \in \hat{G}$ be semisimple and real. Let~$\Xi$ denote the 
characteristic polynomial of~$\hat{s}$. Let~$\Delta_1$ denote the monic 
polynomial of degree~$1$ with root~$1$, and let~$d_1$ be the multiplicity 
of~$\Delta_1$ in~$\Xi$. Let $\Delta_2, \ldots, \Delta_l$ denote the distinct 
irreducible monic factors of~$\Xi$ different from~$\Delta_1$. Let $d_j$ and 
$k_j$ denote, respectively, the multiplicity of~$\Delta_j$ in~$\Xi$ and the 
degree of~$\Delta_j$, $j = 2, \ldots , l$. Then the following hold.

{\rm (a)} We have
\begin{equation}
\label{DExpansion}
d = \sum_{j = 1}^l d_j k_j,
\end{equation}
with $k_1 = 1$.

{\rm (b)}
Let $d' := \max\{ d_j \mid 1 \leq j \leq l \}$.
Put $\hat{C} := C_{\hat{G}}( \hat{s} )$ and 
$$D := [\hat{G}\colon\!\hat{C}]_{2'} \cdot q^{-d(d+1)/4}.$$ 
Then
\begin{equation}
\label{DEstimate}
D \geq (1 - q^{-1} - q^{-2})^{l'+1}q^{d(d - 2d' - 1)/4},
\end{equation}
with $l' = 0$ if $\varepsilon = 1$ and $l' = l$ if $\varepsilon = -1$. 

{\rm (c)} If $d_1 < d/3$ and $d' \geq \lfloor d/2 \rfloor$, then
$d' = \lfloor d/2 \rfloor$ and 
$\Xi = \Delta_1^{d_1}\Delta^{\lfloor d/2 \rfloor}$,
where~$\Delta$ is a monic polynomial of degree~$2$ with $\Delta = \Delta^*$.
\end{lem}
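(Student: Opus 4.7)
Part~(a) is immediate: the displayed identity is just the fact that $\Xi$ has degree $d$ and the factorization $\Xi = \prod_{j=1}^{l} \Delta_j^{d_j}$ into distinct monic irreducible factors, and $k_1 = 1$ by definition of $\Delta_1$.

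For part~(b), the plan is to bound the numerator and denominator of $[\hat{G}\colon\!\hat{C}]_{2'}$ separately. For the numerator, Lemma~\ref{OrderEstimates}, applied with $a = q$ and $m = d$, gives
$$|\hat{G}|_{2'} \geq q^{d(d+1)/2}(1 - q^{-1} - q^{-2}),$$
both in the linear and in the unitary case. For the denominator, Lemma~\ref{FormalitiesOnPolynomialsIII}(a),(c),(d) decomposes $\hat{C}$ as a direct product of factors of the form $\GL_{d_j}(q^{k_j})$ (when $\varepsilon = 1$), $\GU_{d_j}(q^{k_j})$ (for self-paired factors $\Delta_j = \Delta_j^\dagger$ when $\varepsilon = -1$), or $\GL_{d_j}(q^{2k_j})$ (for factors paired under $\Delta \mapsto \Delta^\dagger$). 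Applying Lemma~\ref{OrderEstimates}(a),(b) to each factor, and observing that the $\GU$-contributions each cost at most a factor $(1 - q^{-1} - q^{-2})^{-1}$ while the number of such contributions is bounded by $l$, one obtains
$$|\hat{C}|_{2'} \leq q^{\sum_j k_j d_j(d_j + 1)/2}\,(1 - q^{-1} - q^{-2})^{-l'}.$$
The key combinatorial estimate is
$$\sum_{j=1}^{l} k_j d_j(d_j+1)/2 \;=\; \tfrac{1}{2}\Bigl(\sum_j k_j d_j^2 + d\Bigr) \;\leq\; \tfrac{1}{2}(d\,d' + d),$$
using $d_j \leq d'$ and~(\ref{DExpansion}). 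Substituting this into the quotient and subtracting $d(d+1)/4$ yields the asserted exponent $d(d - 2d' - 1)/4$.

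Part~(c) is the main point. Choose $j^*$ with $d_{j^*} = d' \geq \lfloor d/2 \rfloor$ and set $k := k_{j^*}$. From $d_{j^*} k \leq d$ and $d \geq 5$ one immediately gets $k \leq 2d/\lfloor d/2 \rfloor \leq 5/2$, hence $k \in \{1,2\}$. In each case I will exploit that $\hat{s}$ real forces $\Xi = \Xi^*$ (Lemma~\ref{FormalitiesOnPolynomialsII}(b)), so every irreducible factor occurs with the same multiplicity as its star-image. If $k = 1$, then $\Delta_{j^*} = X - \zeta$ with $\zeta \neq 1$ (as $\Delta_{j^*} \neq \Delta_1$), and since $q$ is even we have $\zeta \neq \zeta^{-1}$; hence $\Delta_{j^*}^* \neq \Delta_{j^*}$ occurs with the same multiplicity $d'$, forcing $d \geq d_1 + 2d'$, and comparing with $d' \geq \lfloor d/2 \rfloor$ pins down $d' = \lfloor d/2 \rfloor$ and gives $\Xi = \Delta_1^{d_1}\Delta^{\lfloor d/2 \rfloor}$ with $\Delta := \Delta_{j^*}\Delta_{j^*}^*$ of degree~$2$ and $\Delta = \Delta^*$. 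If $k = 2$, then $d_{j^*} k \geq 2\lfloor d/2 \rfloor$, so a distinct star-partner $\Delta_{j^*}^* \neq \Delta_{j^*}$ would force $d \geq 4\lfloor d/2 \rfloor$, which is impossible for $d \geq 5$; hence $\Delta_{j^*} = \Delta_{j^*}^*$, and the degree count together with $d_1 < d/3$ again pins down the structure of $\Xi$, the only case requiring extra care being $d$ odd (where one argues that $d_1 = 1$ by ruling out a leftover degree-one factor, which by realness would have to come in a pair).

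The main obstacle will be the bookkeeping in part~(b) to separate the $\GL$- and $\GU$-contributions in the unitary case and to verify that the correction factor $(1 - q^{-1} - q^{-2})^{l'+1}$ suffices uniformly; the case analysis in~(c) is elementary once one observes $k_{j^*} \leq 2$.
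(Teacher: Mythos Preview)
Your proposal is correct and follows essentially the same route as the paper: part~(a) is trivial; part~(b) combines Lemma~\ref{OrderEstimates} for both the numerator and the centralizer factors with the elementary bound $\sum_j k_j d_j(d_j+1)/2 \leq d(d'+1)/2$, exactly as in the paper; and part~(c) proceeds by bounding the degree~$k$ of a factor of maximal multiplicity and then using $\Xi = \Xi^*$ to force the structure of~$\Xi$. One small slip: the displayed bound $k \leq 2d/\lfloor d/2 \rfloor$ should read $k \leq d/\lfloor d/2 \rfloor$ (your conclusion $k \leq 2$ is nonetheless correct, since $d/\lfloor d/2 \rfloor \leq 5/2$ for $d \geq 5$).
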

\begin{proof}
(a) Equation~(\ref{DExpansion}) is clear.

(b) Suppose that $\varepsilon = 1$.
For $1 \leq j \leq l$, each factor$\Delta_j^{d_j}$ of~$\Xi$ contributes the 
direct factor $\GL_{d_j}( q^{k_j} )$ to~$\hat{C}$; see
Lemma~\ref{FormalitiesOnPolynomialsIII}(a). Suppose that 
$\varepsilon = - 1$. For $1 \leq j \leq l$, the factor $\Delta_j^{d_j}$ 
of~$\Xi$ with $\Delta_j = 
\Delta_j^{\dagger}$ contributes the direct factor $\GU_{d_j}( q^{k_j} )$
to~$\hat{C}$, and each factor $(\Delta_j\Delta_j^{\dagger})^{d_j}$ with
$\Delta_j \neq \Delta_j^{\dagger}$ the direct factor $\GL_{d_j}( q^{2k_j} )$;
see Lemma~\ref{FormalitiesOnPolynomialsIII}(c),(d).
Lemma~\ref{GUdPrepPrep}(e) yields, for either case of~$\varepsilon$,
\begin{eqnarray*}
(1-q^{-1}-q^{-2})^{l'}|\hat{C}|_{2'} & \leq & \prod_{j = 1}^l q^{k_jd_j(d_j+1)/2} \\
               & \leq & \prod_{j = 1}^l q^{k_jd_j(d'+1)/2} \\
               & =    & q^{(d'+1)/2 \sum_{j=1}^l k_jd_j} \\
               & =    & q^{d(d'+1)/2}.
\end{eqnarray*}
Also,
$$|\hat{G}|_{2'} \geq q^{d(d+1)/2}(1 - q^{-1} - q^{-2}),$$
by Lemma~\ref{OrderEstimates}, which gives~(\ref{DEstimate}).

(c) Let~$k'$ denote the maximum degree of the irreducible factors of~$\Xi$ 
occurring with multiplicity~$d'$. Let~$\Delta'$ be one of the
$\Delta_1, \ldots , \Delta_l$, for which these values are obtained. By 
hypothesis, $d' \geq \lfloor d/2 \rfloor \geq d/3 > d_1$, and thus 
$\Delta' \neq \Delta_1$.  If $d' > \lfloor d/2 \rfloor$,
Equation~(\ref{DExpansion}) implies that $k' = 1$; in turn,
$\Delta' \neq {\Delta'}^*$ by
Lemma~\ref{FormalitiesOnPolynomialsI}(a). As~${\Delta'}^*$ occurs with
multiplicity~$k'$ in~$\Xi$ by Lemma~\ref{FormalitiesOnPolynomialsII}(b), this
contradicts Equation~(\ref{DExpansion}).
Suppose now that $d' = \lfloor d/2 \rfloor$; then $k' \leq 2$,
and~$\Xi$ is as claimed.
\end{proof}

\begin{lem}
\label{GUdPrep}
Let $\hat{G} = \GL^{\varepsilon}_d(q)$ for some $d \geq 5$ such that 
$e = \gcd( d, q - \varepsilon ) > 1$. Let $1 \neq \hat{s} \in \hat{G}$ be 
semisimple and real with characteristic polynomial~$\Xi$. Let~$\Delta_1$ denote 
the monic polynomial of degree~$1$ with root~$1$, and let~$d_1$ be the 
multiplicity of~$\Delta_1$ in~$\Xi$. Put $\hat{C} := C_{\hat{G}}( \hat{s} )$. 
Then one of the following holds.

{\rm (a)} We have $d_1 \geq d/3$.

{\rm (b)} We have
$\Xi = \Delta_1^{d_1}\Delta^{\lfloor d/2 \rfloor}$, where~$\Delta$ is 
a monic polynomial of degree~$2$ with $\Delta = \Delta^*$. 

{\rm (c)} We have
$[\hat{G}\colon\!\hat{C}]_{2'} \geq \delta ef(q - \varepsilon) \cdot q^{d(d+1)/4}$.

{\rm (d)} We have $\varepsilon = -1$, $q = 4$ and
$[\hat{G}\colon\!\hat{C}]_{2'} \geq 45 \cdot q^{d(d+1)/4}$.

{\rm (e)} We have $\varepsilon = -1$, $q = 2$ and
$[\hat{G}\colon\!\hat{C}]_{2'} \geq 15 \cdot q^{d(d+1)/4}$.

{\rm (f)} We have $\varepsilon = -1$, $(d,q) = (5,4)$, and 
$[\hat{G}\colon\!\hat{C}]_{2'} > 12 \cdot q^{d(d+1)/4}$.

{\rm (g)} We have $\varepsilon = -1$, $(d,q) = (6,8)$ and
$[\hat{G}\colon\!\hat{C}]_{2'} > 51 \cdot q^{d(d+1)/4}$.

{\rm (h)} We have $\varepsilon = -1$ and $(d,q) = (6,2)$.
\end{lem}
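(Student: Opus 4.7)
The plan is to assume that neither (a) nor (b) holds and then verify one of (c)--(h). Under these assumptions, $d_1 < d/3$, and a direct inspection of the maximal multiplicity~$d'$ of an irreducible factor of~$\Xi$ rules out $d' \geq \lfloor d/2 \rfloor$ by Lemma~\ref{GLepsPrepPrep}(c). Hence $d' \leq \lfloor d/2 \rfloor - 1$, and substituting this into the inequality~(\ref{DEstimate}) of Lemma~\ref{GLepsPrepPrep}(b), distinguishing the two parities of~$d$, yields
$$D := [\hat{G}\colon\!\hat{C}]_{2'} \cdot q^{-d(d+1)/4} \geq (1 - q^{-1} - q^{-2})^{l'+1} q^{d/4},$$
with exponent $d/4$ for~$d$ even and $d/2$ for~$d$ odd, where $l' = 0$ if $\varepsilon = 1$ and $l' = l$ if $\varepsilon = -1$.

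For the untwisted case $\varepsilon = 1$, the factor $(1 - q^{-1} - q^{-2})$ is bounded uniformly in terms of~$q$ by Lemma~\ref{GUdPrepPrep}(b)--(d), and a direct comparison using $e \leq d$ and $f = \log_2 q$, carried out separately for $d = 5, 6$ and $d \geq 7$, shows that $D \geq ef(q - 1) = \delta ef(q - \varepsilon)$, establishing~(c). The small cases ($q \in \{2,4\}$) need to be checked explicitly using the exact order formula for $|\GL_d(q)|$ and the list of admissible $d$ coming from $e > 1$ and Lemma~\ref{GUdPrepPrep}(a).

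For the twisted case $\varepsilon = -1$, the factor $(1 - q^{-1} - q^{-2})^{l+1}$ can spoil the estimate when~$l$ is large, so the count of distinct irreducible factors of~$\Xi$ must be controlled. The idea is to apply Lemma~\ref{FormalitiesOnPolynomialsIV}(c) to the restriction of~$\hat{s}$ to the orthogonal complement of $E_{\Delta_1}(\hat{s})$, together with Lemma~\ref{FormalitiesOnPolynomialsIV}(a),(b) pairing the factors $\Delta_j$ with $\Delta_j^{\dagger}$ or $\Delta_j^*$; this gives $2l \leq d - d_1 + O(1)$, which, combined with the lower bound on $q^{d(d - 2d' - 1)/4}$, is enough to derive~(c) for all $(d, q)$ outside a finite set. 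The sharpened bounds on $|C_{\GU_d(q)}(\hat{s})|_{2'}$ from Lemma~\ref{FormalitiesOnPolynomialsIV}(d),(e) will be needed to squeeze out cases with large $l$ but small~$d'$.

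The main obstacle is the explicit small-parameter analysis at the end. For $\varepsilon = -1$ with $q \in \{2, 4\}$ and $d \in \{5, 6\}$, and for the isolated pair $(d,q) = (6,8)$, the general estimate is too weak, and one must enumerate the possible characteristic polynomials~$\Xi$ not covered by~(a) or~(b) and evaluate $[\hat{G}\colon\!\hat{C}]_{2'}$ directly, using the order formula of Lemma~\ref{FormalitiesOnPolynomialsIII} and the pairing constraints of Lemma~\ref{FormalitiesOnPolynomialsIV}(a),(b). These computations should produce precisely the constants $45, 15, 12, 51$ appearing in (d)--(g), while the residual pair $(6,2)$ in~(h) is simply recorded to be handled elsewhere.
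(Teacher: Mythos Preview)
Your overall strategy---assume (a) and (b) fail, deduce $d' \leq \lfloor d/2 \rfloor - 1$ from Lemma~\ref{GLepsPrepPrep}(c), and plug into the estimate~(\ref{DEstimate})---matches the paper. The $\varepsilon = 1$ case is essentially as you describe, though the paper treats $d \in \{5,6,10,12\}$ individually (and $q = 2$ never occurs since then $e = \gcd(d,1) = 1$).

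There are, however, two genuine gaps in your treatment of $\varepsilon = -1$.

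First, your proposed bound ``$2l \leq d - d_1 + O(1)$'' is false. Take $d_1 = 1$, $d' = 1$, and let $\Xi/\Delta_1$ have $d-1$ distinct linear factors; then $l = d$, so $2l = 2d \not\leq d + O(1)$. The correct control on~$l$ must be tied to $d'$, not $d_1$: the paper introduces $m := \lfloor d/2 \rfloor - d'$ and proves $l + 1 \leq 2m + 5$ (when $\Delta' \neq \Delta_1$) and $l + 1 \leq (d+5)/2 + m$ in general. The point is that the exponent in~(\ref{DEstimate}) is $d(2m-1)/4$, so large~$l$ forces large~$m$, and the two effects balance. For $m = 1$ this balance is too delicate and the paper needs a separate structural analysis (Claim~4) pinning down~$\Xi$ exactly and giving the sharper estimates~(\ref{SecondEstimateGUd}),~(\ref{ThirdEstimateGUd}). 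Your argument contains no mechanism replacing this trade-off.

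Second, you assert that (c) holds ``for all $(d,q)$ outside a finite set'' and that the remaining work is a finite enumeration for $q \in \{2,4\}$, $d \in \{5,6\}$ plus $(6,8)$. This is a misreading of the conclusion. For $\varepsilon = -1$ and $q = 2$ one has $\delta e f (q - \varepsilon) = 2 \cdot 3 \cdot 1 \cdot 3 = 18$, but the paper only establishes the constant~$15$; likewise for $q = 4$ one needs $100$ but only gets~$45$. So (c) genuinely fails for \emph{every} admissible~$d$ when $q \in \{2,4\}$, and cases (d),(e) are the actual conclusions there---each requiring its own argument valid for all~$d$ (the paper's Claims~9 and~10), not an ad hoc check for $d \leq 6$. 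The constants in (f),(g),(h) arise because even the weaker bounds of (d),(e),(c) fail at those specific pairs.
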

\begin{proof}
Adopt the notation of Lemma~\ref{GLepsPrepPrep}. Suppose that~(a) and~(b) do not 
hold. Then $d_1 < d/3$ and $d' \leq \lfloor d/2 \rfloor - 1$ by 
Lemma~\ref{GLepsPrepPrep}(c). 

First assume that $\varepsilon = 1$. Lemma~\ref{GLepsPrepPrep}(b) gives
\begin{equation}
\label{FirstDEstimateGLd}
D \geq (1 - q^{-1} - q^{-2})q^{d(d - 2d' - 1)/4}.
\end{equation}
As $d' \leq \lfloor d/2 \rfloor - 1$, we have $d - 2d' - 1 \geq 1$, if~$d$ is 
even, and $d - 2d' - 1 \geq 2$, if~$d$ is odd. Since $\gcd( d, q - 1 ) > 1$, we 
have $q \geq 4$. Hence $1 - q^{-1} - q^{-2} \geq 1 - 1/4 - 1/16 = 11/16$. 

From now on we distinguish the cases~$d$ even and~$d$ odd. Suppose first 
that~$d$ is even, so that $d - 2d' - 1 \geq 1$. If $d > 12$, then
$$
D \geq 11/16 \cdot q^{d/4} > 1/2 \cdot q^{7/2} > q^3 \geq ef(q-1),$$
where the first estimate arises from~(\ref{FirstDEstimateGLd}). We are thus
in case~(c). This leaves the cases $d = 6, 10, 12$.
(The cases $d = 4, 8$ do not occur thanks to Lemma~\ref{GUdPrepPrep}(a).)

For $d = 10$, we have $e = 5$ and $q \geq 16$, and we obtain
$$D \geq 11/16 \cdot q^{5/2} \geq 11/16 \cdot q^{3/2}( q - 1 ) \geq 
5 f (q-1),$$ so that we are in case~(c). For $d = 6$ or $d = 12$ we
have $e  = 3$. If $d = 12$, we obtain
$$D \geq 11/16 \cdot q^{3} \geq 3 f (q-1)$$ 
for all $q \geq 4$, so that we are in case~(c). Suppose finally that $d = 6$.
By checking all the possibilities for~$\Xi$, using $d' \leq 2$ and $d_1 = 0$,
we find that the largest possible value for~$|\hat{C}|_{2'}$ is assumed for
$$|\hat{C}|_{2'} = (q^2-1)(q^2-1)(q^4-1).$$
Inserting the exact values for~$|\hat{G}|_{2'}$ and~$|\hat{C}|_{2'}$, we obtain
$$D \geq q^{-21/2} (q-1)(q^3-1)(q^5-1)(q^4+q^2+1).$$
Now $(q-1)(q^3-1)(q^5-1)(q^4+q^2+1) \geq q^{13}$, and so
$$D \geq q^{5/2}.$$
It is easily checked that 
$$q^{5/2} \geq 3 f (q-1)$$
for $q \geq 4$, so that we are again in case~(c).

Assume now that~$d$ is odd, so that $d - 2d' - 1 \geq 2$. If $d \geq 7$, we 
obtain
$$
D \geq 11/16 q^{d/2} > 1/2 \cdot q^{7/2} > q^3 \geq ef(q-1),$$
where the first estimate arises from~(\ref{FirstDEstimateGLd}). We are thus
in case~(c). If $d = 5$, we have $e = 5$ and $q \geq 16$. Moreover, $d' = 1$. 
Then $|\hat{C}|_{2'} \leq q^5 - 1$, and hence 
$[\hat{G}\colon\!\hat{C}]_{2'} \geq (q-1)(q^2-1)(q^3-1)(q^4-1)$. Now 
$$(q-1)(q^2-1)(q^3-1)(q^4-1) > q^{15/2}\cdot 5 f (q-1)$$
for all $q \geq 16$, so that we are in case~(c). This completes the proof in 
case $\varepsilon = 1$.

Assume now that $\varepsilon = -1$. If $d' = d_1$, and each irreducible factor 
$\Delta \neq \Delta_1$ of~$\Xi$ occurs with multiplicity strictly smaller 
than~$d_1$, put $\Delta' := \Delta_1$. Otherwise, choose a monic irreducible 
factor $\Delta' \neq \Delta_1$ of~$\Xi$ occurring with multiplicity~$d'$, and 
such that $\deg( \Delta' )$ is maximal among the degrees of the irreducible 
factors of~$\Xi$ with multiplicity~$d'$. 
Lemma~\ref{GLepsPrepPrep}(b) gives
\begin{equation}
\label{FirstEstimateGUd}
D \geq (1 - q^{-1} - q^{-2})^{l+1}q^{d(d - 2d' - 1)/4}.
\end{equation}
We now complete the proof with several claims. The end of the proof of each 
claim is indicated by the symbol~$\Diamond$.

\textit{Claim~$1$}: If $d \leq 6$, we are in one of the Cases~(c)--(h).

If $d = 5$ we have $d' = 1$ from $d' \leq \lfloor d/2 \rfloor - 1$. Hence
$|\hat{C}|_{2'} \leq (q + 1)^5$ by Lemma~\ref{FormalitiesOnPolynomialsIV}(d), 
and so
$$D \geq q^{-15/2} (q-1)^2(q^2-q+1)(q^2+1)(q^4-q^3+q^2-q+1).$$ 
As $d = 5$ we have $e = 5$ and $q \geq 4$. It is easy to check that
$$q^{-15/2}(q-1)^2(q^2-q+1)(q^2+1)(q^4-q^3+q^2-q+1) \geq 10 \cdot f(q+1)$$ 
for all $q > 4$ with $5 \mid q + 1$, so that we are in Case~(c) for $q > 4$. 
For $q = 4$ we are in Case~(f), as can be checked directly.

Suppose now that $d = 6$. Then $e = 3$, and thus $3 \mid q + 1$. Moreover,
$d' \leq 2$, as $d' \leq \lfloor d/2 \rfloor - 1$. Also, $d_1 = 0$, as 
$d_1 < d/3$. We also assume that $q > 2$, as the case $q = 2$ is listed in~(h).
Suppose first that $d' = 2$. Then $k' = 1$ by 
Lemma~\ref{FormalitiesOnPolynomialsIV}(c). It follows that 
$\Xi = (\Delta_2\Delta_3)^2\Delta_4\Delta_5$, with polynomials 
$\Delta_2, \ldots , \Delta_5$ of degree~$1$, and $(\Delta_2\Delta_3)^* = 
\Delta_2\Delta_3 = (\Delta_2\Delta_3)^{\dagger}$. 
There are four such possibilities and Lemma~\ref{FormalitiesOnPolynomialsIII}
yields
$|\hat{C}|_{2'} \leq |\GU_2(q)|_{2'}^2(q+1)^2 = (q+1)^4(q^2-1)^2$.
Thus
$$D \geq q^{-21/2} (q-1)(q^2+q+1)(q^2-q+1)^2(q^2+1)(q^4-q^3+q^2-q+1).$$
It is easy to check that 
$(q-1)(q^2+q+1)(q^2-q+1)^2(q^2+1)(q^4-q^3+q^2-q+1) \geq 
q^{21/2}\cdot6f(q+1)$ for
all $q > 8$ with $3 \mid q + 1$, so that we are in Case~(c) for $q > 8$. 
The case $q = 8$ gives the bound asserted in~(g).
If $d' = 1$, we have $k' \leq 3$ by Lemma~\ref{FormalitiesOnPolynomialsIV}(c),
and $|\hat{C}|_{2'} \leq (q + 1)^6$ by
Lemma~\ref{FormalitiesOnPolynomialsIV}(e). In particular,
$$D \geq q^{-21/2} (q-1)^3(q^2+q+1)(q^2-q+1)^2(q^2+1)(q^4-q^3+q^2-q+1).$$
It is easy to check that $(q-1)^3(q^2+q+1)(q^2-q+1)^2(q^2+1)(q^4-q^3+q^2-q+1) 
\geq q^{21/2}\cdot6f(q+1)$ for
all $q > 2$ with $3 \mid q + 1$, so that we are in Case~(c).
\hfill{$\Diamond$}

Assume henceforth that $d > 6$. Then $d \geq 9$ by Lemma~\ref{GUdPrepPrep}(a).
Define the positive integer~$m$ by
$$d' = \lfloor d/2 \rfloor - m,\quad\quad m = 1, 2, \ldots$$

\textit{Claim~$2$}: Suppose that $d \geq 6m'$ and $d \neq 6m' + 1$ for
some positive integer~$m'$. If $d' = d_1$, then $m > m'$. In particular,
if $d' = d_1$, then $m > 1$, as $d \geq 9$.

This follows from $\lfloor d/2 \rfloor - m = d' = d_1 < d/3$, and 
$\lfloor d/2 \rfloor -d/3 \geq m'$. \hfill{$\Diamond$}.

\textit{Claim~$3$}: If $m = 1$, then $\Delta' \neq \Delta_1$ and $k' = 1$.

By Claim~$2$ we have $d' > d_1$ and thus $\Delta' \neq \Delta_1$.
Lemma~\ref{FormalitiesOnPolynomialsIV}(c) yields
$d \geq 2k'(\lfloor d/2 \rfloor - 1)$, which gives $k' = 1$ as $d \geq 9$.
\hfill{$\Diamond$}

\textit{Claim~$4$}: Suppose that $m = 1$. Then
\begin{equation}
\label{SecondEstimateGUd}
D \geq (1 - q^{-1} - q^{-2})^{5}q^{(3d - 12)/4}.
\end{equation}
if~$d$ is even, and
\begin{equation}
\label{ThirdEstimateGUd}
D \geq (1 - q^{-1} - q^{-2})^{6}q^{(5d - 27)/4}.
\end{equation}
if~$d$ is odd.

By Claim~$3$ we have $\Delta' \neq \Delta_1$ and $k' = 1$. Moreover,
$\Delta' \neq {\Delta'}^*$ by Lemma~\ref{FormalitiesOnPolynomialsI}(a). 
It follows that
$\Xi = \Delta_1^{d_1} (\Delta'{\Delta'}^*)^{\lfloor d/2 \rfloor - 1}$ or
$\Xi = \Delta_1^{d_1} (\Delta'{\Delta'}^*)^{\lfloor d/2 \rfloor - 1}\Delta$
with~$\Delta$ of degree~$2$. In the latter case,~$\Delta$ is reducible, as
otherwise $\Delta \neq \Delta^{\dagger}$ and~$\Delta^{\dagger}$ would be a 
divisor of~$\Xi$; see Lemmas~\ref{FormalitiesOnPolynomialsI}(b), 
and~\ref{FormalitiesOnPolynomialsIV}(b). From 
Lemma~\ref{FormalitiesOnPolynomialsIII} we can determine the structure 
of~$\hat{C}$ in each case. Lemma~\ref{OrderEstimates} then gives the 
asserted estimates. \hfill{$\Diamond$}

\textit{Claim~$5$}: If $\Delta' \neq \Delta_1$, then 
$l \leq d - 2d' + 3$. In any case, $l \leq d - d' + 1$.

Suppose first that $\Delta' \neq \Delta_1$. In view of~(\ref{DExpansion}), we
get $d \geq d_1 + 2d' + l - 3$ by Lemma~\ref{FormalitiesOnPolynomialsIV}(c) and 
its proof. The first assertion follows from this. If $\Delta' = \Delta_1$, the
claim follows directly from~(\ref{DExpansion}). \hfill{$\Diamond$}

\textit{Claim~$6$}: We have $d - 2d' - 1 \geq 2m - 1$.

This follows from $d' = \lfloor d/2 \rfloor -m \leq d/2 -m$. \hfill{$\Diamond$}.

\textit{Claim~$7$}: We have
$$l + 1 \leq (d+5)/2 + m,$$
and
$$l + 1 \leq 2m + 5,\quad\quad\text{if\ $\Delta' \neq \Delta_1$}.$$

This follows from Claim~$5$ and $d' \geq (d-1)/2 - m$. \hfill{$\Diamond$}.

\textit{Claim~$8$}: If $q \geq 8$, we are in Case~(c). 

Suppose that $q \geq 8$. By Lemma~\ref{GUdPrepPrep}(b) we have
$1 - q^{-1} - q^{-2} \geq q^{-1/4}$, and it suffices to show that $D > q^3$.
Thus
$$D \geq q^{-(d+5)/8 - m/4 + d(2m - 1)/4}$$
by Claims~$6$ and~$7$ and~(\ref{FirstEstimateGUd}). We have 
$-(d+5)/8 - m/4 + d(2m - 1)/4 \geq 3$,
if and only if $d(4m-3) - 2m \geq 29$. The latter holds if $m \geq 2$.

Suppose then that $m = 1$ and that~$d$ is even. Then
$$D \geq q^{-5/4 + (3d - 12)/4}$$
by~(\ref{SecondEstimateGUd}). Hence $D \geq q^3$ for all $d \geq 10$. 
Suppose now that $m = 1$ and~$d$ is odd. 
Then
$$D \geq q^{-6/4 + (5d - 27)/4}$$
by~(\ref{ThirdEstimateGUd}). Hence $D \geq q^3$ for all $d \geq 9$.
\hfill{$\Diamond$}

\textit{Claim~$9$}: If $q = 4$, we are in Case~(d).

Suppose that $q = 4$. Then $q + 1 = e = 5$, so that $5 \mid d$. Since 
$4^{11/4} > 45$, it suffices to show that $D \geq q^{11/4}$.  By 
Lemma~\ref{GUdPrepPrep}(c) we have $1 - q^{-1} - q^{-2} \geq q^{-1/2}$, and thus
$$D \geq q^{-(d+5)/4 - m/2 + d(2m - 1)/4}$$
by Claims~$6$ and~$7$ and~(\ref{FirstEstimateGUd}). We have 
$-(d+5)/4 - m/2 + d(2m - 1)/4 \geq 11/4$,
if and only if $d(m-2) + m(d-2) \geq 16$. If $m \geq 2$, this is the case for 
all $d \geq 10$.
Suppose now that $m = 1$ and that~$d$ is even. Then
$$D \geq q^{-5/2 + (3d - 12 )/4}$$
by~(\ref{SecondEstimateGUd}). Hence $D \geq q^{11/4}$ for all even $d > 10$ with 
$5 \mid d$. Suppose that $d = 10$. Then $d' = 4$ and we obtain $k' = 1$ from 
Lemma~\ref{FormalitiesOnPolynomialsIV}(c).
Lemma~\ref{FormalitiesOnPolynomialsIV}(d) yields $|\hat{C}|_{2'} \leq
|\GU_4(q)|_{2'}^2|\GU_2(q)|_{2'}$. Using the exact values for 
$|\GU_4(4)|_{2'}$ and $|\GU_{10}(4)|_{2'}$, one checks that the bound on
$[\hat{G}\colon\!\hat{C}|_{2'}$ in~(d) is satisfied.
If $m = 1$ and~$d$ is odd, we have
$$D \geq q^{-6/2 + (5d - 27)/4}$$
by~(\ref{ThirdEstimateGUd}). Hence $D \geq q^{11/4}$ for all odd $d > 5$ 
with $5 \mid d$. \hfill{$\Diamond$}

\textit{Claim~$10$}: If $q = 2$, we are in Case~(e). 

Suppose that $q = 2$. Then $q + 1 = e = 3$, so that $3 \mid d$. 
It suffices to show that $D \geq q^4$. By Lemma~\ref{GUdPrepPrep}(c) we have
$1 - q^{-1} - q^{-2} \geq q^{-2}$. 
Suppose first that $\Delta' = \Delta_1$. Then $d_1 = d' > 1$ by our choice 
of~$\Delta'$, and
$$D \geq q^{-d - 5 - 2m + d(2m - 1)/4}$$
by Claims~(6) and~(7) and~(\ref{FirstEstimateGUd}). We have 
$-d - 5 - 2m + d(2m - 1)/4 \geq 4$,
if and only if 
\begin{equation}
\label{Eq36}
d(m-5) + m(d-8) \geq 36. 
\end{equation}
By Claim~$2$ we have $m \geq 5$ if $d \geq 24$. In particular,~(\ref{Eq36}) 
holds for $d \geq 24$. This leaves the cases $9 \leq d \leq 21$. 
Using $1 < d' = d_1 < d/3$ and $d \equiv d_1\,\,(\mbox{\rm mod}\,\,2)$ to
enumerate the remaining possibilities for $d$ and $d' = \lfloor d/2 \rfloor - m$,
we find that~(\ref{Eq36}) is satisfied unless $(d,d') \in \{ (15,3), (12,2) \}$.
Applying Lemma~\ref{FormalitiesOnPolynomialsIV}(e) to the orthogonal complement
of the fixed space of~$\hat{s}$, we get 
$|\hat{C}|_{2'} \leq |\GU_3(2)|_{2'}|\GU_2(2)|_{2'}^6$ if $d = 15$ and
$|\hat{C}|_{2'} \leq |\GU_2(2)|_{2'}(2+1)^{10}$ if $d = 12$. A computation 
yields
\begin{equation}
\label{qEq2}
[\hat{G}\colon\!\hat{C}]_{2'} > 15 \cdot 2^{d(d+1)/4}
\end{equation}
in these two cases.

Suppose now that $\Delta' \neq \Delta_1$. Then
$$D \geq q^{- 4m - 10 + d(2m - 1)/4}$$
by Claims~(6),~(7) and~(\ref{FirstEstimateGUd}). We have 
$- 4m - 10 + d(2m - 1)/4 \geq 4$, if and only if
$d(m-1) + m(d - 16) \geq 56$. For $m \geq 2$, the latter holds for 
$d \geq 30$, and if $m \geq 4$, the condition holds for $d \geq 18$.

Suppose then that $m = 1$ and~$d$ is even. Then
$$D \geq q^{-10 + (3d - 12 )/4}$$
by~(\ref{SecondEstimateGUd}). Hence $D \geq q^4$ for all even $d \geq 18$ with
$3 \mid d$. If $m = 1$ and~$d$ is odd, we have
$$D \geq q^{-12 + (5d - 27)/4}$$
by~(\ref{ThirdEstimateGUd}). Hence $D \geq q^4$ for all odd $d \geq 21$
with $3 \mid d$. 

We finally investigate the cases which cannot be ruled out with the above
arguments. These are included in the cases $9 \leq d \leq 15$ and all 
possible~$m$ such that $d_1 \leq d'$, and the cases $18 \leq d \leq 27$ and 
$1 \leq m \leq 3$. 
If $d' \leq 2$, we use the bounds on~$|\hat{C}|_{2'}$ of
Lemma~\ref{FormalitiesOnPolynomialsIV}(e). If $d' > 2$, we use the estimate
$|\hat{C}|_{2'} \leq |\GU_{d'}(2)|^2_{2'}|\GU_{d-2d'}(2)|_{2'}$ of 
Lemma~\ref{FormalitiesOnPolynomialsIV}(d). Inserting these upper bounds and the 
correct value for $|\GU_d(2)|_{2'}$ into the definition of~$D$, we 
get~(\ref{qEq2}) in all cases, except if $d = 9$ and $d' = 3$. Suppose that 
$d = 9$ and $d' = 3$. Since $d_1 < d/3 = 1$, we get that 
$\Xi = \Delta_1 (\Delta'{\Delta'}^*)^3\Delta$, where~$\Delta$ is a reducible 
polynomial of degree~$2$. Hence $|\hat{C}|_{2'} \leq |\GU_3(2)|^2_{2'}(2+1)^3$, 
which is enough to get~(\ref{qEq2}). 
\hfill{$\Diamond$}

This completes the proof.
\end{proof}

\addtocounter{subsection}{2}
\subsection{The characters of small degree}
In the next two lemmas we show that the irreducible characters of~$G$ whose 
degrees are too small to apply \cite[Lemma~$4.3.3$]{HL1}, satisfy the hypotheses
of Lemma~\ref{ProofByHCInductionQEven}. Recall the notation 
$\Aut'( \GL_d( \mathbb{F} ) )$ introduced in Subsection~\ref{AutomorphismsIV}.

\addtocounter{num}{1}
\begin{lem}
\label{MultiplesOfRealElements}
Let $\hat{s} \in \GL_d( \mathbb{F} )$ be semisimple and real with characteristic
polynomial~$\Xi$. Let~$\Delta_1$ denote the monic polynomial of degree~$1$ with
root~$1$, and let~$d_1$ be the multiplicity of~$\Delta_1$ in~$\Xi$.

{\rm (a)} Suppose that $\Xi = \Delta_1^{d_1} \Delta^{d_2}$ with
$\Delta = \Delta_2 \Delta_2^*$, where $\Delta_2 \neq \Delta_1$ is monic of
degree~$1$. Suppose also that $d_1 \neq d_2$. If
$\kappa \hat{s}$ is real for some $\kappa \in \mathbb{F}^*$, then $\kappa = 1$.

{\rm (b)} Let $\hat{\alpha} \in \Aut'( \GL_d( \mathbb{F} ) )$ be such that
$\hat{\alpha}( \hat{s} )$ is conjugate to $\kappa \hat{s}$ for some
$\kappa \in \mathbb{F}^*$. If $d_1 \geq d/3$, then $\hat{\alpha}( \hat{s} )$ is
conjugate to~$\hat{s}$.
\end{lem}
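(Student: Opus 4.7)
The plan is to reduce both parts to eigenvalue bookkeeping, exploiting even characteristic throughout. For part~(a), the hypothesis pins down the eigenvalues of $\hat{s}$ exactly: $1$ with multiplicity $d_1$, and $\zeta, \zeta^{-1}$ each with multiplicity $d_2$, where $\zeta \in \mathbb{F}^* \setminus \{1\}$ is the root of $\Delta_2$. The eigenvalue multiset of $\kappa \hat{s}$ is then $\{\kappa,\kappa\zeta,\kappa\zeta^{-1}\}$ with multiplicities $\{d_1,d_2,d_2\}$. Assume $\kappa \neq 1$. Since $\mathbb{F}$ has characteristic two, $\kappa \neq \kappa^{-1}$. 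Realness of $\kappa\hat{s}$, via Lemma~\ref{FormalitiesOnPolynomialsII}(b), forces this multiset to be closed under inversion with multiplicities preserved, so $\kappa^{-1}$ (of multiplicity $d_1$) must coincide with one of $\kappa\zeta$ or $\kappa\zeta^{-1}$ (of multiplicity $d_2$), forcing $d_1 = d_2$ and contradicting the hypothesis.

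For part~(b), decompose $\hat\alpha = \ad_{\hat{g}} \circ \hat\mu$ with $\hat\mu = \hat\iota^a \hat\varphi^b$ using the structure of $\Aut'(\overline{\hat{G}})$, so that $\hat\mu(\hat{s})$ is conjugate to $\kappa\hat{s}$ and they share a characteristic polynomial. Observe that $\hat\mu$ acts on eigenvalues of any semisimple element by $\lambda \mapsto \lambda^{\pm 2^b}$, with a single choice of sign determined by the parity of $a$. Since every element of $\mathbb{F}^*$ has odd order, this map fixes $1$ uniquely, so the multiplicity of $1$ as an eigenvalue of $\hat\mu(\hat{s})$ equals the multiplicity of $1$ in $\hat{s}$, namely $d_1$. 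On the other hand, the multiplicity of $1$ in $\kappa\hat{s}$ equals the multiplicity of $\kappa^{-1}$ in $\hat{s}$. Hence $\kappa^{-1}$ occurs with multiplicity $d_1 \geq d/3$ in $\hat{s}$. If $\kappa = 1$ we are done, so assume $\kappa \neq 1$; then $\kappa \neq \kappa^{-1}$.

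Realness of $\hat{s}$ then forces $\kappa$ also to have multiplicity $d_1$ in $\hat{s}$, and the three distinct eigenvalues $1,\kappa,\kappa^{-1}$ occupy $3 d_1 \geq d$ slots. This forces $d_1 = d/3$ and $\Xi = (x-1)^{d/3}(x-\kappa)^{d/3}(x-\kappa^{-1})^{d/3}$. Comparing the eigenvalue multisets $\{1,\kappa^{\pm 2^b},\kappa^{\mp 2^b}\}$ of $\hat\mu(\hat{s})$ with $\{1,\kappa,\kappa^2\}$ of $\kappa\hat{s}$ (both with multiplicity $d/3$), a short case analysis concludes $\kappa^3 = 1$. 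But then $\{1,\kappa,\kappa^2\} = \{1,\kappa,\kappa^{-1}\}$ as multisets, so $\kappa\hat{s}$ and $\hat{s}$ share a characteristic polynomial and are conjugate by Lemma~\ref{FormalitiesOnPolynomialsII}(a); hence $\hat\alpha(\hat{s})$ is conjugate to $\hat{s}$. The main subtlety here is not depth but vigilance: one must deploy the characteristic-two facts ($\kappa \neq 1 \Rightarrow \kappa \neq \kappa^{-1}$; $\lambda^{\pm 2^b} = 1 \Rightarrow \lambda = 1$) at the right moments to eliminate spurious cases, and be careful that the multiplicity match for the eigenvalue $1$ is an equality rather than merely an inequality.
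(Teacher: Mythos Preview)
Your proof is correct and follows essentially the same approach as the paper: both arguments hinge on the fact that elements of $\Aut'(\GL_d(\mathbb{F}))$ preserve the multiplicity of the eigenvalue~$1$, then use realness and the bound $d_1 \geq d/3$ to pin down the eigenvalue multiset as $\{1,\kappa,\kappa^{-1}\}$ with equal multiplicities, and finally deduce $\kappa^3 = 1$. The only minor difference is in the last step: you compare the eigenvalues of $\hat\mu(\hat{s})$ with those of $\kappa\hat{s}$ using the explicit action $\lambda \mapsto \lambda^{\pm 2^b}$, whereas the paper more directly reads off the eigenvalues of $\hat{s} = \kappa^{-1}(\kappa\hat{s})$ as $\{\kappa^{-1},1,\kappa^{-2}\}$ and invokes realness of $\hat{s}$ once more to force $\kappa^{-2} = \kappa$ --- avoiding any reference to the decomposition of $\hat\alpha$ beyond the single fact that it preserves the $1$-eigenspace dimension.
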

\begin{proof}
(a) Let~$\zeta$ denote the root of~$\Delta_2$. Then $\kappa \hat{s}$ has the
eigenvalues $\kappa$, $\kappa \zeta$, $\kappa \zeta^{-1}$ with multiplicities
$d_1$, $d_2$, $d_2$. Suppose that $\kappa \hat{s}$ is real. Then
$\kappa^{-1} = \kappa$, as $d_1 \neq d_2$. This gives our claim.

(b) Notice that the elements of $\Aut'( \GL_d( \mathbb{F} ) )$ preserve
dimensions of eigenspaces for the eigenvalue~$1$. Thus~$\hat{\alpha}( \hat{s} )$
has the eigenvalues~$1$ and~$\kappa$, each with multiplicity~$d_1$.
As~$\hat{\alpha}( \hat{s} )$ is real, it also has the eigenvalue~$\kappa^{-1}$
with multiplicity~$d_1$. Suppose that $\kappa \neq 1$. As $d_1 \geq d/3$, this
implies $d_1 = d/3$, and $1$, $\kappa$, $\kappa^{-1}$ are the eigenvalues
of~$\hat{\alpha}(\hat{s})$, and hence of $\kappa \hat{s}$, each with
multiplicity~$d/3$. It follows that $\hat{s} = \kappa^{-1}( \kappa \hat{s} )$
has the eigenvalues $\kappa^{-1}$, $1$, $\kappa^{-2}$, each with
multiplicity~$d/3$. Since~$\hat{s}$ is real, we must have
$\kappa^{-2} = \kappa$. Hence~$\hat{\alpha}( \hat{s} )$ and~$\hat{s}$ have the
same multiset of eigenvalues, and so they are conjugate.
\end{proof}

\begin{lem}
\label{HopefullyTrueGUd}
Let $\hat{G} = \GL^{\varepsilon}_d(q)$ for some $d \geq 5$ such that 
$e = \gcd( d, q - \varepsilon ) > 1$. If $\varepsilon = -1$, assume that
$(d,q) \neq (6,2)$. Let $1 \neq \hat{s} \in \hat{G}$ be semisimple and real and 
let~$\Xi$ denote the characteristic polynomial of~$\hat{s}$. Let~$\Delta_1$ 
denote the monic polynomial of degree~$1$ with root~$1$, and let~$d_1$ be the 
multiplicity of~$\Delta_1$ in~$\Xi$. 

Suppose that~$\Xi$ is as in~{\rm (a)} or~{\rm (b)} of {\rm Lemma~\ref{GUdPrep}}.
Then the following statements hold.

{\rm (a)} There is a standard $\hat{\iota}$-stable Levi subgroup~$\hat{L}$ 
of~$\hat{G}$, and there is a $\hat{G}$-conjugate $\hat{s}' \in \hat{L}$ such 
$\hat{s}'$ is real in~$\hat{L}$. Moreover, if~$\hat{\alpha} \in \Aut( \hat{G} )$ 
stabilizes~$\hat{L}$ and the $\hat{G}$-conjugacy class of~$\hat{s}$, 
then~$\hat{\alpha}( \hat{s}' )$ is conjugate to~$\hat{s}'$ in~$\hat{L}$.

{\rm (b)} The image of~$\hat{L}$ in 
$\overline{G}^{\sigma} = \PGL^{\varepsilon}_d(q)$ is of the 
form~$\overline{L}^{\sigma}$ for a standard $\iota$-stable Levi 
subgroup~$\overline{L}$ of~$\overline{G}$, and the image~$s$ of $\hat{s}$ 
in~$\overline{G}^{\sigma}$ satisfies the hypotheses of 
{\rm Lemma~\ref{ProofByHCInductionQEven}} with respect 
to~$\overline{L}^{\sigma}$.
\end{lem}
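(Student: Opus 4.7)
The plan is to construct the Levi $\hat L$ explicitly in each of the two cases allowed by the hypothesis, and then to verify conditions~(i)--(iii) of Lemma~\ref{ProofByHCInductionQEven} for the corresponding data at the level of $\overline L$. Standard Levis of $\hat G = \GL^\varepsilon_d(q)$ are indexed by compositions of~$d$, and palindromy of the composition is equivalent to $\hat\iota$-stability. By Lang's theorem applied to the connected centre $Z(\overline{\hat G})$, the image of such an $\hat L$ under the projection $\hat G \twoheadrightarrow \overline G^\sigma = \PGL^\varepsilon_d(q)$ equals $\overline L^\sigma$ for the corresponding standard Levi $\overline L$ of $\overline G$, which provides part~(b) of the lemma at the level of subgroups.

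In case~(a) of Lemma~\ref{GUdPrep}, $d_1 \geq d/3 \geq 5/3$ forces $d_1 \geq 2$. I take $\hat L$ corresponding to the palindromic composition $(1, d-2, 1)$ and a $\hat G$-conjugate $\hat s' = (1, \hat s'_m, 1) \in \hat L$ whose middle block $\hat s'_m \in \GL^\varepsilon_{d-2}(q)$ has characteristic polynomial $(x-1)^{d_1-2}\prod_j \Delta_j^{m_j}$; this is self-reciprocal because $\Xi$ is, so by Lemma~\ref{FormalitiesOnPolynomialsII}(b),(d) a real $\hat s'_m$ exists and $\hat s'$ is real in $\hat L$. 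In case~(b), where $\Xi = (x-1)^{d_1}\Delta^{\lfloor d/2 \rfloor}$ with $\Delta$ self-reciprocal irreducible of degree~$2$ and $d_1 \in \{0,1\}$, I use the palindromic composition $(2^{d/2})$ when $d$ is even, $(2^{k'}, 1, 2^{k'})$ when $d = 4k'+1$, and $(2^{k'}, 3, 2^{k'})$ when $d = 4k'+3$, placing a semisimple element with characteristic polynomial~$\Delta$ in each $2$-block, the identity in any central $1$-block, and a semisimple element with characteristic polynomial $(x-1)\Delta$ in the central $3$-block. Every block has self-reciprocal characteristic polynomial, so $\hat s'$ is real in $\hat L$.

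For part~(b) of the lemma, $\iota$-stability of $\overline L$ and realness of $s'$ in $\overline L^\sigma$ are immediate from the hat-level construction. For centralizer connectedness, the derived group of $\overline{\hat L}$ is $\prod_i \SL_{a_i}$ and meets $Z(\overline{\hat G}) = \{\lambda I_d\}$ only trivially: in case~(a) because an outer $\GL_1$-block forces $\lambda = 1$, and in case~(b) because $a_i \in \{2, 3\}$ and $\lambda^{a_i} = 1$ imposes $\lambda = 1$ in characteristic~$2$. Hence the derived group of $\overline L$ is simply connected and Steinberg's theorem gives $C_{\overline L}(s')$ connected. For the invariance condition, let $\alpha \in \Aut(G)$ stabilise $\overline L^\sigma$ with $\alpha(s)$ and $s$ conjugate in $\overline G^\sigma$. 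I lift $\alpha$ to an isogeny $\hat\alpha \in \Aut'(\overline{\hat G})$ stabilising $\overline{\hat L}$ via~(\ref{Aut1HatGToAut1G}); the hypothesis then becomes $\hat\alpha(\hat s')$ conjugate to $\kappa \hat s'$ in $\hat G$ for some $\kappa \in Z(\overline{\hat G})$. Since $\hat\alpha(\hat s')$ is real, Lemma~\ref{MultiplesOfRealElements}(b) in case~(a), and Lemma~\ref{MultiplesOfRealElements}(a) in case~(b) (the hypothesis $d_1 \neq d_2$ being satisfied since $d_1 \in \{0,1\}$ and $d_2 = \lfloor d/2 \rfloor \geq 2$), both force $\kappa = 1$, so $\hat\alpha(\hat s')$ and $\hat s'$ are conjugate in $\hat G$. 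Their block-wise characteristic polynomials then coincide --- any permutation of like-sized blocks induced by $\hat\alpha$ being harmless since all such blocks carry the same polynomial --- and Lemma~\ref{FormalitiesOnPolynomialsII}(a),(d) applied factor by factor gives $\hat L$-conjugacy, which projects to the desired $\overline L^\sigma$-conjugacy of $\alpha(s')$ and $s'$.

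The main obstacle is this final invariance step: one must pass from the coarser $\overline G^\sigma$-conjugacy --- which is only sensitive to central scalars in $\hat G$ --- back to $\hat G$-conjugacy and then to $\hat L$-conjugacy. Lemma~\ref{MultiplesOfRealElements} is tailored precisely for the first passage in the two characteristic-polynomial configurations defined by cases~(a) and~(b), and the deliberately coarse block structure of the chosen Levis --- placing $\hat s'$ either as $(1, \hat s'_m, 1)$ with identical trivial outer blocks, or as a concatenation of $\Delta$-blocks with at most one exceptional central block of a different size --- is what makes the second passage routine.
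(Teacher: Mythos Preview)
Your approach is correct and uses the same toolkit as the paper (palindromic compositions for $\hat\iota$-stability, Lemma~\ref{MultiplesOfRealElements} to kill the scalar, Steinberg's theorem for connectedness), but with genuinely different Levi subgroups. In case~(a) of Lemma~\ref{GUdPrep} you take the composition $(1,d-2,1)$, stripping a single identity block on each side; the paper strips $\lfloor d_1/2\rfloor$ of them, using $(1^{\lfloor d_1/2\rfloor},\,d-2\lfloor d_1/2\rfloor,\,1^{\lfloor d_1/2\rfloor})$. In case~(b), for $d\ge 9$ you use an almost-all-$2$-block composition $(2^{k'},*,2^{k'})$ with~$*$ of size $1$,~$2$ or~$3$, whereas the paper uses $(2,d-4,2)$ throughout. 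Your connectedness argument via the simple connectedness of $[\overline L,\overline L]$ (trivial intersection of $\prod\SL_{a_i}$ with $Z(\overline{\hat G})$, using $a_i\in\{1,2,3\}$ in characteristic~$2$) is uniform across both cases and a little slicker than the paper's case-(b) route, which instead shows directly via Lemma~\ref{MultiplesOfRealElements}(a) that the preimage of $C_{\overline L}(s')$ equals $C_{\overline{\hat L}}(\hat s')$.

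The one step you compress too much is the passage from $\hat G$-conjugacy of $\hat\alpha(\hat s')$ and $\hat s'$ to $\hat L$-conjugacy. Your phrase ``block-wise characteristic polynomials then coincide'' hides the real mechanism: writing $\hat\alpha=\ad_{\hat g}\circ\hat\mu$ with $\hat\mu\in\langle\hat\iota,\hat\varphi\rangle$, the $\hat\mu$-part applies the \emph{same} transform $P\mapsto (P^{*a})^{(2^b)}$ to every block of a given size, while $\ad_{\hat g}$ only permutes like-sized blocks and conjugates within them. Since all like-sized blocks of your $\hat s'$ carry the same characteristic polynomial, all like-sized blocks of $\hat\alpha(\hat s')$ carry one common transformed polynomial; the overall constraint $\prod=\Xi$ (from $\hat G$-conjugacy) then forces each transformed polynomial back to the original. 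The paper avoids this bookkeeping by working inside $[\hat L,\hat L]$ and, in its case~(b), exploiting that $\SL_{d-4}^\varepsilon(q)\not\cong\SL_2(q^\delta)$ for $d>6$, so $\hat\alpha$ cannot mix the middle factor with the outer ones; this is why the paper has to treat $d=6$ separately and needs the hypothesis $(d,q)\neq(6,2)$ precisely to ensure $\SL_2(q)$ is simple there. Your all-$2$-block choice sidesteps that case distinction entirely.
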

\begin{proof}
(a) Since~$\hat{s}$ is real, there is $\hat{g} \in \hat{G}$ such that 
$\hat{s}^{-2} = [\hat{g},\hat{s}] \in \SL^{\varepsilon}_d(q)$. As~$q$ is even, 
this implies that $\det(\hat{s}) = 1$.

Suppose first that~$\Xi$ is as in~(a) of Lemma~\ref{GUdPrep}. Notice that 
$d - d_1 \geq 2$, as~$\hat{s}$ is non-trivial and real. Put 
$d_1' := \lfloor d_1/2 \rfloor$, and let $\hat{L} := \hat{L}_I$ of~$\hat{G}$, 
where~$I$ corresponds to the nodes $d_1' + 1, d_1' + 2, \ldots , d - d_1' - 1$ 
of the Dynkin diagram of~$\GL_d( \mathbb{F} )$. Thus 
$\hat{L} \cong \hat{S} \times \GL^{\varepsilon}_{d - 2d_1'}( q )$, 
where~$\hat{S}$ is the standard torus of $\GL_{2d_1'}(q)$ if $\varepsilon = 1$,
and of $\GL_{d_1'}(q^2)$ if $\varepsilon = -1$.
As $1 \leq d_1' \leq d - 2$ and~$I$ is $\hat{\iota}$-invariant,~$\hat{L}$ is a 
proper, non-trivial (i.e.\ different from~$\hat{T}$), standard, 
$\hat{\iota}$-invariant Levi subgroup of~$\hat{G}$. By hypothesis, there is 
$\hat{x} \in \GL^{\varepsilon}_{d - 2d_1'}( q )$, such that~$\hat{s}$ is 
conjugate in~$\hat{G}$ to
$$\hat{s}' = \diag( \Id_{d_1'}, \hat{x}, \Id_{d_1'}) \in \hat{L}.$$
The characteristic polynomial of~$\hat{x}$ equals~$\Xi/\Delta_1^{2d_1'}$.
As~$\hat{s}$ is real, we have $\Xi^* = \Xi$, and thus 
$(\Xi/\Delta_1^{2d_1'})^* = \Xi/\Delta_1^{2d_1'}$. It follows that~$\hat{x}$
is conjugate to~$\hat{x}^{-1}$ in~$\GL^{\varepsilon}_{d-2d_1'}(q)$, and 
so~$\hat{s}$ is real in~$\hat{L}$. Now suppose 
that~$\hat{\alpha} \in \Aut( \hat{G} )$ stabilizes~$\hat{L}$ and the 
$\hat{G}$-conjugacy class of~$\hat{s}$. In particular,~$\hat{\alpha}$ stabilizes
$$[\hat{L},\hat{L}] = \{ \diag( \Id_{d_1'}, \hat{z}, \Id_{d_1'}) 
\mid \hat{z} \in \SL^{\varepsilon}_{d-2d_1'}(q) \}.$$
Since $1 = \det( \hat{s} ) = \det( \hat{s}' ) = \det( \hat{x} )$,
we have $\hat{s}' \in [\hat{L},\hat{L}]$ and thus
$$\hat{\alpha}(\hat{s}') = \diag( \Id_{d_1'}, \hat{x}', \Id_{d_1'} )$$
for some $\hat{x}' \in \SL^{\varepsilon}_{d-d_1}(q)$. As~$\hat{\alpha}$ 
stabilizes the
$\hat{G}$-conjugacy class of~$\hat{s}'$, the characteristic polynomial
of~$\hat{\alpha}(\hat{s}')$ equals~$\Xi$. Hence the characteristic polynomial
of~$\hat{x}'$ equals~$\Xi/\Delta_1^{2d_1'}$, which is the characteristic 
polynomial of~$\hat{x}$. Thus~$\hat{x}$ is conjugate to~$\hat{x}'$ in 
$\GL^{\varepsilon}_{d-2d_1'}(q)$ and so~$\hat{s}'$ is conjugate 
to~$\hat{\alpha}(\hat{s}')$ in~$\hat{L}$.

Suppose now that~$\hat{s}$ satisfies condition~(b) of Lemma~\ref{GUdPrep}. Put 
$d' := d - 4$. Consider the standard Levi subgroup $\hat{L} := \hat{L}_I$, 
where~$I$ corresponds to all the nodes of the Dynkin diagram 
of~$\GL_d( \mathbb{F} )$ without the nodes~$2$ and~$d-2$. Thus
$\hat{L} \cong \GL_2(q) \times \GL_{d'}(q) \times \GL_2(q)$ if 
$\varepsilon = 1$, and $\hat{L} \cong \GL_2(q^2) \times \GU_{d'}(q)$ if
$\varepsilon = -1$. If~$\varepsilon = 1$, let~$\hat{y} \in \GL_2(q)$ denote an
element with characteristic polynomial~$\Delta$. Then $\det(\hat{y}) = 1$, as 
the two roots of~$\Delta$ are mutually inverse. If $\varepsilon = -1$, 
then~$\Delta$ is reducible, as otherwise $\Delta \neq \Delta^{\dagger}$ by 
Lemma~\ref{FormalitiesOnPolynomialsI}(b), contradicting
Lemma~\ref{FormalitiesOnPolynomialsIV}(b). Thus $\Delta = \Delta_2 \Delta_2^*$
for a monic polynomial $\Delta_2 \neq \Delta_1$ of degree~$1$. Let 
$\zeta \in \mathbb{F}_{q^2}$ denote the root of~$\Delta_2$. 
In this case, put
$$\hat{y} := \diag( \zeta, \zeta^{-1} ) \in \GL_2( q^2 ).$$
Then $\hat{y} = \hat{\iota}_2(\hat{y})$, where~$\hat{\iota}_2$ denotes the 
standard graph automorphism of~$\GL_{2}( q^2 )$; see 
Subsection~\ref{AutomorphismsIV}. In either case of~$\varepsilon$, there 
is~$\hat{x} \in \GL_{d'}^{\varepsilon}(q)$, such that~$\hat{s}$ is conjugate 
in~$\hat{G}$ to
$$\hat{s}' = \diag( \hat{y}, \hat{x}, \hat{y} ) \in \hat{L}.$$
Notice that if $\varepsilon = -1$, the displayed element indeed lies 
in~$\GU_d(q)$, since $\hat{y} = \hat{\iota}_2(\hat{y})$.
If $d= 6$ and~$\varepsilon = 1$, we take~$\hat{x} = \hat{y}$.
It follows that~$\hat{s}'$ is real in~$\hat{L}$, as~$\hat{y}$ is real in
$\GL_2(q^{\delta})$ and~$\hat{x}$ is real in~$\GL^{\varepsilon}_{d'}(q)$, since
its characteristic polynomial equals 
$\Delta_1^{d_1} \Delta^{\lfloor d/2 \rfloor - 2}$.

As $\det(\hat{y}) = \det(\hat{x}) = 1$, we have 
$\hat{s}' \in [\hat{L},\hat{L}]$, where $[\hat{L},\hat{L}] \cong 
\SL_2( q ) \times \SL_{d'}( q ) \times \SL_2( q )$,
respectively $[\hat{L},\hat{L}] \cong \SL_{2}( q^2 ) \times \SU_{d'}(q)$, with
the obvious embeddings of the direct factors into~$\hat{L}$.
If $d = 5$, then $\SL_{d'}( q )$ is the trivial group. If $d = 6$, 
then $\SL^{\varepsilon}_{d'}( q ) \cong \SL_2( q )$ is nonabelian simple,
as $q > 2$ in this case (recall that $(d,q) \neq (6,2)$ if 
$\varepsilon = -1$). The case $d = 7$ and $\varepsilon = -1$ 
does not occur thanks to Lemma~\ref{GUdPrepPrep}(a). Hence for $d > 6$, the
group $\SL^{\varepsilon}_{d'}( q )$ is non-abelian simple and not isomorphic
to the group $\SL_2(q^{\delta})$.

Now let $\hat{\alpha} \in \Aut(\hat{G})$ normalize~$\hat{L}$, and thus
$[\hat{L},\hat{L}]$. By the remarks in the previous 
paragraph,~$\hat{\alpha}$ permutes the direct factors $\SL_2(q)$ 
of~$[\hat{L},\hat{L}]$ if $d = 6$ and $\varepsilon = 1$. In all other 
cases,~$\hat{\alpha}$, normalizes the direct 
factor~$\SL^{\varepsilon}_{d'}( q )$ of~$[\hat{L},\hat{L}]$, as
well as the factor $\SL_2(q^2)$ if $\varepsilon = -1$, whereas it 
permutes the two factors $\SL_2(q)$ if $\varepsilon = 1$.
Hence~$\hat{\alpha}(\hat{s}')$ is of the form
$$\hat{\alpha}(\hat{s}') = 
\diag( \hat{\beta}(\hat{y}), \hat{\gamma}(\hat{x}), \hat{\beta'}(\hat{y}) ),$$
with automorphisms $\hat{\beta}, \hat{\beta'}$ of $\SL_2(q^{\delta})$ and 
$\hat{\gamma}$ of $\SL_{d'}^{\varepsilon}( q )$. If $\varepsilon = -1$, we have
$\hat{\beta'} = \hat{\iota}_2 \circ \hat{\beta}$, and~$\hat{\beta}$ 
and~$\hat{\gamma}$ are the restrictions of~$\hat{\alpha}$ to the respective 
subgroups of~$[\hat{L},\hat{L}]$.
Since $\det(\hat{\beta}(\hat{y})) = 1 = \det(\hat{\beta'}(\hat{y}))$, the 
two eigenvalues of~$\hat{\beta}(\hat{y})$, respectively~$\hat{\beta'}(\hat{y})$,
are mutually inverse. As~$\hat{\alpha}$ stabilizes the $\hat{G}$-conjugacy class 
of~$\hat{s}'$, the characteristic polynomial of~$\hat{\alpha}(\hat{s}')$ 
equals~$\Xi$. This implies that $\hat{\beta}(\hat{y})$, $\hat{\gamma}(\hat{x})$ 
and~$\hat{\beta'}(\hat{y})$ have characteristic polynomial~$\Delta$, 
$\Xi/\Delta^2$, respectively~$\Delta$. Hence $\hat{\beta}(\hat{y})$ and
$\hat{\beta'}(\hat{y})$ are conjugate to~$\hat{y}$ in $\GL_2( q^{\delta} )$, and 
$\hat{\gamma}(\hat{x})$ is conjugate 
to~$\hat{x}$ in $\GL^{\varepsilon}_{d'}(q)$, which implies the result.

(b) Let $\overline{\hat{L}}$ denote the standard Levi subgroup of
$\overline{\hat{G}} := \GL_d( \mathbb{F} )$ corresponding to the subset~$I$
specified above in the two cases, and let~$\overline{L}$ denote the image 
of~$\overline{\hat{L}}$ in~$\overline{G}$ under the canonical epimorphism
\begin{equation}
\label{CanEpi}
\overline{\hat{G}} \rightarrow \overline{G}.
\end{equation}
Then~$\overline{L}$ is $\iota$-stable, and the image of~$\hat{L}$ 
in~$\overline{G}^{\sigma}$ equals~$\overline{L}^{\sigma}$. We may assume that 
$\hat{s} = \hat{s}' \in \hat{L}$. Then, in both cases, Condition~(i) of 
Lemma~\ref{ProofByHCInductionQEven} is satisfied.

To establish the second condition, suppose first that~$\hat{s}$ is
as in Case~(a) of Lemma~\ref{GUdPrep}. The natural embedding 
$\SL_{d-2d_1'}( \mathbb{F} ) \rightarrow [\overline{\hat{L}},\overline{\hat{L}}]$
yields an isomorphism 
$\SL_{d-2d_1'}( \mathbb{F} ) \rightarrow [\overline{L},\overline{L}]$ of 
algebraic groups. In particular, $[\overline{L},\overline{L}]$ is simply 
connected, and thus $C_{\overline{L}}( s )$ is connected by a theorem of 
Steinberg; see \cite[Theorem~$3.5.6$]{C2}. Suppose now that~$\hat{s}$ is as in 
Case~(b) of Lemma~\ref{GUdPrep}. 
The inverse image of $C_{\overline{L}}( s )$ in $\overline{\hat{L}}$ under the
map~(\ref{CanEpi}) equals
\begin{equation}
\label{InverseCentralizer}
\{ \hat{g} \in \overline{\hat{L}} \mid \hat{g}\hat{s}\hat{g}^{-1} = 
\kappa \hat{s} \text{\ for some\ } \kappa \in \mathbb{F}^* \}.
\end{equation}
Lemma~\ref{MultiplesOfRealElements}(a) implies that the
group~(\ref{InverseCentralizer}) equals $C_{\overline{\hat{L}}}( \hat{s} )$.
As centralizers of semisimple elements in~$\overline{\hat{L}}$ are connected by
the theorem of Steinberg cited above, the
image~$C_{\overline{L}}( s )$ of~(\ref{InverseCentralizer}) is connected as
well. Thus Condition~(ii) of Lemma~\ref{ProofByHCInductionQEven} is satisfied
in both cases.

Now let~$\alpha \in \Aut( \overline{G}^{\sigma} )$
stabilize~$\overline{L}^{\sigma}$ and the $\overline{G}^{\sigma}$-conjugacy
class of~$s$. By the discussion in Subsection~\ref{AutomorphismsIV}, there is an
an automorphism $\hat{\alpha} \in \Aut'( \overline{\hat{G}} )$, which 
stabilizes~$\hat{L}$ and such that $\hat{\alpha}( \hat{s} )$ is 
$\hat{G}$-conjugate to~$\kappa \hat{s}$ for some $\kappa \in \mathbb{F}_q$. 
Lemma~\ref{MultiplesOfRealElements}(b) implies that~$\hat{\alpha}( \hat{s} )$ is 
conjugate to~$\hat{s}$ in~$\hat{G}$, and so Condition~(iii) of 
Lemma~\ref{ProofByHCInductionQEven} follows from~(a).
\end{proof}

\begin{rem}
{\rm An analogous result as in Lemma~\ref{HopefullyTrueGUd}(b) does not hold for 
$d = 3$. For example, suppose that $G = \PSL_3(4)$. Then 
$\overline{G}^{\sigma} = \PGL_3(4)$ and ${\overline{G}^*}^{\sigma} = \SL_3(4)$. 
There are two conjugacy classes in $\overline{G}^{\sigma}$ of elements of
order~$5$, whose centralizer is the cyclic maximal torus of order~$15$. Let~$s$
be an element in one of these classes. Then 
$\mathcal{E}( {\overline{G}^*}^{\sigma}, s )$ contains a unique element, which 
has degree~$63$, is real and has $Z( {\overline{G}^*}^{\sigma} )$ in its kernel.
Thus these elements are relevant for our investigation.

Let~$\hat{s}$ be a real lift 
of~$s$ of order~$5$. Then the characteristic polynomial of~$\hat{s}$ has the 
form $\Delta_1 \Delta$, where $\Delta_1$ is as in Lemma~\ref{HopefullyTrueGUd}, 
and where~$\Delta$ is irreducible of degree~$2$ with roots of order~$5$. 
Thus~$\hat{s}$ is as in Lemma~\ref{GUdPrep}(a)(b).

The only $\iota$-stable standard Levi subgroups of~$\overline{G}^{\sigma}$ 
are~$\overline{T}^{\sigma}$ and~$\overline{G}^{\sigma}$ itself. No conjugate 
of~$s$ lies in~$\overline{T}^{\sigma}$.

Of course, there is a $\iota$-stable conjugate of a proper standard Levi 
subgroup of~$\overline{G}^{\sigma}$ meeting the conjugacy class of~$s$, but this
does not lie in any $\iota$-stable parabolic subgroups, so the argument
arising from Proposition~\ref{E1LieEvenEasy} cannot be applied.
}\hfill{$\Box$}
\end{rem}

\addtocounter{subsection}{3}
\subsection{The main result}

We are now ready to establish the main result of this section.
\addtocounter{num}{1}
\begin{lem}
\label{CliffordEstimate}
Let $\chi \in \Irr(\SL_d^\varepsilon(q))$, and let~$\hat{\chi} \in 
\Irr( \GL_d^\varepsilon(q) )$ lying above~$\chi$. Then there is a positive
integer~$e'$ with $e' \mid e$ such that $\chi(1) = \hat{\chi}(1)/e'$.
\end{lem}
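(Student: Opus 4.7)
The plan is to apply Clifford theory to the normal subgroup $N := \SL_d^\varepsilon(q)$ of $\hat G := \GL_d^\varepsilon(q)$. The determinant homomorphism exhibits $\hat G/N$ as isomorphic to $\mathbb{F}_q^*$ if $\varepsilon = 1$, and to the norm-one subgroup of $\mathbb{F}_{q^2}^*$ if $\varepsilon = -1$; in either case $\hat G/N$ is cyclic of order $q - \varepsilon$.

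First I would let $I := I_{\hat G}(\chi)$ denote the inertia subgroup of $\chi$ in $\hat G$ and put $t := [\hat G : I]$. Since $N \leq I \leq \hat G$ and $I/N$ is a subgroup of the cyclic group $\hat G/N$, the quotient $I/N$ is itself cyclic, and therefore $\chi$ extends to some $\tilde\chi \in \Irr(I)$. The Clifford correspondence then yields $\hat\chi = \tilde\chi^{\hat G}$, whence
$$\hat\chi(1) = [\hat G : I]\,\tilde\chi(1) = t\,\chi(1).$$
In other words, the ramification index of $\chi$ in $\hat\chi$ equals~$1$, and setting $e' := t$ gives $\chi(1) = \hat\chi(1)/e'$.

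Next I would verify that $e' \mid e$. The conjugation action of $\hat G$ on $\Irr(N)$ factors through the quotient $\hat G/(Z(\hat G)\cdot N)$, since inner automorphisms of $N$ fix every element of $\Irr(N)$, and $Z(\hat G)$ centralises~$N$. Hence $Z(\hat G)\cdot N \leq I$, and $t$ divides $[\hat G : Z(\hat G)\cdot N]$. The determinant restricts on $Z(\hat G)$ to the map $\zeta\cdot \Id_d \mapsto \zeta^d$, so its image is the subgroup of $d$-th powers in the cyclic group $\det(\hat G)$ of order $q - \varepsilon$. Therefore
$$[\hat G : Z(\hat G)\cdot N] = [\det(\hat G) : \det(Z(\hat G))] = \gcd(d, q - \varepsilon) = e,$$
and $e' \mid e$, as required.

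The argument above is essentially routine Clifford theory, and I do not expect a serious obstacle. The one point worth verifying carefully is the identification of $Z(\hat G)$ and the computation of $\det(Z(\hat G))$ in the unitary case, where one uses that the centre of $\GU_d(q)$ consists of the scalar matrices $\zeta\cdot \Id_d$ with $\zeta$ in the norm-one subgroup of $\mathbb{F}_{q^2}^*$; everything else reduces immediately to the fact that $\hat G/N$ is cyclic.
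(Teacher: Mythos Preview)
Your proof is correct and follows essentially the same Clifford-theoretic route as the paper: both use that $\hat G/N$ is cyclic to ensure the restriction $\hat\chi|_N$ is multiplicity-free, and both observe that $Z(\hat G)N$ lies in the inertia group, whence $e' = [\hat G : I_{\hat G}(\chi)]$ divides $[\hat G : Z(\hat G)N] = e$. The only minor imprecision is the sentence ``The Clifford correspondence then yields $\hat\chi = \tilde\chi^{\hat G}$'': strictly speaking, the correspondence gives $\hat\chi = \psi^{\hat G}$ for \emph{some} $\psi \in \Irr(I\mid\chi)$, and you should note (via Gallagher, since $I/N$ is cyclic) that every such $\psi$ is an extension of $\chi$, so $\psi(1) = \chi(1)$.
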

\begin{proof}
Clearly, $Z( \GL_d^\varepsilon(q) )\SL_d^\varepsilon(q)$ stabilizes~$\chi$, and
$$[\GL_d^\varepsilon(q)\colon\!Z( \GL_d^\varepsilon(q) )\SL_d^\varepsilon(q)] = e.$$
As $\GL_d^\varepsilon(q)/\SL_d^\varepsilon(q)$ is cyclic, the restriction
of~$\hat{\chi}$ to~$\SL_d^\varepsilon(q)$ is a sum of~$e'$ distinct conjugates
of~$\chi$, where $e' \mid e$. Thus $\chi(1) = \hat{\chi}(1)/e'$.
\end{proof}

\begin{prp}
\label{PSUSmallDegrees}
Let $G = \PSL^{\varepsilon}_d(q)$ with~$q$ even, $d \geq 5$ and 
$e = \gcd( d, q - \varepsilon ) > 1$. Then~$G$ is not a minimal counterexample 
to {\rm \cite[Theorem~$1.1.5$]{HL1}}.
\end{prp}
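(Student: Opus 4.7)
The plan is to assume $G$ is a minimal counterexample to \cite[Theorem~$1.1.5$]{HL1}---so that every proper subgroup of $G$ has the $E1$-property, making Lemma~\ref{ProofByHCInductionQEven} available---and to derive a contradiction. First, I would invoke Proposition~\ref{QEvenPrep} with $\beta := \nu$, producing $g \in G$ such that $\alpha := \ad_g \circ \nu$ has even order, $|\alpha|$ divides $\delta f(q-\varepsilon)$, and the strict centralizer bound $|C_G(\alpha_{(p)})| < M_G = q^{d(d+1)/2}$ holds for every prime $p \mid |\alpha|$. Pre-multiplying $n$ by a suitable lift of $g$, I may assume $\nu = \alpha$. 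Next I would view the character $\chi$ of $V$, by inflation, as an irreducible character of ${\overline{G}^*}^{\sigma} = \SL_d^\varepsilon(q)$; by Lemma~\ref{SemisimpleCharactersLemma}(a),(b)(ii) it is semisimple, and its Lusztig label $s \in \overline{G}^{\sigma} = \PGL_d^\varepsilon(q)$ is a real semisimple element. I would then fix a real lift $\hat{s} \in \GL_d^{\varepsilon}(q)$ of $s$ via Lemma~\ref{SemisimpleElementsInLeviSubgroups}, and use Lemma~\ref{CliffordEstimate} to bound $\chi(1) \geq [\GL_d^{\varepsilon}(q)\colon\!C_{\GL_d^{\varepsilon}(q)}(\hat{s})]_{2'}/e$.

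The heart of the argument is the dichotomy supplied by Lemma~\ref{GUdPrep}, applied to the characteristic polynomial $\Xi$ of $\hat{s}$. In the large-degree branch, where $\hat{s}$ satisfies one of the cases~(c)--(g), I would exploit the explicit lower bounds of the shape $[\GL_d^{\varepsilon}(q)\colon\!C_{\GL_d^{\varepsilon}(q)}(\hat{s})]_{2'} \geq c_G \cdot q^{d(d+1)/4}$ together with the permitted values of $|\alpha|$ from Proposition~\ref{QEvenPrep}(b)--(d) to check that $\chi(1) > (|\alpha|-1)\cdot|C_G(\alpha_{(p)})|^{1/2}$ for every prime $p \mid |\alpha|$. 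In case~(c), after cancelling the Clifford-theoretic factor $e$ against the factor $\delta e f(q-\varepsilon)$ built into $c_G$, this is immediate since $|\alpha| - 1 < \delta f(q-\varepsilon)$. In cases~(d)--(g), the very small possible values of $|\alpha|$ from the table in Proposition~\ref{QEvenPrep}(d) combine with the strict bound $|C_G(\alpha_{(p)})| < M_G$ to provide the necessary slack. Then \cite[Lemma~$4.3.3$]{HL1}, applied to $\alpha$, delivers the $E1$-property.

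In the small-degree branch, namely when $\hat{s}$ falls into case~(a) or (b) of Lemma~\ref{GUdPrep} and $(d,q) \neq (6,2)$ if $\varepsilon = -1$, I would feed the construction of Lemma~\ref{HopefullyTrueGUd} directly into Lemma~\ref{ProofByHCInductionQEven}: Lemma~\ref{HopefullyTrueGUd} produces a proper $\iota$-stable standard Levi subgroup $\overline{L}$ of $\overline{G}$ and a $\overline{G}^\sigma$-conjugate $s' \in \overline{L}^\sigma$ of $s$ satisfying conditions~(i)--(iii) of Lemma~\ref{ProofByHCInductionQEven}. Since every proper subgroup of $G$ has the $E1$-property by the minimality assumption, Lemma~\ref{ProofByHCInductionQEven} then concludes.

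What remains is the single excluded group $G = \PSU_6(2)$, case~(h) of Lemma~\ref{GUdPrep}. By Proposition~\ref{QEvenPrep}(d) this case forces $|\alpha| = 6$ and $G^{\ex} := \langle \Inn(G), \nu \rangle \cong G.3 \cong \PGU_6(2)$, reducing the question to a finite computation in a single small group. I would consult the Atlas~\cite{Atlas} and GAP~\cite{GAP04} to verify that every real irreducible character of $G$ of odd degree either satisfies the degree bound of \cite[Lemma~$4.3.3$]{HL1}, or else has the property that $\Res^{G^{\ex}}_{\langle \alpha \rangle}(\chi^{\ex})$---with $\chi^{\ex}$ the canonical extension of $\chi$ to $G^{\ex}$ from \cite[Remark~$4.2.5$]{HL1}---contains every real irreducible character of $\langle \alpha \rangle$, invoking \cite[Lemma~$4.3.1$]{HL1} in the latter situation. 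The main obstacle in executing this plan is the bookkeeping in the large-degree branch: tracking how the Clifford-theoretic loss by the divisor $e'$ of $e$ in Lemma~\ref{CliffordEstimate} interacts with the integer arithmetic of $|\alpha|$ and the explicit constants in Lemma~\ref{GUdPrep}(d)--(g), so as to guarantee strict inequality in each edge case without losing any instance.
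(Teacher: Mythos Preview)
Your plan matches the paper's proof and succeeds everywhere except in case~(f) of Lemma~\ref{GUdPrep}, namely $G = \PSU_5(4)$, where the numerics you propose do \emph{not} close. From Lemma~\ref{GUdPrep}(f) and Lemma~\ref{CliffordEstimate} with $e = 5$ you only get $\chi(1) > 12 \cdot 2^{15}/5 \approx 2.4 \cdot 2^{15}$, while Proposition~\ref{QEvenPrep}(d) allows $|\alpha| = 10$, so the target is $\chi(1) > 9 \cdot 2^{15}$; the strict inequality $|C_G(\alpha_{(p)})| < M_G$ cannot absorb a factor of~$3.75$. The paper inserts an extra argument here: a GAP lookup in the character table of $\PSU_5(4)$ shows the only relevant degree in the gap is $\chi(1) = 81\,549$, and this character is \emph{not} invariant under $\PGU_5(4)$. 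Since $\chi$ is $\alpha$-invariant and $|\Phi_G| = 4$, this forces $\alpha \in \Phi_G$, whence $|\alpha| \leq 4$; the case $|\alpha| = 2$ is handled by \cite[Corollary~$4.3.2$]{HL1}, and for $|\alpha| = 4$ one replaces $M_G$ by the sharper bound $|C_G(\alpha^2)| \leq |\Sp_4(4)|$ (the graph automorphism $\varphi^2$ has fixed subgroup $\Sp_4(4)$), which suffices for \cite[Lemma~$4.3.3$]{HL1}. So your ``bookkeeping obstacle'' is real and requires this additional invariance trick, not just tighter arithmetic.

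One minor correction for $G = \PSU_6(2)$: Proposition~\ref{QEvenPrep}(d) gives $|\alpha| \in \{2,6\}$, not $|\alpha| = 6$ outright; the case $|\alpha| = 2$ is disposed of by \cite[Corollary~$4.3.2$]{HL1} before one turns to the $\PGU_6(2)$ character table.
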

\begin{proof}
Suppose that~$G$ is a minimal counterexample to \cite[Theorem~$1.1.5$]{HL1}. Let 
$(V,n,\nu)$ be as in \cite[Notation~$4.1.1$]{HL1}. Let~$\chi$ be the character
of~$V$, viewed as a character of 
$\SL^{\varepsilon}_d(q) = {\overline{G}^*}^{\sigma}$. Let
$s \in \overline{G}^{\sigma}$ be semisimple such that 
$\chi \in \mathcal{E}({\overline{G}^*}^{\sigma}, s )$.
As~$\chi$ is real,~$s$ is real by Lemma~\ref{SemisimpleCharactersLemma}(c)(iii). 
By Lemma~\ref{SemisimpleElementsInLeviSubgroups}(a), there exists a real lift
$\hat{s} \in \GL^{\varepsilon}_d(q)$ of~$s$. 

Suppose that~$\hat{s}$ is as in Cases~(a) or~(b) of Lemma~\ref{GUdPrep}. 
Lemma \ref{HopefullyTrueGUd} shows that~$s$ satisfies the hypotheses of 
Lemma~\ref{ProofByHCInductionQEven}. Hence $(G,V,n)$ has the $E1$-property, as 
we have assumed that~$G$ is a minimal counterexample.

Assume now that~$\hat{s}$ is as in one of the remaining cases of 
Lemma~\ref{GUdPrep}. Let~$\alpha$ be constructed from $\beta := \nu$ as in 
Proposition~\ref{QEvenPrep}. Then $|C_G( \alpha_{(p)} )| < M_G$ for every 
prime~$p$ dividing $|\alpha|$ and every element $\alpha_{(p)}$ of 
$\langle \alpha \rangle$ of order~$p$. Here, $M_G = q^{d(d+1)/2}$; see 
Definition~\ref{DefineMG}. By \cite[Lemma~$4.3.3$]{HL1},
it suffices to show that 
\begin{equation}
\label{Dgeq5Eq}
\chi(1) \geq (|\alpha| - 1)M_G^{1/2} = (|\alpha| - 1)q^{d(d+1)/4}. 
\end{equation}
Bounds for~$|\alpha|$ are given in Proposition~\ref{QEvenPrep}(b)(d). We will 
use these bounds below without any further reference.

If~$\hat{s}$ is as in~(c) of Lemma~\ref{GUdPrep}, then~(\ref{Dgeq5Eq}) holds.
Suppose that we are in Case~(d) of Lemma~\ref{GUdPrep}. Then 
$\chi(1) \geq [\hat{G}\colon\!\hat{C}]_{2'}/5$ by 
Lemma~\ref{CliffordEstimate}, and $|\alpha| \leq 10$. It follows 
that~(\ref{Dgeq5Eq}) is satisfied. Analogous proofs work in Case~(e) and~(g) of
Lemma~\ref{GUdPrep}.

Suppose now that we are in Case~(f) of Lemma~\ref{GUdPrep}. Then $d = e = 5$ 
and $q = 4$. Also, $\chi(1) \geq 12\cdot 2^{15}/5 > 78\,634$. On the other 
hand,~(\ref{Dgeq5Eq}) holds for $\chi(1) > 9 \cdot 2^{15} = 294\,912$. 
The character table of $G = \PSU_5(4)$ is available in GAP~\cite{GAP04}. The two
inequalities above imply that $\chi(1) = 81\,549$. Then, however,~$\chi$ is not 
invariant in~$\PGU_5(q)$. As~$\chi$ is $\alpha$-invariant and $\Phi_G$ has 
order~$4$, this implies that $\alpha \in \Phi_G$. If $|\alpha| = 2$, then~$G$ is 
not a minimal counterexample by \cite[Corollary~$4.3.2$]{HL1}.
Hence $|\alpha| = 4$. As~$d$ is odd, the graph automorphism~$\varphi^2$ of~$G$ 
has fixed subgroup isomorphic to~$\Sp_4(4)$; 
see~\cite[Proposition~$9.4.2$(b)(2)]{GLS}. We have $|\Sp_4(4)| = 979\,200$ and 
hence $|\Sp_4(4)|^{1/2} < 1\,000$. Then \cite[Lemma~$4.3.3$]{HL1} shows that 
$(G,V,n)$ has the~$E1$-property, as $\dim(V) = 81\,549 > 3 \cdot 1\,000$. 

Suppose finally that we are in Case~(g) of Lemma~\ref{GUdPrep}. Then $d = 6$ 
and $q = 2$. Moreover, $|\alpha| = 2$ or $|\alpha| = 6$. In the former case,
$(G,V,n)$ has the $E1$-property by \cite[Corollary~$4.3.2$]{HL1}. Assume then 
that $|\alpha| = 6$. Then 
$G^{\ex} := \langle \Inn(G), \alpha \rangle \cong \PGU_6( q )$ by 
Proposition~\ref{QEvenPrep}(d). Also,~$\chi$ extends to a character~$\chi^{\ex}$ 
of~$G^{\ex} $; see \cite[Remark~$4.2.5$]{HL1}. The character table of~$\PGU_6( q )$ is 
available in the Atlas as well as in GAP. If $\chi( 1 ) > 5\cdot 2^{21/2}$, 
then~(\ref{Dgeq5Eq}) is satisfied. Using this, we are left with the two cases 
$\chi(1) \in \{ 231, 385 \}$. With the notation of \cite[Lemma~$4.3.1$]{HL1}, it 
suffices to show that $\Res^{G^{\ex} }_{\langle \alpha \rangle}( \chi^{\ex} )$ contains 
each of the two irreducible, real characters of~$\langle \alpha \rangle$ with 
positive multiplicity. This is easily checked to be true.
\end{proof}

\section{The groups of type~$E_6$}

In this section we complete the proof that no finite simple group of Lie 
type~$E_6$ is a minimal counterexample to \cite[Theorem~$1.1.5$]{HL1}.

\subsection{The Weyl group and the graph automorphism}

Let $G = E_6^{\varepsilon}( q )$. The strategy to deal with these groups is the 
same as the one employed for the linear and unitary groups in 
Subsection~\ref{RemainingLinearUnitary}. Suppose that 
$\overline{G} = E_6( \mathbb{F} )$. For the explicit computations in the Weyl 
group $W( \overline{G} )$ and the root system $\Sigma = \Sigma(\overline{G})$ 
reported below, we use the GAP3 package Chevie~\cite{chevie} with its extensions 
by Jean Michel~\cite{Michel}. In particular, we follow the numbering of the 
roots used in~\cite{chevie}. Thus~$r_j$ denotes the root of 
position~$j$, $1 \leq j \leq 36$, in the list of positive roots of~$\Sigma$ 
provided by~\cite{chevie}. The simple roots are $r_1, \ldots , r_6$, where the 
root~$r_j$ is attached to the node with number~$j$, $j = 1, \ldots , 6$, in the 
Dynkin diagram of~$\Sigma$ displayed in \cite[Figure~$1$]{HL1}.  Finally, 
$s_j \in W(\overline{G})$ denotes the reflection on the hyperplane perpendicular 
to~$r_j$. The elements $n_j \in N_{\overline{G}}( \overline{T} )$ introduced in
\cite[Lemma~$6.4.2$]{C1} are lifts of the reflections~$s_j$;
see~\cite[Theorem~$7.2.2$]{C1}. As~$\mathbb{F}$ has characteristic~$2$, the 
standard torus~$\overline{T}$ has a complement in 
$N_{\overline{G}}( \overline{T} )$, namely the subgroup generated by the
elements $n_j$, $j = 1, \ldots , 6$. This follows from the Steinberg relations 
as exhibited in \cite[p.~$192$--$193$]{C1}. Although the Steinberg relations 
define the simply connected group~$\overline{G}^*$ of type~$E_6$, the claimed 
result for the adjoint group~$\overline{G}$ follows from this, 
as~$\overline{G}$ is a quotient of~$\overline{G}^*$ with a kernel contained in 
its standard torus. In the following, we will thus identify $W( \overline{G} )$ 
with a subgroup of~$N_{\overline{G}}(\overline{T})$.

The graph automorphism $\iota \in \Aut( \overline{G} )$ arises from the 
non-trivial symmetry, also denoted by~$\iota$, of the Dynkin diagram. The 
symmetry~$\iota$ of the Dynkin diagram induces an automorphism of 
$W(\overline{G})$, which is, in fact, the restriction of 
$\iota \in \Aut(\overline{G})$ to~$W(\overline{G})$. Once again, this 
automorphism of~$W(\overline{G})$ will also be denoted by~$\iota$. If
$\varepsilon = -1$, then~$\iota$ equals the automorphism of~$W(\overline{G})$
induced by~$\sigma$; see \cite[Subsection~$5.3$]{HL1} and 
Subsection~\ref{TheRemainingGroups}. As~$\varphi$ acts trivially 
on~$W(\overline{G})$, an element 
$\mu \in \Gamma_{\overline{G}} \times \Phi_{\overline{G}}$ fixes
$w \in W(\overline{G})$, if and only if~$\iota$ fixes~$w$.

\subsection{Some Levi subgroups}
For $I \subseteq \Pi$, let $\overline{L}_{I}$ denote the corresponding standard 
Levi subgroup of~$\overline{G}$; see 
\cite[Subsection~$5.3$]{HL1}.
Notice 
that~$\overline{L}_{I}$ is $\varphi$-invariant, and~$\overline{L}_{I}$ is
$\iota$-invariant if and only if~$I$ is invariant under symmetry~$\iota$ of the 
Dynkin diagram. More generally, if $J \subseteq \Sigma$, the subsystem subgroup 
of~$\overline{G}$ corresponding to the closed subsystem of~$\Sigma$ generated 
by~$J$, is denoted by~$\overline{L}_J$; see \cite[Section~$13.1$]{MaTe} for the 
definition and construction of subsystem subgroups.

For $I \subseteq \Pi$, the normalizer of the standard Levi subgroup 
$\overline{L}_{I}$ can be computed by an algorithm of Howlett~\cite{How}. In 
fact, $N_{\overline{G}}( \overline{L}_{I} )/\overline{L}_{I} \cong
\Stab_{W(\overline{G})}( I )$, and $N_{\overline{G}}( \overline{L}_{I} )$ is 
generated by $\overline{L}_{I}$ together with elements in
$N_{\overline{G}}( \overline{T} )$, whose images in $W( \overline{G} )$ generate 
$\Stab_{W(\overline{G})}( I )$. With our identification of $W( \overline{G} )$ 
with a subgroup of~$N_{\overline{G}}(\overline{T})$, we have
$N_{\overline{G}}( \overline{L}_{I} ) = 
\overline{L}_{I}\Stab_{W(\overline{G})}( I )$. 

We record a result on twisting of Levi subgroups; for this concept
see~\cite[$3.3.1$]{GeMa}.

\begin{lem}
\label{Twisting}
Let $I \subseteq \Pi$ be $\sigma$-stable, so that~$\overline{L}_I$ 
and $\Stab_{W(\overline{G})}( I )$ are $\sigma$-stable. Let
$y \in \Stab_{W(\overline{G})^{\sigma}}( I )$ and choose
$g \in \overline{G}$ with $g^{-1}\sigma(g) = y$.
Put $\overline{M} := {^g\!\overline{L}}_I$, and 
$S := {^g\Stab}_{W(\overline{G})}( I )$. 

{\rm (a)} The groups $\overline{M}$ and~$S$ are 
$\sigma$-stable and $N_{\overline{G}}( \overline{M} )$ is a semidirect product
$N_{\overline{G}}( \overline{M} ) = \overline{M}S$. Moreover,~$g^{-1}$
conjugates~$S^{\sigma}$ to the set 
$$C_{I,\sigma}( y ) := 
\{ u \in \Stab_{W(\overline{G})}(I) \mid uy\sigma(u)^{-1} = y \},$$
the $\sigma$-centralizer of~$y$.

{\rm (b)} Let $t \in Z( \overline{M}^{\sigma} )$ with 
$C_{\overline{G}}( t ) = \overline{M}$. Then~$S^{\sigma}$ acts regularly on the 
set of $\overline{G}^{\sigma}$-conjugates of~$t$ 
in~$Z( \overline{M}^{\sigma} )$. In particular,~$|S^{\sigma}|$ is even, if~$t$
is non-trivial and real in~$\overline{G}^{\sigma}$.
\end{lem}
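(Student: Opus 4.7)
The plan for part~(a) is to deduce all four assertions by conjugating, via~$g$, the factorization $N_{\overline{G}}( \overline{L}_{I} ) = \overline{L}_{I}\,\Stab_{W(\overline{G})}( I )$ recalled in the paragraph preceding the lemma; this immediately gives $N_{\overline{G}}( \overline{M} ) = \overline{M}S$. To upgrade to a semidirect product I would observe that $\overline{L}_I \cap \Stab_{W(\overline{G})}( I )$ is contained in $W( \overline{L}_I ) \cap \Stab_{W(\overline{G})}( I )$, and any element of~$W(\overline{L}_I)$ that stabilizes the set of simple roots~$I$ of the sub-root-system~$\Sigma_I$ must fix its fundamental chamber, hence is trivial. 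For $\sigma$-stability of~$\overline{M}$, the relation $\sigma(g) = gy$ combined with $y \in \Stab_{W(\overline{G})}( I ) \subseteq N_{\overline{G}}( \overline{L}_I )$ gives $\sigma( \overline{M} ) = {^{gy}\!\overline{L}}_I = {^g\!\overline{L}}_I = \overline{M}$, and the same calculation works for~$S$ once one knows that the embedding $W(\overline{G}) \hookrightarrow N_{\overline{G}}(\overline{T})$ via the elements~$n_j$ is $\sigma$-equivariant (since $\varphi$ fixes each~$n_j$ and $\iota$ permutes them parallel to its action on the~$s_j$), so that $\sigma(\Stab_{W(\overline{G})}(I)) = \Stab_{W(\overline{G})}(I)$ even as a subset of~$N_{\overline{G}}(\overline{T})$. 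The identification of $g^{-1}S^{\sigma}g$ with $C_{I,\sigma}(y)$ is then a direct rearrangement: for $u \in \Stab_{W(\overline{G})}(I)$, the condition $\sigma(gug^{-1}) = gug^{-1}$ becomes $y\sigma(u)y^{-1} = u$ after using $\sigma(g) = gy$, equivalently $uy\sigma(u)^{-1} = y$.

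For part~(b) my strategy is to reduce the regular-action claim to Lemma~\ref{Conjugation0}, but carried out inside~$\overline{G}^{\sigma}$. The central step is to show that every $\overline{G}^{\sigma}$-conjugate $t' \in Z(\overline{M}^{\sigma})$ of~$t$ has the form $t' = sts^{-1}$ for some $s \in S^{\sigma}$. Writing $t' = hth^{-1}$ with $h \in \overline{G}^{\sigma}$ and using $C_{\overline{G}}(t) = \overline{M}$, one has $\overline{M}^{\sigma} \subseteq C_{\overline{G}}(t') = h\overline{M}h^{-1}$; since $\overline{M}$ is connected, $\overline{M}^{\sigma}$ is Zariski-dense in~$\overline{M}$ by Lang--Steinberg, forcing $\overline{M} \subseteq h\overline{M}h^{-1}$ and then, by comparison of dimensions, $h \in N_{\overline{G}}( \overline{M} )^{\sigma}$. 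Part~(a) provides a split decomposition $N_{\overline{G}}(\overline{M}) = \overline{M} \rtimes S$ with both factors $\sigma$-stable, and the general fact $(A \rtimes B)^{\sigma} = A^{\sigma} \rtimes B^{\sigma}$ (immediate from uniqueness of the factorization) gives $N_{\overline{G}}( \overline{M} )^{\sigma} = \overline{M}^{\sigma} \rtimes S^{\sigma}$; writing $h = ms$ accordingly and absorbing~$m$ via $sts^{-1} \in Z(\overline{M})$ yields $t' = sts^{-1}$. Freeness is then immediate: any $s \in S^{\sigma}$ centralizing some $t'$ in the $\overline{G}^\sigma$-orbit of~$t$ lies in $C_{\overline{G}}(t') = \overline{M}$ (by the same density argument applied to~$t'$), whence $s \in \overline{M} \cap S = 1$.

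The concluding parity statement I would obtain by noting that $Z(\overline{M})$ is contained in a maximal torus of~$\overline{M}$ and hence consists of semisimple elements, which in characteristic~$2$ have odd order. A nontrivial real~$t$ therefore satisfies $t \neq t^{-1}$, and $t' \mapsto (t')^{-1}$ is a fixed-point-free involution on the set $Y$ of $\overline{G}^{\sigma}$-conjugates of~$t$ inside~$Z(\overline{M}^{\sigma})$ (every element of~$Y$ is real, as $Y$ is a conjugacy class of the real element~$t$); so $|Y|$ is even, which by the regular action of~$S^\sigma$ equals $|S^{\sigma}|$. The one genuine subtlety I foresee is the Zariski-density step in part~(b), which is what lifts an ambient $\overline{G}^{\sigma}$-conjugacy to one realized inside $N_{\overline{G}}(\overline{M})^{\sigma}$ and hence achievable via~$S^{\sigma}$.
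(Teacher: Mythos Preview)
Your proof is correct and follows essentially the same line as the paper's: part~(a) is dismissed there as ``well known, by a direct calculation,'' and for part~(b) the paper also reduces to Lemma~\ref{Conjugation0} after establishing $N_{\overline{G}}(\overline{M})^{\sigma} = \overline{M}^{\sigma} \rtimes S^{\sigma}$. The one small difference is that where you invoke Zariski density of $\overline{M}^{\sigma}$ in $\overline{M}$ to pass from $\overline{M}^{\sigma} \subseteq C_{\overline{G}}(t')$ to $\overline{M} \subseteq C_{\overline{G}}(t')$, the paper instead cites the identity $Z(\overline{M}^{\sigma}) = Z(\overline{M})^{\sigma}$ (Carter, Proposition~3.6.8), which places $t'$ directly in $Z(\overline{M})$ and makes the containment immediate; both routes are standard and equivalent here. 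For the parity statement the paper observes that the element of $S^{\sigma}$ sending $t$ to $t^{-1}$ squares into the trivial stabilizer, hence is an involution --- your orbit-counting argument is the same observation in disguise.
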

\begin{proof}
(a) This is well known. It can be proved by a direct calculation.

(b) By~(a), we have
$N_{\overline{G}}( \overline{M} )^{\sigma} = \overline{M}^{\sigma}S^{\sigma}$,
a semidirect product.  Notice that $Z( \overline{M}^{\sigma} ) = 
Z( \overline{M} )^{\sigma}$ by \cite[Proposition~$3.6.8$]{C2}. 
It $t' \in Z( \overline{M}^{\sigma} )$ is conjugate to~$t$ by an element
$g \in \overline{G}^{\sigma}$, then 
$g \in N_{\overline{G}}( \overline{M} )^{\sigma}$ by Lemma~\ref{Conjugation0}.
It follows that~$t$ and~$t'$ are conjugate by an element of~$S^{\sigma}$. Since 
the stabilizer of~$t$ in~$S^{\sigma}$ is trivial, we get our assertion.
If~$t$ is real,~$t$ is conjugate to~$t^{-1}$ in~$S^{\sigma}$. Since~$t$ is
non-trivial and~$q$ is even, $t \neq t^{-1}$, and so a conjugating element must 
have even order.
\end{proof}

The following two cases are of particular relevance for our proof.
For simplicity, the elements of $I \subseteq \Sigma$ are denoted by their 
Chevie numbers; see~\cite{chevie}. The root~$r_{36}$ occurring below is the 
unique positive root of maximal height. The corresponding reflection~$s_{36}$ 
is~$\iota$- and hence $\sigma$-stable. 

\begin{lem}
\label{ClassTypes2Split}
{\rm (a)} Let $I = \{1,3,4,5,6\}$. Then
$\Stab_{W(\overline{G})}( I ) = \langle s_{36} \rangle$. Moreover,~$s_{36}$ is
conjugate to~$s_4$ by a $\iota$-stable element~$x$ such that 
$W( \overline{L}_I) \cap 
{^{x^{-1}}W}( \overline{L}_I ) = W( \overline{L}_{\{3,4,5\}} )$.

{\rm (b)} Let $I = \{ 2, 3, 4, 5 \}$. Then $\Stab_{W(\overline{G})}( I ) =
\langle v, w \rangle$ is a non-abelian group of order~$6$. 
Here,~$v$ and~$w$ are distinct involutions, and~$v$ is the unique 
$\iota$-stable involution in $\Stab_{W(\overline{G})}( I )$.
Also,~$v$ acts like~$\iota$ on the subdiagram of the Dynkin diagram of~$E_6$
induced by the nodes $2, 3, 4, 5$, and~$vw$ permutes its leaves cyclically.

Moreover, there is a unique $x \in W(\overline{G})$ such that the
following hold.

\begin{itemize}
\item[{\rm (i)}] The element~$x$ is a $\iota$-stable involution.
\item[{\rm (ii)}] We have ${^xv} = s_1s_6$ and ${^xw} = s_1s_3s_1s_5$.
\item[{\rm (iii)}] We have $W( \overline{L}_I) \cap 
{^{x^{-1}}W}( \overline{L}_{\{1,3,4,5,6\}} ) = W( \overline{L}_{\{2,3,5\}} )$.
\end{itemize}
\end{lem}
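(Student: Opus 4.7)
The plan is to verify both parts by explicit computation in the Weyl group $W(\overline{G})$ of type $E_6$ and its action on $\Sigma$ using Chevie~\cite{chevie}, structured around the following conceptual observations that localize the computations.

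For part~(a), the subset $I = \{1,3,4,5,6\}$ omits the branch node~$r_2$ and spans an $A_5$ subsystem. Since the affine node of the extended Dynkin diagram of~$E_6$ (corresponding to $-r_{36}$) attaches to~$r_2$, the highest root $r_{36}$ is orthogonal to every element of~$I$; consequently $s_{36}$ fixes~$I$ pointwise, so $\langle s_{36}\rangle \subseteq \Stab_{W(\overline{G})}(I)$. The only roots orthogonal to the span of~$I$ are $\pm r_{36}$, so the pointwise stabilizer of $I$ in $W(\overline{G})$ already equals $\langle s_{36}\rangle$; a short Chevie check then confirms that no element of $W(\overline{G})$ realizes the non-trivial $A_5$ diagram symmetry on $I$ setwise, giving the claimed equality. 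For the conjugator~$x$: both $r_4$ and $r_{36}$ are $\iota$-fixed (node~$4$ is the $\iota$-fixed branch node, and $r_{36}$ is the unique highest root), so I expect the $\iota$-invariant subgroup $W(\overline{G})^{\iota}$ to contain an element mapping $r_{36}$ to~$r_4$; such an $x$ can be constructed from a $\iota$-symmetric reduced word realizing a chain of reflections in $\iota$-fixed roots from $r_{36}$ to $r_4$. The intersection identity then follows by computing $x^{-1}(I)$ in Chevie and intersecting with~$I$.

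For part~(b), $I = \{2,3,4,5\}$ spans a $D_4$ subsystem whose diagram automorphism group is~$S_3$, permuting the three leaves $r_2, r_3, r_5$. In contrast to the $A_5$ case, the full $S_3$ is realized inside $W(\overline{G})$ on~$I$---this is the triality of $D_4 \subset E_6$---and Howlett's algorithm, or equivalently a direct Chevie computation, produces the two generators: an involution~$v$ swapping $r_3 \leftrightarrow r_5$ (matching the action of $\iota$ on the subdiagram and fixing $r_2$, $r_4$) and a second involution $w$ such that $vw$ cycles the leaves $r_2, r_3, r_5$. Since $\iota$ acts on $\Stab_{W(\overline{G})}(I) \cong S_3$ and fixes exactly one transposition---the one swapping $r_3$ and $r_5$---uniqueness of~$v$ as the $\iota$-invariant involution is automatic. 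The existence and uniqueness of~$x$ satisfying (i)--(iii) reduces to exhibiting one $\iota$-invariant element conjugating the pair $(v,w)$ to $(s_1s_6,s_1s_3s_1s_5)$ in $W(\overline{L}_{\{1,3,4,5,6\}})$, and checking that the simultaneous stabilizer of this pair inside $W(\overline{G})^{\iota}$ is trivial; both are finite checks. The intersection identity in~(iii) is again verified by computing $x^{-1}(\{1,3,4,5,6\}) \cap I$.

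The main obstacle is bookkeeping rather than conceptual difficulty: one must carefully track how specific Weyl group elements act on the Chevie numbering of roots and maintain $\iota$-equivariance throughout. Each assertion ultimately reduces to a finite computer-algebra check once the conceptual framework above---orthogonality of $r_{36}$ to the $A_5$ subsystem, triality on the $D_4$ subsystem, and $\iota$-fixedness of the relevant roots---is fixed.
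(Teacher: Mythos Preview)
Your proposal is correct and takes essentially the same approach as the paper, whose proof consists of the single line ``This can be proved with a Chevie~\cite{chevie} computation.'' You have supplied more conceptual scaffolding---the orthogonality of $r_{36}$ to the $A_5$ subsystem via the extended Dynkin diagram, the realization of $D_4$ triality inside $W(E_6)$, and the uniqueness argument for~$x$ via the centralizer of $\langle v,w\rangle$ in $W(\overline{G})^{\iota}$---but every assertion you make still bottoms out in a finite Chevie check, exactly as in the paper.
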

\begin{proof}
This can be proved with a Chevie~\cite{chevie} computation.
\end{proof}

We will need some properties of certain semisimple elements
of~$\overline{G}^\sigma$. The semisimple elements of~$\overline{G}^\sigma$ are
organized in class types. By definition, two semisimple elements
$s , s' \in \overline{G}^\sigma$ belong to the same class type, if and only
if~$C_{\overline{G}}( s )$ and~$C_{\overline{G}}( s' )$ are conjugate
in~$\overline{G}^\sigma$. A list of these class types and the corresponding 
centralizers is provided by Frank L{\"u}beck in~\cite{LL}. In fact, there is 
one list for each value of~$\varepsilon$. In Lemma~\ref{ClassTypeLemma} below, 
we will follow~\cite{LL} in the labelling of the class types. Before this, we 
have to explore the structure of a particular Levi subgroup of~$\overline{G}$.

\begin{lem}
\label{A5Levi}
Let $\overline{L} := \overline{L}_{\{1,3,4,5,6\}}$ denote the standard Levi 
subgroup of~$\overline{G}$ of type~$A_5$; see \cite[Figure~$1$]{HL1}. Then
$\overline{L} = [\overline{L}, \overline{L}] \times Z( \overline{L} )$, where 
$Z( \overline{L} )$ is a split torus of rank~$1$.
Moreover, 
$N_{\overline{G}}( \overline{L} ) = \langle \overline{L}, s_{36} \rangle$,
where~$s_{36}$ centralizes $[\overline{L}, \overline{L}]$ and inverts the 
elements of~$Z( \overline{L} )$.

Also,~$\overline{L}$ is $\sigma$-stable and $\overline{L}^{\sigma} = 
[\overline{L}, \overline{L}]^{\sigma} \times Z( \overline{L} )^{\sigma}$, where 
$[\overline{L}, \overline{L}]^{\sigma} \cong \PGL_6^{\varepsilon}(q)$ and
$Z( \overline{L} )^{\sigma} = Z( \overline{L}^{\sigma} )$ is cyclic of 
order~$q - 1$. Moreover, $N_{\overline{G}^{\sigma}}( \overline{L} ) = 
\langle \overline{L}^\sigma, s_{36} \rangle$ and
$N_{\overline{G}^{\sigma}}( \overline{L} )
= N_{\overline{G}^{\sigma}}( \overline{L}^{\sigma} )$, unless $q = 2$, in which
case $N_{\overline{G}^{\sigma}}( \overline{L}^{\sigma} ) = 
[\overline{L},\overline{L}]^{\sigma} \times H_2$, with $H_2 \cong \SL_2(2)$.
\end{lem}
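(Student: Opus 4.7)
My plan is to combine standard Levi theory for adjoint reductive groups with an explicit computation in the $E_6$ root system, and then descend to $\sigma$-fixed points via Lang--Steinberg. The argument splits into three stages: the algebraic structure of $\overline{L}$ together with its normalizer in $\overline{G}$; the descent to $\sigma$-fixed points; and the comparison of $N_{\overline{G}^\sigma}(\overline{L})$ with $N_{\overline{G}^\sigma}(\overline{L}^\sigma)$.

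For the algebraic structure, standard Levi theory (see, e.g., \cite[Section~$12$]{MaTe}) applied to $\overline{L} = \overline{L}_I$ in adjoint $\overline{G}$ yields the direct decomposition $\overline{L} = [\overline{L},\overline{L}] \times Z(\overline{L})$ with $[\overline{L},\overline{L}]$ of adjoint type $A_5$ (so isomorphic to $\PGL_6(\mathbb{F})$) and $Z(\overline{L})$ a $1$-dimensional split torus whose cocharacter lattice is $\mathbb{Z}\cdot\omega_2^\vee$, the fundamental coweight at the branch node. Howlett's algorithm combined with Lemma~\ref{ClassTypes2Split}(a) gives $N_{\overline{G}}(\overline{L}) = \overline{L} \rtimes \langle s_{36}\rangle$. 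A direct computation using $r_{36} = r_1 + 2r_2 + 2r_3 + 3r_4 + 2r_5 + r_6$ shows $(r_j,r_{36}) = 0$ for all $j\in I$ (while $(r_2,r_{36}) = 1$), so $s_{36}$ commutes with every $s_j$, $j\in I$, and fixes each root in the root subsystem of $\overline{L}$ individually. In characteristic~$2$ the canonical lift $n_{36}$ then centralizes each such root subgroup, and hence centralizes $[\overline{L},\overline{L}]$. Simply-lacedness gives $r_{36}^\vee = \omega_2^\vee$, so the reflection formula yields $s_{36}(\omega_2^\vee) = \omega_2^\vee - 2\omega_2^\vee = -\omega_2^\vee$, showing $s_{36}$ inverts $Z(\overline{L})$.

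For the descent, the crucial point is that $I$ is $\iota$-invariant ($\iota$ fixes $r_2$ and $r_4$, swaps $r_1 \leftrightarrow r_6$ and $r_3 \leftrightarrow r_5$), so $\overline{L}$ and the above decomposition are $\sigma$-stable. Moreover, $\iota$ fixes $\omega_2^\vee$ (the branch node being $\iota$-fixed), so $\iota$ acts trivially on $Z(\overline{L})$; hence in both cases of $\varepsilon$, $\sigma|_{Z(\overline{L})}$ is the $q$-th power map, and $Z(\overline{L})^\sigma \cong \mathbb{F}_q^*$ has order $q-1$ independently of $\varepsilon$. On the derived factor, $\sigma$ restricts to the expected (un)twisted Frobenius of $\PGL_6(\mathbb{F})$, producing $\PGL_6^\varepsilon(q)$; and since this finite group has trivial centre, $Z(\overline{L}^\sigma) = Z(\overline{L})^\sigma$ is immediate from the direct product.

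For the finite normalizers, since $\iota$ fixes $r_{36}$ it fixes $s_{36}$, and Lang--Steinberg on the connected $\overline{L}$ inside $N_{\overline{G}}(\overline{L}) = \overline{L}\rtimes\langle s_{36}\rangle$ yields $N_{\overline{G}^\sigma}(\overline{L}) = \overline{L}^\sigma \rtimes \langle s_{36}\rangle$. I introduce the $A_1$ subsystem subgroup $\overline{M} := \langle X_{\pm r_{36}}\rangle$, which commutes with $[\overline{L},\overline{L}]$, has maximal torus equal to $Z(\overline{L})$, and whose fixed-point group $\overline{M}^\sigma \cong \SL_2(q)$ contains $n_{36}$. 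For $q \geq 4$ the $r_2$-coefficients of roots outside the subsystem of $\overline{L}$ lie in $\{\pm 1,\pm 2\}$ and $\gcd(2,q-1) = 1$, so every non-identity element of $Z(\overline{L})^\sigma$ is $\overline{L}$-regular, giving $C_{\overline{G}}(Z(\overline{L})^\sigma) = \overline{L}$; any $g \in \overline{G}^\sigma$ normalizing $\overline{L}^\sigma$ preserves $Z(\overline{L}^\sigma) = Z(\overline{L})^\sigma$ and hence normalizes its centralizer $\overline{L}$, yielding $N_{\overline{G}^\sigma}(\overline{L}^\sigma) = N_{\overline{G}^\sigma}(\overline{L})$. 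For $q = 2$ we have $Z(\overline{L})^\sigma = 1$, so $\overline{L}^\sigma = [\overline{L},\overline{L}]^\sigma$ and $\overline{L}^\sigma \cap \overline{M}^\sigma = 1$; setting $H_2 := \overline{M}^\sigma \cong \SL_2(2)$ gives $[\overline{L},\overline{L}]^\sigma \times H_2 \subseteq N_{\overline{G}^\sigma}(\overline{L}^\sigma)$. The main obstacle is the reverse inclusion in this $q=2$ case, which I would handle at the algebraic level: exploiting that $A_5 \times A_1$ is a maximal-rank subsystem of $E_6$, one shows $C_{\overline{G}}([\overline{L},\overline{L}])^\circ = \overline{M}$ and that no element of $\overline{G}$ induces the graph automorphism of $[\overline{L},\overline{L}]$ by conjugation, so $N_{\overline{G}}([\overline{L},\overline{L}]) = \overline{L}\cdot\overline{M}$, and Lang--Steinberg then transfers this to $\sigma$-fixed points.
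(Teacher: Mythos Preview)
Your approach is essentially the paper's: both rest on the $A_5 \times A_1$ subsystem subgroup of~$\overline{G}$ (your~$\overline{M}$ is the paper's $\overline{H}_2 = [\overline{L}_{\{36\}}, \overline{L}_{\{36\}}]$), the orthogonality of~$r_{36}$ to~$I$, and Lemma~\ref{ClassTypes2Split}(a). Where you diverge is in execution. The paper relies on L{\"u}beck's tables~\cite{LL} to identify $[\overline{L},\overline{L}]^{\sigma}$ with $\PGL_6^{\varepsilon}(q)$ and to verify $C_{\overline{G}}(Z(\overline{L}^{\sigma})) = \overline{L}$ for $q > 2$, and on the Atlas list of maximal subgroups of ${^2\!E}_6(2)$ for the $q=2$ normalizer; you replace all of these by direct root-lattice computations. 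Your regularity argument via the $r_2$-coefficients and $\gcd(2,q-1)=1$ is cleaner than the table lookup, and your identification $r_{36}^\vee = \omega_2^\vee$ makes the inversion of~$Z(\overline{L})$ transparent.

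Two points deserve more care. First, the direct product $\overline{L} = [\overline{L},\overline{L}] \times Z(\overline{L})$ with $[\overline{L},\overline{L}]$ of adjoint type is \emph{not} a general fact about Levi subgroups of adjoint groups; it holds here because the sublattice of $X^*(\overline{T}) = Q$ spanned by $\{r_j : j \in I\} \cup \{r_{36}\}$ has index~$2$ in~$Q$, so $Z(\overline{H}_1 \times \overline{H}_2) \cong \mu_2 = 1$ in characteristic~$2$. You should say this rather than invoke ``standard Levi theory''. Second, your $q=2$ argument is only sketched. It does work: since $\overline{H}_1^{\sigma}$ is Zariski dense in the connected group~$\overline{H}_1$, any $g \in \overline{G}^{\sigma}$ normalizing $\overline{H}_1^{\sigma}$ normalizes~$\overline{H}_1$; then $N_{\overline{G}}(\overline{H}_1)$ normalizes $C_{\overline{G}}(\overline{H}_1) = \overline{H}_2$ (the centralizer computation reduces to $C_{\overline{G}}(\overline{T} \cap \overline{H}_1) = (\overline{T} \cap \overline{H}_1) \times \overline{H}_2$ together with $Z(\overline{H}_1) = 1$), hence lies in $N_{\overline{G}}(\overline{H}_1 \times \overline{H}_2)$, which equals $\overline{H}_1 \times \overline{H}_2$ since $N_{W}(W_I \times \langle s_{36}\rangle) = W_I \times \langle s_{36}\rangle$. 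This is arguably tidier than the paper's Atlas citation, but you should fill in these steps rather than leave them as ``one shows''.
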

\begin{proof}
As is easily checked with Chevie~\cite{chevie}, the root system~$\Sigma$ 
of~$\overline{G}$ contains a closed subsystem of type $A_5 + A_1$, generated by 
the positive roots $r_1, r_3, r_4, r_5, r_6, r_{36}$.
Let $\overline{H} := \overline{L}_{\{1,3,4,5,6,36\}}$ denote the corresponding 
subsystem subgroup of~$\overline{G}$; see \cite[Section~$13.1$]{MaTe}.

Put $\overline{H}_1 := [\overline{L}, \overline{L}]$, and 
$\overline{H}_2 := [\overline{L}_{\{36\}}, \overline{L}_{\{36\}}]$. 
Then~$\overline{H}_i$ is semisimple for $i = 1, 2$. It follows 
that~$\overline{H}_1$ is generated by the root subgroups corresponding to
the roots $\pm r_1, \pm r_3, \pm r_4, \pm r_5, \pm r_6$, and
$\overline{H}_2$ is generated by the root subgroups corresponding 
to~$\pm r_{36}$; see, e.g.\ \cite[Theorem~$8.21$(a)]{MaTe}. Since its root 
system has rank~$6$, the group~$\overline{H}$ is semisimple as well, and we 
obtain $\overline{H} = \langle \overline{H}_1, \overline{H}_2 \rangle$.
As $r_j \pm r_{36}$ is not a root for all $j \in \{ 1, 3, 4, 5, 6 \}$, the
commutator relations show that~$\overline{H}_1$ and~$\overline{H}_2$ centralize 
each other. Since~$q$ is even, the center of~$\overline{H}_2$ is trivial. Hence 
$\overline{H} = \overline{H}_1 \times \overline{H}_2$.
Since $\overline{L} \leq \overline{H}$ and $\overline{H}_1 \leq \overline{L}$,
we have $\overline{L} = [\overline{L}, \overline{L}] \times 
(\overline{L} \cap \overline{H}_2)$ with $\overline{L} \cap \overline{H}_2 =
\overline{T} \cap \overline{H}_2 = Z( \overline{L} )$. 
As $s_{36} \in \overline{H}_2$ it follows that~$s_{36}$ centralizes
$[\overline{L}, \overline{L}]$. Clearly,~$s_{36}$ inverts the elements of the 
standard torus~$\overline{T} \cap \overline{H}_2$ of $\overline{H}_2$. By 
Lemma~\ref{ClassTypes2Split}(a), we have
$\Stab_{W( \overline{G} )}( \{ 1, 3, 4, 5, 6 \} ) = \langle s_{36} \rangle$.
Hence $N_{\overline{G}}( \overline{L} )
= \langle \overline{L}, s_{36} \rangle$.

As~$\overline{L}$ and~$\overline{H}_2$ are $\iota$-stable, they are 
$\sigma$-stable as well. We obtain 
$\overline{H}^{\sigma} = [\overline{L}, \overline{L}]^{\sigma}
\times \overline{H}_2^{\sigma}$ with $\overline{H}_2^{\sigma} \cong \SL_2(q)$.
This yields $\overline{L}^{\sigma} = 
[\overline{L}, \overline{L}]^{\sigma} \times Z( \overline{L} )^{\sigma} =
[\overline{L}, \overline{L}]^{\sigma} \times Z( \overline{L}^{\sigma} )$. 
Clearly, $Z( \overline{L}^{\sigma} )$ is cyclic of order $q - 1$, 
as~$\overline{L}$ is a standard Levi subgroup of~$\overline{G}$ of 
semisimple rank~$5$. If $q = 2$, then $\varepsilon = - 1$, and 
$\overline{L}^{\sigma} = [\overline{L}, \overline{L}]^{\sigma} \cong \PGU_6(2)$; 
see the Atlas~\cite[p.~$191$ and p.~$115$]{Atlas}. Suppose then that $q > 2$. 
Then, by the tables in~\cite{LL}, the group~$\overline{L}^{\sigma}$ is the 
centralizer of a semisimple element of class type~$[8,1,1]$. These tables show 
that the center of $[\overline{L}, \overline{L}]^{\sigma}$ is trivial. Hence
$[\overline{L}^{\sigma}, \overline{L}^{\sigma}]$ is the simple group
$\PSL_6^{\varepsilon}(q)$, which has index~$3$ in 
$[\overline{L}, \overline{L}]^{\sigma}$. Moreover, 
$[\overline{L}, \overline{L}]^{\sigma}$ is isomorphic to a subgroup of 
$\Aut( \PSL_6^{\varepsilon}(q) )$. It follows from general principles about 
isogenies, that the standard torus of $[\overline{L}, \overline{L}]^{\sigma}$ 
has the same order as the standard torus of $\PGL_6^{\varepsilon}(q)$. This 
implies that $[\overline{L}, \overline{L}]^{\sigma} = \langle 
[\overline{L}^{\sigma}, \overline{L}^{\sigma}], t \rangle$, where~$t$ is an
element of the standard torus of $[\overline{L}, \overline{L}]^{\sigma}$.
In particular,~$t$ induces an inner diagonal automorphism on
$[\overline{L}^{\sigma}, \overline{L}^{\sigma}]$, and so
$[\overline{L}, \overline{L}]^{\sigma}
\cong \PGL_6^{\varepsilon}(q)$ as claimed.

As~$s_{36}$ is $\sigma$-stable, we get 
$N_{\overline{G}}( \overline{L} )^\sigma = 
\langle \overline{L}^\sigma, s_{36} \rangle$. 
To prove the final statement, suppose first that $q \neq 2$.
Observe that $C_{\overline{G}}( Z( \overline{L}^{\sigma} ) ) = \overline{L}$.
Indeed, $\overline{L} \leq 
C_{\overline{G}}( Z( \overline{L}^{\sigma} ) )$, and by the tables in \cite{LL}
we must have equality. Hence 
$$N_{\overline{G}^{\sigma}}( \overline{L}^{\sigma} ) \leq
N_{\overline{G}^{\sigma}}( Z( \overline{L}^{\sigma} ) ) \leq
N_{\overline{G}^{\sigma}}( C_{\overline{G}}( Z( \overline{L}^{\sigma} ) ) )
= N_{\overline{G}^{\sigma}}( \overline{L} ) \leq 
N_{\overline{G}^{\sigma}}( \overline{L}^{\sigma} ).$$
Now suppose that $q = 2$. Then, $\overline{L}^{\sigma} = 
[\overline{L},\overline{L}]^{\sigma}$, and thus
$[\overline{L},\overline{L}]^{\sigma} \times H_2 \leq 
N_{\overline{G}^{\sigma}}( \overline{L}^{\sigma} )$. The list of maximal 
subgroups of~$\overline{G}^{\sigma}$ given in the Atlas (see 
\cite[p.~$191$]{Atlas} and \cite[Appendix~$2$]{ModAtl}) now proves our claim.
\end{proof}

\begin{lem}
\label{A5Levi2Split}
Let the notation be as in {\rm Lemma~\ref{A5Levi}} and its proof. Let 
$g \in \overline{H}_2$ be such that $g^{-1}\sigma(g) = s_{36}$, and put
$\overline{L}' := {^g\!\overline{L}}$. Then $\overline{L}'$ is $\sigma$-stable
and $[\overline{L}', \overline{L}'] = [\overline{L}, \overline{L}]$. Moreover, 
${\overline{L}'}^{\sigma} = 
[\overline{L}', \overline{L}']^{\sigma} \times Z( \overline{L}' )^{\sigma}$, 
$Z( \overline{L'} )^{\sigma}$ is cyclic of order~$q + 1$.
\end{lem}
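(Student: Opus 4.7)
The plan is to exploit the decomposition $\overline{H} = \overline{H}_1 \times \overline{H}_2$ from the proof of Lemma~\ref{A5Levi}. The key observation is that $g\in\overline{H}_2$ commutes pointwise with $\overline{H}_1 = [\overline{L},\overline{L}]$, so conjugation by $g$ acts trivially on the derived subgroup but non-trivially on the one-dimensional central torus $Z(\overline{L}) = \overline{T}\cap\overline{H}_2$ of $\overline{L}$. Simultaneously, $g^{-1}\sigma(g) = s_{36} \in N_{\overline{G}}(\overline{L})$, which is exactly what is needed to effect a Lang--Steinberg twist. I will handle the three assertions of the lemma in turn.

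First, I will verify $\sigma$-stability of $\overline{L}'$ by the standard computation
$$\sigma(\overline{L}') = {^{\sigma(g)}\!\overline{L}} = {^{g\,(g^{-1}\sigma(g))}\!\overline{L}} = {^g\!({^{s_{36}}\!\overline{L}})} = {^g\!\overline{L}} = \overline{L}',$$
using $s_{36} \in N_{\overline{G}}(\overline{L})$. Second, since $g\in\overline{H}_2$ centralizes $\overline{H}_1 = [\overline{L},\overline{L}]$ elementwise, conjugation by $g$ fixes $[\overline{L},\overline{L}]$ and only moves $Z(\overline{L})$, so
$$\overline{L}' \;=\; {^g\!\bigl([\overline{L},\overline{L}] \times Z(\overline{L})\bigr)} \;=\; [\overline{L},\overline{L}] \times {^g\!Z(\overline{L})}.$$
This yields $[\overline{L}',\overline{L}'] = [\overline{L},\overline{L}]$ and $Z(\overline{L}') = {^g\!Z(\overline{L})}$, and taking $\sigma$-fixed points of each (already $\sigma$-stable) factor gives the asserted direct-product decomposition of ${\overline{L}'}^\sigma$.

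The remaining and main step is to compute the order of $Z(\overline{L}')^\sigma$. Here the crucial point is that because $g \in \overline{H}_2$, we have
$$Z(\overline{L}') \;=\; {^g\!(\overline{T}\cap \overline{H}_2)} \;\subseteq\; \overline{H}_2,$$
so $Z(\overline{L}')$ is a maximal torus of $\overline{H}_2 \cong \SL_2(\mathbb{F})$. The Weyl group of $\overline{H}_2$ is generated by $s_{36}$, and the relation $g^{-1}\sigma(g) = s_{36}$ identifies $Z(\overline{L}')$ as the twist of the $\sigma$-split torus $\overline{T}\cap\overline{H}_2$ of $\overline{H}_2$ by the non-trivial Weyl group element. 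From Lemma~\ref{A5Levi} we already have $\overline{H}_2^\sigma \cong \SL_2(q)$, so $\sigma$ acts on $\overline{H}_2$ as the standard Frobenius of $\SL_2(q)$; hence $Z(\overline{L}')^\sigma$ is the non-split (Coxeter) maximal torus of $\SL_2(q)$, which is cyclic of order $q+1$.

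The step I expect to be the main obstacle is the last one: one must be sure that restricting $\sigma$ to $\overline{H}_2$ really gives the standard Frobenius on $\SL_2$, so that the split/non-split torus orders are $q-1$ and $q+1$ respectively, rather than a twisted-type $\SL_2$ structure. This is implicit in the identification $\overline{H}_2^\sigma \cong \SL_2(q)$ obtained in Lemma~\ref{A5Levi} (uniform in $\varepsilon$, because the highest root $r_{36}$ is fixed by $\iota$), so once that is invoked the $q+1$ then follows from the standard classification of $\sigma$-stable maximal tori in $\overline{H}_2$ via $\sigma$-conjugacy classes in $W(\overline{H}_2)$.
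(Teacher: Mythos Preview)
Your proof is correct and follows essentially the same approach as the paper: the paper's one-line proof invokes Lemma~\ref{Twisting} for $\sigma$-stability and the fact that $g \in \overline{H}_2$ commutes with $[\overline{L},\overline{L}]$, which is exactly what you unpack in detail. Your computation of $|Z(\overline{L}')^\sigma| = q+1$ via the non-split torus of $\overline{H}_2^\sigma \cong \SL_2(q)$ is the natural way to make explicit what the paper leaves implicit (equivalently, one can use that $s_{36}$ inverts $Z(\overline{L})$ by Lemma~\ref{A5Levi}, so the twisted $\sigma$-action on the rank-one torus is $t \mapsto t^{-q}$).
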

\begin{proof}
This follows from Lemma~\ref{Twisting} and the fact that~$g$ commutes with
$[\overline{L}, \overline{L}]$.
\end{proof}

\addtocounter{subsection}{2}
\subsection{Some class types and their centralizers}
For the concept of $e$-split Levi subgroups of~$\overline{G}$ used below,
see~\cite[$3.5.1$]{GeMa}.

\addtocounter{num}{1}
\begin{rem}
\label{A5LeviRem}
{\rm
The Levi subgroups $\overline{L}$ and ${\overline{L}'}$ described in 
Lemmas~\ref{A5Levi} and~\ref{A5Levi2Split} are $1$-split and $2$-split Levi 
subgroups of~$\overline{G}$ of type~$A_5$, respectively. They are 
representatives, with respect to $\overline{G}^{\sigma}$-conjugation, for the 
centralizers of semisimple elements of class types $[8,1,1]$ and $[8,1,2]$ (in 
the notation of~\cite{LL}), respectively.
}\hfill{$\Box$}
\end{rem}

In the course of the proof, we will consider various other class types.
The following lemma gives a construction of the corresponding centralizers. 

\begin{lem}
\label{CentralizerConstruction}
Let $s \in \overline{G}^\sigma$ be a semisimple element in one of the
class types $[8,1,k]$ or $[14,1,k]$ with $k = 1, 2$. 

Then~$C_{\overline{G}}( s )$ is connected. In fact,~$C_{\overline{G}}( s )$ is 
$\overline{G}$-conjugate to $\overline{L}_I$, where~$I$ is one of 
$\{1,3,4,5,6\}$ or $\{ 2, 3, 4, 5 \}$, according as~$s$ is of class type 
$[8,1,k]$ or $[14,1,k]$, respectively, $k = 1, 2$. 

An element~$s$ of class type $[14,2,1]$ has order~$3$ and lies in the center of 
the centralizer of an element of class type~$[14,1,1]$. In this 
case,~$C^{\circ}_{\overline{G}}( s )$
is $\overline{G}$-conjugate to $\overline{L}_{\{ 2, 3, 4, 5 \}}$,
and $|C_{\overline{G}}( s )/C^{\circ}_{\overline{G}}( s )| = 3$.
\end{lem}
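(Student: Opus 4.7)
The plan is to read off the structure and connectedness of the centralizers from L\"ubeck's tables \cite{LL}, which list all semisimple class types of $\overline{G}^\sigma$ together with the order of the component group of $C_{\overline{G}}(s)$ and the structure of $C^\circ_{\overline{G}^\sigma}(s)$. From those tables, the class types $[8,1,k]$ have connected centralizer with $C^\circ_{\overline{G}}(s)$ of semisimple type $A_5$, the class types $[14,1,k]$ have connected centralizer with $C^\circ_{\overline{G}}(s)$ of semisimple type $D_4$, and $[14,2,1]$ consists of elements of order~$3$ with $C^\circ_{\overline{G}}(s)$ of type $D_4$ and $|C_{\overline{G}}(s)/C^\circ_{\overline{G}}(s)|=3$.

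First I would treat the class types $[8,1,k]$ and $[14,1,k]$, $k = 1, 2$. Since $C^\circ_{\overline{G}}(s)$ is generated by a maximal torus containing~$s$ together with the root subgroups $\overline{U}_\alpha$ for $\alpha\in\Sigma$ with $\alpha(s)=1$, it is a connected reductive subgroup of maximal rank whose root system is a closed subsystem of $\Sigma$. The closed subsystems of $\Sigma(E_6)$ of type $A_5$ and $D_4$ are unique up to $W(\overline{G})$-conjugacy (both being realised by standard Levi subgroups), so $C^\circ_{\overline{G}}(s)$ is $\overline{G}$-conjugate to $\overline{L}_{\{1,3,4,5,6\}}$, respectively $\overline{L}_{\{2,3,4,5\}}$. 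Combined with the triviality of the component group recorded in \cite{LL}, this settles the first part of the lemma.

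For class type $[14,2,1]$ I would argue as follows. Let $s'$ be of class type $[14,1,1]$. By the case just treated we may assume $C_{\overline{G}}(s') = \overline{L}_{\{2,3,4,5\}}$, so $s' \in Z(\overline{L}_{\{2,3,4,5\}})$, a $2$-dimensional torus. The same argument as above identifies $C^\circ_{\overline{G}}(s)$ with $\overline{L}_{\{2,3,4,5\}}$ up to conjugacy, hence we may place $s$ in $Z(\overline{L}_{\{2,3,4,5\}})$; L\"ubeck's table then gives $|s|=3$, and $s$ lies in the centre of the centralizer of $s'$ as asserted. The component group of $C_{\overline{G}}(s)$ sits inside $N_{\overline{G}}(\overline{L}_{\{2,3,4,5\}})/\overline{L}_{\{2,3,4,5\}} \cong \Stab_{W(\overline{G})}(\{2,3,4,5\}) \cong S_3$ by Lemma~\ref{ClassTypes2Split}(b). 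As $|C_{\overline{G}}(s)/C^\circ_{\overline{G}}(s)|=3$ from \cite{LL}, the stabilizer of $s$ in $S_3$ is the triality subgroup $\langle vw\rangle$; in particular $s$ is centralized by elements of $\overline{G}$ inducing triality on the $D_4$ Levi.

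The main obstacle will be the consistent identification in the last case: one must verify that the order~$3$ element produced inside $Z(\overline{L}_{\{2,3,4,5\}})$ and fixed by the triality $\langle vw\rangle$ is precisely the one of class type $[14,2,1]$ from \cite{LL}, and not a different class of order~$3$ elements sharing the same $C^\circ$. Matching the centralizer order against the value given in \cite{LL} closes this gap, and an explicit Chevie~\cite{chevie} calculation on $Z(\overline{L}_{\{2,3,4,5\}})$ determines the action of $vw$ on the $2$-torus and its fixed $3$-elements. The uniqueness of $A_5$ and $D_4$ as closed subsystems of $\Sigma(E_6)$, used repeatedly above, is a classical consequence of the Borel--de Siebenthal classification.
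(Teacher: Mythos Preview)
Your proposal is correct and follows the same approach as the paper, which simply states ``This is implicit in~\cite{LL}.'' You have merely unpacked what that citation means: the class-type labels in L\"ubeck's tables encode the isomorphism type of $C^\circ_{\overline{G}}(s)$ and the order of the component group, and your Borel--de~Siebenthal remark on the uniqueness of $A_5$ and $D_4$ subsystems in $E_6$ makes explicit why the centralizer is $\overline{G}$-conjugate to the named standard Levi subgroup.
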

\begin{proof}
This is implicit in~\cite{LL}. 
\end{proof}

The above lemma gives representatives, up to 
$\overline{G}^{\sigma}$-conjugation, of the centralizers of the class types
in the split case, i.e.\ where $C^{\circ}_{\overline{G}}( s )$ is 
$\overline{G}^{\sigma}$-conjugate to a standard Levi subgroup of~$\overline{G}$.
Representatives for the $2$-split cases will be constructed in the following 
lemma. Recall the significance of the symbol~$\iota$ discussed
in the introductory paragraphs of this subsection.

\begin{table}
\caption{\label{TwistingsForClassTypes} Twisting elements for some class types
(explanations in the proof of Lemma~\ref{ClassTypeLemma})}
$$\begin{array}{ccccccc}\\ \hline
\text{\rm class type} & I & \Stab_{W(\overline{G})}( I ) & \varepsilon & y & z 
                      & |C_{I,\sigma}( y )| \\ \hline
\ [8,1,1] & \{1,3,4,5,6\} & \langle s_{36} \rangle & \pm 1 & 1 & 1 & 2 \\ 
\ [8,1,2] &               &        &       & s_{36} & s_4 & 2 \\
\ [14,1,1] & \{ 2, 3, 4, 5 \} & \langle v, w \rangle & 1 & 1 & 1 & 6 \\
\ [14,1,2] &                  &          &   & v & s_1s_6 & 2 \\
\ [14,1,2] &                  &          & -1 & 1 & 1 & 2 \\
\ [14,1,1] &                  &          &    & v & s_1s_6 & 6 \\ \hline
\end{array}
$$
\end{table}

\begin{lem}
\label{ClassTypeLemma}
Let $s \in \overline{G}^\sigma$ be a semisimple element in one of the 
class types $[8,1,k]$, $[14,1,k]$ with $k = 1, 2$, or $[14,2,1]$. 
In the latter case assume that $q \neq 2$.
Suppose that~$s$ is real in~$\overline{G}^{\sigma}$. Then there is a
$\sigma$-stable standard Levi subgroup~$\overline{L}$ of~$\overline{G}$, and
there is $s' \in \overline{L}^{\sigma}$ such that~$s$ and~$s'$ are 
$\overline{G}^\sigma$-conjugate, and the following conditions hold.
\begin{itemize}
\item[{\rm (i)}] The element~$s'$ is real in~$\overline{L}^\sigma$.
\item[{\rm (ii)}] The centralizer~$C_{\overline{L}}( s' )$ is connected.
\item[{\rm (iii)}] If $\alpha \in \Aut(G)$ stabilizes~$\overline{L}^{\sigma}$
and the $\overline{G}^{\sigma}$-conjugacy class of~$s$,
then~$\alpha(s')$ and~$s'$ are conjugate in~$\overline{L}^{\sigma}$.
\end{itemize}
\end{lem}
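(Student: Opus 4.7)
The proof will be a case analysis along the rows of Table~\ref{TwistingsForClassTypes}. In each case I would take $\overline{L} := \overline{L}_I$ to be the standard Levi attached to the index set $I$ in the table, which is $\iota$-stable (hence $\sigma$-stable) by inspection. The task is then to produce a specific $\sigma$-fixed conjugate $s'$ of $s$ lying inside $\overline{L}^\sigma$, and to verify conditions (i)--(iii).

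In the split rows (where $y = 1$), Lemma~\ref{CentralizerConstruction} identifies $C^\circ_{\overline{G}}(s)$ with a $\overline{G}^\sigma$-conjugate of $\overline{L}_I$, so after conjugation $s$ itself lies in $Z(\overline{L}^\sigma)$, or in $Z(C^\circ_{\overline{G}}(s))^\sigma$ for class type $[14,2,1]$, and we set $s' = s$. In the twisted rows (where $y \neq 1$), Lemma~\ref{Twisting} identifies $C^\circ_{\overline{G}}(s)$ with a twist $\overline{M} = {^g\overline{L}_I}$, where $g^{-1}\sigma(g) = y$. The key observation from Lemma~\ref{ClassTypes2Split} is that $y$ is $W(\overline{G})$-conjugate via a $\iota$-stable element to the listed $z$, which lies in $W(\overline{L}_I)$. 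This rerouting converts the external $y$-twist into an internal $z$-twist of a maximal torus of $\overline{L}_I$, yielding a $\sigma$-fixed conjugate $s' \in \overline{L}_I^\sigma$ sitting in the $z$-twisted maximal torus, with centralizer in $\overline{L}$ equal to $\overline{L} \cap \overline{M}$.

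To verify condition~(i), reality of $s'$ in $\overline{L}^\sigma$: since $s$ is real and non-trivial in $\overline{G}^\sigma$, Lemma~\ref{Twisting}(b) shows that $C_{I,\sigma}(y) \cong S^\sigma$ has even order, consistent with the values $2$ and $6$ in the last column of Table~\ref{TwistingsForClassTypes}. I would then exhibit an involution in $N_{\overline{L}^\sigma}(\langle s' \rangle)$ inverting $s'$, using the explicit generators of $\Stab_{W(\overline{G})}(I)$: the element $s_{36}$ in the $A_5$ case (which inverts $Z(\overline{L})$ by Lemma~\ref{A5Levi}), and appropriate $\iota$-stable elements of $\langle v, w\rangle$ in the $D_4$ case via Lemma~\ref{ClassTypes2Split}(b). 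For condition~(ii), connectedness of $C_{\overline{L}}(s')$: one checks by inspection of root subgroups (or equivalently the explicit structure in Lemma~\ref{CentralizerConstruction}) that $\overline{L} \cap \overline{M}$ is the connected reductive group with root datum compatible with the twist. For condition~(iii), any $\alpha \in \Aut(G)$ stabilizing $\overline{L}^\sigma$ permutes the $\overline{G}^\sigma$-conjugates of $s'$ sitting in $Z(\overline{M})^\sigma$, and since this set is an $S^\sigma$-torsor by Lemma~\ref{Twisting}(b), $\alpha$-stability of the $\overline{G}^\sigma$-class of $s$ forces $\alpha$-stability of the $\overline{L}^\sigma$-class of $s'$.

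The main difficulty will be the twisted $D_4$ cases ($[14,1,2]$ for $\varepsilon = 1$, and $[14,1,1]$ for $\varepsilon = -1$), where $\Stab_{W(\overline{G})}(I) = \langle v, w \rangle$ is non-abelian of order~$6$ and the $\iota$-stable substructure from Lemma~\ref{ClassTypes2Split}(b) must be used carefully to locate both the inverting involution and the internal twisting element. The case $[14,2,1]$ adds the further complication of a non-trivial component group $C_{\overline{G}}(s)/C^\circ_{\overline{G}}(s) \cong \mathbb{Z}/3$ fusing several $\overline{G}^\sigma$-classes, which has to be reconciled with placement of $s'$ in $\overline{L}^\sigma$; the hypothesis $q \neq 2$ enters here through the requirement that an element of order~$3$ be real in~$\overline{G}^\sigma$.
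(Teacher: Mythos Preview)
Your proposal has a genuine gap in the choice of the Levi subgroup~$\overline{L}$. You take $\overline{L} = \overline{L}_I$ with~$I$ the index set from Table~\ref{TwistingsForClassTypes}, so that $C^\circ_{\overline{G}}(s')$ is (a twist of)~$\overline{L}$ itself. In the split rows this forces $s' \in Z(\overline{L}^\sigma)$, and then condition~(i) is impossible: a non-trivial central element of odd order cannot be inverted by anything in~$\overline{L}^\sigma$. Your proposed inverting elements ($s_{36}$ in the $A_5$ case, elements of $\langle v,w\rangle$ in the $D_4$ case) lie in $N_{\overline{G}}(\overline{L}_I)\setminus\overline{L}_I$, not in~$\overline{L}_I$. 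Relatedly, your assertion that the element~$z$ of the table lies in $W(\overline{L}_I)$ is false for the $D_4$ rows: there $z = s_1s_6 \in W(\overline{L}_{\{1,3,4,5,6\}})$, not $W(\overline{L}_{\{2,3,4,5\}})$.

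The paper's remedy is to take $\overline{L} = \overline{K} := \overline{L}_{\{1,3,4,5,6\}}$ uniformly for the class types $[8,1,k]$ and $[14,1,k]$. The role of the $\iota$-stable element~$x$ of Lemma~\ref{ClassTypes2Split} is precisely to conjugate $\Stab_{W(\overline{G})}(I)$ into $W(\overline{K})$; writing $g = hx$ with $h \in \overline{K}$ and $h^{-1}\sigma(h) = z$ then gives $\overline{M} = {}^g\overline{L}_I$ with both $Z(\overline{M})$ and ${}^g\Stab_{W(\overline{G})}(I)$ contained in~$\overline{K}$. This makes $s' \in \overline{K}^\sigma$ genuinely real in~$\overline{K}^\sigma$, and $C_{\overline{K}}(s')$ becomes a proper Levi subgroup of~$\overline{K}$ (conjugate to $\overline{L}_{\{3,4,5\}}$ or $\overline{L}_{\{2,3,5\}}$), hence connected. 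For class type $[14,2,1]$ the paper uses a still smaller Levi $\overline{L} = \overline{L}_{\{3,4,5\}}$ and reduces to the $\PGL_6^\varepsilon(q)$ situation via Lemma~\ref{HopefullyTrueGUd}; the hypothesis $q\neq 2$ enters because that lemma excludes $(d,q)=(6,2)$, not through any reality obstruction. Your sketch of~(iii) is also too quick: an $\alpha$ stabilizing~$\overline{L}^\sigma$ need not stabilize~$Z(\overline{M})$, so the $S^\sigma$-torsor argument does not apply directly; the paper first treats $\mu\in\Gamma_G\times\Phi_G$ using the $\iota$-stability of~$x$, and only then passes to general~$\alpha$ via the structure of $N_{\overline{G}^\sigma}(\overline{K}^\sigma)$ from Lemma~\ref{A5Levi}.
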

\begin{proof}
By Lemma~\ref{CentralizerConstruction}, the group $C^{\circ}_{\overline{G}}(s)$ 
is $\overline{G}$-conjugate to $\overline{L}_{I}$, where~$I$ is one of 
$\{ 1, 3, 4, 5, 6 \}$ or $\{ 2, 3, 4, 5 \}$; notice that~$I$ is $\iota$-stable 
and thus also $\sigma$-stable. Put 
$\overline{K} := \overline{L}_{ \{ 1, 3, 4, 5, 6 \} }$. 
The groups $\Stab_{W(\overline{G})}( I )$ have been determined in 
Lemma~\ref{ClassTypes2Split}, whose notation is used in Column~$3$ of 
Table~\ref{TwistingsForClassTypes}. Let~$x$ be a $\iota$-stable element with the 
properties exhibited in Lemma~\ref{ClassTypes2Split}.
The elements $y \in \Stab_{W(\overline{G})}( I )$ given in the fifth column of 
Table~\ref{TwistingsForClassTypes} are $\sigma$-stable.
Let us write $z := {^x\!y}$, so that $z \in W( \overline{K} )^{\sigma} \leq 
\overline{K}^\sigma$, where the first inclusion follows from 
Lemma~\ref{ClassTypes2Split}. The elements~$z$ are given in the sixth column of
Table~\ref{TwistingsForClassTypes}.

Fix~$I$,~$\varepsilon$ and $k \in \{ 1, 2 \}$. We claim that 
twisting~$\overline{L}_I$ with the element~$y$ associated to these data in 
Table~\ref{TwistingsForClassTypes} (see \cite[$3.3.1$]{GeMa}), yields 
representatives for the centralizers of the corresponding class types
$[8,1,k]$ respectively $[14,1,k]$.
To see this, first observe that the two $y$-elements associated to distinct~$k$s
lie in distinct $\sigma$-conjugacy classes of $\Stab_{W(\overline{G})}(I)$. This 
is clear for $I = \{ 1, 3, 4, 5, 6 \}$, as then $\Stab_{W(\overline{G})}( I )$ 
has order~$2$. For $I = \{ 2, 3, 4, 5 \}$, the corresponding $\sigma$-centralizers
$C_{I,\sigma}(y)$ of~$y$ (see Lemma~\ref{Twisting}(b)) have different orders.
Next, choose $h \in \overline{K}$ with $h^{-1}\sigma(h) = z$; if $z = 1$, choose 
$h = 1$. Putting $g := hx$, we get $g^{-1}\sigma(g) = y$. Now let
$\overline{M} := {^g\overline{L}}_I$. By Lemma~\ref{Twisting}(b), the number of 
$\sigma$-stable elements of~$Z( \overline{M} )$ with centralizer~$\overline{M}$ 
equals $|({^g\Stab}_{W(\overline{G})}(I))^{\sigma}|$. By Lemma~\ref{Twisting}(a), 
the latter number equals~$|C_{I,\sigma}(y)|$. On the other hand, the number in 
question can be read off the tables in~\cite{LL} as the denominator of the 
leading coefficient in the polynomial giving the number of conjugacy classes in 
a given class type. This proves our claim.

We now prove our assertions if~$s$ has class type $[8,1,k]$ or $[14,1,k]$, 
$k = 1, 2$, with $\overline{L} := \overline{K} = 
\overline{L}_{ \{ 1, 3, 4, 5, 6 \} }$. By the claim in the previous paragraph, 
$\overline{M} = {^h({^x\overline{L}}_I)}$ is the centralizer 
in~$\overline{G}$ of an element of class type~$s$, i.e.\ $C_{\overline{G}}( s )$ 
is $\overline{G}^{\sigma}$-conjugate to $\overline{M}$. In particular,~$s$ is 
$\overline{G}^{\sigma}$-conjugate to an element $s' \in Z( \overline{M} )$ with 
$C_{\overline{G}}( s' ) = \overline{M}$. 

(i) As $Z( ^x\overline{L}_I ) \leq {^x\overline{T}} = \overline{T} \leq \overline{K}$ 
and $h \in \overline{K}$, we have 
$Z(  \overline{M} ) = Z(^g\overline{L}_I) \leq \overline{K}$, and 
thus $s' \in Z( \overline{M} )^{\sigma} \leq \overline{K}^{\sigma}$. Now~$s'$ 
is real in~$\overline{G}^{\sigma}$ by hypothesis. Lemma~\ref{Twisting}(b) implies 
that~$s'$ is 
inverted by a $\sigma$-stable element of ${^{g}\Stab_{W(\overline{G})}( I )} = 
{^{h}({^x\Stab_{W(\overline{G})}( I )})}$. Hence~$s'$ is real
in~$\overline{K}^{\sigma}$, as
${^x\Stab_{W(\overline{G})}( I )} \leq \overline{K}$ and $h \in \overline{K}$.


(ii) By construction, the centralizer~$C_{\overline{K}}( s' )$ is 
$\overline{K}$-conjugate to $^x\overline{L}_I \cap \overline{K}$. The latter is
the centralizer in~$\overline{G}$ of the subtorus 
$^xZ(\overline{L}_I )Z( \overline{K} )$ of~$\overline{T}$;
see~\cite[Propositions~$12.6$ and~$3.9$]{MaTe}. Hence
$^x\overline{L}_I \cap \overline{K}$ is a Levi subgroup of~$\overline{G}$ and
thus connected. To be specific, Lemma~\ref{ClassTypes2Split} implies that 
$^x\overline{L}_I \cap \overline{K}$ is $\overline{G}$-conjugate to the
standard Levi subgroup $\overline{L}_{ \{ 3, 4, 5 \} }$, respectively 
$\overline{L}_{ \{ 2, 3, 5 \} }$.
Hence~$C_{\overline{K}}( s' )$ is connected.

(iii) We may assume that $s = s'$. We begin with a special case. Suppose first 
that $\alpha = \mu \in \Gamma_G \times \Phi_G$. Then $\alpha = \mu$ certainly
stabilizes~$\overline{K}^{\sigma}$, as~$\overline{K}$ is $\sigma$-stable by 
Lemma~\ref{A5Levi}, and~$\mu$ commutes with~$\sigma$. By the same 
argument,~$\overline{L}_I$ and~$\overline{L}_I^{\sigma}$ are $\mu$-stable.

Suppose that~$s'$ is conjugate in $\overline{G}^{\sigma}$ to~$\mu(s')$. Then 
$g^{-1}s'g$ is conjugate in~$\overline{G}$ to $\mu(g)^{-1}\mu(s')\mu(g)$. 
Now~$g^{-1}s'g$ and~$\mu(g)^{-1}\mu(s')\mu(g)$ are contained 
in~$Z( \overline{L}_I )$, as~$\overline{L}_I$, and hence~$Z( \overline{L}_I )$ 
are $\mu$-stable. By Lemma~\ref{Conjugation0}, there is 
$u \in \Stab_{W(\overline{G})}( I )$ such that
$ug^{-1}s'gu^{-1} = \mu(g)^{-1}\mu(s')\mu(g)$. As $g = hx$ and $\mu(x) = x$,
we obtain 
$$\mu(s') = [\mu(h) {xux^{-1}} h^{-1}] s' [\mu(h) {xux^{-1}} h^{-1}]^{-1}.$$
Now $h \in \overline{K}$ and $\overline{K}$ is $\mu$-stable. Moreover,
$xux^{-1} \in \overline{K}$ by Lemma~\ref{ClassTypes2Split}. Thus
$\mu(h) {xux^{-1}} h^{-1} \in \overline{K}$ and so~$s'$ and~$\mu(s')$ are
conjugate in~$\overline{K}$. As $C_{\overline{K}}( s' )$ is connected,~$s'$ 
and~$\mu(s')$ are conjugate in~$\overline{K}^{\sigma}$; see
\cite[Example~$1.4.10$]{GeMa}.

Now let $\alpha \in \Aut(G)$ stabilize~$\overline{K}^{\sigma}$ 
and the $\overline{G}^{\sigma}$-conjugacy class of~$s'$. Then $\alpha = 
\ad_c \circ \mu$ with $c \in \overline{G}^{\sigma}$ and 
$\mu \in \Gamma_G \times \Phi_G$. As~$\mu$ stabilizes~$\overline{K}^{\sigma}$,
we must have $c \in N_{\overline{G}^{\sigma}}( \overline{K}^{\sigma} )$.
Now $\alpha( s' ) = c \mu(s') c^{-1}$ is conjugate in~$\overline{G}^{\sigma}$ 
to~$s'$. Hence~$\mu(s')$ is conjugate to~$s'$ in~$\overline{G}^{\sigma}$.
By what we have already proved, there is $c' \in \overline{K}^{\sigma}$ such 
that $\mu(s') = c' s' {c'}^{-1}$. Thus $\alpha(s') = c\mu(s')c^{-1} =
(cc') s' (cc')^{-1}$ and so~$\alpha(s')$ and~$s'$ are conjugate in
$N_{\overline{G}^{\sigma}}( \overline{K}^{\sigma} )$. 

As~$s'$ is real in~$\overline{K}^{\sigma}$ and $\overline{K}^{\sigma} =
[\overline{K},\overline{K}]^{\sigma} \times Z( \overline{K} )^{\sigma}$ by 
Lemma~\ref{A5Levi}, we must have $s' \in [\overline{K},\overline{K}]^{\sigma}$. 
By the structure of $N_{\overline{G}^{\sigma}}( \overline{K}^{\sigma} )$ given 
in Lemma~\ref{A5Levi}, the elements~$\alpha(s')$ and~$s'$ are conjugate 
in~$\overline{K}^{\sigma}$. This completes our proof if~$s$ is of class type 
$[8,1,k]$ or $[14,1,k]$, $k = 1, 2$.

We finally prove our assertions for elements of class type $[14,2,1]$.
Suppose we are in the situation where~$\overline{M}$ is the centralizer of an  
element of class type~$[14,1,1]$, i.e.\ $I = \{ 2, 3, 4, 5\}$. Thus there 
$t \in Z( \overline{M}^{\sigma} )$  
of order~$3$ such that $C^{\circ}_{\overline{G}}( t ) = \overline{M}$ and
$|(C_{\overline{G}}( t )/C^{\circ}_{\overline{G}}( t ))^{\sigma}| = 3$; 
see Lemma~\ref{CentralizerConstruction}. Now $C_{\overline{G}}( t ) \leq 
N_{\overline{G}}( C^{\circ}_{\overline{G}}( t ) ) = 
N_{\overline{G}}( \overline{M} ) = 
\overline{M} {^g\Stab_{W(\overline{G})}( I )}$. Thus~$t$ is
centralized by the elements of order~$3$ in
${^g\Stab_{W(\overline{G})}( I )}$. As~$t$ is real, it is inverted by the 
involutions in ${^g\Stab_{W(\overline{G})}( I )}$. Now every element of
${^g\Stab_{W(\overline{G})}( I )}$ is $\sigma$-stable, and so~$t$ is real
in~$\overline{K}^{\sigma}$. Also, $C_{\overline{L}}( t )$ is not connected, as 
the Sylow $3$-subgroup of ${^g\Stab_{W(\overline{G})}( I )}$ centralizes~$t$ and 
normalizes $\overline{M} \cap \overline{K} = C^{\circ}_{\overline{K}}( t )$.
By the proof of (ii) above, the group $\overline{M} \cap \overline{K}$ is 
$\overline{G}$-conjugate to $\overline{L}_{ \{ 2, 3, 5 \} }$. A Chevie
computation shows that the latter is $\overline{G}^{\sigma}$-conjugate to 
$\overline{L}_{ \{ 1, 4, 6 \} } \leq \overline{K}$.

Now $\overline{K}^{\sigma} = 
[\overline{K}, \overline{K}]^{\sigma} \times Z( \overline{K} )^{\sigma}$, 
where $Z( \overline{K} )^{\sigma}$ is a cyclic group of order $q - 1$ and
$[\overline{K}, \overline{K}]^{\sigma} \cong \PGL^{\varepsilon}_6(q)$;
see Lemma~\ref{A5Levi}. 
As~$t$ is real in~$\overline{K}^{\sigma}$, we must
have $t \in [\overline{K}, \overline{K}]^{\sigma}$. Let $\hat{G} =
\GL_6^{\varepsilon}(q)$ and consider the natural map 
$\hat{G} \rightarrow [\overline{L}, \overline{L}]^{\sigma}$.
Let~$\hat{t}$ be a real lift of~$t$ in~$\hat{G}$. Then~$\hat{t}$ has order~$3$,
since~$\hat{t}^3$ is a real central element of~$\hat{G}$.
The structure of $C_{\overline{K}}( t )$ exhibited above, together with
Lemmas~\ref{FormalitiesOnPolynomialsII} and~\ref{FormalitiesOnPolynomialsIII},
imply that~$\hat{t}$ has eigenvalues $1, \zeta, \zeta^{-1}$, each with
multiplicity~$2$, where $\zeta \in \mathbb{F}_{q^\delta}$ has order~$3$.
In particular, the minimal polynomial of~$\hat{t}$ is as in~(a) of
Lemma~\ref{GUdPrep}.

Let $\overline{L} := \overline{L}_{\{3,4,5\}} \leq \overline{K} = 
\overline{L}_{\{1,3,4,5,6\}}$. Then~$\overline{L}$ is a $\sigma$-stable standard 
Levi subgroup of~$\overline{G}$. Lemma~\ref{HopefullyTrueGUd}, applied 
to~$\hat{t}$ and $[\overline{K},\overline{K}]^{\sigma}$, shows that there is an 
element $t' \in {\overline{L}}^{\sigma}$ conjugate to~$t$ in 
$[\overline{K},\overline{K}]^{\sigma}$, such that~$t$ satisfies~(i) and~(iii). 
Moreover, $C_{\overline{L}}( t' )$ is connected, since the standard Levi 
subgroup $\overline{L}^* \leq \overline{G}^*$ dual to~$\overline{L}$
has connected center; see~\cite{LL}. This completes our proof for the case 
that~$s$ has class type $[14,2,1]$.
\end{proof}

\begin{lem}
\label{NonReal}
The elements of the class types $[3,2,3]$, $[4,1,1]$, $[5,1,1]$, $[9,1,2]$,
$[9,1,1]$, $[14,2,2]$ and $[14,1,3]$ are not real.
\end{lem}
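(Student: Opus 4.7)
The proof will be a case-by-case analysis of the seven listed class types, in the spirit of Lemma~\ref{ClassTypeLemma} but now verifying non-reality rather than reality. The key tool is Lemma~\ref{Conjugation0}: for a semisimple element $s \in \overline{G}^\sigma$ with (possibly non-connected) centralizer $M := C_{\overline{G}}(s)$, the element~$s$ is real in $\overline{G}^\sigma$ if and only if some element of $N_{\overline{G}^\sigma}(M)/M^\sigma$ inverts~$s$. Hence the problem reduces, for each class type, to an explicit check that the action of a certain finite quotient of a normalizer on a central torus element avoids inversion.

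For each class type $[a,b,k]$ in the list the plan is the following. First, read off from the tables in~\cite{LL} the isomorphism type of $M^\circ := C^\circ_{\overline{G}}(s)$ and of the component group $A(s) := M/M^\circ$. Second, as in Lemma~\ref{ClassTypeLemma}, identify a $\sigma$-stable subset $I \subseteq \Pi$ and a representative $y \in \Stab_{W(\overline{G})}(I)$ of the appropriate $\sigma$-conjugacy class in $\Stab_{W(\overline{G})}(I)$, so that a twist ${^g\overline{L}_I}$ (with $g^{-1}\sigma(g)=y$) is $\overline{G}^\sigma$-conjugate to $M^\circ$; by Lemma~\ref{Twisting}(a) this identifies a complement to ${M^\circ}^\sigma$ in $N_{\overline{G}^\sigma}(M^\circ)$ with the $\sigma$-centralizer $C_{I,\sigma}(y)$. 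Third, use Chevie~\cite{chevie} to compute the action of $C_{I,\sigma}(y)$, together with the action of $A(s)^\sigma$ when non-trivial, on $Z(M^\circ)^\sigma$, and verify that no element of this combined group inverts the distinguished element~$s$. Information analogous to that of Table~\ref{TwistingsForClassTypes} is produced along the way.

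The split cases $[a,b,1]$ (i.e.\ $[4,1,1]$, $[5,1,1]$, $[9,1,1]$) are the easiest: here $y$ can be taken trivial, $C_{I,\sigma}(y)=\Stab_{W(\overline{G})}(I)$, and the Chevie check amounts to computing the action of this small Weyl-group stabilizer on the one-parameter torus $Z(\overline{L}_I)$ and comparing with the coordinates of~$s$ given in~\cite{LL}. The twisted cases $[3,2,3]$, $[9,1,2]$ and $[14,1,3]$ require an explicit twist $y \neq 1$, and one must be careful to pick the correct $\sigma$-conjugacy class of $y$, as in the construction preceding Table~\ref{TwistingsForClassTypes}.

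The main obstacle is class type $[14,2,2]$, which has a non-connected centralizer with $|A(s)|=3$ (by Lemma~\ref{CentralizerConstruction}). In contrast to $[14,2,1]$, which is real, here the order of~$s$ and the combined action of the twisted Weyl stabilizer $C_{I,\sigma}(y)$ with $A(s)^\sigma$ fail to produce an inverting element. Verifying this failure is the delicate point: because $A(s)$ can contribute additional elements to the relevant quotient of $N_{\overline{G}^\sigma}(M)$, one must enlarge the group considered in step three above and rule out all of its elements as possible inverters. This final check is again routine in Chevie once the normalizer structure has been made explicit, and completes the proof.
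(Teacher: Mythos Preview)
Your strategy is sound and would succeed, but the paper's proof is much shorter because it replaces your uniform Chevie computation by three ad hoc observations, one for each block of class types.

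For $[14,2,2]$ and $[3,2,3]$ the paper avoids your ``main obstacle'' entirely: it simply reads off from~\cite{LL} that $Z(C_{\overline{G}^\sigma}(s))$ has order~$3$ and that the class type contains exactly two $\overline{G}^\sigma$-classes. Since~$s$ lies in that centre it has order~$3$; then~$s$ and~$s^{-1}$ are the two non-trivial central elements, share the same centraliser, and therefore represent the two classes. No normaliser action is computed. (Note also that $[3,2,3]$ has middle index~$2$ and belongs with $[14,2,2]$, not with the connected-centraliser twisted cases as in your grouping.)

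For $[4,1,1]$, $[5,1,1]$ and $[14,1,3]$ the paper just invokes the last sentence of Lemma~\ref{Twisting}(b): a non-trivial real element in $Z(\overline{M}^\sigma)$ forces $|S^\sigma|=|C_{I,\sigma}(y)|$ to be even. For these three types that order is odd, and the proof is finished without ever looking at the coordinates of~$s$.

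For $[9,1,1]$ and $[9,1,2]$ one has $\Stab_{W(\overline{G})}(\{1,3,4,5\})=\langle s_{36}\rangle$ of order~$2$, so parity fails and one must indeed show $s_{36}$ does not invert. But rather than a Chevie check, the paper uses the structure established in Lemma~\ref{A5Levi}: since $\overline{L}_{\{1,3,4,5\}}\leq\overline{L}_{\{1,3,4,5,6\}}$ and $s_{36}$ centralises $[\overline{L}_{\{1,3,4,5,6\}},\overline{L}_{\{1,3,4,5,6\}}]$, it fixes the component~$t_1$ of~$t$ lying there; but $t_1\neq t_1^{-1}$ (as $q$ is even and $C_{\overline{G}}(t)$ is a proper Levi), so $s_{36}$ cannot invert~$t$.

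In short, your approach trades conceptual input for routine computation; the paper does the opposite and ends up with a three-line argument for five of the seven types.
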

\begin{proof}
If~$s$ is of class type $[14,2,2]$ or $[3,2,3]$, then the center of
$C_{\overline{G}^{\sigma}}( s )$ has order~$3$, and there are two conjugacy
classes of this class type; thus~$s$ and~$s^{-1}$ lie in different
$\overline{G}^{\sigma}$-conjugacy classes. For the class types $[4,1,1]$,
$[5,1,1]$ and $[14,1,3]$, the claim follows from the last statement of
Lemma~\ref{Twisting}(b).

Suppose that the semisimple element $s \in \overline{G}^{\sigma}$ is of class
type~$[9,1,1]$ or~$[9,1,2]$ and that~$s$ is
real in~$\overline{G}^{\sigma}$. Then~$s$ is real in~$\overline{G}$. Let
$J := \{ 1, 3, 4, 5 \}$ and put $\overline{M} := \overline{L}_J$.
As~$C_{\overline{G}}( s )$ is $\overline{G}$-conjugate to~$\overline{M}$ by
\cite{LL}, the center of~$\overline{M}$ contains a real element~$t$ with
centralizer~$\overline{M}$. By Lemma~\ref{Conjugation0}, there is an element in
$\Stab_{W(\overline{G})}(J)$ inverting~$t$.

Now $\overline{M} = \overline{L}_J \leq \overline{L}_I$ with
$I = \{ 1, 3, 4, 5, 6 \}$. By Table~\ref{TwistingsForClassTypes},
we have $\Stab_{W(\overline{G})}(J) = \Stab_{W(\overline{G})}(I) = 
\langle s_{36} \rangle$. By Lemma~\ref{A5Levi}, we may write $t = t_1 t_2$ with
$t_1 \in [\overline{L}_I,\overline{L}_I]$ and $t_2 \in Z( \overline{L}_I )$.
Moreover, $t_1$ is centralized by~$s_{36}$. Now $t_1 \neq t_1^{-1}$ as
$C_{\overline{G}}( t ) = \overline{L}_J$ and~$q$ is even. Hence~$t$ is not
inverted by $s_{36}$, a contradiction.
\end{proof}

\addtocounter{subsection}{4}
\subsection{Modification of~$\nu$}
The following is the analogue of Proposition~\ref{QEvenPrep}.

\addtocounter{num}{1}
\begin{prp}
\label{QEvenPrepE6}
Let~$G$ be as in
{\rm Hypothesis~\ref{GroupsOfEvenCharacteristic}(b)(iii),(iv)}, and let 
$\beta \in \Aut(G)$. Write $\beta = \ad_h \circ \mu$ for some 
$h \in \overline{G}^{\sigma}$ and some $\mu \in \Gamma_G \times \Phi_G$.
Then there is $g \in G$ such that $\alpha := \ad_g \circ \beta$ has even order 
and the following statements hold.

{\rm (a)} We have $|C_G( \alpha_{(p)} )| < q^{48}$ for every prime~$p$ 
dividing~$|\alpha|$ and every element $\alpha_{(p)}$ of $\langle \alpha \rangle$ 
of order~$p$, unless $p = 2$ and $\alpha_{(2)}$ is a graph automorphism of~$G$.
In the latter case, $|C_G( \alpha_{(p)} )| < q^{52}$.

{\rm (b)} The order of~$\alpha$ divides~$3 \delta f$.

{\rm (c)} If $h \in G$, then $|\alpha|$ divides $\delta f$.

{\rm (d)} If $\varepsilon = -1$ and $|\mu|$ is even, then $|\alpha|$ divides $2f$.
\end{prp}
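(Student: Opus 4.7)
The plan is to follow the strategy of Proposition~\ref{QEvenPrep}, adapted to the adjoint group $\overline{G}$ of type~$E_6$; the key structural fact is $[\overline{G}^{\sigma}:G]=3$, a consequence of $3\mid q-\varepsilon$.

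First, since $\overline{G}^{\sigma}=G\overline{T}^{\sigma}$ and $\overline{T}^{\sigma}/(\overline{T}^{\sigma}\cap G)$ is cyclic of order~$3$, I would replace $h$ by an element $t\in\overline{T}^{\sigma}$ of order $1$ or $3$ lying in the coset~$hG$, with $t=1$ if $h\in G$. Setting $y := th^{-1}\in G$ and $\beta' := \ad_y\circ\beta = \ad_t\circ\mu$, Equation~(\ref{PowerFormula}) yields that $|\beta'|$ divides $|t||\mu|$ and is even if and only if $|\mu|$ is even. If $|\mu|$ is even, I would set $\alpha:=\beta'$. If $|\mu|$ is odd, then $\mu\in\Phi_G=\langle\varphi\rangle$ (since $\iota$, when present, has order~$2$), and a short analysis using $\mu=\varphi^k$ with $k$ even shows that $\mu$ acts trivially on $\langle t\rangle$ and thus preserves $C_{\overline{G}}(t)$. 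I would then pick an involution $z\in G$ that is fixed by~$\mu$ and centralises~$t$, choosing $z$ so that $|C_G(z)|$ is as small as possible. Set $\alpha:=\ad_z\circ\beta'$. Since $\ad_z$ commutes with $\beta'$, we have $|\alpha|=2|\beta'|$, $\alpha_{(p)}=\beta'_{(p)}$ for every odd prime $p\mid|\alpha|$, and $\alpha_{(2)}=\ad_z$.

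Parts (b), (c), (d) then follow from Lemma~\ref{OrderBound} applied to $\beta'=\ad_t\circ\mu$: since $t$ is a $3$-element, $|\beta'|$ divides $3\delta f$ by~(b); if $t=1$, it divides $\delta f$ by~(a); and $t^{q+1}=1$ (as $3\mid q+1$ when $\varepsilon=-1$) forces $|\beta'|\mid 2f$ via~(c) when $\varepsilon=-1$ and $|\mu|$ is even. In both our settings $\delta f$ is even (for $\varepsilon=1$, $3\mid q-1$ forces $f$ even; for $\varepsilon=-1$, $\delta=2$), so the doubling $|\alpha|=2|\beta'|$ occurring when $|\mu|$ is odd preserves the required divisibilities in~(b) and~(c), exactly as in the proof of Proposition~\ref{QEvenPrep}(b),(c).

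For (a), Lemma~\ref{CentralizersEstimates} directly handles $\alpha_{(p)}\notin\Inndiag(G)$: either $|C_G(\alpha_{(p)})|<q^{48}$, or $\alpha_{(p)}$ is an order-$2$ graph automorphism of~$G$, for which $|C_G(\alpha_{(p)})|<q^{52}$. For inner-diagonal $\alpha_{(p)}$ with $p$ odd, Equation~(\ref{PowerFormula}) yields $\alpha_{(p)}=\ad_{N_{\mu,l}(t)}$ for some~$l$, with $N_{\mu,l}(t)$ a non-trivial $3$-element of $\overline{T}^{\sigma}$; the semisimple class-type tables~\cite{LL} show that the largest such centralizer in $\overline{G}^{\sigma}$ is of Levi type~$A_5$ (class type $[8,1,k]$), of order at most $(q-\varepsilon)\,|\GL_6^{\varepsilon}(q)|\leq q^{37}$, comfortably below~$q^{48}$. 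The principal obstacle is the involution case $\alpha_{(2)}=\ad_z$: one must exhibit an involution $z$ fixed by~$\mu$ and centralising~$t$ with $|C_G(z)|<q^{48}$. Since $C_{\overline{G}}(t)$ is reductive of type $A_5+A_1$ or $A_2+A_2+A_2$, its $\mu$-fixed subgroup is a finite group of Lie type over a prime field and contains involutions, and by selecting $z$ of sufficiently non-special type in the Aschbacher--Seitz classification of involutions in~$E_6$ the required bound on $|C_G(z)|$ can be achieved uniformly across~$q$ and~$\varepsilon$.
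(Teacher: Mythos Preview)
Your overall strategy matches the paper's: replace~$h$ by a $3$-element~$t$, set $\beta' = \ad_t \circ \mu$, and when $|\mu|$ is odd adjoin a $\mu$-fixed involution commuting with~$t$. Parts (b)--(d) via Lemma~\ref{OrderBound} are correct and coincide with the paper's argument. Two points in part~(a), however, are not in order.

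First, your assertion that a nontrivial order-$3$ element $t'$ of $\overline{T}^{\sigma}$ has centralizer of Levi type~$A_5$ (class type $[8,1,k]$) is wrong: in the adjoint group such an element can have centralizer of type~$D_5$, of order roughly $(q-\varepsilon)\,|\P\Omega_{10}^{\varepsilon}(q)|$. This is the bound the paper actually uses. Fortunately it is still below~$q^{48}$, so your conclusion survives even though the justification does not. Relatedly, the claim that $C_{\overline{G}}(t)$ is of type $A_5+A_1$ or $A_2+A_2+A_2$ is unjustified for a generically chosen order-$3$ element of~$\overline{T}^{\sigma}$; the centralizer can equally well be a Levi of type~$D_4$ or~$D_5$.

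Second, and more seriously, your treatment of the involution~$z$ is only a sketch. ``Selecting~$z$ of sufficiently non-special type in the Aschbacher--Seitz classification'' is not a proof: you must exhibit a concrete~$z$ that is $\mu$-fixed and commutes with~$t$, and then bound $|C_G(z)|$. The paper achieves this by placing~$t$ not at random in~$\overline{T}^{\sigma}$ but in the Sylow $3$-subgroup~$Z_3$ of $Z(\overline{L}^{\sigma})$ for the standard $D_4$-Levi $\overline{L} = \overline{L}_{\{2,3,4,5\}}$, and then taking~$z$ to be the product of a $\iota$-stable triple of pairwise commuting $\varphi$-stable root elements inside $[\overline{L},\overline{L}]$. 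This~$z$ is automatically $\mu$-fixed, commutes with~$t$ by construction, and lies in the unipotent class~$3A_1$, for which $|C_{\overline{G}^{\sigma}}(z)| = q^{31}(q-\varepsilon)^3(q+\varepsilon)^2(q^2+\varepsilon q+1) < q^{48}$ by the explicit computations of Mizuno ($\varepsilon = 1$) and Malle ($\varepsilon = -1$). Without a construction and a centralizer computation at this level of specificity, your argument for $\alpha_{(2)} = \ad_z$ is incomplete.
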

\begin{proof}
Let $\overline{L} := \overline{L}_{\{ 2, 3, 4, 5 \}}$.
Then $[\overline{L},\overline{L}]$ is a simple group of type~$D_4$, and thus
$[\overline{L},\overline{L}]^{\sigma} \cong \P\Omega^\varepsilon_8(q)$, as~$q$ is 
even. This is a non-abelian simple group, so that  
$[\overline{L},\overline{L}]^{\sigma} = 
[\overline{L}^{\sigma},\overline{L}^{\sigma}]$. It follows that
$\overline{L}^{\sigma} = [\overline{L}^{\sigma},\overline{L}^{\sigma}] \times Z$
with~$Z$ isomorphic to $[q-1] \times [q-1]$, respectively to $[q^2 - 1]$; 
see~\cite{LL}. Write~$Z_3$ for the Sylow $3$-subgroup of~$Z$. Then~$Z_3$ is
invariant under $\Gamma_G \times \Phi_G$. 
Recall from Subsection~\ref{TheRemainingGroups} that 
$\overline{G}^{\sigma} = G \overline{T}^{\sigma}$, and thus
$\overline{G}^{\sigma} = G \overline{L}^{\sigma}$.
Now $[\overline{L}^{\sigma},\overline{L}^{\sigma}] \leq 
[\overline{G}^{\sigma},\overline{G}^{\sigma}] = G$.
Hence $Z \not\leq G$ and so $\overline{G}^{\sigma} = GZ_3$. 

Thus there is $y \in G$ such that $\beta' := \ad_y \circ \beta$ is of the form
\begin{equation}
\label{FormOfAlpha}
\beta' = \ad_t \circ \mu,
\end{equation}
with $t \in Z_3$. We choose $t = 1$ if $h \in G$. Equation~(\ref{PowerFormula}) 
and the fact that~$Z_3$ is invariant in $\Gamma_G \times \Phi_G$, imply that 
\begin{equation}
\label{FormOfAlphaI}
{\beta'}^l = 
\ad_{t'} \circ \mu^l
\end{equation}
with $t' \in Z_3$ for every integer~$l$.

(a) Let~$p$ be a prime dividing~$|\beta'|$. The claim that $|C_G( \beta'_{(p)} )| 
< q^{48}$, unless $p = 2$ and $\beta'_{(2)}$ is a graph automorphism of~$G$, in
which case $|C_G( \beta'_{(p)} )| < q^{52}$. This claim follows from
Lemma~\ref{CentralizersEstimates}, except if $\beta'_{(p)}$ is an inner-diagonal 
automorphism. In the latter case, $\beta'_{(p)} = \ad_{t'}$ for some $t' \in Z_3$ 
by Equation~(\ref{FormOfAlphaI}). Since~$t'$ is an element of order~$3$ in the
center of $\overline{L}^{\sigma}$, the tables in~\cite{LL} imply that 
$|C_{\overline{G}^{\sigma}}( \beta'_{(3)} )| = |C_{\overline{G}^{\sigma}}( t' )|$ 
is bounded above by $(q - \varepsilon)|\P\Omega^{\varepsilon}_{10}( q )|$. A rough
upper bound for the latter order is~$q^{48}$, proving our claim.

Once more with Equation~(\ref{FormOfAlphaI}), we see that~$|\beta'|$ is even, if
and only if~$|\mu|$ is even. In this case we put $\alpha := \beta'$.
Then~$\alpha$ satisfies the assertions. Suppose now that~$|\mu|$ is odd. Then 
$\mu \in \Phi_G$. In particular,~$\mu$ is a power of~$\varphi$. Let 
$g \in [\overline{L}, \overline{L}]$ be the product of a $\iota$-stable set of 
three pairwise commuting $\varphi$-stable root elements. Then~$g$ is a 
$\iota$-stable involution in~$G$.  Moreover,~$g$ belongs to the class~$3A1$ of 
unipotent elements in the notation of~\cite{Lawther}. Put 
$\alpha := \ad_g \circ \beta'$. As~$\mu$ is a power of~$\varphi$ and thus
fixes~$g$, and as~$g$ commutes with~$t$, we have $\alpha = \beta' \circ \ad_g$.
Also, $C_{G}( \alpha_{(p)} ) = C_{G}( \beta'_{(p)} )$ if~$p$ is odd, and
$|C_{\overline{G}^{\sigma}}( \alpha_{(2)} )| = 
|C_{\overline{G}^{\sigma}}( g )| = 
q^{31} (q - \varepsilon)^3 (q + \varepsilon)^2 (q^2 + \varepsilon q + 1) \leq 
q^{48}$. In case of $\varepsilon = 1$, the order
of~$C_{\overline{G}^{\sigma}}( g )$ has been computed by Mizuno
(see \cite[Lemma~$4.2$]{Mizuno}), in case $\varepsilon = -1$
by Gunter Malle (see \cite[Proposition~$3$]{MalleGreen}). This completes the 
proof of~(a).

The statements (b)--(d) follow from Lemma~\ref{OrderBound}.
\end{proof}

\addtocounter{subsection}{1}
\subsection{The main result}
We are now ready to finish the proof for the groups~$E_6^{\varepsilon}(q)$.

\addtocounter{num}{1}
\begin{prp}
\label{E6SmallDegrees}
Suppose that $G = E_6^\varepsilon(q)$ with~$q$ even and 
$3 \mid q - \varepsilon$. Then~$G$ is not a minimal counterexample to 
{\rm \cite[Theorem~$1.1.5$]{HL1}}.
\end{prp}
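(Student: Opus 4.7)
The plan is to mirror the strategy of Proposition~\ref{PSUSmallDegrees}. Suppose that $G=E_6^\varepsilon(q)$ is a minimal counterexample and let $(V,n,\nu)$ be as in \cite[Notation~$4.1.1$]{HL1}. Let $\chi$ be the character of $V$, viewed as a character of ${\overline{G}^*}^{\sigma}$, and let $s\in \overline{G}^{\sigma}$ be semisimple with $\chi\in\mathcal E({\overline{G}^*}^{\sigma},s)$. Since $\chi$ is real, $s$ is real in $\overline{G}^{\sigma}$ by Lemma~\ref{SemisimpleCharactersLemma}(b)(ii). Consult L\"ubeck's tables~\cite{LL} to enumerate the class types of semisimple elements of $\overline{G}^{\sigma}$ whose odd-degree semisimple character in $\mathcal E({\overline{G}^*}^{\sigma},s)$ is too small to admit the large-degree method, i.e.\ whose index $[\overline{G}^{\sigma}\colon\! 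C_{\overline{G}^{\sigma}}(s)]_{2'}$ is not large compared with $q^{24}$.

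The first step is to dispose of the class types covered by Lemma~\ref{ClassTypeLemma}. For $s$ in one of the class types $[8,1,k]$, $[14,1,k]$ ($k=1,2$) or $[14,2,1]$, Lemma~\ref{ClassTypeLemma} produces a $\sigma$-stable standard Levi subgroup $\overline{L}$ and a conjugate $s'\in\overline{L}^\sigma$ satisfying the hypotheses of Lemma~\ref{ProofByHCInductionQEven}. Since $G$ is a minimal counterexample, every proper subgroup of $G$ has the $E1$-property, and Lemma~\ref{ProofByHCInductionQEven} then forces $(G,V,n)$ to have the $E1$-property, contradicting the choice of $G$. The case $q=2$ of class type $[14,2,1]$ excluded in Lemma~\ref{ClassTypeLemma} falls under the small-$q$ bookkeeping at the end of the proof. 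By Lemma~\ref{NonReal}, the class types $[3,2,3]$, $[4,1,1]$, $[5,1,1]$, $[9,1,1]$, $[9,1,2]$, $[14,2,2]$ and $[14,1,3]$ cannot occur, as they contain no real semisimple elements.

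For every remaining class type the index $[\overline{G}^{\sigma}\colon\! C_{\overline{G}^{\sigma}}(s)]_{2'}$, and hence $\chi(1)$ by Lemma~\ref{SemisimpleCharactersLemma}(a), is bounded below by a polynomial in $q$ of sufficiently large degree (roughly $q^{25}$ or more, using the explicit centralizer orders in~\cite{LL}). Apply Proposition~\ref{QEvenPrepE6} to $\beta:=\nu$ to produce $\alpha=\ad_g\circ\nu$ of even order with $|C_G(\alpha_{(p)})|<q^{48}$ for every prime $p\mid|\alpha|$, except when $\alpha_{(2)}$ is an involutory graph automorphism, where the bound is $q^{52}$. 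Since $|\alpha|$ divides $3\delta f$, we obtain $|\alpha|-1<q$ for $q$ sufficiently large, and then \cite[Lemma~$4.3.3$]{HL1} yields the $E1$-property as soon as
\[
\chi(1) \;>\; (|\alpha|-1)\,q^{26},
\]
which is satisfied by the class-type lower bound. The main obstacle will be the handful of genuinely small configurations that escape these polynomial estimates, namely very small $q$ (in particular $q=2$, including the class type $[14,2,1]$ excluded from Lemma~\ref{ClassTypeLemma}) together with the borderline class types whose centralizer index is only a low-degree polynomial in $q$. These will have to be handled by finer bookkeeping: using the exact centralizer orders and the exact value of $|\alpha|$ from Proposition~\ref{QEvenPrepE6}(b)--(d) to verify~\eqref{PowerFormula}-style inequalities by hand, and, in the truly small exceptional cases, by directly inspecting $\Res^{G^\ex}_{\langle\alpha\rangle}(\chi^\ex)$ using the character tables of $G$ and its extensions available in GAP~\cite{GAP04} and in Chevie~\cite{chevie}, invoking \cite[Lemma~$4.3.1$]{HL1} to produce a real constituent over $\langle\alpha\rangle$.
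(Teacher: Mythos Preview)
Your plan matches the paper's proof: reduce via Proposition~\ref{QEvenPrepE6} and \cite[Lemma~$4.3.3$]{HL1} to a finite list of small-degree class types, discard the non-real ones of Lemma~\ref{NonReal}, and treat $[8,1,k]$, $[14,1,k]$, $[14,2,1]$ through Lemmas~\ref{ClassTypeLemma} and~\ref{ProofByHCInductionQEven}. The paper uses L\"ubeck's degree tables~\cite{LL2} rather than the centralizer tables~\cite{LL} to isolate exactly fourteen relevant degrees, but by Lemma~\ref{SemisimpleCharactersLemma}(a) these carry the same information.

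The one place where your ``finer bookkeeping'' understates the work is class type $[3,2,1]$, which is the sole remaining obstacle after the above reductions. Here $\chi(1)>q^{27}/3$, so the inequality $\chi(1)>(|\alpha|-1)q^{26}$ fails only for $f\le 5$. You propose to settle these by inspecting $\Res^{G^\ex}_{\langle\alpha\rangle}(\chi^\ex)$ in GAP, but the character tables of $E_6^\varepsilon(q)$ and its automorphism extensions are \emph{not} available for $q>2$, so this cannot be done by table look-up. The paper's argument instead exploits that the three characters attached to $[3,2,1]$ are not invariant under~$\overline{G}^\sigma$; since $\chi$ is $\nu$-invariant, the diagonal part~$t$ in the decomposition $\beta'=\ad_t\circ\mu$ of~(\ref{FormOfAlpha}) must lie in~$G$ whenever $3\nmid f$, whence Proposition~\ref{QEvenPrepE6}(c) sharpens the bound to $|\alpha|\mid\delta f$, disposing of $f=2,4,5$. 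The residual cases $f=1$ and $f=3$ are finished with \cite[Corollary~$4.3.2$]{HL1}, the improved $q^{48}$ bound available when $|\mu|$ is odd, and \cite[Lemma~$2.5.5$]{HL1}; no explicit character table is used.
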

\begin{proof}
Suppose that~$G$ is a minimal counterexample to \cite[Theorem~$1.1.5$]{HL1},
let~($V,n)$ be a pair without the 
$E1$-property and let~$\chi$ denote the character of~$V$. Let~$\alpha$ be
constructed as in Proposition~\ref{QEvenPrepE6} from $\beta := \nu$.
By \cite[Lemma~$4.3.3$]{HL1}
and Proposition~\ref{QEvenPrepE6} we have 
\begin{equation}
\label{CriticalBound}
\chi(1) \leq (|\alpha| - 1)q^{26}.
\end{equation}
By Proposition~\ref{QEvenPrepE6}(b), we have $|\alpha| \leq 3 \delta f$. Hence 
$|\alpha| \leq q$ if $f > 3$. For $f = 1, 2, 3$ we have $3 \delta f = 6, 6, 18$.

Now consider the lists of character degrees and their multiplicities 
of~${\overline{G}^*}^{\sigma}$ 
provided by Frank L{\"u}beck in~\cite{LL2}. In fact there are two such lists, 
one for each $\varepsilon \in \{ -1, 1 \}$; all the considerations below are 
true for each of these lists. It turns out that exactly~$116$ of the character 
degrees of~${\overline{G}^*}^{\sigma}$ are odd. Let 
$(d_i,m_i)$, $i = 1, \ldots , 116$ denote the entries of L{\"u}beck's list 
corresponding to the odd degree characters, where~$d_i$ denotes the degree of a 
character and $m_i$ its multiplicity, $i = 1, \ldots , 116$. We choose the 
notation that that $i < j$ if and only if $(d_i,m_i)$ comes before $(d_j,m_j)$ 
in the full list~\cite{LL2}. The quantities $d_i$, $m_i$, $i = 1, \ldots , 116$ 
are polynomials in~$q$ with rational coefficients. Using GAP~\cite{GAP04} and 
some obvious estimates, one checks that $d_i > q^{27}$ for $q > 8$ and $i > 14$. 
We also have $d_i > 17 \cdot 8^{26}$ if $q = 8$ and $i > 14$, as well as 
$d_i > 5 \cdot 4^{26}$ if $q = 4$ and $i > 14$. Finally, if $q = 2$, we have 
$d_i > 5 \cdot 2^{26}$ for all $i > 20$. However, $m_i = 0$ for $q = 2$ and 
$15 \leq i \leq 19$.  By~(\ref{CriticalBound}) we may thus assume that 
$\chi( 1 ) = d_i$ for some $1 \leq i \leq 14$.

The semisimple elements of~$\overline{G}^{\sigma}$ and their centralizers are 
enumerated and described in the tables of~\cite{LL}. Again, there are two such 
tables, but the statements below hold for each of these tables. As already 
mentioned in the paragraph preceding Lemma~\ref{ClassTypeLemma}, the semisimple 
conjugacy classes are combined to class types. Using 
Lemma~\ref{SemisimpleCharactersLemma}(a), we find that the characters of degrees
$d_i$, $i = 1, \ldots , 14$ correspond, via Lusztig's generalized Jordan 
decomposition of characters, to the following class types:
$[1,1,1]$, $[4,1,1,]$, $[8,1,2]$, $[8,1,1]$, $[14,2,2]$, $[14,2,1]$, $[14,1,3]$, 
$[14,1,2]$, $[14,1,1]$, $[5,1,1]$, $[9,1,2]$, $[9,1,1]$, $[3,2,3]$, $[3,2,1]$,
where the order of the class types in this list corresponds to the order 
$d_1, d_2, \ldots , d_{14}$ if $\varepsilon = 1$. If $\varepsilon = -1$, this 
order has to be changed according to the permutation $(5,6)(7,9)$. The trivial 
character of degree $d_1 = 1$ corresponds to the trivial element, i.e.\ to the 
class type $[1,1,1]$. By Lemma~\ref{NonReal}, the elements of the class types 
$[4,1,1]$, $[14,2,2]$, $[14,1,3]$, $[5,1,1]$, $[9,1,2]$, $[9,1,1]$ and $[3,2,3]$ 
are not real.  In all these cases,~$\chi$ is not real by 
Lemma~\ref{SemisimpleCharactersLemma}(b)(ii).

In the following, we may assume that~$s$ belongs to one of the remaining~$6$
class types. We begin by ruling out the three characters corresponding to the
class type~$[3,2,1]$ (this class type consists of a unique conjugacy class of 
elements of order~$3$) as candidates for~$\chi$. Suppose that~$\chi$ corresponds 
to~$s$ in class~$[3,2,1]$. Using the exact value of~$\chi(1)$, we find
$\chi(1) > q^{27}/3$. Clearly, $3 \delta f < q/3$ for $f \geq 6$. By 
Proposition~\ref{QEvenPrepE6}(b) and~(\ref{CriticalBound}), we are left with
the cases $f \leq 5$. Observe that~$\chi$ is not invariant 
in~$\overline{G}^{\sigma}$. On the other hand,~$\chi$ is invariant under~$\nu$,
and thus also under~$\beta'$, with $\beta' = \ad_y \circ \nu = \ad_t \circ \mu$
as in~(\ref{FormOfAlpha}). This implies that $t \in G$ if $3 \nmid f$.
In the latter case, $|\alpha| \leq \delta f$ by 
Proposition~\ref{QEvenPrepE6}(c). Since $\delta f - 1 < q/3$ for $f = 2, 4, 5$,
we are left with the cases $f = 1$ and $f = 3$. If $f = 1$, then $\alpha$ is an 
involution, and we are done with \cite[Corollary~$4.3.2$]{HL1}. Suppose finally 
that $f = 3$. Assume first that~$|\mu|$ is odd. Then~$|\alpha|$ divides~$18$ by 
Proposition~\ref{QEvenPrepE6}(b), and $|C_G( \alpha_{(p)} )| < q^{48}$ for 
$p = 2, 3$ by Proposition~\ref{QEvenPrepE6}(a). Since $q^{27}/3 > 17 q^{24}$, 
this case is ruled out by \cite[Lemma~$4.3.3$]{HL1}. Suppose finally 
that $|\mu|$ is even. Then $|\alpha| \in \{ 2, 6 \}$ by 
Proposition~\ref{QEvenPrepE6}(d). Once more by \cite[Corollary~$4.3.2$]{HL1},
we may assume that $|\alpha| = 6$. Put $G^{\ex} = \langle \Inn(G), \nu \rangle$ as in 
\cite[Definition~$4.2.3$]{HL1} and let~$\chi^{\ex}$ denote the extension of~$\chi$ 
to~$G^{\ex} $ as in \cite[Remark~$4.2.5$]{HL1}. The elements of order~$6$ in 
$\langle \alpha \rangle$ square to elements of order~$3$, and $\alpha_{(2)}$ is 
the unique element of order~$2$ in $\langle \alpha \rangle$. Thus 
$|C_G( \alpha' )| < q^{48}$ for every element
$\alpha' \in \langle \alpha \rangle$ of order~$3$ or~$6$, and
$|C_G( \alpha_{(2)} )| < q^{52}$; see Proposition~\ref{QEvenPrepE6}(a). Now 
$q^{27}/3 > q^{26} + 4 q^{24}$. If follows from \cite[Lemma~$2.5.5$]{HL1} that 
the restriction of~$\chi^{\ex}$ to $\langle \alpha \rangle$ contains the two real 
irreducible characters of $\langle \alpha \rangle$ with positive multiplicity. 
Then \cite[Lemma~$4.3.1$]{HL1} implies that this case does not yield a 
counterexample. 

We are thus left with the cases that~$s$ belongs to one of the class 
types~$[8,1,k]$ or~$[14,1,k]$, $k = 1, 2$, or $[14,2,1]$. As we have assumed 
that~$G$ is a minimal counterexample, these cases are ruled out by
Lemmas~\ref{ClassTypeLemma}(b) and~\ref{ProofByHCInductionQEven}.
\end{proof}

\section*{Acknowledgements}
The authors thank, in alphabetical order, Thomas Breuer, Xenia Flamm, Meinolf 
Geck, Jonas Hetz, Frank Himstedt, Frank L{\"u}beck, Jean Michel, 
Britta Sp{\"a}th, Andrzej Szczepa{\'n}ski and Jay Taylor for numerous helpful 
hints and enlightening conversations.


\begin{thebibliography}{100}
\bibitem{CeBo2}  {\sc C.~Bonnaf{\'e}}, Sur les caract\`eres des groupes
                 r\'eductifs finis a centre non connexe: applications aux
                 groupes sp\'eciaux lin\'eaires et unitaires,
                 {\em Ast\'erisque} {\bf 306}, 2006.

\bibitem{BHRD}   {\sc J.~N.~Bray, D.~F.~Holt and C.~M.~Roney-Dougal},
                 The Maximal Subgroups of the Low-Dimensional Finite Classical
                 Groups, LMS Lecture Note Series 407, Cambridge University Press,
                 2013.

\bibitem{C1}     {\sc R.~W.~Carter}, {Simple groups of Lie type},
                 John Wiley \& Sons, London-New York-Sydney 1972.

\bibitem{C2}     {\sc R.~W.~Carter}, {Finite groups of Lie type:
                 Conjugacy classes and complex characters},
                 John Wiley \& Sons, Inc., New York 1985.

\bibitem{Atlas}  {\sc J.\ H.\ Conway, R.\ T.\ Curtis, S.\ P.\ Norton,
                 R.\ A.\ Parker, and R.\ A.\ Wilson}, Atlas of finite groups,
                 Oxford University Press, Eynsham, 1985.

\bibitem{DDRP}   {\sc K.~Dekimpe, B.~De Rock and P.~Penninckx}, The~$R_\infty$ 
                 property for infra-nilmanifolds, {\em Topol.\ Methods Nonlinear 
		 Anal.} {\bf 34} (2009), 353--373.

\bibitem{DiMi2}  {\sc F.~Digne and J.~Michel}, {Representations of finite
                 groups of Lie type}, Second Edition, London Math.\ Soc.\
                 Students Texts 95, Cambridge University Press, 2020.

\bibitem{fs2}    {\sc P.~Fong and B.~Srinivasan}, The blocks of finite general 
                 linear and unitary groups, {\em Invent.\ Math.} \textbf{69} 
                 (1982), 109--153.

\bibitem{GAP04}  {\sc The GAP Group}, GAP -- Groups, Algorithms, and Programming, 
                 Version 4.14.0; 2024, \url{http://www.gap-system.org}.

\bibitem{chevie} {\sc M.~Geck, G.~Hiss, F.~L{\"u}beck, G.~Malle, and
                 G.~Pfeiffer}, {\sf CHEVIE}---A system for computing
                 and processing generic character tables,
                 {\em AAECC} {\bf 7} (1996), 175--210.

 \bibitem{GeMa}  {\sc M.~Geck and G.~Malle}, The character theory of finite
                 groups of Lie type, A guided tour, Cambridge Studies in Advanced
                 Mathematics, 187, Cambridge University Press, Cambridge, 2020.

\bibitem{GLS}    {\sc D.\ Gorenstein, R.\ Lyons, and R.\ Solomon},
                 {The classification of finite simple groups, Number 3},
                 Mathematical Surveys and Monographs, AMS, Providence, 1998.

\bibitem{HLIschia}{\sc G.~Hiss and R.~Lutowski}, The $R_\infty$-property of flat
                 manifolds: Toward the eigenvalue one property of finite groups,
                 {\em in:} {\sc Mariagrazia Bianchi, Andrea Caranti, Costantino
                 Delizia, Patrizia Longobardi, Mercede Maj, Carmine Monetta,
                 Marialaura Noce and Carlo Maria Scoppola (eds.)}, Group Theory:
                 IGT 2024, In Honour of Otto H. Kegel on the Occasion of his 90th
                 Birthday, Ischia, Italy, April 7--12, Springer Proceedings in
                 Mathematics \& Statistics, Springer, 2026.
                 \url{arXiv:2412.10206}.

\bibitem{HL1}    {\sc G.~Hiss and R.~Lutowski} The eigenvalue one property of 
                 finite groups, I, {\em J.~Algebra} {\bf 692} (2026), 592--626; 
                 (available online 13 January 2026); 
                 \url{https://doi.org/10.1016/j.jalgebra.2025.12.021}

\bibitem{How}    {\sc R.~B.~Howlett}, Normalizers of parabolic subgroups of
                 reflection groups, {\em J.\ London Math.\ Soc.} {\bf 21} (1980),
                 62--80.

\bibitem{HuBII}  {\sc B.~Huppert and N.~Blackburn}, Finite Groups II,
                 Springer-Verlag, Berlin, 1982.

\bibitem{ModAtl} {\sc C.~Jansen, K.~Lux, R.~Parker und R.~Wilson},
                 An atlas of Brauer characters, Clarendon Press,
                 Oxford, 1995.

\bibitem{LMT}    {\sc M.~Larsen, G.~Malle and P.~H.~Tiep}, The largest irreducible
                 representations of simple groups, {\em Proc.\ Lond.\ Math.\ Soc.}
                 {\bf 106} (2013), 65--96.

\bibitem{Lawther}{\sc R.~Lawther}, Jordan block sizes of unipotent elements in 
                 exceptional algebraic groups, {\em Comm.\ Algebra} {\bf 23} 
                 (1995), 4125--4156.

\bibitem{LL}     {\sc F.~L{\"u}beck}, {Centralizers and numbers of semisimple
                 classes in exceptional groups of Lie type},
                 \url{http://www.math.rwth-aachen.de/~Frank.Luebeck/chev/CentSSClasses/}.

\bibitem{LL2}    {\sc F.~L{\"u}beck}, {Character Degrees and their 
                 Multiplicities for some Groups of Lie Type of Rank $< 9$},
                 \url{http://www.math.rwth-aachen.de/~Frank.Luebeck/chev/DegMult/index.html}.

\bibitem{Lu}     {\sc G.~Lusztig}, {On the representations of reductive groups with 
                 disconnected centre}, {\em Ast\'erisque} {\bf 168} (1988), 
                 157--166.

\bibitem{MalleGreen}{\sc G.~Malle}, Green functions for groups of types~$E_6$ 
                 and~$F_4$ in characteristic~$2$, {\em Comm.\ Algebra} {\bf 21}
                 (1993), 747--798.

\bibitem{MalleHeight0} {\sc G.~Malle}, Height~$0$ characters of finite groups 
                 of Lie type, {\em Represent.\ Theory} {\bf 11} (2007), 192--220.

\bibitem{MaTe}   {\sc G.~Malle and D.~Testerman}, {Linear Algebraic Groups and
                 Finite Groups of Lie Type}, Cambridge University Press,
                 Cambridge, 2011.

\bibitem{Michel} {\sc J.~Michel}, The development version of the {\tt CHEVIE}
                 package of {\tt GAP3}, {\em J.~Algebra} {\bf 435} (2015),
                 308--336.

\bibitem{Mizuno} {\sc K.~Mizuno}, The conjugate classes of Chevalley groups of 
                 type~$E_6$, {\em J.~Fac.\ Sci.\ Univ.\ Tokyo Sect.\ IA Math.}
                 {\bf 24} (1977), 525--563. 

\bibitem{SiFra}  {\sc W.~A.~Simpson and J.~S.~Frame}, The character tables for 
                 $\SL(3,q)$, $\SU(3,q^2)$, $\PSL(3,q)$, $\PSU(3,q^2)$,
                 {\em Canadian J.~Math.} {\bf 25} (1973), 486--494. 

\bibitem{JTayl}  {\sc J.~Taylor}, Action of automorphisms on irreducible 
                 characters of symplectic groups, {\em J.~Algebra} {\bf 505}
                 (2018), 211--246.
\end{thebibliography}
\end{document}